\DeclareMathOperator{\dive}{div}
\numberwithin{equation}{section}
\newcolumntype{C}{>{$\displaystyle} c <{$}}
\def\env@dmatrix{\hskip -\arraycolsep
	\let\@ifnextchar\new@ifnextchar
	\def\arraystretch{2}%
	\array{*{\c@MaxMatrixCols}{>{\displaystyle}c}}%
}
\newenvironment{dmatrix}{\left(\env@dmatrix}{\endmatrix\right)}
\begin{document}
	
	\title{The Classification of Branched Willmore Spheres in the $3$-Sphere and the $4$-Sphere}
	\author{Alexis Michelat\footnote{Department of Mathematics, ETH Zentrum, CH-8093 Zürich, Switzerland.}\; and Tristan \selectlanguage{french}Rivière$^{*}$\selectlanguage{english}\setcounter{footnote}{0}}
	\date{\today}
	
	\maketitle
	
	\vspace{1em}

	\begin{abstract}
		We extend the classification of Robert Bryant of Willmore spheres in $S^3$ to \emph{variational} branched Willmore spheres $S^3$ and show that they are inverse stereographic projections of complete minimal surfaces with finite total curvature in $\mathbb{R}^3$ and vanishing flux. We also obtain a classification of \emph{variational} branched Willmore spheres in $S^4$, generalising a theorem of Seb\'{a}stian Montiel.  As a result of our asymptotic analysis at branch points, we obtain an improved $C^{1,1}$ regularity of the unit normal of \emph{variational} branched Willmore surfaces in arbitrary codimension. We also prove that the width of Willmore sphere min-max procedures in dimension $3$ and $4$, such as the sphere eversion, is an integer multiple of $4\pi$.
	\end{abstract}

	\tableofcontents
	\vspace{0.5cm}
	\begin{center}
		{Mathematical subject classification :\\ 35J35, 35R01, 49Q10, 53A05, 53A10, 53A30, 53C42, 58E15.}
	\end{center}
	\theoremstyle{plain}
	\newtheorem*{theorem*}{Theorem}
	\newtheorem{theorem}{Theorem}[section]
	\newenvironment{theorembis}[1]
	{\renewcommand{\thetheorem}{\ref{#1}$'$}%
		\addtocounter{theorem}{-1}%
		\begin{theorem}}
		{\end{theorem}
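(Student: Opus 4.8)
The plan is to adapt Bryant's classification of smooth Willmore spheres to the branched variational setting via the conformal Gauss map, controlling every step at the branch points with the asymptotic expansions established earlier in the paper. First I would recall that to a branched conformal immersion $\Phi:S^2\to S^3$ one associates its oriented central sphere congruence, the conformal Gauss map $Y:S^2\to S^{3,1}\subset\mathbb{R}^{4,1}$, sending each point to the oriented mean curvature sphere. The key input, which I would re-derive in the branched variational class, is that $\Phi$ is (variationally) branched Willmore if and only if $Y$ is a branched conformal harmonic map into the de Sitter space $S^{3,1}$. Since $\langle dY,dY\rangle$ is conformal to the Willmore integrand $(H^2-K+1)\,g_\Phi$, the finiteness of the Willmore energy is exactly the finiteness of the area of $Y$, so $Y$ extends across the branch points to a genuine finite-energy harmonic map of $S^2$.

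The second step exploits the genus. Harmonicity of $Y$ produces Bryant's holomorphic quartic differential $\mathcal{Q}$ on the underlying Riemann surface. Because the domain is $S^2\cong\mathbb{CP}^1$ and $H^0(\mathbb{CP}^1,K^{\otimes 4})=0$, every holomorphic quartic differential vanishes, forcing $\mathcal{Q}\equiv 0$. Here the branch points require care: I would use the expansions of $\Phi$ and of its second fundamental form near a branch point — together with the point-removability and the $C^{1,1}$ normal regularity proved earlier — to show that $\mathcal{Q}$ is holomorphic across the branch points, so that compactness of $S^2$ still forces the global vanishing.

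The third step converts $\mathcal{Q}\equiv 0$ into the minimal-surface picture. When the quartic differential vanishes, the harmonic map $Y$ degenerates into a constant light-like direction of $\mathbb{R}^{4,1}$, which geometrically means the envelope of mean curvature spheres passes through a single \emph{fixed} point $p_\infty\in S^3$. Removing $p_\infty$ by inverse stereographic projection $\pi:S^3\setminus\{p_\infty\}\to\mathbb{R}^3$, I would show that $\widehat\Phi:=\pi\circ\Phi$ has identically vanishing mean curvature where it is regular, i.e. it is a branched minimal surface in $\mathbb{R}^3$. Completeness and finite total curvature then follow from the finite Willmore energy combined with the branch-point and $p_\infty$-fiber asymptotics: each component of $\Phi^{-1}(p_\infty)$ becomes a complete end of $\widehat\Phi$, the conformal type remains $S^2$ minus finitely many punctures, and Gauss--Bonnet together with the Jorge--Meeks relation quantizes the total curvature.

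The final assertion, vanishing of the flux, encodes that the non-compact surface $\widehat\Phi$ closes up to a genuine Willmore \emph{sphere} at $p_\infty$. Around each puncture the flux is the period $\mathrm{Im}\oint_\gamma\partial_z\widehat\Phi\,dz$, which is nonzero precisely when the corresponding end carries logarithmic (catenoidal) growth; since $\Phi$ is a bounded branched immersion into the compact manifold $S^3$ near $p_\infty$, such growth is excluded, the ends are planar, and the flux vanishes. I would read this off directly from the asymptotic expansion at these points. The main obstacle I anticipate lies in Steps two and three: proving that $\mathcal{Q}$ genuinely extends holomorphically, and that the conformal change of variables turns the branch points and the fiber over $p_\infty$ into \emph{complete} planar ends with quantized total curvature. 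This is exactly where the sharp $C^{1,1}$ asymptotics and the residue computations of the earlier sections do the decisive work, both to license the removability of singularities of the harmonic map $Y$ and to identify the local behavior of $\widehat\Phi$ at infinity.
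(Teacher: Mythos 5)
Your overall architecture (conformal Gauss map, vanishing of the holomorphic quartic on $S^2$, Bryant's degeneration to a dual point, stereographic projection to a minimal surface) is the right skeleton, but two of your steps contain genuine gaps, and they are exactly the two places where the paper's real work happens. First, in your Step two you assert that the expansions and the $C^{1,1}$ regularity of the normal ``show that $\mathcal{Q}$ is holomorphic across the branch points.'' They do not. What the local expansions give a priori is that $\mathscr{Q}_{\phi}$ is meromorphic with poles of order at most $2$ at a branch point (this is the content of the paper's Lemma on pole orders), improved to order at most $1$ after the refined expansion of $\h_0$; the residual simple pole has coefficient proportional to $\s{\vec{A}_1}{\vec{C}_1}$, and nothing in the regularity theory forces this scalar product to vanish. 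The paper removes it by combining the conformality cancellations $\s{\vec{A}_0}{\vec{A}_0}=\s{\vec{A}_0}{\vec{A}_1}=\s{\vec{A}_0}{\vec{C}_1}=0$ with the \emph{conservation law coming from the invariance of $W$ under inversions} (the fourth Noether residue), carried out via computer-assisted expansions and split into the cases $\theta_0\geq 5$, $\theta_0=4,3,2,1$; for $\theta_0=3$ and $\theta_0\leq 2$ the vanishing of the first residue $\vec{\gamma}_0$ — i.e.\ the \emph{true}/variational hypothesis — is used directly through the relation $\s{\vec{A}_0}{\vec{\gamma}_0}+\s{\vec{A}_1}{\vec{C}_1}=0$. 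So the variational assumption enters the holomorphy step itself, not only the flux statement at the end; a proof that treats removability as routine misses the theorem's main mechanism. Relatedly, your appeal to finite-energy removability for the harmonic map $Y$ into $S^{3,1}$ is not licensed: the target is Lorentzian, the Dirichlet-type density is indefinite, and the standard $\epsilon$-regularity/removability machinery does not apply; the paper deliberately works with the tensorial expression of $\mathscr{Q}_{\phi}$ in terms of $\h_0$ precisely to avoid this.

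Second, your flux argument fails outright. You claim that since $\Phi$ is bounded into the compact $S^3$ near $p_\infty$, logarithmic (catenoidal) growth of the ends of $\widehat{\Phi}$ is excluded, hence the flux vanishes. The inverted catenoid is a counterexample: it is a bounded branched Willmore sphere in $S^3$ whose dual minimal surface is the catenoid, with embedded ends of logarithmic growth and nonzero flux. Boundedness of the compact surface costs nothing — inversion sends any complete end, planar or catenoidal, to a bounded branch point. The correct mechanism, and the one the paper uses, is the residue correspondence under inversion (Theorem~\ref{galois}): the flux of the dual minimal surface equals its fourth residue $\vec{\gamma}_3$, which the inversion exchanges with the first residue $\vec{\gamma}_0$ of $\phi$, and $\vec{\gamma}_0=0$ precisely because $\phi$ is a \emph{true} (variational) Willmore surface. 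In other words, vanishing flux is equivalent to the variational hypothesis via Noether's theorem, not a consequence of compactness; your version of the final step would ``prove'' that every branched Willmore sphere has flux-free dual, which is false.
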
}
    \renewcommand*{\thetheorem}{\Alph{theorem}}
	\newtheorem{theoremdef}{Théorème-Définition}[section]
	\newtheorem{lemme}[theorem]{Lemma}
	\newtheorem{propdef}[theorem]{Proposition-Definition}
	\newtheorem{defi2}{Definition}
	\newtheorem{propdef2}[defi2]{Proposition-Definition}
	\newtheorem{prop}[theorem]{Proposition}
	\newtheorem{cor}[theorem]{Corollary}
	\theoremstyle{definition}
	\newtheorem*{definition}{Definition}
	\newtheorem*{definitions}{Definitions}
	\newtheorem{defi}[theorem]{Definition}
	\newtheorem{rem}[theorem]{Remark}
    \newtheorem*{remast}{Remark}
	\newtheorem{remimp}[theorem]{Important remark}
	\newtheorem{rems}[theorem]{Remarks}
	\newtheorem{remintro}[defi2]{Remark}
	\newtheorem{remsintro}[defi2]{Remarks}
	\newtheorem{exemple}[theorem]{Example}
	\theoremstyle{plain}
	\newtheorem{par2}{Parenthesis}
	\newtheorem{obs}{Important observations}
	\newcommand{\N}{\ensuremath{\mathbb{N}}}
	\renewcommand\hat[1]{%
		\savestack{\tmpbox}{\stretchto{%
				\scaleto{%
					\scalerel*[\widthof{\ensuremath{#1}}]{\kern-.6pt\bigwedge\kern-.6pt}%
					{\rule[-\textheight/2]{1ex}{\textheight}}
				}{\textheight}%
			}{0.5ex}}%
		\stackon[1pt]{#1}{\tmpbox}
	}
	\parskip 1ex
	\newcommand{\vc}[3]{\overset{#2}{\underset{#3}{#1}}}
	\newcommand{\conv}[1]{\ensuremath{\underset{#1}{\longrightarrow}}}
	\newcommand{\A}{\ensuremath{\mathscr{A}}}
	\newcommand{\D}{\ensuremath{\nabla}}
	\renewcommand{\N}{\ensuremath{\mathbb{N}}}
	\newcommand{\Z}{\ensuremath{\mathbb{Z}}}
	\newcommand{\I}{\ensuremath{\mathbb{I}}}
	\newcommand{\R}{\ensuremath{\mathbb{R}}}
	\newcommand{\W}{\ensuremath{\mathscr{W}}}
	\newcommand{\Q}{\ensuremath{\mathscr{Q}}}
	\newcommand{\C}{\ensuremath{\mathbb{C}}}
	\newcommand{\totimes}{\ensuremath{\,\dot{\otimes}\,}}
	\newcommand{\z}{\ensuremath{\bar{z}}}
	\newcommand{\p}[1]{\ensuremath{\partial_{#1}}}
	\newcommand{\Res}{\ensuremath{\mathrm{Res}}}
	\newcommand{\pwedge}[2]{\ensuremath{\,#1\wedge#2\,}}
	\newcommand{\lp}[2]{\ensuremath{\mathrm{L}^{#1}(#2)}}
	\renewcommand{\wp}[3]{\ensuremath{\left\Vert #1\right\Vert_{\mathrm{W}^{#2}(#3)}}}
	\newcommand{\np}[3]{\ensuremath{\left\Vert #1\right\Vert_{\mathrm{L}^{#2}(#3)}}}
	\newcommand{\h}{\ensuremath{\vec{h}}}
	\renewcommand{\Re}{\ensuremath{\mathrm{Re}\,}}
	\renewcommand{\Im}{\ensuremath{\mathrm{Im}\,}}
	\newcommand{\diam}{\ensuremath{\mathrm{diam}\,}}
	\newcommand{\leb}{\ensuremath{\mathscr{L}}}
	\newcommand{\supp}{\ensuremath{\mathrm{supp}\,}}
	\renewcommand{\phi}{\ensuremath{\vec{\Phi}}}
	\newcommand{\Perp}{\ensuremath{\perp}}
    \newcommand{\nperp}{\ensuremath{N}}
	\renewcommand{\H}{\ensuremath{\vec{H}}}
	\newcommand{\e}{\ensuremath{\vec{e}}}
	\newcommand{\f}{\ensuremath{\vec{f}}}
	\renewcommand{\epsilon}{\ensuremath{\varepsilon}}
	\renewcommand{\bar}{\ensuremath{\overline}}
	\newcommand{\s}[2]{\ensuremath{\langle #1,#2\rangle}}
	\newcommand{\bs}[2]{\ensuremath{\left\langle #1,#2\right\rangle}}
	\newcommand{\n}{\ensuremath{\vec{n}}}
	\newcommand{\ens}[1]{\ensuremath{\left\{ #1\right\}}}
	\newcommand{\w}{\ensuremath{\vec{w}}}
	\newcommand{\vg}{\ensuremath{\mathrm{vol}_g}}
	\renewcommand{\d}[1]{\ensuremath{\partial_{x_{#1}}}}
	\newcommand{\dg}{\ensuremath{\mathrm{div}_{g}}}
	\renewcommand{\Res}{\ensuremath{\mathrm{Res}}}
	\newcommand{\un}[2]{\ensuremath{\bigcup\limits_{#1}^{#2}}}
	\newcommand{\res}{\mathbin{\vrule height 1.6ex depth 0pt width
			0.13ex\vrule height 0.13ex depth 0pt width 1.3ex}}
	\setlength\boxtopsep{1pt}
	\setlength\boxbottomsep{1pt}
	\newcommand\norm[1]{%
		\setbox1\hbox{$#1$}%
		\setbox2\hbox{\addvbuffer{\usebox1}}%
		\stretchrel{\lvert}{\usebox2}\stretchrel*{\lvert}{\usebox2}%
	}
	\allowdisplaybreaks
	\newcommand*\mcup{\mathbin{\mathpalette\mcapinn\relax}}
	\newcommand*\mcapinn[2]{\vcenter{\hbox{$\mathsurround=0pt
				\ifx\displaystyle#1\textstyle\else#1\fi\bigcup$}}}
	\def\Xint#1{\mathchoice
		{\XXint\displaystyle\textstyle{#1}}%
		{\XXint\textstyle\scriptstyle{#1}}%
		{\XXint\scriptstyle\scriptscriptstyle{#1}}%
		{\XXint\scriptscriptstyle\scriptscriptstyle{#1}}%
		\!\int}
	\def\XXint#1#2#3{{\setbox0=\hbox{$#1{#2#3}{\int}$ }
			\vcenter{\hbox{$#2#3$ }}\kern-.58\wd0}}
	\def\ddashint{\Xint=}
	\newcommand{\dashint}[1]{\ensuremath{{\Xint-}_{\mkern-10mu #1}}}
	\newcommand{\vv}[1]{\vec{\mkern0mu#1}}
	\newcommand\ccancel[1]{\renewcommand\CancelColor{\color{red}}\cancel{#1}}
	\newcommand\colorcancel[2]{\renewcommand\CancelColor{\color{#2}}\cancel{#1}}
	\newpage
	
	\section{Introduction}
	
	\subsection{Willmore functional and quantization of energy}
	
	This article primarily addresses the generalisation of Bryant's classification of smooth Willmore immersions of the sphere $S^2$ into $S^3$ to branched immersions. Let $\Sigma^2$ be a closed Riemann surface, and $n\geq 3$ a fixed integer. The Willmore energy on a smooth Riemannian manifold $(M^n,h)$ with sectional curvature $\widetilde{K}_h$ is defined on any smooth immersion $\phi:\Sigma^2\rightarrow M^n$ by
	\begin{align*}
		W_{M^n}(\phi)=\int_{\Sigma^2}\left(|\H_g|^2+\widetilde{K}_h\right)d\vg
	\end{align*}
	where $g=\phi^\ast h$ is the pull-back metric of $(M^n,h)$ by $\phi$, and $\H_{\phi}$ is the mean-curvature, that is the half-trace of the second fundamental form $\vec{\I}$ the immersion, given by
	\begin{align*}
		\H_g=\frac{1}{2}\sum_{i,j=1}^2g^{i,j}\,\vec{\I}_{i,j}.
	\end{align*}
	The most basic property of the Willmore energy is its conformal invariance which can be stated as follows. For all conformal diffeomorphism $\varphi:(M^n,h)\rightarrow (\widetilde{M}^n,\widetilde{h})$, we have
	\begin{align*}
		W_{\widetilde{M}^n}(\varphi\circ\phi)=W_{M^n}(\phi).
	\end{align*}
	However, in the special case of $\R^n$, if $\iota_a:\R^n\setminus\ens{a}\rightarrow\R^n\setminus\ens{a}$ is the inversion centred at $a\in \phi(\Sigma^2)$, we do not have in general
	\begin{align*}
		W_{\R^n}(\iota_a\circ \phi)=W_{\R^n}(\phi),
	\end{align*}
	while we have equality for inversions with centre outside of $\phi(\Sigma^2)$. 
	Nevertheless, the quantity
	\begin{align*}
		\mathscr{W}(\phi)=\int_{\Sigma^2}\left(|\H_g|^2-K_g\right)d\vg
	\end{align*}
	where $K_g$ is the intrinsic Gauss curvature of $\phi$, is invariant under every conformal transformation. Indeed, the $2$-form
	\begin{align*}
		\left(|\H_g|^2-K_g\right)d\vg=|\h_0|_{WP}^2d\vg,
	\end{align*}
	where $\h_ 0$ is the Weingarten tensor and $|\,\cdot\,|_{WP}$ designs the Weil-Petersson metric, is a\emph{pointwise} invariant (see for example $7.3.1$ \cite{willmorebook}). We shall come back to this point when we will state Noether's theorem for the Willmore energy (see for example \eqref{inversionWeingarten} in the proof of Theorem \ref{galois}).
	
	We now come to the critical points of the Willmore energy. Classically, they are the smooth immersions satisfying the equation
	\begin{align}\label{eq1W}
		\Delta_g^N\H-2|\H|^2\H+\mathscr{A}(\H)+\mathscr{R}(\H)=0
	\end{align} 
	where $\Delta_g^N$ is the Laplacian on the normal bundle, $\mathscr{A}$ the Simons operator and $\mathscr{R}$ a curvature operator, given by
	\begin{align*}
		\mathscr{A}(\H)&=\sum_{i,j=1}^{2}\s{\vec{\I}(\vec{\epsilon}_i,\vec{\epsilon}_j)}{\H}\vec{\I}(\vec{\epsilon}_i,\vec{\epsilon}_j),\qquad
		\mathscr{R}(\H)=\left(\sum_{i=1}^2R(\H,\vec{\epsilon}_i)\vec{\epsilon}_i\right)^N
	\end{align*}
	where $(\vec{\epsilon}_1,\vec{\epsilon}_2)$ is any local orthonormal moving frame, and $R$ is the Riemann curvature tensor of $(M^n,h)$. However, for the natural regularity $\phi\in W^{2,2}(\Sigma^2,M^n)$ this equation does not even have a distributional meaning, as it would require $\H\in L^{3}_{loc}(\Sigma^2,TM^n)$. The weakest possible setting to work with is the space of \emph{weak immersions} (introduced in \cite{riviere1}, \cite{rivierecrelle})
	\begin{align*}
	\mathscr{E}(\Sigma^2,M^n)=W^{2,2}\cap W^{1,\infty}(\Sigma^2,M^n)\cap
	\left\{\begin{alignedat}{1}
	\phi :\; & d\phi(x)\;\text{has rank}\; 2\;\text{for almost all}\; x\in \Sigma^2\\
	& \text{and}\; \inf_{\Sigma^2} |d\phi\wedge d\phi|_{g_0}>0
	\end{alignedat}\right\}. 
	\end{align*}
	for any fixed Riemannian metric $g_0$ on $\Sigma^2$. In the rest of the introduction, we suppose that $M^n=\R^n$ and that $h$ is the standard flat Euclidean metric. 	The second author showed in the Willmore equation can be written in a conservative weak formulation.
	
	\begin{theorem*}[\cite{riviere1}]
		Let $\Sigma^2$ be a closed Riemann surface, and $\phi:\Sigma^2\rightarrow\R^n$ be a smooth immersion. Then, (identifying $2$-vectors and functions on $\Sigma^2$)
		\begin{align}\label{1}
		\Delta_g^N\H_g-2|\H_g|^2\H_g+\mathscr{A}(\H_g)=d\left(\ast_g d\H_g-3\ast_g (d\H_g)^N+\star (\H_g\wedge d\n)\right)
		\end{align}
		where $\H_g$ is the mean curvature of $\phi$, where $\ast_g$ is the Hodge star operator on $\Sigma^2$ for the metric $g$, and $\star$ the Hodge star operator on $\R^n$ for the flat metric.
	\end{theorem*}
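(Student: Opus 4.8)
The plan is to establish the identity locally and then patch, since both sides are globally defined $\mathbb{R}^n$-valued $2$-forms and it suffices to match them on a neighbourhood of each point. Around a given point I would work in a conformal chart $(x_1,x_2)$, so that $g=e^{2\lambda}(dx_1^2+dx_2^2)$, the volume form is $d\vg=e^{2\lambda}\,dx_1\wedge dx_2$, the Hodge operator $\ast_g$ on $1$-forms is the (conformally invariant) rotation by $\pi/2$ in the $(x_1,x_2)$-plane, and the Laplace--Beltrami operator acting componentwise is $\Delta_g=e^{-2\lambda}\Delta$ with $\Delta=\partial_{x_1}^2+\partial_{x_2}^2$. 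The structural fact I would record at the outset is the conformal relation $\Delta\phi=2e^{2\lambda}\H$, together with $\s{\partial_{x_i}\phi}{\H}=0$, the expression $\vec{\I}_{ij}=(\partial_{x_i}\partial_{x_j}\phi)^N$ for the second fundamental form, and the formula for the differential $d\n$ of the Gauss $(n-2)$-vector in terms of $\vec{\I}$ and $d\phi$. A direct computation then gives the first term: since $\ast_g d\H=-\partial_{x_2}\H\,dx_1+\partial_{x_1}\H\,dx_2$, one finds $d(\ast_g d\H)=\Delta\H\,dx_1\wedge dx_2=\Delta_g\H\,d\vg$, so the first piece of the divergence reproduces the full ambient Laplacian $\Delta\H$.

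I would then split $\Delta\H$ into its normal and tangential parts. The normal part is not yet $e^{2\lambda}\Delta_g^N\H$: the difference between the ambient normal Laplacian $(\Delta\H)^N$ and the connection Laplacian $\Delta_g^N\H$ on the normal bundle is a zeroth-order expression built from $\vec{\I}$ contracted with $\H$, and it is precisely here that the Simons term $\mathscr{A}(\H)$ and the cubic term $-2|\H|^2\H$ must appear. For the remaining two pieces I would expand $d\bigl(-3\ast_g(d\H)^N\bigr)$ and $d\bigl(\star(\H\wedge d\n)\bigr)$ using the recorded formulas; the term $\star(\H\wedge d\n)$ is engineered so that differentiating the Gauss vector brings down a factor of $\vec{\I}$, thereby generating both tangential derivatives of $\H$ and the quadratic-in-$\vec{\I}$ contributions.

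The crux is the bookkeeping of tangential components. Since the Willmore operator $\Delta_g^N\H-2|\H|^2\H+\mathscr{A}(\H)$ is a \emph{normal} field, the full right-hand side must also be purely normal; yet the first piece alone carries the tangential remainder $(\Delta\H)^T$. The role of the correction terms $-3\ast_g(d\H)^N$ and $\star(\H\wedge d\n)$ is therefore to cancel this tangential remainder exactly while contributing the correct normal pieces. To carry this out I would use the Codazzi equations to rewrite tangential derivatives of $\H$ (equivalently of $\mathrm{tr}\,\vec{\I}$) as divergences of $\vec{\I}$, together with the contraction identities relating $\vec{\I}$, $d\n$, and $\H$. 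I expect the main obstacle to be exactly this cancellation: verifying that all tangential contributions sum to zero and that the surviving normal terms reassemble into $\Delta_g^N\H-2|\H|^2\H+\mathscr{A}(\H)$ with the correct universal constants, which is what fixes the coefficient $-3$ and the particular combination chosen. Once the pointwise identity holds in one conformal chart, I would note that every object in it---the intrinsic Willmore operator on the left, and $\ast_g$, $d$, the ambient $\star$, $\H$, $\n$ on the right---is independent of the chart, so the local identities patch to the asserted global statement.
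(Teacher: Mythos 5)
Your strategy is sound and would go through; it is essentially the derivation of the original reference \cite{riviere1} — a direct verification in a real conformal chart, using $\Delta\phi=2e^{2\lambda}\H$, splitting the flat Laplacian of $\H$ into normal and tangential parts, and invoking Codazzi to cancel the tangential remainder against the contributions of $-3\ast_g(d\H)^N$ and $\star(\H\wedge d\n)$. The present paper, however, organizes the proof quite differently: rather than matching the two sides term by term, it routes both through a single complex $(1,0)$-form built from the Weingarten tensor. Lemma \ref{complexcons} shows $\ast_g d\H-3\ast_g(d\H)^N+\star(\H\wedge d\n)=-4\,\Im\left(g^{-1}\otimes\left(\bar{\partial}^N-\bar{\partial}^\top\right)\h_0-|\h_0|^2_{WP}\,\partial\phi\right)$, using the Codazzi identity in the complex form $\partial^N\H=g^{-1}\otimes\bar{\partial}^N\h_0$ together with the tangential projections $\D^\top_{\e_z}\H=-|\H|^2\e_z-\s{\H}{\H_0}\e_{\z}$ and $\D_{\e_z}\n=-H\e_z-\H_0\e_{\z}$; the Proposition that follows computes $\Delta^N\H-2|\H|^2\H+\mathscr{A}(\H)$, via the Simons identity $\mathscr{A}(\H)=2\,\Re\left(\s{\H_0}{\H}\bar{\H}_0\right)+2|\H|^2\H$, as $-4\,g^{-1}\otimes\Im\,d$ of the same form, and \eqref{1} follows by comparison. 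What the complex route buys is central to this paper: the $(1,0)$-form appearing here is exactly the Noether current whose closedness produces the four residues \eqref{residues1234}, it is the quantity that transforms simply under inversion in theorem \ref{galois}, and it is what gets expanded at branch points in theorem \ref{devh0}; it also treats arbitrary codimension and constant-curvature targets uniformly, with the correction $(R(\e_z,\e_{\z})\e_z)^N$ appearing explicitly in the general case. Your real-variable route is more elementary and makes the geometric role of each of the three $1$-forms transparent, but it does not yield that reusable conservation law. One caveat: as written, your attempt defers the theorem's entire computational content — the tangential cancellation and the determination of the coefficient $-3$ — to a promised Codazzi bookkeeping ("I would use\dots, I expect\dots"); those displayed computations are precisely where the paper spends its effort, so a complete write-up must carry them out, e.g.\ via $\s{\D_{\e_i}\H}{\e_j}=-\s{\H}{\vec{\I}_{ij}}$ and $\D^N_{\p{z}}\H=e^{-2\lambda}\D^N_{\p{\z}}\left(e^{2\lambda}\H_0\right)$.
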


	 As the $1$-form under the exterior derivative in \eqref{1} is in $W^{-1,2}+L^1$, the right-hand side is well-defined in a distributional sense as a element of $\mathscr{D}'(\Sigma^2)$. If the left-hand side is not defined in general for $\phi\in \mathscr{E}(\Sigma^2,\R^n)$ , this comes from the fact that to write it, one has to make a projection on the normal bundle, while the normal is only in $W^{1,2}(\Sigma^2,\mathscr{G}_{n-2}(\R^n))$, where $\mathscr{G}_{n-2}(\R^n)$ denotes the oriented Grassmannian of $(n-2)$-plans in $\R^n$. Computing the Euler-Lagrange equation for arbitrary variations allows one to recover the weak formulation of the right-hand side (see \cite{indexS3}). Furthermore, as we shall see, the conservative form of the Euler-Lagrange equation of Willmore energy is a consequence of Noether's theorem (this last fact already appears in a paper by Yann Bernard (\cite{bernard})).

	Furthermore, writing the Willmore equation as the closeness of a $1$-form allows one to introduce the concept of \emph{variational} Willmore surfaces. In general, a critical point of $W$ is smooth outside a finite number of points (called branch points, where $\phi$ fails to be an immersion), but globally only in $W^{2,p}(\Sigma^2,\R^n)$ for all $p<\infty$. In particular, if the branch points are $p_1,\cdots,p_m\in \Sigma^2$, we could have
	\begin{align}\label{dirac}
		d\left(\ast_g d\H_g-3\ast_g (d\H_g)^N+\star (\H_g\wedge d\n)\right)=\sum_{i=1}^{m}\vec{\alpha}_i \delta_{p_i}
	\end{align}
	for some $\vec{\alpha}_1,\cdots,\vec{\alpha}_m\in \R^n$, or more generally derivatives of Dirac masses.
		
	\begin{defi2}
		\emph{We say that a branched Willmore immersion is \emph{variational} if it is obtained as a weak limit or as a bubble of a sequence of Willmore immersions of uniformly bounded energy.}
	\end{defi2}

    We will see thanks to Theorem \ref{A} that variational Willmore surfaces are also \emph{true} Willmore surfaces in the sense introduced by \cite{eversion}. 
    By definition, a branched Willmore immersion is true if no Dirac mass appears in \eqref{dirac}. However, there might still be derivatives of Dirac masses, related to the lack of smoothness of a branched immersion, measured by the second residue. On conditions under which one can remove this second residue, we refer to \cite{blow-up}, \cite{blow-up2}. 

    A common example of non-variational Willmore spheres are the inversions of the family of catenoids in $\R^3$, which have non-zero flux around their two embedded ends. We refer to the discussion after Theorem \ref{C} for more details on this example.
	
	The equation \eqref{dirac} permits to introduce the first residue defined in \cite{beriviere} as
	\begin{align}\label{resber}
		\widetilde{\vec{\gamma_0}}(p_i)=\frac{1}{4\pi}\int_{\gamma}\ast_g d\H_g-3\ast_g (d\H_g)^N+\star (\H_g\wedge d\n)=\frac{1}{2}\vec{\alpha}_i
	\end{align}
	for any smooth closed curved $\gamma$ around $p_i$, for $i=1,\cdots,m$.  This quantity was first defined for immersions in codimension $1$ by Kuwert and Sch\"{a}tzle in \cite{kuwert}, and in any codimension in \cite{beriviere}. We will see that $\widetilde{\vec{\gamma_0}}(p_i)$ measures on the basic first obstruction to the regularity of Willmore surfaces through the branch points. It appears in particular in the quantization of Willmore energy. Furthermore, as will appear clear in the introduction, the following theorem shows that branched immersions naturally appear and justify much of the work here, outside of the theoretical interest to determine when branched immersions from the sphere are conformally minimal in some space form geometry. 
	
	\begin{theorem*}[\cite{quanta}]
		Let $\{\phi_k\}_{k\in\N}$ be a sequence of Willmore immersions from a closed Riemann surface $\Sigma^2$ into $\R^n$. Assume that
		\begin{align*}
			\limsup_{k\rightarrow\infty}W(\phi_k)<\infty
		\end{align*}
		and that the conformal class of $\{\phi_k^\ast g_{\,\R^n}\}_{k\in\N}$ remains within a compact sub-domain of the moduli space of $\Sigma^2$. Then, modulo extraction of a subsequence, the following energy identity holds:
		\begin{align}\label{quantization}
			\lim\limits_{k\rightarrow\infty}W(\phi_k)=W(\phi_{\infty})+\sum_{i=1}^{p}W(\vec{\Psi}_i)+\sum_{j=1}^{q}\left(W(\vec{\xi}_j)-4\pi\theta_j
			\right)
		\end{align}
		where $\phi_{\infty}:\Sigma^2\rightarrow\R^n$ is a \emph{true} branched Willmore and the bubbles $\vec{\Psi}_i:S^2\rightarrow\R^n$ and $\vec{\xi}_j:S^2\rightarrow \R^n$ are compact branched Willmore spheres, while the integer $\theta_j=\theta_0(\vec{\xi}_j,p_j)\geq 1$ is the multiplicity of the branched immersion $\vec{\xi}_j$ at some point $p_j\in \xi_j(S^2)\subset \R^n$.
	\end{theorem*}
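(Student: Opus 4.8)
The plan is to run the classical concentration--compactness scheme for conformally invariant problems, using the conservative form of the Willmore equation \eqref{1} as the main analytic tool. First I would establish an $\epsilon$-regularity statement: there is a threshold $\epsilon_0>0$ such that whenever the Willmore energy of $\phi_k$ on a geodesic ball falls below $\epsilon_0$, the divergence structure of \eqref{1} controls all higher $W^{2,p}$ (hence $C^k_{loc}$) norms on a smaller ball. Since $\limsup_k W(\phi_k)<\infty$, the set of points where energy concentrates at level $\geq\epsilon_0$ is finite, say $\{a_1,\dots,a_N\}\subset\Sigma^2$; the compactness of the conformal class lets me fix conformal charts (via uniformisation) in which the pulled-back metrics neither degenerate nor blow up. Away from the $a_\ell$, after renormalising by the Möbius group (which preserves $\W$) to prevent collapse, $\phi_k$ converges in $C^\infty_{loc}(\Sigma^2\setminus\{a_1,\dots,a_N\})$ to a map $\phi_\infty$ which solves the conservative equation and extends across the $a_\ell$ as a branched Willmore immersion. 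Checking that $\phi_\infty$ is a \emph{true} branched Willmore surface — that no Dirac mass survives in \eqref{dirac} — amounts to passing the conservation law to the limit and verifying that all the residues concentrate on the bubbles rather than on the macroscopic surface.

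Next I would extract the bubbles. At each concentration point I rescale $\phi_k$ around $a_\ell$ at the scale dictated by the energy concentration; by the scaling invariance of the conservative system together with the $\epsilon$-regularity, the rescaled maps converge to a nonconstant branched Willmore sphere. Iterating this blow-up on the regions where residual energy persists produces a finite bubble tree whose leaves are the spheres $\vec{\Psi}_i$ and $\vec{\xi}_j$; the distinction is that a $\vec{\xi}_j$ arises when the blow-up profile is the inverse stereographic image of a minimal end of integer multiplicity $\theta_j\geq 1$, which is exactly the object that will carry the $4\pi\theta_j$ correction.

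The crux, and the hard part, is the \textbf{no-neck energy} estimate: one must show that the Willmore energy stored in the degenerating annular necks $A_k=B_{R_k}\setminus B_{r_k}$ joining the macroscopic surface to the bubbles (and the bubbles to one another) converges, with no logarithmic loss, to the quantities prescribed by the residues. Here the plan is to expand $\H$ and $\n$ on each dyadic sub-annulus in Fourier modes and to use the conservation laws behind \eqref{1} to derive a Pohozaev-type identity relating the radial flux $\int_{\partial B_\rho}(\ast_g d\H-3\ast_g(d\H)^N+\star(\H\wedge d\n))$ at different radii to the first residue $\widetilde{\vec{\gamma_0}}$ of \eqref{resber}. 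The decisive gain comes from the Jacobian/Wente structure of the system, which places the oscillatory (non-residual) part of the relevant quantities in the Lorentz space $\lp{2,1}{A_k}$ rather than merely $\lp{2}{A_k}$; the duality with $\mathrm{L}^{2,\infty}$ then forces the neck energy of that part to decay geometrically across the dyadic scales, hence to vanish in the limit. What remains is precisely the residual contribution, and matching it against the branch-point asymptotics through Gauss--Bonnet for the bubble of multiplicity $\theta_j$ yields the quantum $4\pi\theta_j$ and the identity \eqref{quantization}.

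I expect the no-neck lemma to be the principal obstacle, because the Willmore energy is a borderline-critical $\lp{2}{}$ quantity for which no pointwise decay is available a priori; only the conservative structure combined with the Lorentz-space improvement rules out a fixed amount of energy hiding in infinitely long necks. Controlling the interaction between neighbouring scales so that the dyadic contributions telescope with genuine geometric decay, rather than summing to a logarithm, is where the heaviest analysis lies.
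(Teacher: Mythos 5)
First, note that the paper you are working against contains no proof of this statement at all: it is quoted verbatim from \cite{quanta} (Bernard--Rivi\`ere), so the only meaningful comparison is with the proof in that reference. Measured against it, your outline reproduces the actual architecture faithfully: $\epsilon$-regularity extracted from the conservative form \eqref{1}, finitely many concentration points under the compact-moduli-space hypothesis, bubble-tree extraction by rescaling, and above all the no-neck energy estimate run through the conservation laws, the residue \eqref{resber} controlling the radial flux on dyadic annuli, and the $L^{2,1}$--$L^{2,\infty}$ Lorentz duality that rules out energy hiding in degenerating necks. You are also right that the no-neck lemma is where the weight of the analysis sits; your sketch of it is schematic, but it is the correct mechanism.

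Two points need correcting, however. (i) Your characterisation of the $\vec{\xi}_j$ as ``inverse stereographic images of minimal ends'' is not part of the quoted theorem and is not available at that stage. In \cite{quanta} the $\vec{\xi}_j$ are arbitrary branched Willmore spheres arising when the blow-up limit is attained only after composing with an inversion; $\theta_j$ is the multiplicity with which $\vec{\xi}_j$ passes through the centre of inversion, and the deficit $4\pi\theta_j$ is the Li--Yau discrepancy between the energy $\int|\vec{H}|^2$ of the non-compact profile and $W(\vec{\xi}_j)$ of its compactification --- not a Gauss--Bonnet matching against a minimal end. The identification of these bubbles with inversions of minimal surfaces is precisely the \emph{added content} of the present paper (Theorems \ref{G} and \ref{J} feeding into Theorem \ref{B}), obtained from vanishing residues plus the classification; importing it into the proof of the quantization theorem is circular. (ii) The trueness of $\phi_{\infty}$ does not come from residues ``concentrating on the bubbles rather than on the macroscopic surface.'' Each $\phi_k$ is a smooth, unbranched Willmore immersion, so the $1$-form under the exterior derivative in \eqref{1} is closed for $\phi_k$ and its flux around any closed curve vanishes identically; by the $C^l_{\mathrm{loc}}$ convergence away from the concentration points this passes directly to the limit, forcing the first residue of $\phi_{\infty}$ to vanish at every concentration point. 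No bookkeeping of where residues ``go'' is needed, and indeed the bubbles themselves are shown to have vanishing first residue by the same flux-convergence argument combined with the correspondence of Theorem \ref{A}.
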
 

    Recall that for all $p\in \R^n$, the multiplicity of a branched immersion is defined by
    \begin{align*}
    	\theta_0(\phi,p)=\lim_{r\rightarrow 0}\frac{\mathrm{Area}(\phi(\Sigma)\cap B_r(p))}{\pi r^2}\in \N.
    \end{align*}
    
    We finally introduce the definition of branch points of Willmore immersions.
    
    \begin{propdef2}[\cite{riviere1}, \cite{beriviere}]
    	Let $\phi\in W^{2,2}\cap W^{1,\infty}(D^2)\cap C^{\infty}(D^2\setminus\ens{0})$ be a conformal Willmore immersion of finite total curvature on $D^2$. Then there exists an integer $\theta_0\geq 1$ and $\vec{A}_0\in \C^n\setminus\ens{0}$ such that 
    	\begin{align}\label{notalgebraic}
    	\left\{\begin{alignedat}{1}
    	&\phi(z)=\Re\left(\vec{A}_0z^{\theta_0}\right)+O\left(|z|^{\theta_0+1}\log|z|\right)\\
    	&\p{z}\phi(z)=\frac{\theta_0}{2}\vec{A}_0z^{\theta_0-1}+O\left(|z|^{\theta_0}\log|z|\right),
    	\end{alignedat}\right.
    	\end{align}
    	and we say that $\phi$ has a branch point of order $\theta_0\geq 1$ at $z=0$.
    	Furthermore, provided the mean curvature $\H$ be not identically zero, there exists an integer $m\leq \theta_0-1$ and $\vec{C}_0\in \C^{n}\setminus\ens{0}$ such that for $\theta_0\geq 2$
    	\begin{align}\label{theta2}
    	\H=\Re\left(\frac{\vec{C}_0}{z^{m}}\right)+O\left(|z|^{1-m}\log|z|\right),
    	\end{align}
    	while for $\theta_0=1$, there exists $\vec{\gamma}_0\in\R^n$ such that 
    	\begin{align}\label{theta1}
    	\H=\vec{\gamma}_0\log|z|+O(|z|\log|z|).
    	\end{align}
    	We call $r=\max\ens{m,0}\in \ens{0,\cdots,\theta_0-1}$ the second residue of $\phi$ at the branch point $z=0$. More generally, if $\Sigma$ is a closed Riemann surface, $p_1,\cdots,p_d\in \Sigma$ are fixed distinct points and $\phi:\Sigma\setminus\ens{p_1,\cdots,p_d}\rightarrow \R^n$ is a conformal Willmore immersion of finite total curvature, then we define for all $1\leq j\leq d$ the integers $\theta_0(p_j)\in \N$ to be the order of branch point and $0\leq r(p_j)\leq \theta_0(p_j)-1$ to be the associated residue at $z=0$ of the composition $\phi\circ \psi:D^2\rightarrow \R^n$, for any complex chart $\psi:D^2\rightarrow \Sigma$ such that $\psi(0)=p_i$. This definition does not depend on the chart.
    \end{propdef2}

    We fix some terminology. Let $\phi:\Sigma^2\rightarrow\R^n$ be a smooth immersion, and $\D$ the pull-back connection of the flat connection on $\R^n$ by $\phi$. We let 
    \begin{align*}
    \phi^\ast_{\C}T\R^n=\phi^{\ast}T\R^n\otimes_{\R}\C
    \end{align*}
    be the complexified pull-back bundle of the tangent bundle of $\R^n$ by $\phi$. Then we have the decomposition of the Levi-Civita into tangent and normal parts $\D=\D^\top+\D^N$.  Furthermore, if we define two differential operators $\partial$ and $\bar{\partial}$ of order $1$,  
    \begin{align*}
    	\partial =\D_{\p{z}}\left(\,\cdot\,\right)\otimes dz,\quad \bar{\partial}=\D_{\p{\z}}\left(\,\cdot\,\right)\otimes d\z,
    \end{align*}
    then we also have a decomposition
    \begin{align}\label{defnormal}
    	 \partial=\partial^\top+\partial^N,\quad \bar{\partial}=\bar{\partial}^\top+\bar{\partial}^N.
    \end{align}    
    To be able to rule out the existence of the first residue in a limit of of Willmore immersions, it suffices to understand how bubbles form. Up to diffeomorphisms and rescaling, they are obtained by taking conformal transformations of $\R^n$. The first residue is invariant by translations, rotations, but not by inversions (as for example, it vanishes for minimal surfaces). We are nevertheless able to define thanks to Noether's theorem three others residues, which are transformed one into each other under a simple rule.    
    The invariance by rotations, dilatations, and composition of translations with inversions give the four residues
    \begin{align}\label{res00}
    \left\{\begin{alignedat}{1}
        	\vec{\gamma}_0(\phi,p)&=\frac{1}{4\pi}\,\Im\int_{\gamma}g^{-1}\otimes \left(\bar{\partial}^N-\bar{\partial}^\top\right)\h_0-|\h_0|_{WP}^2\,\partial\phi\\
            \vec{\gamma}_1(\phi,p)&=\frac{1}{4\pi}\,\Im\int_{\gamma}\phi\wedge \left(g^{-1}\otimes\left(\bar{\partial}^N-\bar{\partial}^\top\right)\h_0-|\h_0|^{2}_{WP}\,\partial\phi\right)+g^{-1}\otimes \h_0\wedge \bar{\partial}\phi\\
            \vec{\gamma}_2(\phi,p)&=\frac{1}{4\pi}\,\Im\int_{\gamma}\phi\cdot\left(g^{-1}\otimes\left(\bar{\partial}^N-\bar{\partial}^\top\right)\h_0-|\h_0|^{2}_{WP}\,\partial\phi\right)\\
            \vec{\gamma}_3(\phi,p)&=\frac{1}{4\pi}\,\Im\int_{\gamma}\mathscr{I}_{\phi}\left(g^{-1}\otimes\left(\bar{\partial}^N-\bar{\partial}^\top\right)\h_0-|\h_0|_{WP}^2\,\partial\phi\right)-g^{-1}\otimes\left(\bar{\partial}|\phi|^2\otimes \h_0-2\,\s{\phi}{\h_0}\otimes\bar{\partial}\phi\right)
    \end{alignedat}\right.
    \end{align}
    where for all vector $\vec{X}\in\C^n$, 
    \begin{align*}
    	\mathscr{I}_{\phi}(\vec{X})=|\phi|^2\vec{X}-2\s{\phi}{\vec{X}}\phi.
    \end{align*}
    \begin{remintro}
    	If one prefers an expression without normal derivatives, something which will actually prove crucial in the proof of the main Theorem \ref{devh0}, let us mention that by Codazzi identity, we have
    	\begin{align*}
    		g^{-1}\otimes \left(\bar{\partial}^N-\bar{\partial}^\top\right)\h_0-|\h_0|_{WP}^2\,\partial\phi=\partial\H+|\H|^2\partial\phi+2\,g^{-1}\otimes\s{\H}{\h_0}\otimes\bar{\partial}\phi
    	\end{align*}
    \end{remintro}
    \begin{remintro}
    	In codimension $1$, we have the alternative formulae corresponding  to the four residues
    	\begin{align}\label{constantesnulles}
    		\left\{\begin{alignedat}{1}
    		{\vec{\gamma}_0}(\phi,p)&=-\frac{1}{\pi}\int_{\gamma}\dive\left(\D H\,\n-H\D\n-H^2\D\phi\right),\\
    		{\vec{\gamma}_1}(\phi,p)&=-\frac{1}{\pi}\int_{\gamma}\dive\left(\D H\left(\phi\wedge \n\right)-H\,\D\left(\phi\wedge\n\right)-H^2\,\left(\phi\wedge \D\phi\right)+2H\,\D^{\perp
    		}\phi\right),\\
    		{\vec{\gamma}_2}(\phi,p)&=-\frac{1}{\pi}\int_{\gamma}\dive\left(\D H\left(\phi\cdot \n\right)-H\,\D\left(\phi\cdot \n\right)-\frac{1}{2}H^2\D|\phi|^2\right),\\
    		{\vec{\gamma}_3}(\phi,p)&=-\frac{1}{\pi}\int_{\gamma}\dive\bigg(2\D\phi+2\phi\left(\D H\left(\phi\cdot\n\right)-H\,\D\left(\phi\cdot\n\right)\right)-|\phi|^2\left(\D H\,\n-H\,\D\n\right)\\
    		&+H^2\left(|\phi|^2\D\phi-\D|\phi|^2\phi\right)\bigg).
    		\end{alignedat}\right.
    	\end{align}
    	In particular, comparing \eqref{resber} and \eqref{res00}, we have
    	\begin{align*}
    		\widetilde{\vec{\gamma}_0}(\phi,p)=-4\,\vec{\gamma}_0(\phi,p).
    	\end{align*}
    \end{remintro}
    One can recognize in these formulae the infinitesimal generators of the afore cited symmetries. We have the following correspondence.
    \begin{theorem}\label{A}
    	Let $\phi:\Sigma^2\rightarrow\R^n$ be a branched Willmore surface and let $\iota:\R^n\setminus\ens{0}\rightarrow\R^n\setminus\ens{0}$ be the inversion centred at zero. If $\vec{\Psi}=\iota\circ \phi:\Sigma^2\setminus\phi^{-1}(\ens{0})\rightarrow\R^n$ is the inverted Willmore surface, for all $p\in \Sigma^2$, we have
    	\begin{align}
    	\left\{\begin{alignedat}{1}
    	\vec{\gamma}_0(\phi,p)&=\vec{\gamma}_3(\vec{\Psi},p)\\
    	\vec{\gamma}_1(\phi,p)&=\vec{\gamma}_1(\vec{\Psi},p)\\
    	\vec{\gamma}_2(\phi,p)&=-\vec{\gamma}_2(\vec{\Psi},p)\\
    	\vec{\gamma}_3(\phi,p)&=\vec{\gamma}_0(\vec{\Psi},p).
    	\end{alignedat}\right.
    	\end{align}
    	where the residues $\vec{\gamma}_0,\vec{\gamma}_1,\vec{\gamma}_2,\vec{\gamma}_3$ are given by \eqref{res00}. Furthermore, if $p_1,\cdots,p_m\in \Sigma^2$ are fixed points and $\vec{\Psi}:\Sigma^2\setminus\ens{p_1,\cdots,p_m}\rightarrow\R^n$ is a complete minimal surface with finite total curvature, then for all $j=1,\cdots,m$
    	\begin{align*}
    		&\vec{\gamma}_0(\vec{\Psi},p_j)=\vec{\gamma}_1(\vec{\Psi},p_j)=\vec{\gamma}_2(\vec{\Psi},p_j)=0,
    	\end{align*}
    	and the fourth residue corresponds to the \emph{flux}, that is
    	\begin{align*}
    		\vec{\gamma}_3(\vec{\Psi},p_j)=-\frac{1}{4\pi}\Im\int_{\gamma}g^{-1}\left(\bar{\partial}|\vec{\Psi}|^2\otimes\h_0-2\s{\vec{\Psi}}{\h_0}\otimes\bar{\partial}\vec{\Psi}\right)=\frac{1}{4\pi}\Im\int_{\gamma}\partial\vec{\Psi},\quad \text{for}\;\, j=1,\cdots,m.
    	\end{align*}
    	In particular, if $\phi:\Sigma^2\rightarrow\R^n$ is the inversion of a complete minimal surface $\vec{\Psi}:\Sigma^2\setminus\ens{p_1,\cdots,p_m}\rightarrow\R^n$ with finite total curvature, for all $j=1,\cdots,m$, we have
    	\begin{align}
    	\left\{\begin{alignedat}{1}
    	    		&\vec{\gamma}_1(\phi,p_j)=\vec{\gamma}_2(\phi,p_j)=\vec{\gamma}_3(\phi,p_j)=0\nonumber\\
                 	&\vec{\gamma}_0(\phi,p_j)=\frac{1}{4\pi}\Im\int_{\gamma}g^{-1}\otimes\left(\bar{\partial}^N-\bar{\partial}^\top\right)\h_0-|\h_0|_{WP}^2\partial\phi=\frac{1}{4\pi}\Im\int_{\gamma}\partial\vec{\Psi}.
    	\end{alignedat}\right.
    	\end{align}
    \end{theorem}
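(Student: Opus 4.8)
The plan is to prove the three assertions in order, obtaining the residues of a complete minimal surface and those of its inversion as consequences of the transformation rule of the first part.

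\textbf{Step 1 (the four transformation identities).} The densities appearing in \eqref{res00} are, respectively, the conserved Noether currents attached to translations ($\vec\gamma_0$), rotations ($\vec\gamma_1$), dilations ($\vec\gamma_2$) and special conformal transformations ($\vec\gamma_3$). Conjugation by the inversion $\iota$ centred at $0$ acts on the Lie algebra of the conformal group of $\R^n\cup\ens{\infty}$ by interchanging the translation and special-conformal generators, fixing the rotation generators, and negating the dilation generator; this is exactly the combinatorial content of the claimed identities. To turn this into a proof I would first record the elementary inversion formulas
\[
\vec\Psi=\frac{\phi}{|\phi|^2},\qquad d\iota_{\phi}=\frac{1}{|\phi|^2}R_{\phi},\qquad \mathscr{I}_{\phi}=|\phi|^2R_{\phi}=|\phi|^4\,d\iota_{\phi},
\]
where $R_{\phi}(\vec X)=\vec X-2|\phi|^{-2}\s{\phi}{\vec X}\phi\in O(n)$ is the reflection across $\phi^{\perp}$, together with the conformal rescaling $g_{\vec\Psi}=|\phi|^{-4}g_{\phi}$ (and the resulting invariance of $\ast_g$ on $1$-forms) and the transformation law of the Weingarten tensor $\h_0$ under $\iota$, which is fixed up to the rotation $R_{\phi}$ by the pointwise invariance of $|\h_0|_{WP}^2\,d\vg$ recalled in the introduction (cf. \eqref{inversionWeingarten}). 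Substituting these into the four integrands and using that $\mathscr{I}_{\phi}$ is, up to the scalar $|\phi|^2$, precisely the orthogonal map realizing $d\iota$, the translation density $\vec J:=g^{-1}\otimes(\bar{\partial}^N-\bar{\partial}^\top)\h_0-|\h_0|_{WP}^2\,\partial\phi$ of $\phi$ is carried onto the $\mathscr{I}_{\phi}$-weighted density of $\vec\Psi$ and conversely, while the weights $\phi\wedge(\,\cdot\,)$ and $\phi\cdot(\,\cdot\,)$ reproduce the stated sign $+$ for $\vec\gamma_1$ and $-$ for $\vec\gamma_2$. This step is long but mechanical once the law for $\h_0$ is in hand.

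\textbf{Step 2 (residues of a minimal surface).} Evaluating \eqref{res00} on the minimal immersion (that is, substituting $\vec\Psi$ for $\phi$), we now have $\vec H\equiv 0$, and by the Codazzi reformulation recorded in the remark following \eqref{res00} one has $\vec J=\partial\H+|\H|^2\partial\vec\Psi+2\,g^{-1}\otimes\s{\H}{\h_0}\otimes\bar{\partial}\vec\Psi=0$. Hence immediately
\[
\vec\gamma_0(\vec\Psi,p_j)=\frac{1}{4\pi}\Im\int_{\gamma}\vec J=0,\qquad
\vec\gamma_2(\vec\Psi,p_j)=\frac{1}{4\pi}\Im\int_{\gamma}\vec\Psi\cdot\vec J=0 .
\]
For $\vec\gamma_1$ only the term $g^{-1}\otimes\h_0\wedge\bar{\partial}\vec\Psi$ survives, and for $\vec\gamma_3$ only $-g^{-1}\otimes(\bar{\partial}|\vec\Psi|^2\otimes\h_0-2\,\s{\vec\Psi}{\h_0}\otimes\bar{\partial}\vec\Psi)$ survives. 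To evaluate these periods I would use that in a conformal chart $\p{z}\p{\z}\vec\Psi=0$, so $\partial\vec\Psi$ is a holomorphic $\C^n$-valued $1$-form and the Hopf differential $\vec{\I}_{zz}\,dz^2$ (hence $\h_0$) is holomorphic in the normal bundle. The surviving $\vec\gamma_1$-density is then seen, using this holomorphicity, to have vanishing period around each end $p_j$; and expanding $\bar{\partial}|\vec\Psi|^2=2\,\s{\vec\Psi}{\bar{\partial}\vec\Psi}$ and integrating by parts with the help of holomorphicity, the surviving $\vec\gamma_3$-density reduces to $\partial\vec\Psi$ modulo an exact form, giving $\vec\gamma_3(\vec\Psi,p_j)=\tfrac{1}{4\pi}\Im\int_{\gamma}\partial\vec\Psi$, the flux.

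\textbf{Step 3 (the inverted minimal surface) and main obstacle.} The last block is now immediate: writing $\phi=\iota\circ\vec\Psi$ and applying Step 1 with the roles of the two surfaces exchanged (legitimate since $\iota$ is an involution), the four identities give $\vec\gamma_3(\phi,p_j)=\vec\gamma_0(\vec\Psi,p_j)=0$, $\vec\gamma_1(\phi,p_j)=\vec\gamma_1(\vec\Psi,p_j)=0$, $\vec\gamma_2(\phi,p_j)=-\vec\gamma_2(\vec\Psi,p_j)=0$, and $\vec\gamma_0(\phi,p_j)=\vec\gamma_3(\vec\Psi,p_j)=\tfrac{1}{4\pi}\Im\int_{\gamma}\partial\vec\Psi$, as claimed. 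I expect the principal difficulties to be twofold: first, establishing and carefully book-keeping the transformation of $\h_0$ and of the split operators $\bar{\partial}^N,\bar{\partial}^\top$ under $\iota$, since the inversion rotates the normal bundle itself by $R_{\phi}$ and mixes the normal and tangential projections; and second, the identification of the surviving $\vec\gamma_3$-density with $\partial\vec\Psi$, which genuinely uses the holomorphicity of $\partial\vec\Psi$ and of the Hopf differential and the extraction of the exact part, rather than merely the vanishing $\vec H=0$.
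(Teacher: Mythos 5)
Your overall architecture — the four exchange identities from pointwise conformal invariance, the minimal-surface residues from $\H=0$ plus end asymptotics, and the transfer through the involution $\iota$ — is exactly the paper's (theorem \ref{galois}, proposition \ref{fluxresidue}, corollary \ref{inversionmin}). In Step 1, however, the paper never transforms $\h_0$, $\bar{\partial}^N$ and $\bar{\partial}^\top$ separately, which is precisely the bookkeeping you flag as the difficulty: instead it differentiates the pointwise-invariant Lagrangian $L_0=\star\big(|\h_0|^2_{WP}\,d\vg\big)$ in the jet variables $(\zeta,\omega,\chi)=(\e_z,\D_{\p{z}}\e_z,\D_{\p{z}}\e_{\z})$, chain-rules through the inversion formulas for $(\vec{f}_z,\D_{\p{z}}\vec{f}_z,\D_{\p{\z}}\vec{f}_z)$, and arrives at the single pointwise current identity \eqref{inversionformula}; the awkward combination $(\bar{\partial}^N-\bar{\partial}^\top)\h_0$ then appears automatically from $\bar{\frac{\partial L_0}{\partial\zeta}}-\D_{\p{\z}}\bar{\left(\frac{\partial L_0}{\partial\omega}\right)}$, and the four exchange laws follow by inserting $\vec{X}=\vec{C}$, $\vec{C}\wedge\phi$, $\phi$, $|\phi|^2\vec{C}-2\s{\phi}{\vec{C}}\phi$ and using that $\iota$ is an involution. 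Your direct-substitution plan would reproduce this at a higher cost; your Step 3 is verbatim the paper's.

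The genuine gap is in Step 2, at the flux identification. The claim that "integrating by parts with the help of holomorphicity, the surviving $\vec{\gamma}_3$-density reduces to $\partial\vec{\Psi}$ modulo an exact form" is the whole content of the hard half of the statement, and you give no argument for it; it is also not how the paper proceeds, and it is doubtful any such mod-exact pointwise reduction exists, because the flux is the coefficient of the \emph{logarithmic} term in the Weierstrass data and an exact-form bookkeeping does not see it. The paper extracts it by an explicit Laurent expansion of $|\vec{\Psi}|^2$, $e^{2\lambda}$ and $\h_0$ at an \emph{embedded} end (equations \eqref{refgamma3}--\eqref{unautre} in proposition \ref{fluxresidue}), where delicate cancellations leave precisely the term $-\beta\,dz/z$ in the integrand. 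For ends of higher multiplicity, which your sketch ignores, $\h_0$ has a pole whose order grows with the multiplicity, the surviving density is no longer $O(|z|^{-1})$, and the naive estimates degenerate; the paper handles these only \emph{indirectly}, in corollary \ref{inversionmin}: it expands the inverted surface, identifies the coefficient of $|z|^{2\theta_0}\log|z|$ with the first residue of $\iota\circ\vec{\Psi}$ via \cite{beriviere}, and then invokes the already-proved exchange $\vec{\gamma}_0\leftrightarrow\vec{\gamma}_3$, pinning the universal constant by the embedded-end computation. Without either the explicit expansion or this detour through the correspondence, your Step 2 does not close.

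A smaller inaccuracy: for $\vec{\gamma}_1$, holomorphicity is not the operative mechanism. The tensor $\h_0$ is holomorphic only as a section of the normal bundle, while $\bar{\partial}\h_0=-|\h_0|^2_{WP}\,g\otimes\partial\vec{\Psi}\neq 0$ as a $\C^n$-valued form. What kills the period is the observation in the remark following theorem \ref{galois}: the leading term of $\p{z}^2\vec{\Psi}$ at an end is tangential, so the closed density $g^{-1}\otimes\h_0\wedge\bar{\partial}\vec{\Psi}$ is $O(1)$ near the end, and a bounded closed $1$-form has vanishing period around a shrinking loop. That boundedness argument is what you should substitute for "vanishing period using holomorphicity".
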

    From this, the $\epsilon$-regularity of the bubble tree convergence in \eqref{quantization} allows one to pass to the limit in the first residue of inverted surfaces, thanks to the strong convergence in $C^l_{\mathrm{loc}}$ for all 
    $l\in\N$ outside of a number finite set of points. As we can further improve the quantization theorem thanks to the classification of branched Willmore spheres, anticipating on the next subsections, we have the following.
    
    \begin{theorem}\label{B}
    	Let $\{\phi_k\}_{k\in\N}$ be a sequence of Willmore immersions of a closed surface $\Sigma^2$ to $\R^n$. Assume that
    	\begin{align*}
    	\limsup_{k\rightarrow\infty}W(\phi_k)<\infty
    	\end{align*}
    	and that the conformal class of $\{\phi_k^\ast g_{\R^n}\}_{k\in\N}$ remains within a compact sub-domain of the moduli space of $\Sigma^2$. Then, modulo extraction of a subsequence, the following energy identity holds
    	\begin{align}\label{quantizationidentity}
    	\lim\limits_{k\rightarrow\infty}W(\phi_k)=W(\phi_{\infty})+\sum_{i=1}^{p}W(\vec{\Psi}_i)+\sum_{j=1}^{q}\left(W(\vec{\xi}_j)-4\pi\theta_j\right)
    	\end{align}
    	where $\phi_\infty:\Sigma^2\rightarrow\R^n$ is a \emph{true} Willmore immersion, and $\vec{\Psi}_i:S^2\rightarrow\R^n$ and $\vec{\xi}_j:S^2\rightarrow\R^n$ are compact \emph{true} Willmore spheres, and the integer $\theta_j=\theta_0(\vec{\xi}_j,p_j)\geq 1$ is the multiplicity of the branched immersion $\vec{\xi}_j$ at some point $p_j\in \vec{\xi}_j(S^2)\subset \R^n$
    \end{theorem}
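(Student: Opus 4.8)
The plan is to deduce the statement from the quantization theorem of \cite{quanta}, whose conclusion is exactly the energy identity \eqref{quantizationidentity} but with $\phi_\infty$ only a \emph{true branched} Willmore surface and the bubbles $\vec{\Psi}_i,\vec{\xi}_j$ merely \emph{branched} Willmore spheres. Thus two improvements remain: first, to upgrade every bubble to a \emph{true} Willmore sphere, and second, to show that the third sum vanishes when $n=3$ or $n=4$. The macroscopic limit $\phi_\infty$ is already true by the cited theorem, so the new analytic content concerns the bubbles and the dimensional restriction.

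For the first improvement I would argue that each bubble inherits a vanishing first residue $\vec{\gamma}_0$ from the approximating immersions. Since $\phi_k$ is a smooth immersion it has no branch points, so all four residues \eqref{res00} vanish along any closed contour. The bubbles arise as limits of conformal rescalings of the $\phi_k$; because $\vec{\gamma}_0$ is invariant under the translations and dilations used to desingularise a concentration point, and because the bubble-tree convergence is strong in $C^l_{\mathrm{loc}}$ away from a finite set (by the $\epsilon$-regularity underlying \eqref{quantization}), the contour integral defining $\vec{\gamma}_0$ of a bubble is the limit of the vanishing contour integrals of the rescaled $\phi_k$, and hence vanishes. For the bubbles produced through an inversion in the rescaling I would instead transport the residue by theorem \ref{A}, which identifies $\vec{\gamma}_0$ of the inverted surface with $\vec{\gamma}_3$ of the original and again reduces the computation to a limit of vanishing quantities. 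As a branched Willmore surface is true precisely when its first residue vanishes at every branch point, by the identity $\widetilde{\vec{\gamma}_0}=\tfrac12\vec{\alpha}$ linking \eqref{dirac}, \eqref{resber} and \eqref{res00}, this shows each $\vec{\Psi}_i$ and $\vec{\xi}_j$ is true.

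For the second improvement I would invoke the classification of true branched Willmore spheres in $S^3$ and $S^4$ proved later in the paper (theorems \ref{C}, \ref{D} and \ref{G}). By the first step each neck bubble $\vec{\xi}_j$ is a true Willmore sphere, so the classification forces it to be the inverse stereographic projection of a complete minimal surface of finite total curvature with \emph{vanishing flux}; by theorem \ref{A} this vanishing flux is exactly the vanishing of $\vec{\gamma}_0$ already obtained above. It then remains to identify the integer $\theta_j$ in \eqref{quantizationidentity} with the order of the branch point created under inversion, so that the underlying minimal surface is a $\theta_j$-fold plane and $\vec{\xi}_j$ is a $\theta_j$-fold round sphere with $W(\vec{\xi}_j)=4\pi\theta_j$. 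Each term $W(\vec{\xi}_j)-4\pi\theta_j$ then vanishes and the third sum disappears.

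The hard part, I expect, is the first step: controlling the first residue through the degenerating neck regions, where the convergence is weakest, so that one may pass to the limit without losing or creating residue in the necks, and in particular making rigorous the interplay between the rescalings (invariance of $\vec{\gamma}_0$) and the inversions (theorem \ref{A}). Once the bubbles are known to be true, the second step is essentially an application of the classification, the only delicate point there being the identification of $\theta_j$ with the branching order so that the energy of the neck bubble is exactly $4\pi\theta_j$ in dimensions $3$ and $4$.
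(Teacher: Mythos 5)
Your first step is exactly the paper's argument: starting from the quantization theorem of \cite{quanta}, one writes the bubbles as limits of compositions of the $\phi_k$ with diffeomorphisms and conformal transformations of $\R^n$, and the strong $C^l_{\mathrm{loc}}$ convergence away from finitely many points lets one pass to the limit in the contour integrals defining the first and fourth residues of \eqref{res00}; the translation/dilation invariance of $\vec{\gamma}_0$ handles the rescaled bubbles, and theorem \ref{A} transports the residue through the inversions, so every $\vec{\Psi}_i$ and $\vec{\xi}_j$ has vanishing first residue and is therefore \emph{true}. Your worry about the necks is not an issue precisely because the residues are integrals of \emph{closed} forms over a single contour, which can be fixed inside the region of smooth convergence.

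Your second step, however, contains a genuine error. After the classification (theorem \ref{G} for $n=3$, theorem \ref{J} for $n=4$, applicable because the bubbles are never standard spheres, hence not completely umbilic) you know each $\vec{\xi}_j$ is the inverse stereographic projection of a complete branched minimal sphere with finite total curvature and vanishing flux, but it is false that this forces the dual minimal surface to be a $\theta_j$-fold plane and $\vec{\xi}_j$ a $\theta_j$-fold round sphere: the inversion of Enneper's surface is a true Willmore sphere with $W=12\pi$ and maximal multiplicity $\theta=3$, yet its dual is not a plane and it is not round (and a multiply covered round sphere, being completely umbilic, could not arise here at all). The identity $W(\vec{\xi}_j)=4\pi\theta_j$ does not require any such rigidity; it holds for \emph{every} inversion of a complete minimal surface with finite total curvature. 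This is the missing ingredient the paper uses: by Kusner's theorem (\cite{kusnerpacific}) one has equality in the Li--Yau inequality (\cite{lieyau}) exactly for inversions of minimal surfaces, i.e.
\begin{align*}
W(\vec{\xi}_j)=4\pi\,\sum_{i=1}^{r}m_i=4\pi\,\theta_j,
\end{align*}
where the $m_i$ are the multiplicities of the ends and their sum is the multiplicity of $\vec{\xi}_j$ at the centre of inversion, which by Li--Yau must coincide with the highest multiplicity $\theta_j$ appearing in \eqref{quantizationidentity}. (Equivalently one can compute $W$ via the Jorge--Meeks formula \cite{jorge}.) With this substitution your argument closes; as written, the step "the minimal surface is a $\theta_j$-fold plane" would fail on the very first nontrivial example.
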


\begin{remintro}
	More generally, the Willmore spheres arising in more general formulations of the quantizations, such as \cite{quantamoduli}, are also \emph{true} Willmore immersions.
\end{remintro}

    In particular, we deduce the following theorem, interesting in itself.
    
    \begin{theorembis}{B}\label{B'}
    	Let $\{\phi_k\}_{k\in\N}$ be a sequence of smooth Willmore immersions and $\phi_{\infty}:\Sigma^2\rightarrow\R^n$ (where $n=3$ or $n=4$) be a branched Willmore surface such that $\{\phi_k\}_{k\in\N}$ converges weakly in $W^{2,2}$ to $\phi_{\infty}$ as $k\rightarrow\infty$. Then there exists an \emph{integer} $m\in \N$ such that 
    	\begin{align}\label{decrease}
    		W(\phi_{\infty})=\lim\limits_{k\rightarrow\infty}W(\phi_k)-4\pi m.
    	\end{align}
    	Furthermore, we have $m=0$ in \eqref{decrease} if and only if 
    	\begin{align*}
    		\phi_k\conv{k\rightarrow\infty}\phi_{\infty}\quad \text{in}\;\, C^l(\Sigma^2),\quad \text{for all}\;\, l\in \N.
    	\end{align*}
    \end{theorembis}

    \begin{remintro}
    	We may have $m=1$ in $\R^3$. For example, if the limiting branched immersion has a unique branched point of order $\theta_0=3$ (and no other branched), one may glue the non-compact end of multiplicity $3$ of the L\'{o}pez minimal surface and a sphere to its planar end (of multiplicity $1$). Denote by $\vec{\xi}:S^2\setminus\ens{0,\infty}\rightarrow \R^3$ the L\'{o}pez surface, $\phi_{\infty}$ the limiting immersion and $\vec{\Psi}:S^2\rightarrow \R^3$ an immersion of a round sphere. Then we have by the Gauss-Bonnet theorem
    	\begin{align*}
    		&\int_{\Sigma}K_{g_{\phi_{\infty}}}d\mathrm{vol}_{g_{\phi_{\infty}}}=2\pi\chi(\Sigma)+2\pi(3-1)=2\pi\chi(\Sigma)+4\pi\\
    		&\int_{S^2}K_{g_{\vec{\chi}}}d\mathrm{vol}_{g_{\vec{\chi}}}=-8\pi\\
    		&\int_{S^2}K_{g_{\vec{\Psi}}}d\mathrm{vol}_{g_{\vec{\Psi}}}=4\pi,
    	\end{align*}
    	so this possible bubbling is consistent with the quantization of the Gauss curvature, \textit{i.e.}
    	\begin{align*}
    		2\pi\chi(\Sigma)=\int_{\Sigma}K_{g_{\phi_{\infty}}}d\mathrm{vol}_{g_{\phi_{\infty}}}+\int_{S^2}K_{g_{\vec{\chi}}}d\mathrm{vol}_{g_{\vec{\chi}}}+\int_{S^2}K_{g_{\vec{\Psi}}}d\mathrm{vol}_{g_{\vec{\Psi}}}.
    	\end{align*}
    \end{remintro}

    Furthermore, we note here for the convenience of the reader one of the by-products of Theorem \ref{F} and \cite{blow-up}, \cite{blow-up2}, which is interesting in itself as it gives an improved regularity for Willmore surfaces at branch points (see \cite{kusnerpacific} and \cite{beriviere} for the first results in this direction).
    
    \begin{theorem}\label{C}
    	Let $\Sigma^2$ be a closed Riemann surface, $n\geq 3$, $\mathscr{G}_{n-2}(\R^n)$ be the oriented Grassmannian of $(n-2)$-plans in $\R^n$, and $\phi: \Sigma^2\rightarrow\R^n$ be a \emph{variational} branched Willmore surface. Then $\n\in C^{1,1}(\Sigma^2)$, and $\phi\in C^{5,\alpha}(\Sigma^2)$ for all $\alpha<1$.
    \end{theorem}
    
    This theorem also permits to  improve the main result of \cite{eversion}, anticipating on the next section, and referring to \cite{geodesics} for definitions related to admissible families).

    \begin{theorem}\label{D}
    	Let $n\geq 3$ and $\mathscr{A}$ be an admissible
    	 family of $W^{2,4}$ immersions of the sphere $S^2$ into $\R^n$. Assume that
    	\begin{align*}
    		\beta_0=\inf_{A\in \mathscr{A}}\sup W(A)>0.
    	\end{align*}
    	Then there exists finitely many true branched compact Willmore spheres $\phi_1
    	,\cdots,\phi_p:S^2\rightarrow\R^n$, and true branched compact Willmore spheres $\vec{\Psi}_1,\cdots,\vec{\Psi}_q:S^2\rightarrow\R^n$ such that
    	\begin{align}\label{quantization2}
    		\beta_0=&\sum_{i=1}^{p}W(\phi_i)+\sum_{j=1}^{q}\left(W(\vec{\Psi}_j)-4\pi\theta_j\right)\in 4\pi\N,
    	\end{align}
    	 where the integer $\theta_1,\cdots,\theta_q$ correspond respectively to the highest multiplicities of $\vec{\Psi}_1,\cdots,\vec{\Psi}_q$, and the integer $\theta_j=\theta_0(\vec{\Psi}_j,p_j)\geq 1$ is the multiplicity of the branched immersion $\vec{\Psi}_j$ at some point $p_j\in \vec{\Psi}_j(S^2)\subset \R^n$
    \end{theorem}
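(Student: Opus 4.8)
The plan is to run the viscosity min-max scheme of \cite{eversion} and then upgrade its output using the improved quantization of Theorem \ref{B} together with the classification of true branched Willmore spheres. A useful simplification is that $\Sigma^2=S^2$ carries a unique conformal structure, so the moduli-space compactness hypothesis of Theorem \ref{B} holds automatically and never needs to be verified. First I would apply the min-max method of \cite{eversion} to the admissible family $\mathscr{A}$: since $\beta_0>0$, it produces a min-max sequence $\{\phi_k\}_{k\in\N}\subset\mathscr{E}(S^2,\R^n)$ of $W^{2,4}$ immersions which is asymptotically critical for $W$ (a Palais--Smale-type sequence for the viscous approximations) and satisfies $W(\phi_k)\to\beta_0$.

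I would then feed $\{\phi_k\}$ into Theorem \ref{B}, whose quantization applies to such min-max sequences (cf.\ the remark after Theorem \ref{B} and \cite{quantaepi}). After extracting a subsequence this yields a bubble-tree decomposition
\begin{align*}
\beta_0=W(\phi_\infty)+\sum_{i}W(\vec{\Psi}_i)+\sum_{j}\left(W(\vec{\xi}_j)-4\pi\theta_j\right),
\end{align*}
where $\phi_\infty:S^2\to\R^n$ is a true branched Willmore sphere and every bubble $\vec{\Psi}_i,\vec{\xi}_j$ is a true branched Willmore sphere. Relabelling $\phi_\infty$ together with the $\vec{\Psi}_i$ as $\phi_1,\dots,\phi_p$ (discarding a trivial constant limit, if any) and the $\vec{\xi}_j$ as $\vec{\Psi}_1,\dots,\vec{\Psi}_q$ produces exactly the identity \eqref{quantization2}, save for the integrality claim.

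It remains to prove $\beta_0\in4\pi\N$, for which the crucial input is that the Willmore energy of every true branched Willmore sphere lies in $4\pi\N$. This follows from the classification (Bryant's theorem \cite{bryant} and its extension to true branched spheres, the heart of this paper): by Theorem \ref{A} trueness forces the vanishing of the first residue, and the classification then realises the sphere as the inversion of a complete minimal surface of finite total curvature with vanishing flux, whose total curvature is quantized in $4\pi\N$ by Osserman's theorem; combined with the branched Gauss--Bonnet formula and the conformal invariance of $\W$, this gives $W\in4\pi\N$. Consequently each $W(\phi_i)$ and $W(\vec{\Psi}_j)$ is a positive multiple of $4\pi$; since $\theta_j\in\N$ and the Li--Yau inequality yields $W(\vec{\Psi}_j)\geq4\pi\theta_j$, every summand of \eqref{quantization2} is a nonnegative multiple of $4\pi$, and as $\beta_0>0$ we conclude $\beta_0\in4\pi\N$. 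For $n=3$ and $n=4$, the vanishing of the second sum is precisely the furthermore statement of Theorem \ref{B}: no concentrating bubble $\vec{\xi}_j$ survives, so that component of \eqref{quantization2} is empty.

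The main obstacle is the integrality step: quantizing the energy of a true branched Willmore sphere in $4\pi\N$ requires the full classification, careful bookkeeping of the branch points in Gauss--Bonnet, and the verification, through the residue correspondence of Theorem \ref{A}, that trueness is equivalent to the minimal-surface representation with vanishing flux. The $n=3,4$ vanishing of the concentrating bubbles likewise rests on the finer classification of true branched Willmore spheres in $S^3$ and $S^4$, which is the core of the paper.
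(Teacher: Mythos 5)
Your route is, at bottom, the paper's route: the paper disposes of this theorem in one line, as ``an easy consequence of lemma VI.2 of \cite{eversion} when we replace the first by the fourth residue''. But your first step misapplies Theorem \ref{B}. Its hypotheses require a sequence of \emph{exact} Willmore immersions, whereas the viscosity min-max scheme of \cite{eversion} produces only critical points of perturbed functionals (a Palais--Smale-type sequence for viscous approximations of $W$), to which Theorem \ref{B} does not literally apply. The quantization for such sequences is already the content of \cite{eversion} itself; the single new ingredient this paper adds is precisely the one named in its proof: in lemma VI.2 of \cite{eversion} one passes to the limit in a residue along the strong $C^l_{\mathrm{loc}}$ convergence away from the concentration points, and since the bubbles are obtained after inversions, the quantity that survives in the limit is the \emph{fourth} residue of the pre-inverted surfaces, which by the correspondence of Theorem \ref{A} equals the first residue of the bubbles, forcing all limiting spheres to be true. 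The remark following Theorem \ref{B} asserts (without proof) that the conclusion extends to such more general quantization settings, and you lean on that remark; but the mechanism that actually carries the argument is this residue swap inside lemma VI.2, which your proposal gestures at only implicitly through Theorem \ref{A}.

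The second genuine gap is in your integrality step. Your chain ``true $\Rightarrow$ vanishing first residue $\Rightarrow$ inversion of a complete minimal surface with vanishing flux $\Rightarrow$ Osserman and branched Gauss--Bonnet $\Rightarrow$ $W\in 4\pi\N$'' is incomplete for $n=4$ and unavailable for $n\geq 5$. For $n=4$, Theorem \ref{J} contains a second alternative: $\phi$ may be the Penrose twistor projection of an algebraic curve $C\subset \C\mathbb{P}^3$, in which case it is \emph{not} an inversion of a minimal surface and the quantization comes instead from $W(\phi)=4\pi\,\mathrm{deg}(C)$; this case must be treated separately, and your argument omits it. For $n\geq 5$ no classification is proved in this paper at all, so your derivation of $\beta_0\in4\pi\N$ only covers $n=3,4$ (consistent with the abstract, but short of what you claim to establish); for general $n$ the decomposition \eqref{quantization2} itself is delivered by \cite{eversion} as above, but membership in $4\pi\N$ cannot be reached by your classification route. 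Finally, note that what kills the second sum for $n=3,4$ is the identification of the concentrating bubbles as inversions of branched minimal spheres via Theorems \ref{G} and \ref{J}, combined with Kusner's equality case $W(\vec{\Psi}_j)=4\pi\theta_j$ in the Li--Yau inequality; attributing this to Theorem \ref{B} is acceptable bookkeeping, since that is where the paper records the argument.
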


    \subsection{Bryant's duality theory and the cost of the sphere eversion}

    We briefly describe Bryant's theory of the geometrical aspects of Willmore surfaces in $S^3$. Its most basic ingredient is the introduction of a holomorphic quartic form associated to any Willmore sphere. In particular, in the case of genus $0$ surfaces, this quartic form must vanish thanks to Riemann-Roch theorem, and this information furnishes rich consequences. Indeed, the idea of introducing holomorphic quartic forms originated first in a paper of Calabi (\cite{calabi}) in the context of minimal surfaces in spheres, then in the subsequent work of Chern (\cite{chern}) for the same objects, and of Bryant for conformal immersions into $S^4$ - and so \textit{before} his paper on Willmore surfaces (see \cite{bryant2} for references on this subject) - and is the basis for example of the fairly complete description of minimal two-sphere in $S^n$ for $n\geq 3$ by Calabi.
    
    The other remarkable feature of the theory is the introduction of a pseudo Gauss map with values into a Lorentzian manifold, associated to any surface immersion in $S^3$, which is harmonic if and only if the immersion is a Willmore immersion. A holomorphic quartic form corresponding to any Willmore surface is then defined thanks to this pseudo Gauss map as follows.
    
    Let $h$ be the Lorentzian metric of signature $(1,4)$ on $\R^5$
    \begin{align*}
    	h=-dx_0^2+dx_1^2+dx_2^2+dx_3^2+dx_4^2
    \end{align*}
    and $S^{3,1}$ be the Lorentzian sphere in $(\R^5,h)$, defined by
    \begin{align*}
    	S^{3,1}=\R^5\cap\ens{x:|x|_h^2=-x_0^2+x_1^2+x_2^2+x_3^2+x_4^2=1}.
    \end{align*}
    For all smooth immersion $\phi:\Sigma^2\rightarrow S^3$, if $\n:\Sigma^2\rightarrow S^3$ is the Gauss map of $\phi$, we define a map $\psi_{\phi}:\Sigma^2\rightarrow S^{3,1}$ by
    \begin{align*}
    	\psi_{\phi}=(H,\phi H+\n)
    \end{align*}
    which is called the pseudo Gauss map of $\phi$. The first step in Bryant's theory is the following.
    
    \begin{theorem*}[Bryant, \cite{bryant}]
    	Let $\Sigma^2$ be a closed Riemann surface and $\phi:\Sigma^2\rightarrow S^3$ be a smooth immersion. Then the pseudo Gauss map $\psi_{\phi}:\Sigma^2\rightarrow S^3$ is weakly conformal, is an immersion outside of the umbilic locus of $\phi$, and if $\phi$ is a Willmore immersion, then the quartic form
    	\begin{align*}
    		\mathscr{Q}_{\phi}=\s{\partial^2\psi_{\phi}}{\partial^2\psi_{\phi}}_h
    	\end{align*}
    	is holomorphic. Furthermore, $\phi:\Sigma^2\rightarrow S^3$ is a Willmore surface if and only if $\psi_{\phi}:\Sigma^2\rightarrow S^{3,1}$ is harmonic with values into the Lorentzian manifold $(S^{3,1},h)$.
    \end{theorem*}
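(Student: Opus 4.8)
The plan is to work in a local conformal coordinate $z$ on $\Sigma^2$, writing $g=\phi^\ast g_{S^3}=e^{2\lambda}|dz|^2$ and using the frame $(\phi,\p{z}\phi,\p{\z}\phi,\n)$ of $\C\otimes\R^4$ along the surface, in which $\phi$ is the $\R^4$-position (the outward normal of $S^3$) and $\n$ is the Gauss map. The conformal structure equations of an immersion into $S^3\subset\R^4$ read
\begin{align*}
\p{z}^2\phi=2\lambda_z\,\p{z}\phi+q\,\n,\qquad \p{z}\p{\z}\phi=-\tfrac12 e^{2\lambda}\phi+\tfrac12 e^{2\lambda}H\,\n,\qquad \p{z}\n=-H\,\p{z}\phi-2q\,e^{-2\lambda}\,\p{\z}\phi,
\end{align*}
where $H$ is the mean curvature and $q=\s{\p{z}^2\phi}{\n}$ the Hopf coefficient, with $|q|^2=\tfrac18 e^{4\lambda}|\mathring A|^2$ for the trace-free second fundamental form $\mathring A$, so that $q$ vanishes exactly at umbilic points. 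First I would check that $\psi_\phi$ lands in $S^{3,1}$: since $|\phi|=|\n|=1$ and $\s{\phi}{\n}=0$, one has $|\psi_\phi|_h^2=-H^2+|H\phi+\n|^2=-H^2+(H^2+1)=1$.

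Next I would differentiate, and the key algebraic feature of the pseudo Gauss map appears: by the third structure equation $H\,\p{z}\phi+\p{z}\n=-2q\,e^{-2\lambda}\,\p{\z}\phi$, so the mean-curvature terms cancel and
\begin{align*}
\p{z}\psi_\phi=\big(H_z,\;H_z\,\phi-2q\,e^{-2\lambda}\,\p{\z}\phi\big).
\end{align*}
Using $\s{\phi}{\p{\z}\phi}=0$ and $\s{\p{\z}\phi}{\p{\z}\phi}=0$, this gives immediately $\s{\p{z}\psi_\phi}{\p{z}\psi_\phi}_h=-H_z^2+H_z^2=0$, i.e. weak conformality, together with $\s{\p{z}\psi_\phi}{\p{\z}\psi_\phi}_h=2|q|^2e^{-2\lambda}$, which is positive away from the umbilic locus $\{q=0\}$; hence $\psi_\phi$ is a branched conformal map and an immersion outside the umbilics.

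The core of the proof is the harmonicity statement. Since $S^{3,1}=\{|x|_h^2=1\}$ has $\psi_\phi$ itself as (spacelike) $h$-unit normal at the point $\psi_\phi$, the map is harmonic precisely when the tangential part of $\Delta_g\psi_\phi=4e^{-2\lambda}\p{z}\p{\z}\psi_\phi$ vanishes, i.e. when $\p{z}\p{\z}\psi_\phi$ is proportional to $\psi_\phi$. Differentiating $\p{z}\psi_\phi$ and substituting $\p{\z}^2\phi=2\lambda_{\z}\,\p{\z}\phi+\overline{q}\,\n$, I would obtain
\begin{align*}
\p{z}\p{\z}\psi_\phi=\Big(H_{z\z},\;H_{z\z}\,\phi+\big(H_z-2q_{\z}e^{-2\lambda}\big)\p{\z}\phi-2|q|^2e^{-2\lambda}\,\n\Big).
\end{align*}
Here the coefficient of $\p{\z}\phi$ is annihilated automatically by the Codazzi equation $q_{\z}=\tfrac12 e^{2\lambda}H_z$, valid for any immersion into a space form; this is the structural reason the tangential obstruction disappears. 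Comparing the $\phi$- and $\n$-components of the remaining vector with $\psi_\phi=(H,H\phi+\n)$, proportionality to $\psi_\phi$ is equivalent to the single equation $H_{z\z}=-2|q|^2e^{-2\lambda}H$; rewriting it through $\Delta H=4e^{-2\lambda}H_{z\z}$ and $8|q|^2e^{-4\lambda}=|\mathring A|^2$ yields exactly the Willmore equation in $S^3$,
\begin{align*}
\Delta H+H|\mathring A|^2=\Delta H+2H\big(H^2-K+1\big)=0.
\end{align*}
I expect this identification to be the main obstacle: one must track the $S^3$-curvature term $-\tfrac12 e^{2\lambda}\phi$ through the computation and recognise that it is the Codazzi identity, not the Willmore equation, that kills the tangential component, so that the Willmore equivalence comes solely from the normal component.

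Finally, for $\Q_\phi=\s{\partial^2\psi_\phi}{\partial^2\psi_\phi}_h=\s{\p{z}^2\psi_\phi}{\p{z}^2\psi_\phi}_h\,(dz)^4$ I would first verify it is a genuine quartic differential: under a holomorphic change of coordinate the inhomogeneous terms in the transformation of $\p{z}^2\psi_\phi$ are weighted by $\s{\p{z}\psi_\phi}{\p{z}\psi_\phi}_h=0$ and by $\s{\p{z}\psi_\phi}{\p{z}^2\psi_\phi}_h=\tfrac12\p{z}\s{\p{z}\psi_\phi}{\p{z}\psi_\phi}_h=0$, so only the quartic term survives. When $\phi$ is Willmore, $\psi_\phi$ is harmonic, hence $\p{z}\p{\z}\psi_\phi=\mu\,\psi_\phi$ for some function $\mu$, and therefore
\begin{align*}
\p{\z}\s{\p{z}^2\psi_\phi}{\p{z}^2\psi_\phi}_h=2\s{\p{z}^2\p{\z}\psi_\phi}{\p{z}^2\psi_\phi}_h=2\mu_z\s{\psi_\phi}{\p{z}^2\psi_\phi}_h+2\mu\s{\p{z}\psi_\phi}{\p{z}^2\psi_\phi}_h=0,
\end{align*}
since $\s{\psi_\phi}{\p{z}^2\psi_\phi}_h=-\s{\p{z}\psi_\phi}{\p{z}\psi_\phi}_h=0$ follows from $|\psi_\phi|_h^2=1$ and $\s{\p{z}\psi_\phi}{\p{z}^2\psi_\phi}_h=0$ from conformality. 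Thus $\Q_\phi$ is holomorphic, which completes the argument.
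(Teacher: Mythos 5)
Your proof is correct, and on the two substantive assertions it takes a genuinely different route from the paper's. For weak conformality and the immersion property off the umbilic locus your computation is essentially the paper's own (it likewise derives $\p{z}\n=-H\p{z}\phi-H_0\p{\z}\phi$ and checks $\s{\p{z}\psi_{\phi}}{\p{z}\psi_{\phi}}_h=0$ directly); your value $\s{\p{z}\psi_{\phi}}{\p{\z}\psi_{\phi}}_h=2|q|^2e^{-2\lambda}=\tfrac{e^{2\lambda}}{2}|H_0|^2$ is in fact the correct constant, the paper's $\tfrac{e^{2\lambda}}{4}|H_0|^2$ being a harmless factor-of-two slip, since only positivity away from umbilics is used. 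The divergence is in the last two claims. The paper does \emph{not} prove the harmonicity equivalence at all: it cites Ejiri for that statement in a general context, whereas you establish it in codimension one from the structure equations, with the key structural observation that the Codazzi identity $q_{\z}=\tfrac{1}{2}e^{2\lambda}H_z$ (the scalar form of the paper's identity \eqref{codazzi}) alone annihilates the tangential coefficient of $\p{z}\p{\z}\psi_{\phi}$, so that harmonicity reduces to the single normal equation $H_{z\z}=-2|q|^2e^{-2\lambda}H$, which you correctly identify with the Willmore equation $\Delta_g H+2H(H^2-K_g+1)=0$. For holomorphy the paper argues in the opposite order: in theorem \ref{int} it first derives the intrinsic expression \eqref{intrinsèque} of $\mathscr{Q}_{\phi}$ in terms of the Weingarten tensor $\h_0$, then verifies $\bar{\partial}\mathscr{Q}_{\phi}=0$ by a brute-force computation from the Willmore PDE $\partial\bar{\partial}H+\tfrac{e^{2\lambda}}{2}|H_0|^2H=0$ together with Codazzi and the Liouville equation, after a stereographic projection to $\R^3$; you instead obtain holomorphy in two lines as a soft corollary of harmonicity plus weak conformality — the classical Hopf-differential mechanism for harmonic maps into pseudo-spheres — using $\s{\psi_{\phi}}{\p{z}^2\psi_{\phi}}_h=-\s{\p{z}\psi_{\phi}}{\p{z}\psi_{\phi}}_h=0$ and $\s{\p{z}\psi_{\phi}}{\p{z}^2\psi_{\phi}}_h=0$, the commutation of flat derivatives justifying $\p{\z}\p{z}^2\psi_{\phi}=\p{z}(\mu\,\psi_{\phi})$, and your transformation-rule check that $\mathscr{Q}_{\phi}$ is a genuine quartic differential is likewise sound. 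Your route is shorter, self-contained (it even supplies the outsourced harmonicity statement) and more conceptual; what it does not produce is the explicit algebraic formula for $\mathscr{Q}_{\phi}$ as a function of $\h_0$, which is exactly what the paper's longer computation buys and what the rest of the paper relies on for the asymptotic analysis at branch points in theorem \ref{devh0} and for the $S^4$ generalisation in theorem \ref{threedev}.
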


   To describe the second ingredient of the theory, we need to make some additional definitions on the umbilic locus and on the Willmore adjoint.
      
   Let $\phi:\Sigma^2\rightarrow S^3$ be a smooth immersion. The umbilic locus of $\phi$ is equal to the subset of $\Sigma^2$ where the two principal curvatures coincide. If $\phi$ is completely umbilic, then $\Sigma^2=S^2$ and $\phi$ is a diffeomorphism, so we assume that $\phi$ is not completely umbilic, and we note $\mathscr{U}_{\phi}$ the closed set
    \begin{align}\label{umbiliclocus}
    	\mathscr{U}_{\phi}=\Sigma^2\cap\ens{z:|\h_0(z)|^2_{WP}\,d\vg(z)=0}.
    \end{align}
    Then it is possible to define a Willmore adjoint of any Willmore surface $\phi:\Sigma^2\rightarrow S^3$, that is a branched immersion $\vec{\Psi}:\Sigma^2\setminus\mathscr{U}_{\phi}\rightarrow S^3$ such that for all 
    $z_0\in \Sigma^2$, the point $p=\vec{\Psi}(z_0)\in S^3$ is the unique element in $S^3$ such that after a stereographic projection based on $p$, the mean curvature vanishes at order two at $z_0$; \textit{i.e.} if $\pi_{p}:S^3\setminus\ens{p}\rightarrow \R^3$ is the stereographic projection, then
    \begin{align}\label{adjoint}
    	\H_{\pi_{p(z_0)}\circ \phi}(z)=O(|z-z_0|^2).
    \end{align}
    One of the main results of Bryant's paper is the following.
    
    \begin{theorem*}[Bryant, \cite{bryant}]
    	Let $\phi:\Sigma^2\rightarrow S^3$ be a Willmore surface. Then the set of umbilic points $\mathscr{U}_{\phi}$ is equal to $\Sigma^2$ or is a closed set with empty interior. In the first alternative, $\Sigma^2=S^2$ and $\phi$ is a diffeomorphism. In the second alternative, there exists an immersion $\vec{\Psi}:\Sigma^2\setminus\mathscr{U}_{\phi}\rightarrow S^3$ satisfying \eqref{adjoint}, and a holomorphic quartic differential 
    	\begin{align*}
          	\mathscr{Q}_{\phi}=\s{\partial^2\psi_{\phi}}{\partial^2\psi_{\phi}}_h\in H^0(\Sigma^2,K_{\Sigma^2}^4)
    	\end{align*}
    	with the following property : if $\mathscr{Q}_{\phi}=0$, then $\vec{\Psi}$ is constant. Whenever $\vec{\Psi}=p\in S^3$ is constant, the set $\phi^{-1}(\ens{p})$ is a non-empty discrete set in $\Sigma^2$, the stereographic projection
    	\begin{align*}
    		\pi :S^3\setminus\ens{p}\rightarrow\R^3
    	\end{align*}
    	makes the mean curvature of $\pi\circ\phi$ vanish identically, and the Willmore surface
    	\begin{align*}
    	\pi\circ \phi:\Sigma^2\setminus\phi^{-1}(\ens{p})\rightarrow\R^3 
    	\end{align*}
    	is a complete minimal surface with finite total curvature and embedded planar ends. In particular, if $\Sigma^2=S^2$, then $K_{S^2}^4$ is a negative holomorphic line bundle, so $\mathscr{Q}_{\phi}=0$, and every non-completely umbilic Willmore sphere in $S^3$ is the inverse stereographic projection of a complete minimal surface in $\R^3$ with embedded planar ends.
    \end{theorem*}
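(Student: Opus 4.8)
The plan is to run Bryant's argument in four stages: the dichotomy for the umbilic locus, the construction of the adjoint $\vec{\Psi}$, the analysis of the holomorphic quartic form, and the geometric description of the degenerate case. First I would settle the alternative for $\mathscr{U}_\phi$. Since $\phi$ is a Willmore immersion, the harmonicity of the pseudo Gauss map $\psi_\phi$ (equivalently, ellipticity of the fourth-order Willmore system in conformal gauge) forces $\phi$ to be real-analytic away from branch points; in particular the conformally invariant density $|\h_0|_{WP}^2\,d\vg$ is a real-analytic function in any conformal chart. By unique continuation for real-analytic functions, its zero set $\mathscr{U}_\phi$ is either all of $\Sigma^2$ or a closed set with empty interior. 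If $\mathscr{U}_\phi=\Sigma^2$ then $\h_0\equiv 0$, so $\phi$ is totally umbilic, and the classical classification of totally umbilic surfaces in $S^3$ forces the image onto a round $2$-sphere; closedness then gives $\Sigma^2=S^2$ with $\phi$ a diffeomorphism onto a geodesic sphere.

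Next I would construct the adjoint immersion $\vec{\Psi}$ on $\Sigma^2\setminus\mathscr{U}_\phi$, where $\psi_\phi$ is an immersion. For fixed $z_0\notin\mathscr{U}_\phi$ I would compute how $\H$ transforms under the Möbius group of $S^3$, i.e. under stereographic projections $\pi_p$ based at varying $p\in S^3$; imposing that $\H$ and its first derivatives of $\pi_p\circ\phi$ vanish at $z_0$ yields a linear system for $p$ whose unique solution defines $\vec{\Psi}(z_0)$ and realises \eqref{adjoint}. Geometrically $\vec{\Psi}$ is the \emph{second envelope} of the mean-curvature sphere congruence encoded by $\psi_\phi$, and I would check that it is a branched immersion on $\Sigma^2\setminus\mathscr{U}_\phi$ whose regularity can degenerate only along the umbilic locus, where the two envelopes of the congruence collide.

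The heart of the argument is the link between $\mathscr{Q}_\phi$ and $\vec{\Psi}$. By the preceding theorem of Bryant, $\mathscr{Q}_\phi=\s{\partial^2\psi_\phi}{\partial^2\psi_\phi}_h$ is a holomorphic section of $K_{\Sigma^2}^4$. The structural fact I would establish is that $\mathscr{Q}_\phi$ is, up to a nonvanishing conformal factor, the Hopf-type differential of $\vec{\Psi}$ measured against $\psi_\phi$ in the Lorentz metric $h$, so that the $(2,0)$-part of $d\vec{\Psi}$ is controlled by $\mathscr{Q}_\phi$. Feeding $\mathscr{Q}_\phi\equiv 0$ into the structure equations of the conformal Gauss map then forces $d\vec{\Psi}=0$, hence $\vec{\Psi}$ is locally constant; connectedness of $\Sigma^2\setminus\mathscr{U}_\phi$ (guaranteed by the empty interior of $\mathscr{U}_\phi$) upgrades this to $\vec{\Psi}\equiv p$ for a single $p\in S^3$. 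This is the step I expect to be the main obstacle: tying the vanishing of the holomorphic quartic to the collapse of the dual surface requires carefully unwinding the moving-frame equations of $\psi_\phi$ and controlling the behaviour of $\vec{\Psi}$ across $\mathscr{U}_\phi$.

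Finally, in the constant case $\vec{\Psi}\equiv p$, relation \eqref{adjoint} holds with the \emph{same} $p$ at every $z_0$, so $\H$ and $d\H$ of $\pi\circ\phi$ (with $\pi=\pi_p$) vanish at every point, whence $\H\equiv 0$ and $\pi\circ\phi$ is minimal on $\Sigma^2\setminus\phi^{-1}(\ens{p})$. The puncture set $\phi^{-1}(\ens{p})$ is discrete since $\phi$ is an immersion, and nonempty since a closed minimal surface in $\R^3$ cannot exist (its coordinate functions would be bounded harmonic, hence constant); completeness follows because the induced metric blows up at the punctures, where the image escapes to infinity. Finite total curvature follows from the fact that $\W(\phi)=\int_{\Sigma^2}|\h_0|_{WP}^2\,d\vg$ is finite and conformally invariant, together with the Gauss equation $-K=\tfrac12|\h_0|^2$ for the minimal surface $\pi\circ\phi$; and the ends are embedded because $\phi$ is one-to-one near each puncture, and planar because smoothness of $\phi$ through $p$ in $S^3$ rules out logarithmic growth of the end. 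The concluding assertion is then immediate: for $\Sigma^2=S^2$ one has $K_{S^2}^4=\mathcal{O}(-8)$, so $H^0(S^2,K_{S^2}^4)=0$ by Riemann--Roch, forcing $\mathscr{Q}_\phi=0$ and hence exhibiting every non-completely umbilic Willmore sphere as the inverse stereographic projection of a complete minimal surface in $\R^3$ with embedded planar ends.
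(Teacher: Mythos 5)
First, a point of reference: the paper does not prove this statement at all --- it is quoted verbatim as background and attributed to Bryant \cite{bryant}, so there is no internal proof to compare your attempt against. Your proposal therefore has to be measured against Bryant's original argument, whose overall architecture (the analyticity dichotomy for $\mathscr{U}_{\phi}$, the mean-curvature-sphere duality defining $\vec{\Psi}$, the holomorphic quartic controlling the collapse of the dual envelope, and the vanishing of $H^0(S^2,K_{S^2}^4)$ in the genus-zero case) you reproduce correctly in outline, as do your final geometric steps: discreteness and nonemptiness of $\phi^{-1}(\ens{p})$, completeness from the image escaping to infinity, finite total curvature from conformal invariance of $\int_{\Sigma^2}|\h_0|^2_{WP}\,d\mathrm{vol}_g$ together with $-K_g=|\h_0|^2_{WP}$ for a minimal surface, embeddedness of the ends from the multiplicity-one preimages, and planarity because a catenoidal end inverts to a surface that is not $C^2$ (indeed not $C^{1,1}$, as the paper recalls), contradicting smoothness of $\phi$.

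There is, however, one step that fails as written. You deduce $\vec{\Psi}\equiv p$ globally from local constancy via ``connectedness of $\Sigma^2\setminus\mathscr{U}_{\phi}$ (guaranteed by the empty interior of $\mathscr{U}_{\phi}$)''. A closed set with empty interior can perfectly well disconnect a surface: the zero set of a nontrivial real-analytic function is in general a locally finite union of analytic arcs and points, and an analytic circle (the equator of $S^2$, say, the zero set of $x_3$) has empty interior yet separates. So empty interior does not give connectedness of the complement, and as stated your argument only produces a locally constant $\vec{\Psi}$, possibly with different values $p$ on different components. The repair is the unique-continuation argument that both Bryant and the present paper use (see Part 3 of the proof of theorem \ref{devh0}): once $\vec{\Psi}\equiv p$ on a single component $U$, relation \eqref{adjoint} at every $z_0\in U$ forces $\H_{\pi_p\circ\phi}$ and its first derivatives to vanish at each point of $U$, hence $\H_{\pi_p\circ\phi}\equiv 0$ on $U$; the real-analyticity of Willmore immersions, which you already invoked in your first stage, then propagates $\H_{\pi_p\circ\phi}\equiv 0$ to all of $\Sigma^2\setminus\phi^{-1}(\ens{p})$, and in particular fixes the same $p$ on every component. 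Separately, be aware that your third stage --- the identification of $\mathscr{Q}_{\phi}$ with a Hopf-type differential of $\vec{\Psi}$ and the implication $\mathscr{Q}_{\phi}=0\Rightarrow d\vec{\Psi}=0$ --- is asserted rather than proved, and it is the heart of the theorem: in Bryant's frame computation the normal bundle of $\psi_{\phi}$ inside $S^{3,1}$ splits into two null line bundles corresponding to the envelopes $\phi$ and $\vec{\Psi}$, the quartic pairs $\partial^2\psi_{\phi}$ with itself in such a way that off $\mathscr{U}_{\phi}$ the $\phi$-null component is nonvanishing, so $\mathscr{Q}_{\phi}=0$ annihilates the $\vec{\Psi}$-null component, i.e. $d\vec{\Psi}=0$. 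Without some version of this moving-frames computation, the central implication remains a plan rather than a proof.
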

    
    \begin{defi2}
    Whenever a compact Willmore surface in $S^3$ is the inverse stereographic projection of a complete minimal surface in $\R^3$ with finite total curvature, we call this underlying object the dual minimal surface.
    \end{defi2}

    By a result of Kusner (\cite{kusnerpacific}), the dual minimal surface is obtained by inverting the compact branched immersion at a point of highest multiplicity (this is also a direct consequence of a finer version of Li-Yau inequality \cite{lieyau}).
       
    The first ingredient of the generalisation of Bryant's theorem is the special algebraic structure of Bryant's quartic form, which did not appear in the previous literature on the subject.
    
    \begin{theorem}\label{E}
    	Let $\Sigma^2$ be a closed Riemann surface, and $\phi:\Sigma^2\rightarrow S^3$ be a smooth immersion. Then Bryant's quartic admits the following representation
    	\begin{align}\label{algebra}
    		\mathscr{Q}_{\phi}&=\s{\partial^2\psi_{\phi}}{\partial^2\psi_{\phi}}_h=
    		g^{-1}\otimes\left(\partial^N\bar{\partial}^N\h_0\totimes\h_0-\partial^N\h_0\totimes\bar{\partial}^N\h_0\right)+\frac{1}{4}\left(1+|\H|^2\right)\h_0\totimes\h_0\nonumber\\
    		&=g^{-1}\otimes\left(\partial\bar{\partial}\h_0\totimes\h_0-\partial\h_0\totimes\bar{\partial}\h_0\right)+\left(\frac{1}{4}\left(1+|\H|^2\right)+|\h_0|^2_{WP}\right)\h_0\totimes\h_0.
    	\end{align}
    \end{theorem}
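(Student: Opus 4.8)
The plan is to compute $\mathscr{Q}_\phi=\s{\partial^2\psi_\phi}{\partial^2\psi_\phi}_h$ directly in a local conformal coordinate $z$ on $\Sigma^2$, with $g=e^{2\lambda}|dz|^2$, and then to reorganise the result into the invariant form of the statement. The only geometric inputs are the structure equations for $\phi:\Sigma^2\rightarrow S^3\subset\R^4$. Writing $q=\s{\p{z}\p{z}\phi}{\n}$ for the Hopf coefficient (so that $\h_0$ is represented by $q\,dz^2\otimes\n$ up to normalisation), and using $|\phi|^2=|\n|^2=1$, $\s{\phi}{\n}=0$, $\s{\p{z}\phi}{\p{z}\phi}=0$, $\s{\p{z}\phi}{\p{\z}\phi}=\tfrac12 e^{2\lambda}$, one obtains
\begin{align*}
\p{z}\p{z}\phi = 2(\p{z}\lambda)\,\p{z}\phi + q\,\n,\qquad \p{z}\p{\z}\phi = -\tfrac12 e^{2\lambda}\bigl(\phi - H\,\n\bigr),\qquad \p{z}\n = -H\,\p{z}\phi - 2q\,e^{-2\lambda}\,\p{\z}\phi,
\end{align*}
the first two being the Gauss and mean-curvature equations and the last the Weingarten equation, the term $-\tfrac12 e^{2\lambda}\phi$ encoding the curvature of the ambient sphere. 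Everything else is bookkeeping.

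The first key step is to compute $\partial\psi_\phi=\p{z}\psi_\phi\,dz$ and to record a crucial collapse: since the structure equations give $H\,\p{z}\phi+\p{z}\n=-2q\,e^{-2\lambda}\,\p{\z}\phi$, the part of $\p{z}(H\phi+\n)$ tangent to $S^3$ reduces to a single term, so that
\begin{align*}
\p{z}\psi_\phi = \bigl(\p{z}H,\; (\p{z}H)\,\phi - 2q\,e^{-2\lambda}\,\p{\z}\phi\bigr)\in\R\times\R^4.
\end{align*}
It is precisely this collapse that will force $\mathscr{Q}_\phi$ to depend only on $\h_0$ (through $q$) and on $\H$. I would then differentiate once more and correct by the Christoffel symbol $\Gamma^z_{zz}=2\p{z}\lambda$ of the conformal metric, so that $\partial^2\psi_\phi=\bigl(\p{z}\p{z}\psi_\phi-2(\p{z}\lambda)\,\p{z}\psi_\phi\bigr)\,dz^2$ is genuinely a section of $K_{\Sigma^2}^2\otimes\psi_\phi^\ast T\R^5$. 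Substituting the structure equations expresses $\partial^2\psi_\phi$ in the moving frame $\ens{\phi,\p{z}\phi,\p{\z}\phi,\n}$ together with its scalar first component, the $\p{\z}\phi$-coefficient being proportional to $\p{z}q-2(\p{z}\lambda)q$ (a first covariant derivative of $\h_0$), the $\p{z}\phi$-coefficient to $\p{z}H$, the $\phi$-coefficient to $\p{z}\p{z}H+q-2(\p{z}\lambda)\p{z}H$, and the $\n$-coefficient to $-qH$.

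The second step is to take the Lorentzian square. Using the orthogonality relations above, the timelike first coordinate combines with the $\phi$- and $\p{z}\phi$-components so that every term containing a bare $\p{z}\p{z}H$ or $(\p{z}H)^2$ cancels, and one is left (up to a universal normalisation) with a coefficient of the shape $2\bigl(q\,\p{z}\p{z}H-\p{z}q\,\p{z}H\bigr)+q^2\bigl(1+H^2\bigr)$; the two pieces of $q^2(1+H^2)$ come respectively from the ambient-sphere term $-\tfrac12 e^{2\lambda}\phi$ (the summand $1$) and from the $-qH\,\n$ term (the summand $H^2$). The remaining $H$-dependence is then removed in favour of $\h_0$ alone by the Codazzi identity of the remark preceding the statement, which relates $\bar{\partial}^N\h_0$ to $\partial^N\H$: this converts $q\,\p{z}\p{z}H-\p{z}q\,\p{z}H$ into the manifestly symmetric Wronskian $g^{-1}\otimes\bigl(\partial^N\bar{\partial}^N\h_0\totimes\h_0-\partial^N\h_0\totimes\bar{\partial}^N\h_0\bigr)$, the $d\z$'s produced being absorbed by the $g^{-1}$ contraction, while the algebraic term becomes $\tfrac14(1+|\H|^2)\h_0\totimes\h_0$. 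This yields the first representation. The second representation follows at once: passing from the normal second derivatives $\partial^N\bar{\partial}^N$ to the full covariant derivatives $\partial\bar{\partial}$ reinstates the tangential parts $\partial^\top,\bar{\partial}^\top$, whose contraction against $\h_0$ produces exactly the additional $|\h_0|^2_{WP}\,\h_0\totimes\h_0$ correction.

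The main obstacle I anticipate lies entirely in the second step, and is twofold. First, verifying the complete cancellation of the $H$-derivative terms requires tracking the Christoffel correction and the scalar first coordinate \emph{together}, since only their combination is tensorial; a sign slip there would spoil the clean dependence on $\h_0$. Second, matching the coordinate computation to the invariant Simons-type expression is delicate, because the $\bar{\partial}^N$ derivatives never arise from differentiating the purely holomorphic object $\partial^2\psi_\phi$, but only through Codazzi and the $g^{-1}$ contraction; the bulk of the care therefore goes into confirming that the lower-order terms generated by Codazzi reassemble precisely into the coefficient $\tfrac14(1+|\H|^2)$ and leave no spurious remainder.
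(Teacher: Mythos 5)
Your proposal follows essentially the same route as the paper's own proof, which is split into Lemma \ref{s3} and Theorem \ref{int}: first compute $\partial^2\psi_{\phi}$ from the structure equations and take its Lorentzian square, exploiting that $\vec{a}+\phi$ is $h$-null so that all bare $\p{z}\p{z}H$ and $(\p{z}H)^2$ terms cancel and only the Wronskian $e^{2\lambda}\left(\s{\D^N_{\p{z}}\D^N_{\p{z}}\H}{\H_0}-\s{\D^N_{\p{z}}\H}{\D^N_{\p{z}}\H_0}\right)dz^4$ plus the algebraic piece $\frac{e^{4\lambda}}{4}(1+|\H|^2)\s{\H_0}{\H_0}dz^4$ survive (your collapse $H\p{z}\phi+\p{z}\n=-2qe^{-2\lambda}\p{\z}\phi$ is exactly the paper's identity $\left(\D_{\p{z}}\psi\right)(\vv{\xi})=\s{\D^N_{\p{z}}\H}{\vv{\xi}}(\vec{a}+\phi)-\s{\H_0}{\vv{\xi}}\e_{\z}$, specialised to $\vv{\xi}=\n$); then use Codazzi, $\partial^N\H=g^{-1}\otimes\bar{\partial}^N\h_0$, to rewrite the Wronskian in terms of $\h_0$, and finally the tangential-part formulas for $\partial^\top\h_0$, $\bar{\partial}^\top\h_0$ to trade normal for full derivatives. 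Your only presentational difference is that you work in codimension one with the Hopf coefficient $q$ and an explicit moving frame, whereas the paper runs the computation frame-invariantly on normal sections in $S^n$ and only then sets $\vv{\xi}=\vv{\eta}=\n$; this changes nothing of substance, and your anticipated "delicate points" are exactly where the paper spends its effort.

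There is, however, one concrete quantitative slip in your last step. Carrying out the trade of normal for full derivatives, the tangential parts contract to
\begin{align*}
\partial^\top\h_0\totimes\bar{\partial}^\top\h_0=\frac{1}{2}\,g\otimes\left(|\h_0|^2_{WP}\,\h_0\totimes\h_0+\s{\H}{\h_0}^2\right),
\end{align*}
so the second representation acquires \emph{both} corrections, $|\h_0|^2_{WP}\,\h_0\totimes\h_0$ \emph{and} $\s{\H}{\h_0}^2$; this is precisely what the body of the paper records in Theorem \ref{int}, whose second line carries the extra summand $+\s{\H}{\h_0}^2$. In codimension one $\s{\H}{\h_0}^2=H^2h_0^2\,dz^4=|\H|^2\,\h_0\totimes\h_0$, which does not vanish, so your assertion that the reinstatement "produces exactly the additional $|\h_0|^2_{WP}\,\h_0\totimes\h_0$ correction" with "no spurious remainder" would not survive the computation. (The statement as printed in the introduction omits this term too, and is in this respect inconsistent with Theorem \ref{int}, with which the paper explicitly identifies it.) You should either carry $\s{\H}{\h_0}^2$ along, or, since you work in codimension one, absorb it by replacing the coefficient $\frac{1}{4}\left(1+|\H|^2\right)$ in the second line by $\frac{1}{4}\left(1+5|\H|^2\right)$; the first representation, in terms of $\partial^N$, $\bar{\partial}^N$, is unaffected.
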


	The second main result of this paper is a generalisation of Bryant's theorem, based on the algebraic structure of the quartic form and a refined analysis of its singularities at branch points, which prove to be removable under natural assumptions. We first have the following.
	
	\begin{theorem}\label{F}
		Let $\Sigma^2$ be a closed Riemann surface, and $\phi:\Sigma^2\rightarrow\R^3$ be a non-completely umbilic branched Willmore surface. Then $\phi$ is conformally minimal in $\R^3$ if and only if $\mathscr{Q}_{\phi}=0$.
	\end{theorem}

    This theorem can be deduced quite easily from the Weiertrass parametrisation and the observation that the quadratic form $Q$ defined on quadratic differentials by
    \begin{align*}
    	Q(\alpha)=\partial\bar{\partial}\alpha\otimes\alpha-\partial\alpha\otimes\bar{\partial}\alpha=\alpha^2\otimes \partial\bar{\partial}\log(\alpha).
    \end{align*}
    vanishes if $\alpha=f_1(z)\bar{f_2(z)}dz^2$, and $f_1$ and $f_2$ are holomorphic. Here, the last equality is formal but shows the special structure of $\partial\bar{\partial}$ of a logarithm.
    
    The following theorem extends a preceding one of Lamm and Nguyen in the case of Willmore spheres whose sum of multiplicities of branch points is less than three \cite{lamm}. Motivated by the generalisation in higher codimension, we remark      that the quartic form $\mathscr{Q}_{\phi}$ is a well-defined tensor for any immersion, but need not be meromorphic when $\phi$ is Willmore in $\R^n$ and $n\geq 4$. It is a particular case of the more general Theorem \ref{devh0}.

	\begin{theorem}[Meromorphy implies holomorphy]\label{G} 
			Let $\Sigma^2$ be a closed Riemann surface, $n\geq 3$ and $\phi:\Sigma^2\rightarrow S^n$ be a \emph{variational} branched Willmore surface. Furthermore, suppose that the quartic differential $\mathscr{Q}_{\phi}$ is meromorphic. Then
			\begin{align}\label{bounded}
				\mathscr{Q}_{\phi}\;\,\text{is holomorphic}.
			\end{align}
			 In particular, \emph{variational} branched Willmore spheres $\phi:S^2\rightarrow S^3$ are inverse stereographic projections of complete \emph{branched} minimal surfaces in $\R^3$ with finite total curvature and \emph{vanishing flux}.
	\end{theorem}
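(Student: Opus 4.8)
\emph{Proof strategy.} The assertion is local around the branch points, so I would first reduce to them and then exploit that a \emph{true} Willmore surface carries no first residue. Away from the branch points $p_1,\dots,p_m$ the map $\phi$ is a smooth Willmore immersion and $\mathscr{Q}_\phi$ is a smooth section of $K_{\Sigma^2}^4$; a meromorphic section that is smooth on an open set is holomorphic there, so the only possible poles of $\mathscr{Q}_\phi$ lie at the $p_j$. Fixing a conformal chart $z$ centred at a branch point $p$ of multiplicity $\theta_0\geq 1$ and writing $\mathscr{Q}_\phi=q(z)\,dz^4$ with $q$ meromorphic, it remains to prove that $q$ is bounded near $z=0$: Riemann's removable-singularity theorem then upgrades the isolated singularity to a regular point, so that $\mathscr{Q}_\phi$ is holomorphic.

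The decisive input is that $\phi$ is \emph{true}. By definition no Dirac mass occurs in \eqref{dirac}, so every $\vec{\alpha}_j=0$, and \eqref{resber} forces $\widetilde{\vec{\gamma}_0}(p_j)=0$, i.e. $\vec{\gamma}_0(\phi,p_j)=0$ at each branch point. I would then feed in the asymptotic expansion of the Weingarten tensor $\h_0$ and of the mean curvature $\H$ at $p$: in the conformal gauge the metric degenerates like $e^{2\lambda}\sim|z|^{2(\theta_0-1)}$, and the most singular part of $\H$, hence of $\h_0$ read through the chart, is a universal expression in the residue $\vec{\gamma}_0$. Inserting this into the algebraic representation of Theorem \ref{E},
\[
\mathscr{Q}_\phi=g^{-1}\otimes\big(\partial\bar{\partial}\h_0\totimes\h_0-\partial\h_0\totimes\bar{\partial}\h_0\big)+\Big(\tfrac14\big(1+|\H|^2\big)+|\h_0|_{WP}^2\Big)\h_0\totimes\h_0,
\]
I would track the order in $z$ of $q$. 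Once $\vec{\gamma}_0=0$, the degeneracy of the metric yields $q=O(|z|^{2\theta_0-4})$ for $\theta_0\geq 2$, which is bounded (indeed vanishing), while for $\theta_0=1$ the vanishing of the first residue makes $\phi$ extend smoothly across $p$ by Theorem \ref{C}, so $q$ is again bounded. In either case the singularity is removable and $\mathscr{Q}_\phi$ is holomorphic, proving \eqref{bounded}.

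The step I expect to be the main obstacle is exactly this control of the order of $q$: one must annihilate \emph{all} the negative-order Laurent coefficients of $q$, not merely the leading one, which requires the sharp asymptotic expansion of $\h_0$ at branch points (the general Theorem \ref{devh0}) together with the fact that the polar part of $\mathscr{Q}_\phi$ is a polynomial in the residue data that collapses when $\vec{\gamma}_0=0$. That the hypothesis $\vec{\gamma}_0=0$ cannot be dropped is shown by the inverted catenoid, which is Willmore but not true and whose quartic genuinely has a pole.

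For the final assertion, let $\Sigma^2=S^2$ and $n=3$. Since $\deg K_{S^2}^4=4(2\cdot 0-2)=-8<0$, the bundle $K_{S^2}^4$ has no nonzero holomorphic section, so the now-holomorphic quartic satisfies $\mathscr{Q}_\phi\equiv 0$. Theorem \ref{F} in genus $0$ then identifies $\phi$ (after stereographic projection) with the inversion of a complete branched minimal surface $\vec{\Psi}$ in $\R^3$ of finite total curvature. Finally, since $\phi$ is true, $\vec{\gamma}_0(\phi,p_j)=0$ at each branch point, and the correspondence of Theorem \ref{A} gives $\vec{\gamma}_0(\phi,p_j)=\vec{\gamma}_3(\vec{\Psi},p_j)=\frac{1}{4\pi}\,\Im\int_{\gamma}\partial\vec{\Psi}$, which is precisely the flux of $\vec{\Psi}$; hence the flux vanishes, as claimed.
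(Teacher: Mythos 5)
Your outer frame is sound: the reduction to the branch points, the use of Riemann's removable singularity theorem once boundedness is known, the smooth extension for $\theta_0=1$ with $\vec{\gamma}_0=0$, and the final genus-zero argument ($\deg K_{S^2}^4<0$, Theorem \ref{F}, then the residue correspondence of Theorem \ref{A} identifying $\vec{\gamma}_0(\phi,p_j)$ with the flux of the dual minimal surface) all match the paper. But the central analytic claim — that once $\vec{\gamma}_0=0$ the expansion of $\h_0$ yields $q=O(|z|^{2\theta_0-4})$ for $\theta_0\geq 2$ — is false, and it is precisely where the whole difficulty of the theorem lives. The power counting does not improve with $\theta_0$: the metric degeneracy $g^{-1}\sim|z|^{2-2\theta_0}$ exactly offsets the decay $\h_0=O(|z|^{\theta_0-1})$ and of its derivatives, so that a priori $\mathscr{Q}_{\phi}$ has poles of order up to $2$ for \emph{every} $\theta_0$ (this is Lemma \ref{polesorder}). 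Concretely, the refined expansion at a branch point reads $\h_0=(\vec{B}_0+\vec{B}_1\z)z^{\theta_0-1}dz^2+\vec{B}_2z^{\theta_0}dz^2+(\vec{C}_0/z+\vec{C}_1)\z^{\theta_0}dz^2+O(|z|^{\theta_0+1-\epsilon})$, and the dangerous contribution comes from the crossed terms between the holomorphic-type and the anti-holomorphic-type parts. The conformality relations $\s{\p{z}\phi}{\p{z}\phi}=0$ kill $\vec{C}_0,\vec{B}_0,\vec{B}_1$ — but this holds for \emph{any} branched Willmore disk with $\theta_0\geq 2$, residue or no residue, so the trueness hypothesis plays no role there — and what survives is a genuine simple pole $(\theta_0-1)(\theta_0-2)\s{\vec{A}_1}{\vec{C}_1}\,dz^4/z$. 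Your assertion that "the polar part is a polynomial in the residue data that collapses when $\vec{\gamma}_0=0$" is only true for $\theta_0\leq 3$ (e.g. $\s{\vec{A}_1}{\vec{C}_1}=-\s{\vec{A}_0}{\vec{\gamma}_0}$ when $\theta_0=3$, and the pole coefficient is $\s{\vec{A}_1}{\vec{\gamma}_0}$ when $\theta_0=2$); for $\theta_0\geq 4$ the vector $\vec{C}_1$ is a subleading coefficient of $\H$ with no relation to $\vec{\gamma}_0$, and vanishing of the first residue gives you nothing about $\s{\vec{A}_1}{\vec{C}_1}$.

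What the paper actually does to remove this pole (Steps 4--7 of Theorem \ref{devh0}) is of a different nature and is not recoverable from your argument: it exploits the \emph{meromorphy hypothesis itself} — the coefficients of the non-meromorphic terms $\z\,dz^4$, $z^{\theta_0}\z^{2-\theta_0}dz^4$ (and, for $\theta_0=4$, a $\frac{\z^4}{z}\log|z|$ term) in the fourth and fifth order developments of $\mathscr{Q}_{\phi}$ must vanish, producing the linear system \eqref{system} in $(\s{\vec{A}_1}{\vec{C}_1},\s{\vec{A}_1}{\vec{A}_1})$ whose determinant is handled by Cauchy--Schwarz — combined with the Noether conservation law attached to the invariance by inversions (closedness of $\Im\vec{\beta}$, whose relevant Taylor coefficients are extracted via the computer-assisted expansions of \cite{sagepaper}) to finally force $\s{\vec{A}_1}{\vec{C}_1}=0$ when $\theta_0\geq 4$. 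The hypothesis that $\phi$ is \emph{true} enters only in the low-multiplicity cases $\theta_0\in\{1,2,3\}$. So your proposal, as written, silently skips the theorem's core: a correct proof must annihilate a pole whose coefficient is not residue data, and boundedness simply does not follow from $\vec{\gamma}_0=0$ plus leading-order asymptotics.
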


   We remark that the assertion on the flux finally justifies the last sentence of \cite{bryant}.
   
   \begin{remintro}
   	We stress out that the dual minimal surface might have interior branch points : the famous example of the \emph{hérissons} (hedgehogs in English) of Rosenberg and Toubiana (\cite{herisson}) shows that there even exist \emph{true} Willmore spheres whose dual minimal surface have interior branch points (and can even have total curvature equal to $-4\pi$).
   	An explicit example is given by the two-sheeted  covering of the Henneberg's surface, a non-orientable minmal surface with total curvature $-2\pi$ which is conformally equivalent to a once-punctured real projective plan $\R \mathbb{P}^2$. Its inversion is a \emph{true} Willmore sphere of energy $24\pi$. 
   \end{remintro}
   
   We can summarize the analogies between the theories of minimal and Willmore surfaces in the following table.
   
   {\tabulinesep=0.6mm
   	\begin{figure}[H]
   		\begin{center}
   			\begin{tabu}{|c|c|c|}
   				\hline
   				& Minimal surfaces in $\displaystyle\R^3$ & Willmore surfaces in $\displaystyle S^3$ \\
   				\hline
   				Conformal immersion & $\displaystyle\displaystyle\phi:\Sigma^2\rightarrow \R^3$ & $\displaystyle\phi:\Sigma^2\rightarrow S^3$\\
   				\hline
   				Euler-Lagrange equation & $\displaystyle 2\H=\Delta_g\phi=0$ &$\displaystyle\Delta_gH+2H(H^2-K+1)=0$\\
   				\hline
   				Harmonic Gauss map & $\displaystyle\n:\Sigma^2\rightarrow S^2\subset\R^3$ & $\displaystyle\psi_{\phi}:\Sigma^2\rightarrow S^{3,1}\subset \R^{4,1}$ \\
   				\hline
   				\makecell{Holomorphic quadratic and \\quartic differentials} & \makecell{Weingarten tensor\\$h_0=\s{2\,\partial^N\partial\phi}{\n}$} & \makecell{$\displaystyle\mathscr{Q}_{\phi}=g^{-1}\otimes\left(\partial{\bar{\partial}}h_0\totimes h_0-\partial h_0\totimes\bar{\partial}h_0\right)$\\ $\displaystyle\mkern-9mu+\frac{1}{4}\left(1+H^2\right)h_0\totimes h_0$}\\
   				\hline
   			\end{tabu}
   		\end{center}
   		\caption{Comparison between Willmore and minimal surfaces.}\label{comparison}
   	\end{figure}

    Combining Theorem \ref{G} with the improvement of the main result of \cite{eversion} of Theorem \ref{D}, we obtain the following information on the so-called min-max sphere eversion.
    \begin{theorem}\label{H0}
    	Let $S_+^2\subset\R^3$ (resp $S^2_-\subset \R^3$) be the standard Euclidean sphere with positive (resp. negative) orientation, $\mathrm{Imm}(S^2,\R^3)$ be the space of smooth immersions from $S^2$ to $\R^3$ and denote by $\Omega$ the set of paths between the two spheres, defined by
    	\begin{align*}
    	\Omega=C^0\left([0,1],\mathrm{Imm}(S^2,\R^3)\right)\cap\ens{\{\phi_t\}_{t\in [0,1]}, \phi_0(S^2)=S^2_+, \phi_1(S^2)=S^2_-}.
    	\end{align*}
    	If the cost of the sphere eversion is defined by
    	\begin{align*}
    	\beta_0=\min_{\phi\in \Omega}\max_{t\in [0,1]}W(\phi_t),
    	\end{align*}
    	then there exists finitely many \emph{true} branched Willmore spheres $\phi_1,\cdots,\phi_p,\vec{\Psi}_1,\cdots,\vec{\Psi}_q:S^2\rightarrow \R^3$ such that
    	\begin{align}\label{cost}
    		\beta_0=\sum_{i=1}^{p}W(\phi_i)+\sum_{j=1}^q\left(W(\vec{\Psi}_j)-4\pi\theta_j\right)\in 4\pi\N,
    	\end{align}
    	where the integer $\theta_j=\theta_0(\vec{\Psi}_j,p_j)\geq 1$ is the multiplicity of the branched immersion $\vec{\Psi}_j$ at some point $p_j\in \vec{\Psi}_j(S^2)\subset \R^n$
    \end{theorem}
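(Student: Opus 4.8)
The plan is to read Theorem \ref{H0} as the specialization of Theorem \ref{D} to the one-parameter admissible family generated by the sphere eversion, with the codimension fixed by $n=3$. All of the hard analysis—the quantization of the min-max width and the identification of the concentration limits as \emph{true} branched Willmore spheres—is already encapsulated in Theorem \ref{D}; what is left is to check that the eversion problem fits the hypotheses of that theorem and to cash in the codimension-one geometry supplied by Theorem \ref{G}.

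First I would confirm that the min-max data are admissible and that the width is positive. Smale's sphere-eversion theorem shows $\Omega \neq \emptyset$, so $\beta_0 < \infty$. For positivity, the Willmore inequality gives $W(\phi) \geq 4\pi$ for every immersion $\phi : S^2 \to \R^3$; hence for each path $\{\phi_t\}_{t \in [0,1]} \in \Omega$ one has $\max_{t} W(\phi_t) \geq W(\phi_0) = 4\pi$, and taking the infimum over $\Omega$ yields $\beta_0 \geq 4\pi > 0$. It then remains to recognize $\Omega$—or, more precisely, its saturation under the pseudo-gradient deformations of \cite{geodesics}—as an admissible family $\mathscr{A}$ of $W^{2,4}$ immersions in the exact sense demanded by Theorem \ref{D}. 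This is the construction already performed in \cite{eversion}: the path space with the two oriented round spheres $S^2_\pm$ as fixed endpoints is a legitimate sweepout, the endpoints are absolute minimizers of $W$, and the regularity passage from smooth to $W^{2,4}$ immersions changes nothing because smooth immersions are dense and $W$ is continuous in the relevant topology, so that $\beta_0$ as defined over $\Omega$ coincides with $\inf_{A \in \mathscr{A}} \sup W(A)$.

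With these verifications in place I would simply invoke Theorem \ref{D} with $n = 3$: there exist finitely many \emph{true} branched compact Willmore spheres $\phi_1, \dots, \phi_p$ and $\vec{\Psi}_1, \dots, \vec{\Psi}_q : S^2 \to \R^3$ such that
\begin{align*}
\beta_0 = \sum_{i=1}^p W(\phi_i) + \sum_{j=1}^q \left( W(\vec{\Psi}_j) - 4\pi\theta_j \right) \in 4\pi\N.
\end{align*}
Since $n = 3$, the final clause of Theorem \ref{D} forces the second sum to vanish; this is exactly where Theorem \ref{G} enters, for in codimension one each residual bubble $\vec{\Psi}_j$ is the inverse stereographic projection of a complete branched minimal surface with \emph{vanishing flux}, whose Willmore defect $W(\vec{\Psi}_j) - 4\pi\theta_j$ therefore cancels. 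Relabelling the surviving spheres as $\phi_1, \dots, \phi_m$ then gives $\beta_0 = \sum_{j=1}^m W(\phi_j) \in 4\pi\N$, which is the claim.

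I expect the only genuine obstacle to lie in the admissibility step, namely in certifying that the sphere-eversion path space really satisfies the structural axioms of an admissible family—stability under the deformation used in the min-max scheme and the absence of energy concentration at the round-sphere endpoints—rather than in the concluding algebra. That verification is, however, inherited wholesale from \cite{eversion}; the genuinely new input of the present paper is the pair of Theorems \ref{D} and \ref{G}, which upgrade the abstract existence of a critical width into the explicit, correction-free integer multiple of $4\pi$ displayed above.
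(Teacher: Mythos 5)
Your proposal is correct and follows essentially the same route as the paper, which disposes of Theorem \ref{H0} as an immediate corollary of the min-max quantization of \cite{eversion} upgraded by Theorem \ref{G} (i.e.\ precisely Theorem \ref{D} with $n=3$), the admissibility of the eversion path family being inherited from \cite{eversion}. The only point worth making explicit is that the vanishing of each defect $W(\vec{\Psi}_j)-4\pi\theta_j$ comes not from the flux condition alone but from Kusner's theorem that inversions of complete minimal surfaces realize equality in the Li--Yau inequality, exactly as in the paper's argument for Theorem \ref{B}.
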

    We also know thanks to a topological result of Max and Banchoff that every sphere eversion has a quadruple point (\cite{max}, actually, this reference is not easily accessible, and for an alternative proof, see \cite{another}), so by Li-Yau inequality, we deduce that $\beta_0\geq 16\pi$. It would be interesting to determine if this value is effectively attained, and by which surfaces.\
    
    \textbf{The Willmore energy as a quasi-Morse function.}
    
    More generally, one can consider the following problem.
    Let $k>0$ and $\Gamma\in \pi_k(\mbox{Imm}(S^2,{\mathbb R}^3))$ a non-zero element (provided $\pi_k(\mathrm{Imm}(S^2,\R^3))$ is not trivial). Thanks to Theorem \ref{D} we have
    \[
    \beta_\Gamma=\inf_{\vec{\Phi}(t,\cdot)\simeq\Gamma}\ \max_{t\in S^k}\ W(\vec{\Phi}(t,\cdot))\in 4\pi \, {\mathbb N}^\ast
    \]
    and this furnishes a map
    \begin{align}\label{map}
    \Gamma\in  \pi_k(\mbox{Imm}(S^2,{\mathbb R}^3))\ \longrightarrow \ \frac{\beta_\Gamma}{4\pi}\in {\mathbb N}^\ast.
    \end{align}
    It would be interesting to study the map \eqref{map} giving the Willmore energy of the optimal representative of a non-zero element of these groups.
    
    \subsection{Willmore immersions into $S^4$}
    
    The generalisation of Bryant's theorem relies on the specific algebraic structure of the quartic form and on the four-term asymptotics at branch points of the immersion of the Weingarten tensor the quartic form is a function of. The classification of Willmore spheres in $S^4$ of Montiel (see \cite{montiels4}) also relies on the holomorphy of certain $3$, $4$, and $8$-differentials (here $\partial$ and $\bar{\partial}$ are the normal operators $\partial^N$ and $\bar{\partial}^N$ as in \eqref{defnormal}). 
    
    \begin{theorem}\label{I}
    Let $\Sigma^2$ a closed Riemann surface and $\phi:\Sigma^2\rightarrow S^4$ be a smooth immersion, and $\partial$ and $\bar{\partial}$ the complex operators acting of the normal bundle induced by the immersion $\phi$. Then Montiel's forms of degree $3$, $4$ and $8$ have the following expressions
    \begin{align*}
    \left\{\begin{alignedat}{1}
    &\mathscr{T}_{\phi}=g^{-1}\otimes (\bar{\partial}\h_0\totimes J\h_0)\\
    &\mathscr{Q}_{\phi}=\,g^{-1}\otimes \left(\partial\bar{\partial}\h_0\,\dot{\otimes}\, \h_0-\partial\h_0\otimes\bar{\partial}\h_0
    \right)+\frac{1}{4}(1+|\H|^2)\h_0\,\dot{\otimes}\, \h_0\\
    &\mathscr{O}_{\phi}=g^{-2}\otimes\bigg\{\frac{1}{4}(\partial\bar{\partial}\h_0\,\dot{\otimes}\,\partial\bar{\partial}\h_0)\otimes (\h_0\,\dot{\otimes}\,\h_0)+\frac{1}{4}(\partial\h_0\,\dot{\otimes}\, \partial\h_0)\otimes (\bar{\partial}\h_0\,\dot{\otimes}\, \bar{\partial}\h_0)\\
    &-\frac{1}{2}(\partial\bar{\partial}\h_0\totimes\partial\h_0)\otimes(\bar{\partial}\h_0\totimes\h_0)-\frac{1}{2}(\partial\bar{\partial}\h_0\totimes\bar{\partial}\h_0)\otimes(\partial\h_0\totimes\h_0)+\frac{1}{2}(\partial\bar{\partial}\h_0\totimes\h_0)\otimes(\partial\h_0\totimes\bar{\partial}\h_0)\bigg\}\\
    &+\frac{1}{4}(1+|\H|^2)\,g^{-1}\otimes\left\{\frac{1}{2}(\partial\bar{\partial}\h_0\totimes \h_0)\otimes (\h_0\totimes\h_0)-(\partial\h_0\totimes\h_0)\otimes (\bar{\partial}\h_0\totimes \h_0)+\frac{1}{2}(\partial\h_0\totimes\bar{\partial}\h_0)\otimes (\h_0\totimes\h_0)\right\}\\
    &+\frac{1}{64}(1+|\H|^2)^2\,(\h_0\totimes\h_0)\otimes (\h_0\totimes\h_0),
    \end{alignedat}\right.
    \end{align*}
    where $J$ is the natural almost complex structure on the holomorphic normal bundle. Furthermore, if $\phi$ is a Willmore surface then $\mathscr{T}_{\phi}$ is holomorphic, and if $\mathscr{T}_{\phi}=0$, then $\mathscr{Q}_{\phi}$ and $\mathscr{O}_{\phi}$ are holomorphic. 
    \end{theorem}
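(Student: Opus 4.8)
The plan is to split the argument into the algebraic identities (the stated closed forms for $\mathscr{T}_{\phi}$, $\mathscr{Q}_{\phi}$, $\mathscr{O}_{\phi}$) and the holomorphy statements, and to exploit throughout the fact that in codimension two the almost complex structure $J$ of the oriented rank-two normal bundle is parallel for the normal connection. Since the structure group reduces to $SO(2)$, one has $\D^N J=0$, so that $J$ commutes with $\partial^N$ and $\bar{\partial}^N$; moreover $J$ is skew-symmetric with respect to the normal metric, which is the algebraic fact that will make most self-paired cross-terms collapse. I would also record at the outset that the expression for $\mathscr{Q}_{\phi}$ is nothing but the $S^4$-analogue of Theorem~\ref{E}, so that only the cubic $\mathscr{T}_{\phi}$ and the degree-eight form $\mathscr{O}_{\phi}$ require genuinely new computation.

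For the explicit formulas I would start from Montiel's definitions of the three differentials as contractions of iterated normal $\partial^N$-derivatives of the pseudo-Gauss map $\psi_{\phi}$ (as in \cite{montiels4}), and expand each one in a local conformal chart $z=x+iy$ together with a unitary frame of the holomorphic normal bundle. Writing $\partial^2\psi_{\phi}$ in terms of $\partial^N\h_0$, $\h_0$ and $1+|\H|^2$ by means of the structure (Gauss, Codazzi, Ricci) equations converts the pairings $\s{\partial^k\psi_{\phi}}{\partial^l\psi_{\phi}}_h$ into the displayed polynomials in $\partial\bar{\partial}\h_0$, $\partial\h_0$, $\bar{\partial}\h_0$ and $\h_0$. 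The cubic $\mathscr{T}_{\phi}=g^{-1}\otimes(\bar{\partial}\h_0\totimes J\h_0)$ is the lowest such contraction, and $\mathscr{O}_{\phi}$ appears as the symmetric square / Pfaffian of the second-order normal jet of $\h_0$, which accounts for every monomial in the stated formula; this bookkeeping is routine but long.

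For holomorphy I would use the two analytic inputs that survive in $S^4$: the Codazzi identity, which (as in the first remark after \eqref{res00}) expresses $\bar{\partial}^N\h_0$ through $g\otimes\partial^N\H$ up to terms proportional to $\h_0$, and the equivalence between $\phi$ being Willmore and harmonicity of $\psi_{\phi}$, which controls $\partial^N\bar{\partial}^N\H$. Computing $\bar{\partial}_{\Sigma}\mathscr{T}_{\phi}$ and substituting Codazzi turns the derivative into a sum of a term of the form $\langle\partial^N\H,J\partial^N\H\rangle$, which vanishes by skew-symmetry of $J$, and a term built from $\partial^N\bar{\partial}^N\H$, which the Willmore equation forces to be proportional either to $\h_0$-contractions of the same skew type or to $\mathscr{T}_{\phi}$ itself; in all cases $\bar{\partial}_{\Sigma}\mathscr{T}_{\phi}=0$. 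For the conditional statement I would impose $\mathscr{T}_{\phi}=0$, i.e. $\bar{\partial}\h_0\totimes J\h_0=0$, and show that every obstruction term appearing in $\bar{\partial}_{\Sigma}\mathscr{Q}_{\phi}$ and $\bar{\partial}_{\Sigma}\mathscr{O}_{\phi}$ is a contraction carrying the factor $\bar{\partial}\h_0\totimes J\h_0$ (the only mixed $J$-coupling the normal curvature can produce through the Ricci equation), hence is annihilated by the hypothesis.

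The hard part will be the degree-eight form $\mathscr{O}_{\phi}$: both establishing its explicit polynomial expression and verifying that, after differentiation, all of the many terms not manifestly of Codazzi/Willmore type reorganise into multiples of $\mathscr{T}_{\phi}$. The delicate point is precisely the interplay between the Ricci equation, which feeds the curvature of the normal bundle into $\bar{\partial}^N\partial^N\h_0$, and the skew-symmetry of $J$; I expect the vanishing of $\mathscr{T}_{\phi}$ to be exactly what is needed to suppress the normal-curvature contributions that would otherwise obstruct holomorphy of $\mathscr{Q}_{\phi}$ and $\mathscr{O}_{\phi}$, and confirming this cancellation term by term is where the real work lies.
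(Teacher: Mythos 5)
Your proposal is correct in outline, and for the three explicit formulas it is essentially the paper's own route: the paper also works in a conformal chart with the null eigenframe $\e_1=\tfrac{1}{\sqrt 2}(\n_1+i\n_2)$, $\e_2=\tfrac{1}{\sqrt 2}(\n_1-i\n_2)$ of $J$, for which $\s{\e_1}{\e_1}=\s{\e_2}{\e_2}=0$ and $\s{\e_1}{\e_2}=1$, and then reduces everything to the bilinear expression for $\s{\D_{\p{z}}\D_{\p{z}}\psi}{\D_{\p{z}}\D_{\p{z}}\psi}_h(\vv{\xi},\vv{\eta})$ already computed in the $S^3$ case. Your observation that $\mathscr{Q}_{\phi}$ is the $S^4$ avatar of Theorem \ref{E} is exactly how the paper gets it, in one line, from the identity $\s{\vec{F}}{\e_1}\s{\vec{G}}{\e_2}+\s{\vec{F}}{\e_2}\s{\vec{G}}{\e_1}=\s{\vec{F}}{\vec{G}}$ applied to the pairing $2\s{\partial^2\psi}{\partial^2\psi}_h(\e_1,\e_2)$; the expression for $\mathscr{T}_{\phi}$ is simply quoted from Montiel (\cite{montiels4}, Remark 5). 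For $\mathscr{O}_{\phi}=\s{\partial^2\psi}{\partial^2\psi}(\e_1,\e_1)\otimes\s{\partial^2\psi}{\partial^2\psi}(\e_2,\e_2)$ the bookkeeping you call routine is organised in the paper by a specific device worth knowing: writing $\h_0=h^1\e_1+h^2\e_2$ and repeatedly trading a component monomial $h^iF^j$ for the invariant $\s{\vec{F}}{\h_0}$ minus its partner (the ``circular permutations'' in the proof of Theorem \ref{threedev}), which is what converts the a priori frame-dependent products into the displayed contractions in $\totimes$.

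Where you genuinely diverge is the holomorphy. The paper proves nothing here: it imports Montiel's lemma \eqref{partialdel}, namely $\bar{\partial}\mathscr{T}_{\phi}=0$, $\bar{\partial}\mathscr{Q}_{\phi}=K^{\nperp}g\otimes\mathscr{T}_{\phi}$ and $\bar{\partial}\mathscr{O}_{\phi}=2\vec{\delta}\otimes\mathscr{T}_{\phi}$ for Willmore $\phi$, from which the last sentence of the statement is immediate. Your plan to re-derive these identities from Codazzi, the harmonicity characterisation of Willmore immersions, and the Ricci equation (using $\D^{\nperp}J=0$ and skew-symmetry of $J$, both correct and both verified in the paper for rank-two normal bundles) is viable — it is in substance Montiel's original argument, and your prediction that the obstructions are contractions carrying the factor $\bar{\partial}\h_0\totimes J\h_0$ matches the form of \eqref{partialdel}. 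But as written this is precisely the point at which your proposal says ``I expect'' rather than proves: the term-by-term cancellation for $\bar{\partial}\mathscr{O}_{\phi}$, fed by the normal curvature through the Ricci equation, is the hard content of that route, and asserting the reorganisation into multiples of $\mathscr{T}_{\phi}$ is not yet a proof of it. Either carry out that computation in full, or do what the paper does and cite \eqref{partialdel}; with the citation your argument closes completely, and the only remaining work is the algebraic identification of $\mathscr{O}_{\phi}$, for which the null-frame trick above is the efficient path.
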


    As the analysis of the singularities of the quartic form $\mathscr{Q}_{\phi}$ in Theorem \ref{F} does not depend on codimension, 
    we can prove the following. See Theorem \ref{s4vanish} for a more refined hypothesis. 

    \begin{theorem}\label{J}
    	Let $\Sigma^2$ be a closed Riemann surface and $\phi:\Sigma^2\rightarrow S^4$ be a \emph{variational} Willmore immersion. Then $\mathscr{T}_{\phi}$ is holomorphic, and if $\mathscr{T}_{\phi}=0$, then the meromorphic $4$ and $8$-forms $\mathscr{Q}_{\phi}$ and $\mathscr{O}_{\phi}$ are holomorphic. If 
    	$\mathscr{T}_{\phi}=\mathscr{Q}_{\phi}=\mathscr{O}_{\phi}=0$, the pseudo Gauss map $\mathscr{G}:\Sigma^2\rightarrow \mathbb{C}\mathbb{P}^{4,1}$  of $\phi$ is either holomorphic or anti-holomorphic, or lies lying in a null totally geodesic hypersurface of the null quadric $Q^{3,1}$ of the $5$-dimensional indefinite complex projective plan $\mathbb{C}\mathbb{P}^{4,1}$. In the first case, $\phi$ is the image by the Penrose twistor fibration of a (singular) algebraic curve $C\subset \mathbb{C}\mathbb{P}^3$, and in the other case, $\phi$ is the inverse stereographic projection of a complete (branched) minimal surface with finite total curvature in $\R^4$ and zero flux. Furthermore, the two possibilities coincide if and only if the algebraic curve $C\subset \mathbb{C}\mathbb{P}^3$ lies in some hypersurface $H\simeq  \mathbb{C}\mathbb{P}^2\subset\mathbb{C}\mathbb{P}^3$. In particular, the hypothesis are always satisfied for a \emph{variational} Willmore sphere.
    \end{theorem}

    We remark that we cannot rule out the existence of interior branch points of the dual minimal surface in $\R^4$ when it exists. 

    In particular, the Willmore energy of \emph{true} branched Willmore spheres in $S^4$ is quantized by $4\pi$, and the integer multiple depends only on the degree of the dual algebraic curve or some topological invariants of the dual minimal surface.
    
    Finally, we note as the expansion of $\h_0$ at branch points of Theorem \ref{F} is valid in any codimension, and as we can express any holomorphic form constructed on a Willmore surface only with respect to $\h_0$ for possibly singular terms which enjoy nice compensations as in \eqref{algebra}, this strongly suggests that any generalisation of Bryant and Montiel's classification of Willmore surfaces in $S^n$ for $n\geq 5$ and smooth unbranched immersions shall generalise immediately to branched immersions.
    
    \begin{remintro}
    	After the first version of this work, we saw that there was a version under press of a generalisation to $S^5$ of Bryant's classification (\cite{willmores5}). As there are also papers under review in the cases $S^n$ (with $n\geq 6$), and for obvious size limitation, we will not discuss these cases.
    \end{remintro}

    \renewcommand{\thetheorem}{\thesection.\arabic{theorem}}
   
   \section{Outline of the proofs of the main results}
   
   \textbf{\ref{A}.} The proof of Theorem \ref{A} is given in Section \ref{4}, and is composed of the reunion of the corollary \ref{residues1234} of Noether's Theorem \ref{noether2} for the definition of the four residues, of Theorem \ref{galois} for the correspondence, and finally of Proposition \ref{fluxresidue} and corollary \ref{inversionmin} for the link with minimal surfaces.
   
   \textbf{\ref{B}.} By the main theorem of \cite{quanta}, there exists finitely many points $\ens{p_1,\cdots,p_m}\subset \Sigma^2$, and a sequence of diffeomorphisms $\ens{f_k}_{k\in\N}\subset \mathrm{Diff}_+(\Sigma^2)$, a sequence of conformal transformations $\ens{F_k}_{k\in\N}$ of $\R^n$, such that $F_k\circ \phi_k\circ f_k$ is conformal for all $k\in \N$, and 
   \begin{align}\label{composition}
   	F_k\circ\phi_k\circ f_k\conv{k\rightarrow\infty}\phi_{\infty},\quad \text{in}\;\, C^l_{\mathrm{loc}}(\Sigma^2\setminus\ens{p_1,\cdots,p_m})\quad \text{for all}\;\, l\in \N,
   \end{align}
   and for all $i=1,\cdots,p$, and all $j=1,\cdots,q$, up to rescaling, the limiting Willmore spheres $\vec{\Psi}_i$ and $\vec{\xi}_j$ are obtained by taking compositions of diffeomorphism in the domain and conformal transformations in the target, the strong convergence in $C^l_{\mathrm{loc}}$ minus finitely many points allows one to pass to the limit in the first and fourth residue in \eqref{res00} of these compositions to deduce that $\vec{\Psi}_i$ and $\vec{\xi}_j$ have vanishing first residue by Theorem \ref{A}.
   
   \textbf{\ref{C}.} It follows from proposition \ref{parenthesis} and \cite{blow-up}, \cite{blow-up2}. 
   
   \textbf{\ref{D}.} This theorem is an easy consequence of lemma VI.2 of \cite{eversion} when we replace the first by the fourth residue.
   
   \textbf{\ref{E}.} This is Theorem \ref{int}.
   
   \textbf{\ref{F}.} This is Theorem \ref{devinv}.
   
   \textbf{\ref{G}.} This is Theorem \ref{devh0}.
   
   \textbf{\ref{H0}.} This is a trivial corollary of the main result of \cite{eversion} combined with Theorem \ref{G}.
   
   \textbf{\ref{I}.} This is Theorem \ref{threedev}
   
   \textbf{\ref{J}.} This is Theorem \ref{s4generalfinal}.

    		\section{Noether's theorem, residues and conformal invariance}\label{4}

   In the sequel we always assume that the ambient dimension $n$ satisfies the inequality $n\geq 3$.
  
   Let $\Sigma^2$ be a compact Riemann surface, $K_{\Sigma^2}$ be its canonical line bundle, and $\phi:\Sigma^2\rightarrow S^n$ be a $C^{1,\alpha}$ (for some $\alpha<1$) conformal immersion (as this is the minimal regularity for Willmore surfaces, this assumption is not restrictive), and $g$ be the induced metric on $\Sigma^2$ by $\phi$ of the Euclidean metric on $S^n$. Let us write $\D$ the Levi-Civita connection on the pull-back bundle $\phi^\ast TS^n$, and the splitting
    \begin{align*}
    \D=\D^\top+\D^N=\bar{\D}+\D^N
    \end{align*}
    where $\bar{\D}=\D^\top$ and $\D^N$ are the tangent and normal parts respectively. In particular, for all tangent vectors $X,Y,Z$, one has
    \begin{align*}
    \s{\bar{\D}_{Z}X}{Y}=\s{\D_ZX}{Y}.
    \end{align*}
    Consider on $S^n$ the complexified tangent bundle 
    \begin{align*}
    T_{\C}S^n=TS^n\otimes_{\R}\C
    \end{align*}
    and the following splitting of the complex pull-back bundle $\phi^\ast T_{\C}S^n$
    \begin{align*}
    \phi^\ast T_{\C}S^n=T_{\C}\Sigma^2\oplus T_{\C}^\nperp \Sigma^2.
    \end{align*}
    We still write $\D=\bar{\D}+\D^N$ the extension by linearity of the Levi-Civita connection $\D$ on $\phi^\ast T_{\C}S^n$. There exists a unique complex structure on $T_{\C}^N\Sigma^2$, see \cite{ejiri}. Let us see how to define it in low dimensions.
    
     If $n=3$, then the unit normal $\n:S^2\rightarrow S^3$ of $\phi$ furnishes a global non-vanishing section of $T_{\C}^\nperp \Sigma^2$. Therefore, if $s\in \Gamma(T_{\C}^\nperp \Sigma^2)$ is a $C^1$ section, there exists $f\in C^1(\Sigma^2,\C)$ such that $s=f\n$, and this never vanishing section  of $T_{\C}^N\Sigma^2$ furnishes a unique structure of holomorphic line bundle on $T_{\C}^\nperp\Sigma^2$, which makes it a trivial bundle. We can also give a more abstract proof : if $J$ the almost complex structure defined by
    \begin{align*}
    	J\n=i\n.
    \end{align*}
    Then $\D^\nperp J=0$, as
    \begin{align*}
    	\D_{\p{z}}^\nperp J(\n)=\D_{\p{z}}^\nperp (J\n)-J(\D_{\p{z}}^\nperp \n)=i\D_{\p{z}}^\nperp \n=0
    \end{align*}
    as $\D_{\p{z}}^\nperp\n=0$. As $\n$ is real, we also readily have $\D_{\p{\z}}^\nperp J=0$. Therefore, this almost complex structure is integrable, and by the Newlander-Nirenberg theorem (which we can apply as the normal bundle is $C^{1,\alpha}$, see \cite{newlander}, or chapter V of \cite{hormandercomplex}) there exists a unique complex structure on the normal bundle $T_{\C}^N\Sigma^2$ such that $T_\C^\nperp \Sigma^2\rightarrow\Sigma^2$ is a holomorphic line bundle, which is in particular the same as the one previously defined.
    
    If $n=2$, and $\n_1,\n_2$ is a local orthonormal base of $T_{\C}^\nperp\Sigma^2$, we define an almost complex structure $J$ by $J\n_1=-n_2$. Then we compute by definition of $\D$
    \begin{align*}
    	\D_{\p{z}}^\nperp J(\n_1)&=\D_{\p{z}}^\nperp(J\n_1)-J(\D_{\p{z}}^\nperp \n_1)\\
    	&=-\D_{\p{z}}^\nperp\n_2-J\left(\s{\D_{\p{z}}^\nperp\n_1}{\n_1}\n_1+\s{\D_{\p{z}}^\nperp\n_1}{\n_2}\n_2\right)\\
    	&=-\D_{\p{z}}^N\n_2-\s{\D_{\p{z}}^\nperp\n_1}{\n_2}J(\n_2)\\
    	&=-\left(\s{\D_{\p{z}}^N\n_2}{\n_1}+\s{\D_{\p{z}}^\nperp\n_1}{\n_2}\right)\n_1\\
    	&=-\p{z}(\s{\n_1}{\n_2})\,\n_1=0
    \end{align*}
    so we also have $\D^\nperp J=0$, and the previous argument applies. In dimension $4$, we note that one could directly define a complex structure on the real normal bundle (see \cite{montiels4}) ; however, this is not true in general and in the codimension $1$ case in particular. 
    \begin{prop}
    	Let $\phi:\Sigma^2\rightarrow S^n$ be a weak Willmore immersion. Then the complexified pull-back bundle $\phi^\ast T_{\C}S^n$ splits into tangent and normal parts as
    	\begin{align*}
    		\phi^\ast T_{\C}S^n=T_{\C}\Sigma^2\oplus T_{\C}^N\Sigma^2,
    	\end{align*}
    	and there exists a unique complex structure on $T_{\C}^N\Sigma^2$ which is compatible with the decomposition $\D=\D^\top+\D^\nperp$ of the Levi-Civita connection induced by $g=\phi^\ast g_{S^n}$ on $\phi^\ast TS^n$ and makes it a holomorphic line bundle.
    \end{prop}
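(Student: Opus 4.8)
The plan is to reduce the statement to the construction of a Dolbeault operator on the complex bundle $T_{\C}^N\Sigma^2$ and an application of the Newlander--Nirenberg theorem over the Riemann surface $\Sigma^2$, following verbatim the two model computations ($n=3$ and codimension two) carried out just above. First I would fix the \emph{splitting}: since $\phi$ is a $C^{1,\alpha}$ conformal immersion, $d\phi(T\Sigma^2)$ is a genuine $2$-plane at every point and its $h$-orthogonal complement is the real normal bundle, whose complexification gives $\phi^\ast T_{\C}S^n=T_{\C}\Sigma^2\oplus T_{\C}^N\Sigma^2$. Projecting the complexified Levi-Civita connection onto the two summands produces $\D=\bar{\D}+\D^N$, and since $\D$ is metric and the projection orthogonal, $\D^N$ is itself a metric connection on $T_{\C}^N\Sigma^2$.

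Next I would equip $T_{\C}^N\Sigma^2$ with its tautological complex structure $J$ (multiplication by $i$), which is $\D^N$-parallel because $\D^N$ is $\C$-linear, and introduce the $(0,1)$-part $\bar{\partial}^N=\D^N_{\p{\z}}\left(\,\cdot\,\right)\otimes d\z$ as a candidate Dolbeault operator. The verification $\D^N J=0$ is exactly the two displayed computations above: in codimension one because the unit section $\n$ of the rank-one real normal bundle satisfies $\D^N\n=0$, and in codimension two because $\s{\n_1}{\n_2}\equiv 0$ forces the skew-symmetric normal connection form to commute with the rotation $J$.

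The one genuinely delicate point, and the step I expect to be the main obstacle, is the \emph{integration} of $\bar{\partial}^N$ into an honest holomorphic structure. Over a Riemann surface there are no $(0,2)$-forms, so the integrability condition $(\bar{\partial}^N)^2=0$ (equivalently the vanishing of the Nijenhuis tensor of $J$) holds for free; the real difficulty is regularity, since Newlander--Nirenberg and the production of local $\bar{\partial}^N$-holomorphic frames require $J$, hence $\n$ and $\D^N$, to be at least $C^{1,\alpha}$. This is the sole place where the hypothesis is used: by $\epsilon$-regularity and elliptic estimates a weak Willmore immersion has $C^{1,\alpha}$ Gauss map, so \cite{newlander} applies and yields local holomorphic frames, i.e. a holomorphic structure on $T_{\C}^N\Sigma^2$ whose local holomorphic sections are precisely $\ker\bar{\partial}^N$.

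Uniqueness is then immediate: any holomorphic structure compatible with $\D=\bar{\D}+\D^N$ must have $\ker\bar{\partial}^N$ as its sheaf of local holomorphic sections, and a complex bundle over $\Sigma^2$ is determined by that sheaf, so it coincides with the one just built (compare \cite{ejiri}). I would stress in passing that the argument uses nothing beyond the $C^{1,\alpha}$ regularity of the immersion, so the Willmore equation enters only to guarantee this regularity in the weak setting; the algebra of the construction is independent of whether $\phi$ is a critical point.
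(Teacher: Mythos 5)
Your skeleton is the paper's own: orthogonal splitting of $\phi^\ast T_{\C}S^n$, a $\D^N$-parallel almost complex structure, the operator $\bar{\partial}^N=\D^N_{\p{\z}}(\,\cdot\,)\otimes d\z$, Newlander--Nirenberg (in the low-regularity form valid for $C^{1,\alpha}$ data, which is exactly how the paper invokes \cite{newlander} and \cite{hormandercomplex}), and uniqueness read off from the sheaf $\ker\bar{\partial}^N$. The regularity remark --- that the Willmore equation enters only through the $C^{1,\alpha}$ regularity of $\n$, and that integrability is free over a surface since there are no $(0,2)$-forms --- is also faithful to what the paper does.

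There is, however, one genuine slip: you take $J$ to be the \emph{tautological} complex structure on the complexification, i.e.\ multiplication by $i$, and declare it $\D^N$-parallel ``because $\D^N$ is $\C$-linear.'' That statement is true but vacuous, and it is \emph{not} the structure the paper constructs in codimension two. For $n=3$ the two coincide (the real unit normal $\n$ generates $T_{\C}^N\Sigma^2$ over $\C$, so $J\n=i\n$ extends to multiplication by $i$, and the complexified normal bundle is already a complex line bundle). But for codimension two the paper's $J$ is the complexified \emph{rotation} $J\n_1=-\n_2$ of the real normal plane; the displayed computation you defer to is precisely the verification that this rotation is parallel, which is a real statement using the skew-symmetry of the normal connection form, $\s{\D^N_{\p{z}}\n_1}{\n_2}+\s{\n_1}{\D^N_{\p{z}}\n_2}=0$, and not a formal consequence of $\C$-linearity. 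With your tautological $J$ one obtains a holomorphic bundle of rank $n-2$, not a line bundle, and one loses the eigenspace splitting $T_{\C}^N\Sigma^2=\mathscr{N}\oplus\bar{\mathscr{N}}$ into $(\pm i)$-eigenbundles of the rotation --- which is exactly the mechanism by which the normal bundle becomes a holomorphic \emph{line} bundle and which the paper needs later for the $S^4$ theory (the forms $\mathscr{T}_{\phi}$, $\mathscr{Q}_{\phi}$, $\mathscr{O}_{\phi}$ are built from $\mathscr{N}$). So your argument proves the proposition as stated only in the codimension-one case; to cover the case addressed by the paper's second computation you must replace ``multiplication by $i$'' by the parallel rotation and carry out (rather than trivialize) the computation $\D^N J=0$, then apply your Koszul--Malgrange/Newlander--Nirenberg step to the $i$-eigenbundle. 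Your uniqueness paragraph is unaffected by this correction.
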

    
    We finally introduce the operators
    \begin{align}\label{delbar}
    \partial^N=\D_{\partial_z}^\nperp\left(\,\cdot\,\right)\otimes dz\quad \bar{\partial}^N=\D_{\partial_{\z}}^\nperp\left(\,\cdot\,\right)\otimes d\z
    \end{align}
    acting on $\Gamma(T^\nperp_{\C}\Sigma^2)$, the space of sections of the complexified normal bundle. In particular, a section $s\in\Gamma(T^\nperp_{\C}\Sigma)$ is holomorphic is and only if $\bar{\partial}^Ns=0$. If we write $\mathscr{L}=T_{\C}^\nperp \Sigma^2$, then with the notations of  the appendix \ref{line}, we have $\bar{\partial}^N=\bar{\partial}_{\mathscr{L}}$. Furthermore, we have a decomposition
    \begin{align*}
    &\partial=\partial^N+\partial^\top,\quad \bar{\partial}=\bar{\partial}^{N}+\bar{\partial}^\top
    \end{align*}
    acting on sections of the total bundle $\phi^\ast T_{\C}S^n$. If $p,q\geq 1$ are fixed integers, and $g$ a smooth metric on $\Sigma^2$ let
    \begin{align*}
    	g^{-1}:\Gamma(K_{\Sigma^2}^p\otimes \bar{K}_{\Sigma^2}^q)\rightarrow \Gamma(K_{\Sigma^2}^{p-1}\otimes \bar{K}_{\Sigma^2}^{q-1})
    \end{align*}
    defined in the space of \emph{continuous} sections of $K_{\Sigma^2}^p\otimes \bar{K}_{\Sigma^2}^{q-1}$ as follows : in a local complex chart $z$, write $g=e^{2\lambda}dz\otimes d\z$, for some smooth positive function $e^{2\lambda}$. Then for all continuous section $\xi$, there exists locally, there exists $f$ such that
    \begin{align*}
    \xi=f(z)dz^p\otimes d\z^{\,q}
    \end{align*}
    and
    \begin{align*}
    	g^{-1}\otimes \xi=e^{-2\lambda(z)}f(z)dz^{p-1}\otimes d\z^{\,q-1}.
    \end{align*}
    This is easy to see that such definition defines a section of $K_{\Sigma^2}^{p-1}\otimes \bar{K}_{\Sigma^2}^{\,q-1}$.
    
    We also remark that one could even remove the positively condition on $p$ and $q$, as negative sections also occur naturally; for example with the Beltrami differentials of in the definition of the Weil-Petersson metric (see \cite{tromba}, \cite{weil}), as for any quadratic differential $\alpha\in \Gamma(K_{\Sigma^2}^2)$ given locally by $\alpha=f(z)dz^2$, if $g=e^{2\lambda}dz\otimes d\z$ is any smooth metric, we have
    \begin{align*}
    |\alpha|_{WP}^2=g^{-2}\otimes \alpha\otimes \bar{\alpha}=e^{-4\lambda}|f(z)|^2,
    \end{align*}
    and in the following, the reference to the metric $g$ in the Weil-Petersson norm will always be implicit.
    
    Let us come back to a slightly more general context, where we consider immersions $\phi:\Sigma^2\rightarrow (M^n,h)$, where $(M^n,h)$ is a $C^3$ Riemannian manifold of constant sectional curvature. In a local coordinates system $(x_1,x_2)$, we introduce the complex coordinate $z=x_1+ix_2$ and notations
    \begin{align*}
    &\partial_z=\frac{1}{2}(\d{1}-i\d{2}),\quad \quad
    \partial_{\bar{z}}=\frac{1}{2}(\d{1}+i\d{2}),\qquad
    \e_z=\partial_{z}\phi\quad\quad \e_{\bar{z}}=\partial_{\bar{z}}\phi
    \end{align*}
    We note $\s{\,\cdot\,}{\,\cdot\,}$ the metric $h$, $\D$ its Levi-Civita connexion, and we decompose $\D$ as
    \begin{align*}
    \D=\D^\top+\D^\nperp=\bar{\D}+\D^\nperp
    \end{align*}
    where $\bar{\D}=\D^\top$ and $\D^\nperp$ are the tangent and normal parts respectively. In particular, for all tangent vectors $X,Y,Z$, one has
    \begin{align*}
    \s{\bar{\D}_{Z}X}{Y}=\s{\D_ZX}{Y}.
    \end{align*}    
    Then by conformality of $\phi$, one has
    \begin{align}\label{orthogonality}
    \bar{\s{\e_{\bar{z}}}{\e_{\bar{z}}}}=\s{\e_z}{\e_z}&=\frac{1}{4}\left(|\d{1}\phi|^2-|\d{2}\phi|^2-2i\s{\d{1}\phi}{\d{2}\phi}\right)=0\nonumber\\
    \s{\e_z}{\e_{\bar{z}}}&=\frac{1}{4}\left(|\d{1}\phi|^2+|\d{2}\phi|^2\right)=\frac{e^{2\lambda}}{2}.
    \end{align}
    Therefore, we have
    \begin{align}\label{H}
    \vec{H}_0&=\frac{e^{-2\lambda}}{2}\left(\vec{\I}(\e_1,\e_1)-\vec{\I}(\e_2,\e_2)-2i\,\vec{\I}(\e_1,\e_2)\right)=2\,e^{-2\lambda}\vec{\I}(\e_z,\e_z)\nonumber\\
    \vec{H}&=\frac{e^{-2\lambda}}{2}\left(\vec{\I}(\e_1,\e_1)+\vec{\I}(\e_2,\e_2)\right)=2\,e^{-2\lambda}\vec{\I}(\e_z,\e_{\bar{z}})
    \end{align}
    Furthermore, as $\D_{\partial_z}\e_{\z}=\D_{\partial_{\z}}\e_z=4^{-1}\Delta\phi$ has no tangential component, by \eqref{orthogonality} we deduce the additional following properties
    \begin{align}\label{conforme}
    \s{\D_{\partial_z}\e_z}{\e_z}&=\s{\D_{\partial_z}\e_{\z}}{\e_{\bar{z}}}=0\nonumber\\
    \s{\D_{\partial_z}\e_z}{\e_{\z}}&=\frac{1}{2}\partial_z (e^{2\lambda}),\qquad
    \s{\D_{\partial_z}\e_{\z}}{\e_z}=\frac{1}{2}\partial_{\z} (e^{2\lambda})
    \end{align}
    If $W$ is defined by
    \begin{align*}
    W(\phi)=\int_{\Sigma^2}(|\H_g|^2-K_g+K_h)\,d\vg
    \end{align*}
    we have if $(\vec{\epsilon}_1,\vec{\epsilon}_2)=(e^{-\lambda}\e_1,e^{-\lambda}\e_2)$ is an orthonormal frame, the mean and Gauss curvature are respectively defined by
    \begin{align*}
    \vec{H}_g=\frac{1}{2}\left(\vec{\I}(\vec{\epsilon}_1,\vec{\epsilon}_1)+\vec{\I}(\vec{\epsilon}_2,\vec{\epsilon}_2)\right)\quad K_g=K_h+\s{\vec{\I}(\vec{\epsilon}_1,\vec{\epsilon}_1)}{\vec{\I}(\vec{\epsilon}_2,\vec{\epsilon}_2)}-|\vec{\I}(\vec{\epsilon}_1,\vec{\epsilon}_2)|^2.
    \end{align*}
    Therefore, we have
    \begin{align*}
    |\vec{H}_g|^2-K_g=\frac{1}{4}|\vec{\I}(\vec{\epsilon}_1,\vec{\epsilon}_1)-\vec{\I}(\vec{\epsilon}_2,\vec{\epsilon}_2)|^2+|\vec{\I}(\vec{\epsilon}_1,\vec{\epsilon}_2)|^2.
    \end{align*}
    Recall that the Weingarten operator is defined by
    \begin{align*}
    \vec{H}_0=\frac{1}{2}\left(\vec{\I}(\vec{\epsilon}_1,\vec{\epsilon}_1)-\vec{\I}(\vec{\epsilon}_2,\vec{\epsilon}_2)-2i\,\vec{\I}(\vec{\epsilon}_1,\vec{\epsilon}_2)\right).
    \end{align*}
    This implies that
    \begin{align*}
    |\vec{H}_0|_g^2&=\frac{1}{4}\s{\vec{\I}(\vec{\epsilon}_1,\vec{\epsilon}_1)-\vec{\I}(\vec{\epsilon}_2,\vec{\epsilon}_2)-2i\vec{\I}(\vec{\epsilon}_1,\vec{\epsilon}_2)}{\vec{\I}(\vec{\epsilon}_1,\vec{\epsilon}_1)-\vec{\I}(\vec{\epsilon}_2,\vec{\epsilon}_2)+2i\vec{\I}(\vec{\epsilon}_1,\vec{\epsilon}_2)}\\
    &=\frac{1}{4}\left|\vec{\I}(\vec{\epsilon}_1,\vec{\epsilon}_1)-\vec{\I}(\vec{\epsilon}_2,\vec{\epsilon}_2)\right|+|\vec{\I}(\vec{\epsilon}_1,\vec{\epsilon}_2)|^2
    =|\H_g|^2-K_g+K_h.
    \end{align*}
    As in a conformal chart, we have the following expression of the Weingarten tensor
    \begin{align*}
    \h_0=(e^{2\lambda}\H_0)dz^2
    \end{align*}
    and the Weil-Petersson norm of $\h_0$ reads
    \begin{align*}
    |\h_0|^2_{WP}=e^{-4\lambda}|e^{2\lambda}\H_0|^2=|H_0|^2=|\H_g|^2-K_g+K_h,
    \end{align*}
    we obtain
    \begin{align*}
    W(\phi)=\int_{\Sigma^2}^{}|\h_0|^2_{WP}\,d\vg.
    \end{align*}

    \begin{lemme}\label{complexcons}
    	Let $\Sigma^2$ be a closed Riemann surface and $(M^n,h)$ be a smooth Riemannian manifold with constant sectional curvature. Then for all smooth immersion $\phi:\Sigma^2\rightarrow (M^n,h)$, we have
    	\begin{align}\label{divh0}
    	\ast_g d\H-3\ast_g (d\H)^{N}+\star (\vec{H}\wedge d\n)=-4\,\mathrm{Im}\left(g^{-1}\otimes \left(\bar{\partial}^N-\bar{\partial}^\top\right)\h_0-|\h_0|^2_{WP}\partial\phi\right)
    	\end{align}
    	where $\H$ is the mean curvature and $\h_0=2\,\bar{\partial}^N\partial\phi=2\,\vec{\I}(\e_z,\e_z)dz^2$ is the Weingarten tensor.
    \end{lemme}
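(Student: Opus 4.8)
The plan is to verify the identity \eqref{divh0} pointwise, working in a local conformal chart $z = x_1 + ix_2$ in which $g = e^{2\lambda}\,dz\otimes d\z$, so that every object can be expanded in the frame $\e_z = \p{z}\phi$, $\e_{\z} = \p{\z}\phi$ by means of the conformality relations \eqref{orthogonality}, \eqref{conforme} and the expressions \eqref{H} for $\H$ and $\H_0$. Since both sides are real $1$-forms valued in $\phi^\ast TM^n$ and the chart is conformal, it suffices to match the coefficient of $dz$; the coefficient of $d\z$ then follows by complex conjugation. The basic mechanism is that on a Riemann surface the Hodge star acts by $\ast_g\,dz = -i\,dz$ and $\ast_g\,d\z = i\,d\z$, so that for any real section $\vec{X}$ of $\phi^\ast TM^n$ one has $\ast_g\,d\vec{X} = 2\,\Im(\D_{\p{z}}\vec{X}\otimes dz)$. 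Applying this to $\vec{X} = \H$ and to its normal projection gives
\begin{align*}
\ast_g\,d\H - 3\ast_g(d\H)^N = 2\,\Im\left(\partial^\top\H - 2\,\partial^N\H\right).
\end{align*}

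Next I would eliminate all normal derivatives of $\h_0$ from the right-hand side. By the Codazzi identity recorded in the remark following \eqref{res00}, the complex expression obeys
\begin{align*}
g^{-1}\otimes\left(\bar{\partial}^N - \bar{\partial}^\top\right)\h_0 - |\h_0|^2_{WP}\,\partial\phi = \partial\H + |\H|^2\,\partial\phi + 2\,g^{-1}\otimes\s{\H}{\h_0}\otimes\bar{\partial}\phi,
\end{align*}
so that the target of the computation becomes $-4\,\Im$ of a purely first-order expression in $\H$, $\phi$ and $\h_0$. Comparing with the reduced left-hand side, the whole statement amounts to showing that the ambient term $\star(\H\wedge d\n)$ supplies exactly the missing tangential contributions, namely the $|\H|^2\,\partial\phi$ and $\s{\H}{\h_0}\,\bar{\partial}\phi$ pieces together with the correct multiple of the normal derivative $\partial^N\H$, the signs and factors being tracked through the conjugation introduced by $\Im$.

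The main obstacle is therefore the term $\star(\H\wedge d\n)$, whose treatment must be uniform in the codimension: here $\n$ is the unit $(n-2)$-vector representing the oriented normal plane, $\H\wedge d\n$ is an $(n-1)$-vector valued $1$-form, and $\star$ returns it to a vector valued $1$-form in $\R^n$. I would expand $d\n = \p{z}\n\,dz + \p{\z}\n\,d\z$ and use the Weingarten relations to write the derivatives of $\n$ in terms of the shape operator and of $\e_z, \e_{\z}$; inserting the conformality identities \eqref{conforme} together with $\H = 2e^{-2\lambda}\vec{\I}(\e_z,\e_{\z})$ and $\h_0 = 2\,\vec{\I}(\e_z,\e_z)\,dz^2$ then lets one evaluate $\star(\H\wedge d\n)$ as an explicit combination of $\partial^N\H$, $|\H|^2\partial\phi$ and $\s{\H}{\h_0}\bar{\partial}\phi$. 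Collecting the three contributions and checking that the numerical coefficients assemble to $-4\,\Im$ of the Codazzi expression finishes the proof; the constancy of the sectional curvature of $(M^n,h)$ is what guarantees that no extra ambient curvature terms survive in this matching.
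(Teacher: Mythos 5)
Your proposal is correct and follows essentially the same route as the paper's proof of Lemma \ref{complexcons}: both reduce the identity to a pointwise computation in a conformal chart using the action of $\ast_g$ on $(1,0)$- and $(0,1)$-forms, convert the normal derivative via the Codazzi identity $\partial^N\H=g^{-1}\otimes\bar{\partial}^N\h_0$ (equivalently the rewriting you quote from the remark after \eqref{res00}), compute the tangential projections $\partial^\top\H=-|\H|^2\,\partial\phi-g^{-1}\otimes\s{\H}{\h_0}\otimes\bar{\partial}\phi$ and $\bar{\partial}^\top\h_0=-|\h_0|^2_{WP}\,g\otimes\partial\phi-\s{\H}{\h_0}\otimes\bar{\partial}\phi$ from the conformality relations, and evaluate $\star(\H\wedge d\n)$ through the Weingarten relation $\D_{\e_z}\n=-H\e_z-\H_0\e_{\z}$ before matching coefficients. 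You also correctly locate where constant sectional curvature is used, namely the vanishing of $\left(R(\e_z,\e_{\z})\e_z\right)^N$ in the Codazzi--Mainardi equation, which is exactly the point the paper makes explicit in the general proposition following the lemma.
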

    \begin{rem}
    	We could make a statement for arbitrary target; however, curvature terms will prevent to write the equation in divergence form, and the notion of residue does not make sense as the integration of an exact form. We would obtain
    	\begin{align*}
    	\Im\,d\left(g^{-1}\otimes \left(\bar{\partial}^N-\bar{\partial}^\top\right)\h_0-|h_0|_{WP}^2\,\partial\vec{\Psi}+(R(\e_z,\e_{\z})\e_z)^N\right)=\mathscr{R}(\H)
    	\end{align*}
    	for some curvature tensor (of class $C^{\nu-2}$ is $(M^n,h)$ is $C^\nu$) depending only on $\H$ and $d\vec{\Psi}$, see \cite{riviere1} for more details. It makes little doubt that the results of \cite{beriviere} could be generalised to this setting, however, for our immediate goal, this seems of little interest. For the first complex formulation of Willmore equation, see the paper of Mondino in collaboration with the second author (\cite{mondino}).
    \end{rem}
    \begin{proof}
    	We recall (see \cite{riviere1}) that the Willmore equation in a space of constant sectional curvature is equivalent to
    	\begin{align*}
    	d\left(\ast_g d\H-3\ast_g(d\H)^\nperp-\star\left(d\n\wedge\H\right)\right)=0
    	\end{align*}
    	We first compute
    	\begin{align*}
    	\ast_g(d\H)^\nperp&=\D_{\e_1}^\nperp\H\,dx_2-\D_{\e_2}^\nperp \H dx_1
    	=\D_{\e_z+\e_{\z}}^\nperp\H \frac{dz-d\z}{2i}-\D_{i(\e_z-\e_{\z})}^\nperp \H\frac{dz+d\z}{2}\\
    	&=\frac{1}{i}\left(\D_{\e_z}^\nperp\H-\D_{\e_{\z}}^\nperp\H\right)=2\,\Im(\D_{\e_z}^\nperp \H dz)
    	=2\,\Im(\partial^N \H).
    	\end{align*}
    	We compute by definition of $\D^\nperp$ as $\bar{\D}_{\partial_{\z}}\e_z=0$
    	\begin{align*}
    	\D_{\partial_z}^\nperp\H&=\D_{\partial_z}^\nperp\left(2e^{-2\lambda}\vec{\I}(\e_z,\e_{\z})\right)=2\,\partial_z(e^{-2\lambda})\vec{\I}(\e_z,\e_{\bar{z}})+2e^{-2\lambda}\left(\D_{\partial_z}^\nperp\vec{\I}(\e_z,\e_{\z})+\vec{\I}(\bar{\D}_{\partial_z}\e_z,\e_{\z})+\vec{\I}(\e_z,\bar{\D}_{\partial_z}\e_{\z})\right)\\
    	&=-2\,e^{-4\lambda}\partial_z(e^{2\lambda})\vec{\I}(\e_z,\e_{\z})+2\,e^{-2\lambda}\left(\D_{\partial_z}^\nperp\vec{\I}(\e_z,\e_{\z})+\vec{\I}(\bar{\D}_{\partial_z}\e_z,\e_{\z})\right)\\
    	&=-e^{-2\lambda}\partial_z(e^{2\lambda})\H+2\,e^{-2\lambda}\left(\D_{\partial_z}^\nperp\vec{\I}(\e_z,\e_{\z})+\vec{\I}(\bar{\D}_{\partial_z}\e_z,\e_{\z})\right)
    	\end{align*}
    	Then by Codazzi-Mainardi formula and 
    	as $\bar{\D}_{\partial_z}\e_{\z}=0$, we have
    	\begin{align*}
    	\D_{\partial_z}^\nperp\vec{\I}(\e_z,\e_{\z})&=\D_{\partial_{\z}}^\nperp\vec{\I}(\e_z,\e_z)=\D_{\partial_{\z}}^\nperp\left(\vec{\I}(\e_z,\e_z)\right)-2\,\vec{\I}(\bar{\D}_{\partial_{\z}}\e_z,\e_z)
    	=\D_{\partial_{\z}}^\nperp\left(\vec{\I}(\e_z,\e_z)\right)
    	\end{align*}
    	and
    	\begin{align*}
    	\vec{\I}(\bar{\D}_{\partial_z}\e_z,\e_{\z})&=2e^{-2\lambda}\s{\D_{\partial_z}\e_z}{\e_{\z}}\vec{\I}(\e_z,\e_{\z})+2e^{-2\lambda}\s{\D_{\partial_z}\e_z}{\e_z}\vec{\I}(\e_{\z},\e_z)
    	=e^{-2\lambda}\partial_z(e^{2\lambda}\vec{\I}(\e_z,\e_{\z})
    	=\frac{1}{2}\partial_z(e^{2\lambda})\H.
    	\end{align*}
    	Therefore
    	\begin{align*}
    	\D_{\partial_z}^\nperp\H&=-e^{-2\lambda}\partial_z(e^{2\lambda})\H+2e^{-2\lambda}\left(\D_{\partial_{\z}}^\nperp\vec{\I}(\e_z,\e_z)+\frac{1}{2}\partial_z(e^{2\lambda})\vec{H}\right)
    	=2e^{-2\lambda}\D_{\partial_{\z}}^\nperp\left(\vec{\I}(\e_z,\e_z)\right)
    	=e^{-2\lambda}\D_{\partial_{\z}}^\nperp(e^{2\lambda}\H_0).
    	\end{align*}
    	and we deduce that
    	\begin{align}\label{codazzi}
    	\partial^N\H=g^{-1}\otimes \bar{\partial}^N\h_0
    	\end{align}
    	Then we have
    	\begin{align*}
    	\ast_g d\H-3\ast_g(d\H)^N=\ast_g (d\H)^\top-4\Im\left(g^{-1}\otimes \bar{\partial}^N\h_0\right)
    	\end{align*}
    	while
    	\begin{align*}
    	\D_{\e_z}^\top\H=-|\H|^2\e_z-\s{\H}{\H_0}\e_{\z}
    	\end{align*}
    	and
    	\begin{align}\label{h01}
    	\ast_g (d\H)-3\ast_g (d\H)^\nperp=-4\,\Im(g^{-1}\otimes\bar{\partial}^N\h_0)-2\,\Im\left(|\H|^2\e_z+\s{\H}{\H_0}\e_{\z}\right).
    	\end{align}
    	Now recall
    	that the unit normal $\n$ is defined by $\n=e^{-2\lambda}\,\star(\e_1\wedge \e_2)$, so that 
    	\begin{align*}
    	\star(\n\wedge\e_1)=\e_2,\quad \star(\n\wedge\e_2)=-\e_1.
    	\end{align*}
    	Furthermore, an immediate computation shows that 
    	\begin{align*}
    	\D_{\e_z}\n=-H\e_z-\H_0\e_{\z}
    	\end{align*}
    	and
    	\begin{align}\label{step0}
    	\star (\n\wedge \D_{\e_z}\n)=-iH\e_z+i\H_0\e_{\z}.
    	\end{align}
    	As 
    	$
    	d\n=2\Re(\partial\n)
    	$, 
    	we deduce from \eqref{step0} that 
    	\begin{align}\label{h02}
    	\star( \H\wedge d\n)=2\Re\left(-i|H|^2\e_z\,dz+i\s{\H}{\H_0}\e_{\z}d\z\right)
    	=2\,\Im\left(|\H|^2\e_z dz-\s{\H}{\H_0}\e_{\z}d\z\right).
    	\end{align}
    	Finally by \eqref{h01} and \eqref{h02}
    	\begin{align*}
    	&\ast_g(d\H)-3\ast_g(d\H)^\nperp+\ast(\H\wedge d\n)=-4\,\Im\left(g^{-1}\otimes\bar{\partial}^N\h_0+\s{\H}{\H_0}\e_{\z} d\z\right)\\
    	&=-4\,\Im\left(g^{-1}\otimes \left(\bar{\partial}^N\h_0+\s{\H}{\h_0}\otimes \bar{\partial}\phi\right)\right)
    	=-4\,\Im\left(g^{-1}\otimes\left(\bar{\partial}^N-\bar{\partial}^\top\right)\h_0-|\h_0|^2_{WP}\,\partial\phi\right).
       	\end{align*}
    	As
    	\begin{align*}
    	\bar{\D}_{\e_{\z}}\h_0=-|\vec{h}_0|_{WP}^2\e_z-\s{\H}{\H_0}\e_{\z},
    	\end{align*}
    	this concludes the proof.
    \end{proof}
    Proceeding directly in the general case gives the following.
    \begin{prop}
    	Let $(M^n,h)$ be a smooth Riemannian manifold, and $\phi:\Sigma^2\rightarrow (M^n,h)$ be a smooth immersion, then we have
    	\begin{align*}
    	\Delta^\nperp \vec{H}-2|\vec{H}|^2\H +\mathscr{A}(\vec{H})&=4\,\Re\left\{g^{-1}\otimes\bar{\partial}^\nperp\left(g^{-1}\otimes\left(\bar{\partial}^\nperp\vec{h}_0+\s{\H}{\h_0}\otimes\bar{\partial}\phi+2(R(\e_z,\e_{\z})\e_z)^\nperp  dz^2\otimes d\z\right)\right)\right\}\\
    	&=-4\,g^{-1}\otimes d\, \Im \left\{g^{-1}\otimes\left( \left(\bar{\partial}^N-\bar{\partial}^\top\right)\h_0+2\left(R(\partial\phi,\bar{\partial}\phi)\partial\phi\right)^N\right)-|\h_0|^2_{WP}\,\partial\phi\right\}.
    	\end{align*}
    	which reduces if $M^n$ has constant sectional curvature to 
    	\begin{align*}
    	\Delta^\nperp \vec{H}-2|\vec{H}|^2\H+\mathscr{A}(\vec{H})&=-4\,g^{-1}\otimes \Im d\left(g^{-1}\otimes \left(\bar{\partial}^N-\bar{\partial}^\top\right)\h_0+|\h_0|^2_{WP}\,\partial\phi\right).
    	\end{align*}
    \end{prop}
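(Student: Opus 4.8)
The plan is to observe that this statement is the general–ambient–manifold counterpart of Lemma \ref{complexcons}, and to run exactly the same computation, the only new feature being that the ambient curvature no longer drops out at the Codazzi–Mainardi step. Concretely, I would (i) re-derive the identity \eqref{codazzi} for a general target, picking up a curvature correction; (ii) differentiate once more and collect terms into the interior Willmore operator $\Delta^\nperp\H-2|\H|^2\H+\mathscr{A}(\H)$; (iii) rewrite the result in the $\Im\,d(\cdots)$ conservative form; and (iv) specialise to constant sectional curvature.

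For step (i), recall that in the proof of Lemma \ref{complexcons} the crucial simplification was the identity $\D_{\p{z}}^\nperp\vec{\I}(\e_z,\e_{\z})=\D_{\p{\z}}^\nperp\vec{\I}(\e_z,\e_z)$, which used the Codazzi–Mainardi equation in a space of constant curvature. For a general target this equation reads
\begin{align*}
\D_{\p{z}}^\nperp\vec{\I}(\e_z,\e_{\z})-\D_{\p{\z}}^\nperp\vec{\I}(\e_z,\e_z)=\left(R(\e_z,\e_{\z})\e_z\right)^\nperp,
\end{align*}
so propagating this term through the same chain of equalities upgrades \eqref{codazzi} to
\begin{align*}
\partial^N\H=g^{-1}\otimes\left(\bar{\partial}^N\h_0+2\left(R(\e_z,\e_{\z})\e_z\right)^\nperp dz^2\otimes d\z\right).
\end{align*}
One checks that the factor $2$ and the tensor weight are exactly those appearing inside the bracket of the first displayed formula of the proposition. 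In constant sectional curvature the correction vanishes, since $R(X,Y)Z$ applied to the tangent vectors $\e_z,\e_{\z},\e_z$ is again tangent, which is why it was invisible in Lemma \ref{complexcons}.

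For steps (ii) and (iii), I would apply $g^{-1}\otimes\bar{\partial}^\nperp$ once more and take four times the real part. The principal part reconstructs the normal Laplacian $\Delta^\nperp\H$, while the commutator $[\D^\nperp_{\p{z}},\D^\nperp_{\p{\z}}]$ of normal derivatives together with the intrinsic (Gauss-equation) contributions assemble — exactly as in the second author's conservative reformulation \cite{riviere1} — into the algebraic terms $-2|\H|^2\H+\mathscr{A}(\H)$, while the genuinely ambient curvature survives as the term $2\left(R(\partial\phi,\bar{\partial}\phi)\partial\phi\right)^N$. To pass from the $\bar{\partial}^\nperp\h_0+\s{\H}{\h_0}\otimes\bar{\partial}\phi$ bracket to the $\left(\bar{\partial}^N-\bar{\partial}^\top\right)\h_0-|\h_0|^2_{WP}\,\partial\phi$ bracket one uses the algebraic identity $\bar{\D}_{\e_{\z}}\h_0=-|\h_0|_{WP}^2\e_z-\s{\H}{\H_0}\e_{\z}$ recorded at the end of the proof of Lemma \ref{complexcons}, and the conversion of the real part of $\bar{\partial}^\nperp$ into the imaginary part of $d$ relies, as there, on the reality of $\H$ and $\phi$. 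Finally, setting $\left(R(\partial\phi,\bar{\partial}\phi)\partial\phi\right)^N=0$ in the constant-curvature case yields the last displayed identity, in agreement with Lemma \ref{complexcons} through \cite{riviere1}.

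The main obstacle is the bookkeeping in step (ii): one must cleanly separate the \emph{intrinsic} algebraic terms, which originate from the normal curvature $R^\nperp$ via the Gauss equation and produce precisely $\mathscr{A}(\H)$ and $-2|\H|^2\H$, from the \emph{ambient} curvature contribution $\left(R(\partial\phi,\bar{\partial}\phi)\partial\phi\right)^N$, and simultaneously keep track of all conformal factors $e^{2\lambda}$ and numerical constants so that the final coefficients $4$, $2$ and $-4$ match. Everything else is a verbatim repetition of the computation already carried out for the constant-curvature case.
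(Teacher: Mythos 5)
Your route is the paper's own: the curvature-corrected Codazzi identity in your step (i), namely $\partial^\nperp\H=g^{-1}\otimes\left(\bar{\partial}^\nperp\h_0+2\left(R(\e_z,\e_{\z})\e_z\right)^\nperp dz^2\otimes d\z\right)$, is exactly what the paper derives (factor $2$ included), your step (iii) bracket conversion via $\bar{\partial}^\top\h_0=-|\h_0|^2_{WP}\,g\otimes\partial\phi-\s{\H}{\h_0}\otimes\bar{\partial}\phi$ is the paper's closing move, and the constant-curvature specialisation is right. However, step (ii) misidentifies the mechanism producing $-2|\H|^2\H+\mathscr{A}(\H)$, and as written that step fails. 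There is no commutator contribution at all: since $\bar{\D}_{\p{z}}\e_{\z}=\bar{\D}_{\p{\z}}\e_z=0$ the first-order term $\D^\nperp_{\sum_i\bar{\D}_{\e_i}\e_i}$ in $\Delta^\nperp$ drops, and since $\H$ is real one has $\bar{\D^\nperp_{\p{\z}}\D^\nperp_{\p{z}}\H}=\D^\nperp_{\p{z}}\D^\nperp_{\p{\z}}\H$, so that $\Delta^\nperp\H=4\,\Re\left\{e^{-2\lambda}\D^\nperp_{\p{\z}}\D^\nperp_{\p{z}}\H\right\}$ \emph{exactly}: taking the real part symmetrises away $[\D^\nperp_{\p{z}},\D^\nperp_{\p{\z}}]$, so nothing is left over from the Laplacian to generate the algebraic terms, and neither the Gauss nor the Ricci equation ever enters the paper's argument.

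The algebraic terms instead come from a computation absent from your sketch. One inserts the \emph{tangential} term $\s{\H}{\h_0}\otimes\bar{\partial}\phi$ into the bracket and observes that its normal derivative is nonzero because $\e_{\z}$ bends into the normal directions:
\begin{align*}
\D^\nperp_{\p{\z}}\left(\s{\H}{\H_0}\e_{\z}\right)=\s{\H}{\H_0}\,\vec{\I}(\e_{\z},\e_{\z})=\frac{e^{2\lambda}}{2}\s{\H}{\H_0}\,\bar{\H_0},
\end{align*}
whence $4\,\Re\left\{g^{-1}\otimes\bar{\partial}^\nperp\left(g^{-1}\otimes\s{\H}{\h_0}\otimes\bar{\partial}\phi\right)\right\}=2\,\Re\left(\s{\H}{\H_0}\bar{\H_0}\right)$. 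To match this against the left-hand side you must additionally prove the reduction $\mathscr{A}(\H)=2\,\Re\left(\s{\H_0}{\H}\bar{\H_0}\right)+2|\H|^2\H$, which the paper obtains by expanding $\vec{\I}(\e_1,\e_1)=\Re\H_0+\H$, $\vec{\I}(\e_2,\e_2)=-\Re\H_0+\H$, $\vec{\I}(\e_1,\e_2)=\Im\H_0$ inside the definition of the Simons operator; you never derive or invoke this identity, and without it the proof cannot close, since $\mathscr{A}(\H)$ a priori involves all components of $\vec{\I}$ and must be shown to reduce to a Weingarten contraction. With these two computations substituted for your commutator argument, the remaining steps (i), (iii), (iv) of your plan go through verbatim as in the paper.
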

    \begin{proof}
    	We recall that in a conformal chart, we have if $\e_i=\d{i}\phi$ ($1\leq i\leq 2$)
    	\begin{align*}
    	\Delta^\nperp=e^{-2\lambda}\sum_{i=1}^{2}\left(\D_{\e_i}^\nperp\D_{\e_i}^\nperp-\D^\nperp_{\bar{\D}_{\e_i}\e_i}\right)
    	\end{align*}
    	and $\mathscr{A}$ is the Simon's operator, given by
    	\begin{align*}
    	\mathscr{A}(\,\cdot\,)=e^{-4\lambda}\sum_{i,j=1}^{2}\s{\vec{\I}(\e_i,\e_j)}{\,\cdot\,}\vec{\I}(\e_i,\e_j).
    	\end{align*}
    	We have in a local complex coordinate $z$ the identity
    	\begin{align*}
    	\sum_{i=1}^{2}\D_{\e_i}^\nperp\D^\nperp_{\e_i}=\D_{\e_z+\e_{\z}}^\nperp\D_{\e_z+\e_{\z}}^\nperp+\D_{i(\e_z-\e_{\z})}^\nperp\D_{i(\e_z-\e_{\z})}^\nperp&=2\D_{\e_{z}}^\nperp\D_{\e_{\z}}^\nperp+2\D_{\e_{\z}}^\nperp\D_{\e_z}^\nperp
    	\end{align*}
    	and
    	\begin{align*}
    	\sum_{i=1}^{2}\bar{\D}_{\e_i}\e_{i}&=\bar{\D}_{\e_z+\e_{\z}}(\e_z+\e_{\z})+\bar{\D}_{i(\e_z-\e_{\z})}i(\e_z-\e_{\z})
    	=2\bar{\D}_{\e_z}\e_{\z}+2\bar{\D}_{\e_{\z}}\e_z
    	=0.
    	\end{align*}
    	As 
    	\begin{align*}
    	\D_{\e_z}\e_{\z}=\D_{\e_{\z}}\e_z=\frac{e^{2\lambda}}{4}\Delta_g \phi=\frac{\vec{H}}{2}
    	\end{align*}
    	has only \textit{normal} components, \textit{i.e.} $\bar{\D}_{\e_z}\e_{\z}=\bar{\D}_{\e_{\z}}\e_z=0$, we deduce that 
    	\begin{align*}
    	\frac{1}{2}\D_{\e_z}^\nperp\vec{H}&=\D_{\e_z}^\nperp\left(e^{-2\lambda}\vec{\I}(\e_z,\e_{\z})\right)=\partial_{z}(e^{-2\lambda})\vec{\I}(\e_z,\e_{\z})+e^{-2\lambda}\left(\D_{\e_z}^\nperp \vec{\I}(\e_z,\e_{\z})+\vec{\I}(\bar{\D}_{\e_z}\e_{z},\e_{\z})+\vec{\I}(\e_z,\bar{\D}_{\e_z}\e_{\z})\right)\\
    	&=\partial_{z}(e^{-2\lambda})\vec{\I}(\e_z,\e_{\z})+e^{-2\lambda}\left(\D_{\e_z}^\nperp \vec{\I}(\e_z,\e_{\z})+\vec{\I}(\bar{\D}_{\e_z}\e_{z},\e_{\z})\right).
    	\end{align*}
    	Noting that
    	\begin{align*}
    	\bar{\D}_{\e_z}\e_z=a\e_z+b\e_{\z}
    	\end{align*}
    	we obtain
    	\begin{align*}
    	\frac{e^{2\lambda}}{2}a&=\s{\bar{\D}_{\e_z}\e_z}{\e_{\z}}=\partial_z\s{\e_z}{\e_{\z}}=\frac{1}{2}\partial_z(e^{2\lambda})\\
    	\frac{e^{2\lambda}}{2}b&=\s{\D_{\e_z}\e_z}{\e_z}=\frac{1}{2}\partial_z\s{\e_z}{\e_z}=0,
    	\end{align*}
    	while the Codazzi-Mainardi implies that
    	\begin{align*}
    	\D_{\e_z}^\nperp\vec{\I}(\e_z,\e_{\z})&=\D_{\e_{\z}}^\nperp\vec{\I}(\e_z,\e_{z})+\left(R(\e_z,\e_{\z})\e_z\right)
    	=\D_{\e_{\z}}^\nperp\left(\vec{\I}(\e_z,\e_{z})\right)-2\vec{\I}(\bar{\D}\e_{\z}\e_z,\e_z)+\left(R(\e_z,\e_{\z})\e_z\right)^\nperp\\
    	&=\D_{\e_{\z}}^\nperp\left(\vec{\I}(\e_z,\e_{z})\right)+\left(R(\e_z,\e_{\z})\e_z\right)^\nperp.
    	\end{align*}
    	In particular, if $(M^m,h)$ has constant sectional curvature $\kappa\in\R$, we have for all vector-fields $X,Y,Z$,
    	\begin{align*}
    	R(X,Y)Z=\kappa\left(\s{Y}{Z}X-\s{X}{Z}Y\right)
    	\end{align*}
    	so $(R(\e_z,\e_{\z})\e_z)^\nperp=0$. Then, we obtain
    	\begin{align*}
    	\frac{1}{2}\D_{\p{z}}^\nperp\H&=\left(\p{z}(e^{-2\lambda})+e^{-4\lambda}\p{z}(e^{2\lambda})\right)\vec{\I}(\e_z,\e_{\z})+\D_{\e_{z}}^\nperp\left(\vec{\I}(\e_z,\e_z)\right)+(R(\e_z,\e_{\z})\e_z)^\nperp\\
    	&=e^{-2\lambda}\D_{\e_{z}}^\nperp\left(\vec{\I}(\e_z,\e_z)\right)+e^{-2\lambda}(R(\e_z,\e_{\z})\e_z)^\nperp\\
    	&=\frac{1}{2}e^{-2\lambda}\D_{\e_{\z}}^\nperp\vec{h}_0+e^{-2\lambda}(R(\e_z,\e_{\z})\e_z)^\nperp
    	\end{align*}
    	and as $\H$ is real, we have
    	\begin{align*}
    	\D_{\e_{\z}}^\nperp\vec{H}&=\bar{\D_{\e_{z}}^\nperp\vec{H}}
    	=e^{-2\lambda}\bar{\D_{\e_{\z}}^\nperp\vec{h}_0}+2e^{-2\lambda}(R(\e_{\z},\e_z)\e_{\z})^\nperp
    	\end{align*}
    	and we deduce that
    	\begin{align*}
    	\Delta^\nperp \vec{H}&=2e^{-2\lambda}\left\{\D_{\e_{\z}}^\nperp\left(e^{-2\lambda}\D_{\e_{\z}}^\nperp\h_0\right)+\D_{\e_{\z}}^\nperp\left(e^{-2\lambda}\bar{\D_{\e_{\z}}^\nperp\h_0}\right)\right\}+8e^{-2\lambda}\,\Re\left\{e^{-2\lambda}\D_{\e_{\z}}^\nperp(R(\e_z,\e_{\z})\e_z)^\nperp\right\}\\
    	&=4\,\Re\left\{e^{-2\lambda}\D_{\p{\z}}^\nperp\left(e^{-2\lambda}\left(\D_{\p{\z}}^\nperp\vec{h}_0+2(R(\e_z,\e_{\z})\e_z)^\nperp\right)\right)\right\}
    	\end{align*}
    	We now want to express the Simon's operator only with respect of $\H_0$ and $\H$, but this is easy as
    	\begin{align*}
    	\vec{\I}(\e_1,\e_1)&=e^{-2\lambda}\vec{\I}(\e_z+e_{\z},\e_z+\e_{\z})=\frac{1}{2}\left(2e^{-2\lambda}\vec{\I}(\e_z,\e_z)+2e^{-2\lambda}\vec{\I}(\e_{\z},\e_{\z})\right)+2e^{-2\lambda}\vec{\I}(\e_z,e_{\z})\\
    	&=\Re\H_0+\H\\
    	\vec{\I}(\e_2,\e_2)&=-e^{-2\lambda}\vec{\I}(\e_z-\e_{\z},\e_z-\e_{\z})=-\Re\H_0+\H\\
    	\vec{\I}(\e_1,\e_2)&=\frac{1}{i}e^{-2\lambda}\I(\e_z+\e_{\z},\e_z-\e_{\z})=\Im\H_0
    	\end{align*}
    	therefore
    	\begin{align*}
    	\mathscr{A}(\H)&=\sum_{i,j=1}^{2}\s{\vec{\I}(\e_i,\e_j)}{\H}\vec{\I}(\e_i,\e_j)\\
    	&=\s{\Re\H_0+\H}{\H}(\Re\H_0+\H)+2\s{\Im\H_0}{\H}\Im\H_0+\s{-\Re\H_0+\H}{\H}(-\Re\H_0+\H)\\
    	&=2\s{\Re \H_0}{\H}\Re\H_0+2\s{\Im\H_0}{\H}\Im\H_0+2|\H|^2\H\\
    	&=2\Re \left(\s{\H_0}{\H}\bar{\H}_0\right)+2|\H|^2\H
    	\end{align*}    	
    	and finally, we obtain
    	\begin{align*}
    	\Delta^\nperp\H-2|\vec{H}|^2\H+\mathscr{A}(\H)&=4\,\Re\left\{e^{-2\lambda}\D_{\p{\z}}^\nperp\left(e^{-2\lambda}\left(\D_{\p{\z}}^\nperp\vec{h}_0+\s{\h_0}{\H}\e_{\z}+2(R(\e_z,\e_{\z})\e_z)^\nperp\right)\right)\right\}
    	\end{align*}
    	and the last equality goes like Lemma \ref{complexcons}.
    \end{proof}

	\subsection{Noether's theorem for second order functionals}
	
	    Noether's theorem is the mathematical formulation of the physical phenomenon that infinitesimal symmetries correspond to conserved quantities, \textit{i.e.} closed differential forms (see \cite{noether}).
	
		\begin{definitions}
		(1) Let $\Sigma^k$ be a $C^2$ manifold and $(M^n,h)$ be a $C^2$ Riemannian manifold.
		For all $p\in\N$, we define the $p$-differentiation bundle $B^p(\Sigma^k,M^n)$ of the couple $(\Sigma^k,M^n)$ as the product
		\begin{align*}
		B^p(\Sigma^k,M^n)=\prod_{j=1}^{k}(T^\ast\Sigma^k)^{\otimes j}\otimes TM^n.
		\end{align*}
		(2) If $\mathscr{U}\subset B^2(\Sigma^k,M^n)$ and $L\in C^1(M^n\times \mathscr{U})$, we say that a vector field $\vec{X}\in \Gamma(TM)$ is an infinitesimal symmetry of $L$ if for all $\phi\in C^2(\Sigma^k,M^n)$ such that $\Im(d\phi,\D d\phi,\cdots \D^{k-1} d\phi)\subset \mathscr{U}$,
		\begin{align*}
			L(\exp_{\phi}(t\vec{X}),d(\exp_{\phi}(t\vec{X})),\cdots,\D^{k-1} d(\exp_{\phi}(t\vec{X})))=L(\phi,d\phi,\cdots \D^{k-1}d\phi).
		\end{align*}
		for all $t\in\R$ in some small interval around $0$.
	\end{definitions}

		\begin{theorem}\label{noether1}
		Let $m\geq 2$, $1\leq p\leq\infty$, $\Sigma^k$ be a $C^2$ manifold and $(M^n,h)$ be a $C^2$ Riemannian manifold, $U$ be an open subset of $\Sigma^k$, $\mathscr{U}$ be an open subset of $B^2(\Sigma^k,M^n)$, $L=L(y,p,q)\in C^1(M^n\times \mathscr{U},\R)$, $\mathscr{V}$ be an open subset of $W^{k,p}(\Sigma^k,M^n)$, and $\mathscr{L}\in C^1(\mathscr{V},\R)$, such that for all $\phi\in \mathscr{V}$, we have
		\begin{align*}
		\mathscr{L}(\phi)=\int_{U}L(\phi,d\phi,\D d\phi)d\mathscr{H}^2.
		\end{align*}
		For all infinitesimal symmetry $\vec{X}\in\Gamma(TM)$, and for all critical point $\phi\in \mathscr{V}$, we have
		\begin{align}\label{noether}
		\sum_{i,j=1}^{2}\d{i}\left(\p{p_i}L(\phi,d\phi, \D d\phi)\cdot \vec{X}(\phi)-2\d{j}(\p{q_{ij}}L(\phi,d\phi,\D d\phi))\cdot \vec{X}(\phi)+2\p{q_{ij}}L(\phi,d\phi, \D d\phi)\cdot \d{j}\vec{X}(\phi)\right)=0.
		\end{align}
	\end{theorem}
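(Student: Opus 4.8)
The plan is to derive the Noether identity \eqref{noether} directly from the invariance of the Lagrangian under the flow generated by the infinitesimal symmetry $\vec{X}$, combined with the fact that $\phi$ is a critical point of $\mathscr{L}$. The starting observation is that for a \emph{symmetry}, the variation of the Lagrangian density $L$ along the flow $t\mapsto \exp_{\phi}(t\vec{X})$ vanishes identically, \emph{before} using the Euler-Lagrange equations; whereas for an \emph{arbitrary} variation, the first variation of $\mathscr{L}$ produces the Euler-Lagrange operator plus a boundary (divergence) term. The idea of Noether's theorem is to compute the same quantity two ways and subtract.

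\medskip

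\textbf{Step 1: Compute the first variation for a general compactly supported variation.} First I would take any variation $\phi_t=\exp_{\phi}(t\vec{Y})$ with $\vec{Y}\in\Gamma(\phi^\ast TM^n)$ and differentiate $\mathscr{L}(\phi_t)$ at $t=0$. Using the chain rule on $L(y,p,q)$ with $y=\phi$, $p_i=\partial_{x_i}\phi$, $q_{ij}=\D_{\partial_{x_i}}\partial_{x_j}\phi$, one obtains
\begin{align*}
	\frac{d}{dt}\mathscr{L}(\phi_t)\Big|_{t=0}=\int_U\left(\partial_y L\cdot \vec{Y}+\sum_i\partial_{p_i}L\cdot\partial_{x_i}\vec{Y}+\sum_{i,j}\partial_{q_{ij}}L\cdot \D_{\partial_{x_i}}\partial_{x_j}\vec{Y}\right)d\mathscr{H}^2.
\end{align*}
Integrating by parts the $p$-term once and the $q$-term twice moves all derivatives off $\vec{Y}$, yielding an interior term $\mathrm{EL}(\phi)\cdot\vec{Y}$ (the Euler-Lagrange operator) plus the divergence of the one-form
\begin{align*}
	\vec{T}_i(\vec{Y})=\partial_{p_i}L\cdot\vec{Y}-2\sum_j\partial_{x_j}\big(\partial_{q_{ij}}L\big)\cdot\vec{Y}+2\sum_j\partial_{q_{ij}}L\cdot\partial_{x_j}\vec{Y},
\end{align*}
which is precisely the expression appearing inside $\partial_{x_i}(\,\cdot\,)$ in \eqref{noether}.

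\medskip

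\textbf{Step 2: Exploit the symmetry and criticality.} Since $\vec{X}$ is an infinitesimal symmetry, choosing the special variation $\vec{Y}=\vec{X}(\phi)$ gives $\frac{d}{dt}\mathscr{L}(\exp_\phi(t\vec{X}))|_{t=0}=0$ identically, as $L$ itself is pointwise invariant along this flow. On the other hand, the integration-by-parts identity from Step 1 reads $0=\int_U \mathrm{EL}(\phi)\cdot\vec{X}(\phi)+\int_U\sum_i\partial_{x_i}\vec{T}_i(\vec{X})$. Because $\phi$ is a critical point, $\mathrm{EL}(\phi)=0$, so the first integral drops out and we are left with $\int_U\sum_i\partial_{x_i}\vec{T}_i(\vec{X})=0$. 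To upgrade this integral identity to the \emph{pointwise} divergence-free statement \eqref{noether}, I would localise: run the argument on arbitrary small coordinate balls, using that the symmetry relation holds for every admissible $\phi$ and that the divergence structure is local, so the integrand must vanish pointwise.

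\medskip

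\textbf{The main obstacle} will be Step 2's passage from the global integral identity to the pointwise one, and more delicately, justifying the two-fold integration by parts at the stated regularity $\phi\in W^{k,p}$ with $L$ only $C^1$. The cleanest route is to observe that the symmetry hypothesis holds not merely for our fixed $\phi$ but for \emph{all} admissible maps, which lets one compare the pointwise invariance $\frac{d}{dt}L(\exp_\phi(t\vec{X}),\dots)|_{t=0}=0$ with the algebraic expansion of $\partial_y L\cdot\vec{X}+\sum_i\partial_{p_i}L\cdot\partial_{x_i}(\vec{X}\circ\phi)+\sum_{ij}\partial_{q_{ij}}L\cdot\D_{\partial_{x_i}}\partial_{x_j}(\vec{X}\circ\phi)$, and then feed this pointwise relation into the rearranged divergence expression so that the Euler-Lagrange terms assemble and cancel by criticality, leaving exactly \eqref{noether} as an identity of distributions. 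For smooth $\phi$ — which is the setting of the statement — all integrations by parts are legitimate and the localisation is immediate, so the proof reduces to the bookkeeping of matching the chain-rule expansion against the divergence of $\vec{T}_i$.
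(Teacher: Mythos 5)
Your final argument (the pointwise comparison in your closing paragraph) is correct for smooth data, but it takes a genuinely different route from the paper, and your first formulation contains the gap you yourself sense. The paper never takes a general variation, never forms the Euler--Lagrange operator, and never cancels it. Instead it perturbs along $t\varphi\vec{X}$ for an \emph{arbitrary} test function $\varphi\in C^{\infty}_c(U)$: criticality gives $\mathscr{L}(\exp_{\phi}(t\varphi\vec{X}))=\mathscr{L}(\phi)+o(t)$, while the symmetry hypothesis exactly absorbs all terms of the expansion in which $\varphi$ is \emph{not} differentiated (pointwise, the flow with frozen amplitude $t\varphi(x)$ leaves $L$ invariant). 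What survives at order $t$ is an integral identity in which every derivative falls on $\varphi$,
\begin{align*}
\int_U \p{p_i}L\cdot\vec{X}\,\p{x_i}\varphi+2\,\p{q_{ij}}L\cdot\p{x_j}\vec{X}\,\p{x_i}\varphi+\p{q_{ij}}L\cdot\vec{X}\,\p{x_ix_j}^2\varphi\;d\mathscr{H}^2=0,
\end{align*}
and a single distributional integration by parts yields \eqref{noether} directly in $\mathscr{D}'(U)$. This buys two things your route does not: the localisation you flag as the main obstacle is automatic, since $\varphi$ is arbitrary; and only first derivatives of $L$ are ever evaluated pointwise, so the proof genuinely works at the stated regularity $L\in C^1$.

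Concerning your own two steps: the first version of Step 2 is a real gap, not merely a technicality. From $\int_U\sum_i\d{i}\vec{T}_i(\vec{X})=0$ one cannot conclude pointwise vanishing, and rerunning the argument on small balls does not repair this directly, because criticality only yields the Euler--Lagrange equation distributionally against compactly supported variations --- you cannot pair the distribution $\mathrm{EL}(\phi)$ with $\chi_B\,\vec{X}$ for an indicator $\chi_B$. Your proposed fix --- compare the pointwise invariance of $L$ with the algebraic rearrangement into $\mathrm{EL}(\phi)\cdot\vec{X}+\mathrm{div}\,\vec{T}(\vec{X})$ and cancel the assembled Euler--Lagrange factor --- is the classical computation and does close the gap, but note that assembling that factor requires differentiating $\p{p_i}L$ once and $\p{q_{ij}}L$ twice along the solution, i.e.\ effectively $L\in C^2$ (or a distributional reinterpretation, which collapses back into the paper's test-function argument). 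So your proof is sound for smooth immersions and $C^2$ Lagrangians --- which covers the Willmore applications in this paper --- whereas the paper's method is both shorter and valid under the stated $C^1$ hypothesis.
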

\begin{rem}
	In particular, Noether's theorem does not depend on the derivatives in the space variable $y$. This should be useful in the correspondence of Section \ref{correspondance}.
\end{rem}
	\begin{proof}
		Following \cite{helein}, we can suppose that $M^n$ is a submanifold of an Euclidean space. We fix a critical point  $\phi$ of $\mathscr{L}$, and if $\exp$ is the exponential application on $(M^n,h)$, for all test function $\varphi\in C^{\infty}_{c}(U)$, we have
		\begin{align}\label{dev1}
		\mathscr{L}(\exp_{\phi}(t\varphi\vec{X}))=\mathscr{L}(\phi+t\varphi \vec{X}+o(t))=\mathscr{L}(\phi)+o(t).
		\end{align}
		Therefore, we obtain, abbreviating $\vec{X}=\vec{X}(\phi)$
		\begin{align*}
		\mathscr{L}(\phi+t\varphi\vec{X}+o(t))&=\int_{U}L(\phi+t\varphi\vec{X},d\phi+t\varphi d\vec{X}+td\varphi\cdot \vec{X},
		\D d\phi+t\varphi \D d\vec{X}+2d\varphi\cdot d\vec{X}+t\D d\varphi\cdot \vec{X})d\mathscr{H}^2\\
		&+o(t)\\
		&=\int_{U}L(\phi+t\varphi d\vec{X},d\phi+t\varphi d\vec{X},\D d\phi+t\varphi \D d\vec{X})d\mathscr{H}^2\\
		&+t\int_{U}\p{p_i}L(\phi,d\phi,\D d\phi)\cdot X\p{x_i}\varphi d\mathscr{H}^2+2t\int_{U}\partial_{q_{ij}}L(\phi,d\phi,\D d\phi)\cdot \p{x_j}\vec{X}\p{x_i}\varphi d\mathscr{H}^2\\
		&+t\int_{U}\partial_{q_{ij}}L(\phi,d\phi,\D d\phi)\cdot \vec{X}\p{x_ix_j}^2\varphi d\mathscr{H}^2+o(t)\\
		&=\mathscr{L}(\phi)+o(t)
		\end{align*}
		therfore, comparing this equation to \eqref{dev1} we deduce that
		\begin{align*}
		\int_{U}\p{p_i}L(\phi,d\phi,\D d\phi)\cdot X\p{x_i}\varphi +2\partial_{q_{ij}}L(\phi,d\phi,\D d\phi)\cdot \p{x_j}\vec{X}\p{x_i}\varphi
		&+\partial_{q_{ij}}L(\phi,d\phi,\D d\phi)\cdot \vec{X}\p{x_ix_j}^2\varphi\, d\mathscr{H}^2=0
		\end{align*}
		so integrating by parts, this gives
		\begin{align*}
		\d{i}\left(\p{p_i}L\cdot \vec{X}+2\p{q_{ij}}L\cdot \p{x_j}\vec{X}-\p{x_j}\left(\p{q_{ij}}L\cdot \vec{X}\right)\right)=0
		\end{align*}
		which is equivalent to
		\begin{align*}
		\d{i}\left(\p{p_i}L\cdot \vec{X}+\p{q_{ij}}L\cdot \p{x_j}\vec{X}-\p{x_j}(\p{q_{ij}}L)\cdot \vec{X}\right)=0.
		\end{align*}
		which is the expected result, as the sums in $j$ are performed \textit{inside} the parenthesis, contrary to the formula announced in the theorem. This concludes the proof.
	\end{proof}
	
	As the equation does not involve derivatives in $y$ of $L$, we write $L=L(p_1,p_2,q_{11},q_{12},q_{21},q_{22})$, where the index stands for the corresponding partial derivative with respect to any local frame, and we let
	\begin{align*}
	\left\{\begin{aligned}
	z_1&=\frac{1}{2}(p_1-ip_2)\\
	z_2&=\frac{1}{2}(p_1+ip_2)\\
	w_1&=\frac{1}{4}\left(q_{11}-q_{22}-i(q_{12}+q_{21})\right)\\
	w_2&=\frac{1}{4}\left(q_{11}-q_{22}+i(q_{12}+q_{21})\right)\\
	w_3&=\frac{1}{4}(q_{11}+q_{22}+i(q_{12}-q_{21}))\\
	w_4&=\frac{1}{4}(q_{11}+q_{22}-i(q_{12}-q_{21}))
	\end{aligned}\right.
	\end{align*}
	such that $L_0(z_1,z_2,w_1,w_2,w_3,w_4)=L(p_1,p_2,q_{11},q_{12},q_{21},q_{22})$.
	We deduce that
	\begin{align*}
	\left\{\begin{aligned}
	\frac{\partial L}{\partial p_1}&=\frac{1}{2}\left(\frac{\partial L_0}{\partial z_1}+\frac{\partial L_0}{\partial z_2}\right)\\
	\frac{\partial L}{\partial p_2}&=\frac{1}{2i}\left(\frac{\partial L_0}{\partial z_1}-\frac{\partial L_0}{\partial z_2}\right)\\
	\frac{\partial L}{\partial q_{11}}&=\frac{1}{4}\left(\frac{\partial L_0}{\partial w_1}+\frac{\partial L_0}{\partial w_2}+\frac{\partial L_0}{\partial w_3}+\frac{\partial L_0}{\partial w_4}\right)\\
	\frac{\partial L}{\partial q_{12}}&=\frac{1}{4i}\left(\frac{\partial L_0}{\partial w_1}-\frac{\partial L_0}{\partial w_2}-\frac{\partial L_0}{\partial w_3}+\frac{\partial L_0}{\partial w_4}\right)\\
	\frac{\partial L}{\partial q_{21}}&=\frac{1}{4i}\left(\frac{\partial L_0}{\partial w_1}-\frac{\partial L_0}{\partial w_2}+\frac{\partial L_0}{\partial w_3}-\frac{\partial L_0}{\partial w_4}\right)\\
	\frac{\partial L}{\partial q_{22}}&=\frac{1}{4}\left(-\frac{\partial L_0}{\partial w_1}-\frac{\partial L_0}{\partial w_2}+\frac{\partial L_0}{\partial w_3}+\frac{\partial L_0}{\partial w_4}\right)
	\end{aligned}\right.
	\end{align*}
	Now as we are mostly interested in deriving conservations laws for the Willmore energy in spaces with known conformal transformations, \textit{i.e.} space forms, as in this case no curvature terms can arise we can suppose that $q_{12}=q_{21}$ (implying that $w_3=w_4$).
	As $L_0$ is real, we deduce that
	\begin{align*}
	\frac{\partial L_0}{\partial z_2}=\bar{\frac{\partial L_0}{\partial z_1}},\quad \frac{\partial L_0}{\partial w_2}=\bar{\frac{\partial L_0}{\partial w_1}}
	\end{align*}
	so the system reduces to 
	\begin{align}\label{div2}
	\left\{\begin{aligned}
	\frac{\partial L}{\partial p_1}&=\Re\left(\frac{\partial L_0}{\partial z_1}\right)\\
	\frac{\partial L}{\partial p_2}&=\Im\left(\frac{\partial L_0}{\partial z_1}\right)\\
	\frac{\partial L}{\partial q_{11}}&=\frac{1}{2}\Re\left(\frac{\partial L_0}{\partial w_1}\right)+\frac{1}{4}\frac{\partial L_0}{\partial w_3}\\
	\frac{\partial L}{\partial q_{12}}&=\frac{1}{2}\Im\left(\frac{\partial L_0}{\partial w_1}\right)\\
	\frac{\partial L}{\partial q_{22}}&=-\frac{1}{2}\Re\left(\frac{\partial L_0}{\partial w_1}\right)+\frac{1}{4}\frac{\partial L_0}{\partial w_3}
	\end{aligned}\right.
	\end{align}	
	If $L_0=L(\zeta,\omega,\chi)=L(\zeta,\bar{\zeta},\omega,\bar{\omega},\chi)=L_0(z_1,z_2,w_1,w_2,w_3)$, we obtain the following.
	\begin{cor}\label{noether2}
		Under the hypothesis of Theorem \ref{noether1}, we have
		\begin{align}\label{noethercomplexe}
		\Re\left(\partial_{z}\left(\frac{\partial L_0}{\partial\zeta}\cdot\vec{X}-\partial_{z}\left(\frac{\partial L_0}{\partial \omega}\right)\cdot\vec{X}+\frac{\partial L_0}{\partial \omega}\cdot \partial_{z}\vec{X}-\frac{1}{2}\p{\z}\left(\frac{\partial L_0}{\partial \chi}\right)\cdot\vec{X}+\frac{1}{2}\frac{\partial L_0}{\partial\chi}\cdot\p{\z}\vec{X}\right)\right)=0.
		\end{align}
	\end{cor}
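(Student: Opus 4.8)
The plan is to read \eqref{noethercomplexe} as a purely formal rewriting, in the complex coordinate $z=x_1+ix_2$, of the real divergence form of Noether's theorem established in Theorem \ref{noether1}. Denoting the conserved current obtained at the very end of that proof by $(J_1,J_2)$, with
\[
J_i=\frac{\partial L}{\partial p_i}\cdot\vec{X}+\sum_{j=1}^{2}\left(\frac{\partial L}{\partial q_{ij}}\cdot\p{x_j}\vec{X}-\p{x_j}\!\left(\frac{\partial L}{\partial q_{ij}}\right)\cdot\vec{X}\right),
\]
the content of \eqref{noether} is exactly $\p{x_1}J_1+\p{x_2}J_2=0$. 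The first step is the elementary identity $\p{x_1}J_1+\p{x_2}J_2=2\,\Re\!\left(\partial_z(J_1+iJ_2)\right)$, valid for any pair of real-valued functions and following at once from $\partial_z=\tfrac12(\p{x_1}-i\p{x_2})$. Hence the conservation law is equivalent to $\Re\!\left(\partial_z(J_1+iJ_2)\right)=0$, and it remains only to identify $J_1+iJ_2$ with the bracketed expression in \eqref{noethercomplexe}.

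Next I would substitute the change of variables \eqref{div2} relating the real derivatives $\partial L/\partial p_i$ and $\partial L/\partial q_{ij}$ (for the symmetric Hessian $q_{12}=q_{21}$, so that $w_3=w_4$) to the complex derivatives $\partial L_0/\partial\zeta=\partial L_0/\partial z_1$, $\partial L_0/\partial\omega=\partial L_0/\partial w_1$ and $\partial L_0/\partial\chi=\partial L_0/\partial w_3$. The momentum term is immediate, since \eqref{div2} gives $\partial L/\partial p_1+i\,\partial L/\partial p_2=\Re(\partial L_0/\partial\zeta)+i\,\Im(\partial L_0/\partial\zeta)=\partial L_0/\partial\zeta$. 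For the second-derivative terms I would form the complex combinations $A_j:=\partial L/\partial q_{1j}+i\,\partial L/\partial q_{2j}$, which \eqref{div2} collapses to
\[
A_1=\tfrac12\frac{\partial L_0}{\partial\omega}+\tfrac14\frac{\partial L_0}{\partial\chi},\qquad A_2=-\tfrac{i}{2}\frac{\partial L_0}{\partial\omega}+\tfrac{i}{4}\frac{\partial L_0}{\partial\chi}.
\]
Inserting $\p{x_1}=\partial_z+\p{\z}$ and $\p{x_2}=i(\partial_z-\p{\z})$ into $\sum_j A_j\,\p{x_j}\vec{X}$ and into $\sum_j \p{x_j}A_j$, the $\partial_z$- and $\p{\z}$-cross-terms telescope, yielding $\sum_j A_j\,\p{x_j}\vec{X}=\frac{\partial L_0}{\partial\omega}\cdot\partial_z\vec{X}+\tfrac12\frac{\partial L_0}{\partial\chi}\cdot\p{\z}\vec{X}$ and $\sum_j \p{x_j}A_j=\partial_z\!\left(\frac{\partial L_0}{\partial\omega}\right)+\tfrac12\p{\z}\!\left(\frac{\partial L_0}{\partial\chi}\right)$.

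Assembling these three pieces gives precisely
\[
J_1+iJ_2=\frac{\partial L_0}{\partial\zeta}\cdot\vec{X}-\partial_z\!\left(\frac{\partial L_0}{\partial\omega}\right)\cdot\vec{X}+\frac{\partial L_0}{\partial\omega}\cdot\partial_z\vec{X}-\tfrac12\p{\z}\!\left(\frac{\partial L_0}{\partial\chi}\right)\cdot\vec{X}+\tfrac12\frac{\partial L_0}{\partial\chi}\cdot\p{\z}\vec{X},
\]
which is exactly the argument of the outer $\partial_z$ in \eqref{noethercomplexe}; applying the first step then finishes the proof. The only genuinely delicate point is the bookkeeping: one must keep the symmetry $q_{12}=q_{21}$ (equivalently $w_3=w_4$, so that $\chi$ is real) consistent throughout, and verify that the $\omega$- and $\chi$-contributions in $A_1,A_2$ recombine cleanly under the replacement $\p{x_1},\p{x_2}\mapsto\partial_z,\p{\z}$ so that no spurious mixed term survives. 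Everything else is a direct substitution.
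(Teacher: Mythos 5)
Your proof is correct and takes essentially the same route as the paper: both substitute the dictionary \eqref{div2} into the real conservation law \eqref{noether} and convert $\p{x_1},\p{x_2}$ into $\p{z},\p{\z}$, your complexified current $J_1+iJ_2$ together with the identity $\p{x_1}J_1+\p{x_2}J_2=2\,\Re\left(\p{z}(J_1+iJ_2)\right)$ being merely a tidier bookkeeping of the paper's direct expansion and rearrangement. Your formulas for $A_1$, $A_2$ and the contracted terms check out, and your insistence on the symmetry $q_{12}=q_{21}$ (so $w_3=w_4$ and $\partial L_0/\partial\chi$ is real) matches the reduction the paper performs before stating \eqref{div2}.
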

	\begin{proof}
		Using $\p{z}=\dfrac{1}{2}\left(\d{1}-i\d{2}\right),\,\p{\z}=\dfrac{1}{2}\left(\d{1}+i\d{2}\right)$, we obtain by \eqref{noether} and \eqref{div2}
		\begin{align*}
		&(\p{z}+\p{\z})\bigg\{\Re\left(\frac{\partial L_0}{\partial \zeta}\right)\cdot\vec{X}-(\p{z}+\p{\z})\left(\frac{1}{2}\Re\left(\frac{\partial L_0}{\partial \omega}\right)+\frac{1}{4}\frac{\partial L_0}{\partial\chi}\right)\cdot\vec{X}+\left(\frac{1}{2}\Re\left(\frac{\partial L_0}{\partial \omega}\right)\cdot \vec{X}+\frac{1}{4}\frac{\partial L_0}{\partial\chi}\right)\cdot (\p{z}+\p{\z})\vec{X}\\
		&-i(\p{z}-\p{\z})\left(\frac{1}{2}\Im\left(\frac{\partial L_0}{\partial \omega}\right)\right)\cdot\vec{X}+\frac{1}{2}\Im\left(\frac{\partial L_0}{\partial \omega}\right)\cdot i(\p{z}-\p{\z})\vec{X}\bigg\}\\
		&+i(\p{z}-\p{\z})\bigg\{\Im\left(\frac{\partial L_0}{\partial\zeta}\right)\cdot\vec{X}-(\p{z}+\p{\z})\left(\frac{1}{2}\Im\left(\frac{\partial L_0}{\partial \omega}\right)\right)\cdot\vec{X}+\frac{1}{2}\Im\left(\frac{\partial L_0}{\partial\omega}\right)\cdot (\p{z}+\p{\z})\vec{X}\\
		&-i(\p{z}-\p{\z})\left(-\frac{1}{2}\Re\left(\frac{\partial L_0}{\partial \omega}\right)+\frac{1}{4}\frac{\partial L_0}{\partial \chi}\right)\cdot\vec{X}+\left(-\frac{1}{2}\Re\left(\frac{\partial L_0}{\partial \omega}\right)+\frac{1}{4}\frac{\partial L_0}{\partial \chi}\right)\cdot i(\p{z}-\p{\z})\vec{X}\bigg\}=0
		\end{align*}
		and after rearranging, we have
		\begin{align*}
		\Re\left(\partial_{z}\left(\frac{\partial L_0}{\partial\zeta}\cdot\vec{X}-\partial_{z}\left(\frac{\partial L_0}{\partial \omega}\right)\cdot\vec{X}+\frac{\partial L_0}{\partial \omega}\cdot \partial_{z}\vec{X}-\frac{1}{2}\p{\z}\left(\frac{\partial L_0}{\partial \chi}\right)\cdot\vec{X}+\frac{1}{2}\frac{\partial L_0}{\partial\chi}\cdot\p{\z}\vec{X}\right)\right)=0
		\end{align*}
		which concludes the proof.
	\end{proof}

\subsection{Residues of Willmore and minimal surfaces}

	In this section, we want to derive the four conservation laws for the Willmore energy with respect to tensors only depending on the immersion (for such formulation, see \cite{riviere1}, and for a derivation of the first three conservation with Noether's theorem, \cite{bernard}). 
	
	We recall that the mean curvature $\H$ of an immersion $\phi:\Sigma^2\rightarrow\R^n$ is the tensor
	\begin{align}\label{meandef}
		\H=\frac{1}{2}\mathrm{Tr}_g(\vec{\I}_g)=\frac{1}{2}\sum_{i,j=1}^{2}g^{i,j}\vec{\I}_{i,j},
	\end{align}
	where $\vec{\I}_{i,j}=\vec{\I}(\e_i,\e_j)$, and $\vec{\I}$ is the second fundamental form of $\phi$. If $\e_k=\p{x_k}\phi$ for $k=1,2$. In particular, using $\Z_2$ notations for indices we have as $g^{i,j}=(-1)^{i,j}g_{i+1,j+1}(\det g)^{-1}$ the identities
	\begin{align*}
		g_{11}&=2\left(|\e_z|^2+\Re\s{\e_z}{\e_z}\right),\quad
		g_{12}=-2\,\Im\s{\e_z}{\e_z},\quad
		g_{22}=2\left(|\e_z|^2-\Re\s{\e_z}{\e_z}\right)\\
		\det g&=g_{11}g_{22}-g_{12}^2=4\left(|\e_z|^4-(\Re\s{\e_z}{\e_z}^2\right)-4(\Im\s{\e_z}{\e_z})^2=4\left(|\e_z|^4-|\s{\e_z}{\e_z}|^2\right).
	\end{align*}
	As $\e_1=\e_z+\e_{\z}$ and $\e_2=i(\e_z-\e_{\z})$, a trivial computation gives
	\begin{align*}
	\begin{alignedat}{2}
	    g^{1,1}&=\frac{1}{2}\frac{|\e_z|^2-\Re\s{\e_z}{\e_z}}{|\e_z|^4-|\s{\e_z}{\e_z}|^2},\qquad&&\vec{\I}(\e_1,\e_1)=2\,\Re \vec{\I}(\e_z,\e_z)+2\,\vec{\I}(\e_z,\e_{\z})\\
	    g^{1,2}&=\frac{1}{2}\frac{\Im\s{\e_z}{\e_z}}{|\e_z|^4-|\s{\e_z}{\e_z}|^2},\qquad&&\vec{\I}(\e_1,\e_2)=-2\,\Im \vec{\I}(\e_z,\e_z)\\
	    g^{2,2}&=\frac{1}{2}\frac{|\e_z|^2+\Re\s{\e_z}{\e_z}}{|\e_z|^4-|\s{\e_z}{\e_z}|^2},\qquad&& \vec{\I}(\e_2,\e_2)=-2\,\Re\vec{\I}(\e_z,\e_z)+2\,\vec{\I}(\e_z,\e_{\z}).
	\end{alignedat}
	\end{align*}
	So we have by \eqref{meandef}
	\begin{align}\label{314}
		\vec{H}&=\frac{1}{4}\left(|\e_z|^4-|\s{\e_z}{\e_z}|^2\right)^{-1}\bigg(\left(|\e_z|^2-\Re\s{\e_z}{\e_z}\right)\left(2\Re\vec{\I}(\e_z,\e_z)+2\vec{\I}(\e_z,\e_{\z})\right)\nonumber\\
		&-4\Im\s{\e_z}{\e_z}\Im\vec{\I}(\e_z,\e_z)+\left(|\e_z|^2+\Re\s{\e_z}{\e_z}\right)\left(-2\Re\vec{\I}(\e_z,\e_z)+2\vec{\I}(\e_z,\e_{\z})\right)\bigg)\nonumber\\
		&=\left(|\e_z|^4-|\s{\e_z}{\e_z}|^2\right)^{-1}\left(|\e_z|^2\vec{\I}(\e_z,\e_{\z})-\Re\s{\e_z}{\e_z}\vec{\I}(\e_{\z},\e_{\z})\right)
	\end{align}
	To apply our version of Noether's theorem, we want to write the equation as a function depending only on the derivatives of $\phi$ without taking normal components. 
	For all vector field 
	$\w$ on $\R^n$, writing
	\begin{align*}
		\w^\top&=a\e_z+b\e_{\z}
	\end{align*}
	we have
	\begin{align*}
		\begin{pmatrix}
		a\\
		b
		\end{pmatrix}&=\left(|\s{\e_z}{\e_z}|^2-\s{\e_z}{\e_{\z}}^2\right)^{-1}
		\begin{pmatrix}
		\s{\e_{\z}}{\e_{\z}}&-\s{\e_z}{\e_{\z}}\\
		-\s{\e_z}{\e_{\z}}&\s{\e_z}{\e_z}
		\end{pmatrix}
		\begin{pmatrix}
		\s{\D_XY}{\e_z}\\
		\s{\D_XY}{\e_{\z}}
		\end{pmatrix}
	\end{align*}
	so
	\begin{align*}
		\w^\top=-f(\e_z)^{-1}\left\{\left(\s{\e_{\z}}{\e_{\z}}\s{\e_z}{\w}-|\e_z|^2\s{\e_{\z}}{\w}\right)\e_z+\left(-|\e_z|^2\s{\e_z}{\w}+\s{\e_z}{\e_z}\s{\e_{\z}}{\w}\right)\e_{\z}\right\}
	\end{align*}
where $f(\zeta)=|\zeta|^4-|\s{\zeta}{\zeta}|^2$. We now set the notations
\begin{align*}
	\zeta=\e_{z},\quad \omega=\D_{\p{z}}\e_z,\quad \chi=\D_{\p{z}}\e_{\z}
\end{align*}
and
\begin{align*}
	h(\zeta,\kappa)=\left(\s{\bar{\zeta}}{\bar{\zeta}}\s{\zeta}{\kappa}-|\zeta|^2\s{\bar{\zeta}}{\kappa}\right)\zeta+\left(-|\zeta|^2\s{\zeta}{\kappa}+\s{\zeta}{\zeta}\s{\bar{\zeta}}{\kappa}\right)\bar{\zeta}.
\end{align*}
We remark that
\begin{align*}
	\bar{h(\zeta,\kappa)}=h(\zeta,\bar{\kappa})
\end{align*}
so by \eqref{314}
\begin{align}\label{expressionH}
	\H=f(\zeta)^{-1}\left(|\zeta|^2\chi+|\zeta|^2f(\zeta)^{-1}h(\zeta,\chi)-\Re\left(\s{\zeta}{\zeta}\bar{\omega}\right)-f(\zeta)^{-1}\Re\left(\s{\zeta}{\zeta}h(\zeta,\bar{\omega})\right)\right).
\end{align}
To simplify the expressions, we will take the derivative at conformal coordinates, as there will be significant amount of simplifications. We compute
\begin{align*}
    f(\e_{\z})&=|\e_z|^4=\frac{e^{4\lambda}}{4}\\
	D_{\zeta}f(\zeta)&=2(\bar{\zeta}|\zeta|^2-\zeta\s{\bar{\zeta}}{\bar{\zeta}})\\
	D_{\zeta}f(\e_{z})&=2\s{\e_z}{\e_{\z}}\,\e_{\z}=e^{2\lambda}\e_{\z}\\
	h(\e_{z},\D_{\e_{\z}}\e_{\z})&=-|\e_z|^2\s{\e_{\z}}{\D_{\p{\z}}\e_{\z}}\e_z-|\e_z|^2\s{\e_z}{\D_{\p{\z}}\e_{\z}}\e_{\z}=-\frac{e^{2\lambda}}{2}\p{\z}\left(\frac{e^{2\lambda}}{2}\right)\e_{\z}=-\frac{e^{4\lambda}}{2}(\p{\z}\lambda)\e_{\z}\\
	h(\e_z,\D_{\p{z}}\e_{\z})&=0,\;\, \text{as}\;\,\bar{\D}_{\p{z}}\e_{\z}=0\\
	D_{\zeta}h(\zeta,\kappa)&=\left(\s{\bar{\zeta}}{\bar{\zeta}}\s{\kappa}{\,\cdot\,}-\s{\bar{\zeta}}{\,\cdot\,}\s{\bar{\zeta}}{\kappa}\right)\zeta+\left(\s{\bar{\zeta}}{\bar{\zeta}}\s{\zeta}{\kappa}-|\zeta|^2\s{\bar{\zeta}}{\kappa}\right)\,\cdot\,\\
	&+\left(-\s{\bar{\zeta}}{\,\cdot\,}\s{\zeta}{\kappa}-|\zeta|^2\s{\,\cdot\,}{\kappa}+2\s{\zeta}{\,\cdot\,}\s{\bar{\zeta}}{\kappa}\right)\bar{\zeta}\\
	D_{\zeta}h(\e_z,\D_{\p{z}}\e_{\z})&=-|\e_z|^2\s{\D_{\p{z}}\e_{\z}}{\,\cdot\,}\e_{\z}=-\frac{e^{2\lambda}}{2}\s{\D_{\p{z}}\e_{\z}}{\,\cdot\,}\e_{\z}=-\frac{e^{4\lambda}}{4}\s{\H}{\,\cdot\,}\e_{\z}\\
	D_{\zeta}h(\e_z,\D_{\p{z}}\e_{z})&=-\s{\e_{\z}}{\,\cdot\,}\s{\e_{\z}}{\D_{\p{z}}\e_z}\e_{z}-|\e_z|^2\s{\e_{\z}}{\D_{\p{z}}\e_z}\,\cdot\,
	+\left(-|\e_z|^2\s{\D_{\p{z}}\e_{z}}{\,\cdot\,}+2\s{\e_{\z}}{\D_{\p{z}}\e_{z}}\s{\e_z}{\,\cdot\,}\right)\e_{\z}\\
	&=-\p{z}\left(\frac{e^{2\lambda}}{2}\right)\s{\e_{\z}}{\,\cdot\,}\e_z-\frac{e^{2\lambda}}{2}\p{z}\left(\frac{e^{2\lambda}}{2}\right)\cdot+\left(-\frac{e^{2\lambda}}{2}\s{\D_{\p{z}}\e_{z}}{\,\cdot\,}+2\p{z}\left(\frac{e^{2\lambda}}{2}\right)\s{\e_z}{\,\cdot\,}\right)\e_{\z}\\	
	D_{\zeta}h(\e_z,\D_{\p{\z}}\e_{\z})&=\left(-\s{\e_z}{\D_{\p{\z}}\e_{\z}}\s{\e_{\z}}{\,\cdot\,}-\frac{e^{2\lambda}}{2}\s{\D_{\p{\z}}\e_{\z}}{\,\cdot\,}\right)\e_{\z}\\
	&=-\left(\p{\z}\left(\frac{e^{2\lambda}}{2}\right)\s{\e_{\z}}{\,\cdot\,}+\frac{e^{2\lambda}}{2}\s{\D_{\p{\z}}\e_{\z}}{\,\cdot\,}\right)\e_{\z}\\
	D_{\kappa}h(\zeta,\kappa)&=\left(\s{\bar{\zeta}}{\bar{\zeta}}\s{\zeta}{\,\cdot\,}-|\zeta|^2\s{\bar{\zeta}}{\,\cdot\,}\right)\zeta+\left(-|\zeta|^2\s{\zeta}{\,\cdot\,}+\s{\zeta}{\zeta}\s{\bar{\zeta}}{\,\cdot\,}\right)\bar{\zeta}\\
	D_{\kappa}h(\e_z,\w)&=-\frac{e^{2\lambda}}{2}\left(\s{\e_{\z}}{\,\cdot\,}\e_z+\s{\e_{z}}{\,\cdot\,}\e_{\z}\right)=-e^{2\lambda}\Re\left(\s{\e_{\z}}{\,\cdot\,}\e_z\right)\quad \text{(if the infinitesimal symmetries are real).}
\end{align*}
Furthermore, as $\s{\e_z}{\e_z}=\s{\e_{\z}}{\e_{\z}}=0$, we have
\begin{align}\label{cl0}
	D_{\omega}\H=0.
\end{align}
 Therefore, we obtain
\begin{align}\label{cl1}
	D_{\zeta}\H&=-D_{\zeta}f(\e_z)f(\e_z)^{-2}\left(|\e_z|^2\vec{\I}(\e_z,\e_{\z})\right)+f(\e_z)^{-1}\bigg(\s{\e_{\z}}{\,\cdot\,}\vec{\I}(\e_z,\e_{\z})+|\e_z|^2f(\e_z)^{-1}D_{\zeta}h(\e_z,\D_{\e_z}\e_{\z})\nonumber\\
	&-\s{\e_{z}}{\,\cdot\,}\vec{\I}(\e_{\z},\e_{\z})\bigg)\nonumber\\
	&=-e^{2\lambda}\s{\e_{\z}}{\,\cdot\,}\left(\frac{e^{4\lambda}}{4}\right)^{-2}\frac{e^{4\lambda}}{4}(2e^{-2\lambda}\vec{\I}(\e_z,\e_{\z}))+4e^{-4\lambda}\bigg\{\frac{e^{2\lambda}}{2}\s{\e_{\z}}{\,\cdot\,}\left(2e^{-2\lambda}\vec{\I}(\e_z,\e_{\z})\right)\nonumber\\
	&+\frac{e^{2\lambda}}{2}\left(\frac{e^{4\lambda}}{4}\right)^{-1}\left(-\frac{e^{4\lambda}}{4}\s{\H}{\,\cdot\,}\e_{\z}\right)-\frac{e^{2\lambda}}{2}\s{\e_z}{\,\cdot\,}\left(2e^{-2\lambda}\vec{\I}(\e_{\z},\e_{\z})\right)\bigg\}\nonumber\\
	&=-4e^{-2\lambda}\s{\e_{\z}}{\,\cdot\,}\vec{H}+2e^{-2\lambda}\left(\s{\e_{\z}}{\,\cdot\,}\vec{H}-\s{\H}{\,\cdot\,}\e_{\z}-\s{\e_z}{\,\cdot\,}\bar{\H_0}\right)\nonumber\\
	&=-2e^{-2\lambda}\left(\s{\e_{\z}}{\,\cdot\,}\H+\s{\H}{\,\cdot\,}\e_{\z}+\s{\e_z}{\,\cdot\,}\bar{\vec{H}_0}\right).
\end{align}
The last identity is
\begin{align}\label{cl2}
	D_{\chi}\H&=4e^{-4\lambda}\left(|\e_z|^2\,\cdot\,+|\e_z|^2\left(\frac{e^{4\lambda}}{4}\right)^{-1}D_{\kappa}h(\e_z,\D_{\e_z}\e_{\z})\right)\nonumber\\
	&=4e^{-4\lambda}\left(\frac{e^{2\lambda}}{2}\,\cdot\,-\frac{e^{2\lambda}}{2}\left(\frac{e^{4\lambda}}{4}\right)^{-1}e^{2\lambda}\Re\left(\s{\e_{\z}}{\,\cdot\,}\e_z\right)\right)\nonumber\\
	&=2e^{-2\lambda}\left(\,\cdot\,-4e^{-2\lambda}\Re\left(\s{\e_{\z}}{\,\cdot\,}\e_z\right)\right).
\end{align}
Thanks to \eqref{cl0}, \eqref{cl1} and \eqref{cl2}, we obtain
\begin{align}\label{conservationH}
	\left\{\begin{alignedat}{1}
	D_{\zeta}\H&=-2e^{-2\lambda}\left(\s{\e_{\z}}{\,\cdot\,}\H+\s{\H}{\,\cdot\,}\e_{\z}+\s{\e_z}{\,\cdot\,}\bar{\vec{H}_0}\right)\\
	D_{\chi}\H&=2e^{-2\lambda}\left(\,\cdot\,-4e^{-2\lambda}\Re\left(\s{\e_{\z}}{\,\cdot\,}\e_z\right)\right)\\
	D_{\omega}\H&=0
	\end{alignedat}\right.
	.
\end{align}
Now we see that
\begin{align*}
	K_gd\vg&=(\det g)^{-1}\left(\s{\vec{\I}(\e_1,\e_1)}{\vec{\I}(\e_2,\e_2)}-|\vec{\I}(\e_1,\e_2)|^2\right)\sqrt{\det g}\,dx_1\wedge dx_2\\
	&=\frac{1}{2}\left(|\e_z|^4-|\s{\e_z}{\e_z}|^2\right)^{-\frac{1}{2}}\Big(\s{2\,\Re\vec{\I}(\e_z,\e_z)+2\,\vec{\I}(\e_z,\e_{\z})}{-2\,\Re \vec{\I}(\e_z,\e_z)+2\,\vec{\I}(\e_z,\e_{\z})}\\
	&-|2\,\Im\vec{\I}(\e_z,\e_z)|^2\Big)dx_1\wedge dx_2\\
	&=2\left(|\e_z|^4-|\s{\e_z}{\e_z}|^2\right)^{-\frac{1}{2}}\left(|\vec{\I}(\e_z,\e_{\z})|^2-|\vec{\I}(\e_z,\e_z)|^2\right)dx_1\wedge dx_2.
\end{align*}
As
\begin{align*}
	&\vec{\I}(\e_z,\e_{\z})=\D_{\p{z}}\e_{\z}+f(\e_z)^{-1}+f(\e_z)^{-1}h(\e_z,\D_{\p{z}}\e_{\z})=\chi+f(\zeta)^{-1}+f(\zeta)^{-1}h(\zeta,\chi)\\
	&\vec{\I}(\e_z,\e_z)=\omega+f(\zeta)^{-1}h(\zeta,\omega),
\end{align*}
we deduce that
\begin{align*}
	D_{\zeta}\vec{\I}(\e_z,\e_{\z})&=-D_{\zeta}f(\e_z)f(\e_z)^{-2}h(\e_z,\D_{\p{z}}\e_{\z})+f(\e_z)^{-1}D_{\zeta}h(\e_z,\D_{\p{z}}\e_{\z})\\
	&=\left(\frac{e^{4\lambda}}{4}\right)^{-1}\left(-\frac{e^{4\lambda}}{4}\s{\vec{H}}{\,\cdot\,}\e_{\z}\right)
	=-\s{\H}{\,\cdot\,}\e_{\z}\\
	D_{\zeta}|\vec{\I}(\e_z,\e_{\z})|^2&=2\s{D_{\zeta}\vec{\I}(\e_z,\e_{\z})}{\vec{\I}(\e_z,\e_{\z}}=-2\s{\H}{\,\cdot\,}\s{\e_{\z}}{\frac{e^{2\lambda}}{2}\H}=0.
\end{align*}
Therefore, we have
\begin{align}\label{conservationK}
    \left\{\begin{alignedat}{1}
	   D_{\zeta}\left(\star K_gd\vg\right)&=-2\s{\e_{\z}}{\,\cdot\,}K_g+4(\p{z}\lambda)\s{\,\cdot\,}{\bar{\H_0}}=-2K_g\e_{\z}+4(\p{z}\lambda)\bar{\H_0}\\
       D_{\chi}\left(\star K_gd\vg\right)&=4\H\\
       D_{\omega}\left(\star K_gd\vg\right)&=-2\bar{\H_0}
    \end{alignedat}\right.
    .
\end{align}
Now define
\begin{align*}
	L_0(\phi,d\phi,\D\phi)=|\H|^2 (\det g)^{\frac{1}{2}}=2|\H|^2f(\e_z)^{\frac{1}{2}}
\end{align*}
We have by \eqref{conservationH}
\begin{align}\label{cl22}
	D_{\zeta}L_0&=2e^{2\lambda}\s{D_{\zeta}\H}{\H}+2D_{\zeta}f(\e_z)f(\e_z)^{-\frac{1}{2}} |\H|^2\nonumber\\
	&=-4\s{\s{\e_{\z}}{\,\cdot\,}\H+\s{\H}{\,\cdot\,}\e_{\z}+\s{\e_z}{\,\cdot\,}\bar{\vec{H}_0}}{\H}
	+2|\H|^2\s{\e_{\z}}{\,\cdot\,}\nonumber\\
	&=-2|\H|^2\s{\e_{\z}}{\,\cdot\,}-4\s{\H}{\bar{\H_0}}\s{\e_z}{\,\cdot\,}\nonumber\\
	&=-2|\H|^2\e_{\z}-4\s{\H}{\bar{\H_0}}\e_z
\end{align}
and
\begin{align}\label{cl11}
	D_{\chi}L_0&
	=2e^{2\lambda}\s{D_{\chi}\H}{\H}=4\s{\,\cdot\,-4e^{-2\lambda}\Re\left(\s{\e_{\z}}{\,\cdot\,}\e_z\right)}{\H}=4\s{\H}{\,\cdot\,}=4\H,
\end{align}
while
\begin{align}\label{cl00}
	D_{\omega}L_0&=0.
\end{align}
Therefore by \eqref{cl22}, \eqref{cl11} and \eqref{cl00}, we have
\begin{align}\label{conservationL0}
\left\{\begin{alignedat}{1}
	D_{\zeta}L_0&=-2|\H|^2\e_{\z}-4\s{\H}{\bar{\H_0}}\e_z\\
	D_{\chi}L_0&=4\H\\
	D_{\omega}L_0&=0.
	\end{alignedat}\right.
	.
\end{align}
If $L=\star(|H|^2-K_g)d\vg=\star\left(|\H_0|^2d\vg\right)=L_0-\star \left(K_gd\vg\right)$, by \eqref{conservationK} and \eqref{conservationL0}, we have
\begin{align}\label{conservationL}
\left\{\begin{alignedat}{1}
	D_{\zeta}L&=-2|\H|^2\e_{\z}-4\s{\H}{\bar{\H_0}}\e_z+2K_g\e_{\z}-4(\p{z}\lambda)\bar{\H_0}=-2|\H_0|^2\e_{\z}-4\s{\H}{\bar{\H_0}}\e_z-4(\p{z}\lambda)\bar{\H_0}\\
D_{\omega}L&=2\bar{\H}_0\\
D_{\chi}L&=4\H-4\H=0.
\end{alignedat}\right.
\end{align}
Therefore, for any infinitesimal (real) symmetry $\vec{X}$, Noether's theorem shows that (as $D_{\chi}L=0$)
\begin{align*}
	\Re\left(\D_{\p{z}}\left(D_{\zeta}L\cdot\vec{X}-\D_{\p{z}}(\D_{\omega}L)\cdot\vec{X}+D_{\omega}L\cdot\D_{\p{z}}\vec{X}\right)\right)=0
\end{align*}
which gives
\begin{align}\label{noetherwillmore}
	\Re\left(\D_{\p{\z}}\left(\left(-2|\H_0|^2\e_z-4\s{\H}{\H_0}\e_{\z}-4(\p{z}\lambda)\H_0-2\D_{\p{\z}}\H_0\right)\cdot\vec{X}+2\H_0\cdot\D_{\p{\z}}\vec{X}\right)\right)=0.
\end{align}
As
\begin{align*}
	\D_{\p{\z}}\H_0&=\D_{\p{\z}}(e^{-2\lambda}e^{2\lambda}\H_0)=-2(\p{\z}\lambda)\H_0+e^{-2\lambda}\D_{\p{\z}}(e^{2\lambda}\H_0)\\
	&=-2(\p{\z}\lambda)\H_0+g^{-1}\otimes\bar{\partial}^N\h_0+\D_{\p{\z}}^\top\H_0.
\end{align*}
and
\begin{align*}
	\D_{\p{\z}}^\top\H_0=-|\H_0|^2\e_z-\s{\H}{\H_0}\e_{\z}
\end{align*}
we have
\begin{align*}
	|\H_0|^2\e_z+2\s{\H}{\H_0}\e_{\z}+2(\p{\z}\lambda)\H_0+\D_{\p{\z}}\H_0&=\s{\H}{\H_0}\e_{\z}+g^{-1}\otimes \bar{\partial}^N\h_0\\
	&=g^{-1}\otimes \left(\bar{\partial}^N-\bar{\partial}^\top\right)\h_0-|\h_0|_{WP}^2\, \partial\phi.
\end{align*}
which finally gives
\begin{align}\label{noetherwillmore2}
	d\,\Im \left(\left(g^{-1}\otimes \left(\bar{\partial}^N-\bar{\partial}^\top\right)\h_0-|\h_0|_{WP}^2\partial\phi\right)\cdot\vec{X}-g^{-1}\otimes\h_0\cdot\bar{\partial}\vec{X}\right)=0
\end{align}
The invariance by translation gives (taking $\vec{X}=\vec{C}\in\R^n$)
\begin{align*}
	d\,\Im\left(g^{-1}\otimes \left(\bar{\partial}^N-\bar{\partial}^\top\right)\h_0-|\h_0|_{WP}^2\partial\phi\right),
\end{align*}
while the invariance dilatation invariance corresponds to $\vec{X}=\phi$, so (as $\s{\h_0}{\p{\z}\phi}=0$)
\begin{align*}
	d\,\Im\left(\left(g^{-1}\otimes \left(\bar{\partial}^N-\bar{\partial}^\top\right)\h_0-|\h_0|_{WP}^2\partial\phi\right)\cdot\phi\right)=0.
\end{align*}
The invariance by rotation corresponds to $\vec{X}=\vec{C}\wedge\phi$ (where $\vec{C}\in\R^n$ constant), and implies that
\begin{align*}
	d\,\Im\left(\phi\wedge\left(g^{-1}\otimes \left(\bar{\partial}^N-\bar{\partial}^\top\right)\h_0+|\h_0|_{WP}^2\partial\phi\right)+g^{-1}\otimes\h_0\wedge\bar{\partial}\phi\right)=0
\end{align*}
and finally, the invariance by the composition of translations and inversions, corresponds to $\vec{X}=|\phi|^2\vec{C}-2\s{\phi}{\vec{C}}\phi$, and we obtain (as $\s{\h_0}{\p{\z}\phi}=0$)
\begin{align*}
	d\,\Im\bigg(&|\phi|^2g^{-1}\otimes \left(\bar{\partial}^N-\bar{\partial}^\top\right)\h_0-|\h_0|_{WP}^2\partial\phi-2\s{\phi}{g^{-1}\otimes \left(\bar{\partial}^N-\bar{\partial}^\top\right)\h_0-|\h_0|_{WP}^2\partial\phi}\\
	&-g^{-1}\otimes\h_0\otimes \bar{\partial}|\phi|^2
	+2\,g^{-1}\otimes\s{\h_0}{\phi}\otimes\bar{\partial}\phi\bigg)=0.
\end{align*}

In particular, the four residues are
\begin{align}\label{residues1234}
\left\{\begin{alignedat}{1}
	\vec{\gamma}_0(\phi,p)&=\frac{1}{4\pi}\,\Im\int_{\gamma}g^{-1}\otimes\left(\bar{\partial}^N-\bar{\partial}^\top\right)\h_0-|\h_0|_{WP}^2\,\partial\phi\\
    \vec{\gamma}_1(\phi,p)&=\frac{1}{4\pi}\,\Im\int_{\gamma}\phi\wedge\left( g^{-1}\otimes\left(\bar{\partial}^N-\bar{\partial}^\top\right)\h_0-|\h_0|_{WP}^2\,\partial\phi\right)+g^{-1}\otimes\h_0\wedge\bar{\partial}\phi\\
    \vec{\gamma}_2(\phi,p)&=\frac{1}{4\pi}\,\Im\int_{\gamma}\phi\cdot\left( g^{-1}\otimes\left(\bar{\partial}^N-\bar{\partial}^\top\right)\h_0-|\h_0|_{WP}^2\,\partial\phi\right)\\
    \vec{\gamma}_3(\phi,p)&=\frac{1}{4\pi}\,\Im\int_{\gamma}|\phi|^2\left(g^{-1}\otimes\left(\bar{\partial}^N-\bar{\partial}^\top\right)\h_0-|\h_0|_{WP}^2\,\partial\phi\right)\\
    &-2\s{\phi}{g^{-1}\otimes\left(\bar{\partial}^N-\bar{\partial}^\top\right)\h_0-|\h_0|_{WP}^2\,\partial\phi}\phi-g^{-1}\otimes\h_0\otimes \bar{\partial}|\phi|^2+2\,g^{-1}\otimes\s{\h_0}{\phi}\otimes\bar{\partial}\phi
\end{alignedat}\right.
\end{align}
where $p\in \Sigma^2$ and $\gamma$ is an arbitrary smooth closed curve enclosing the point $p$.

To get some intuition on these residues, it is useful to look at the simplest possible case of Willmore surfaces : the minimal surfaces in $\R^n$ with embedded ends.

\begin{prop}\label{fluxresidue}
	Let $\phi:\Sigma^2\setminus\ens{p_1,\cdots,p_m}\rightarrow\R^n$ be a complete minimal surface with embedded ends and finite total curvature. Then its first three residues are always zero, and its fourth one corresponds to 
	its flux, that is for all $j=1,\cdots,m$
	\begin{align*}
	\vec{\gamma}_3(\phi,p_j)&=\frac{1}{4\pi}\,\Im\int_{\gamma}\mathscr{I}_{\phi}\left(g^{-1}\otimes\left(\bar{\partial}^N-\bar{\partial}^\top\right)\h_0-|\h_0|_{WP}^2\,\partial\phi\right)-g^{-1}\otimes\h_0\otimes \bar{\partial}|\phi|^2+2\,g^{-1}\otimes\s{\h_0}{\phi}\otimes\bar{\partial}\phi\\
	&=\frac{1}{4\pi}\Im\,\int_{\gamma}\partial\phi.
    \end{align*}
where for all tangent $\vec{w}\in\C^n$, 
\begin{align*}
\mathscr{I}_{\phi}(\vec{w})=|\phi|^2\vec{w}-2\s{\phi}{\vec{w}}\phi.
\end{align*}
\end{prop}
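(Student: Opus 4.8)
The plan is to exploit that a minimal surface has vanishing mean curvature $\H\equiv 0$, which forces the common ``master'' one-form appearing in all four residues \eqref{residues1234} to vanish identically, and then to dispatch the two remaining $\h_0$-correction terms (those surviving in $\vec{\gamma}_1$ and $\vec{\gamma}_3$) by writing them as total $\p{z}$-derivatives and invoking a reality/period argument. Throughout I would use that for a minimal surface $\partial\phi=\e_z\,dz$ is holomorphic, since $\p{\z}\e_z=\tfrac14\Delta\phi=0$.

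First I would record that, by the remark following \eqref{res00} (the Codazzi reformulation),
\[
g^{-1}\otimes\left(\bar{\partial}^N-\bar{\partial}^\top\right)\h_0-|\h_0|_{WP}^2\,\partial\phi=\partial\H+|\H|^2\partial\phi+2\,g^{-1}\otimes\s{\H}{\h_0}\otimes\bar{\partial}\phi,
\]
so that for $\H\equiv 0$ this one-form vanishes \emph{pointwise}. Substituting into \eqref{residues1234} immediately yields $\vec{\gamma}_0(\phi,p_j)=0$ and $\vec{\gamma}_2(\phi,p_j)=0$, and annihilates the terms $\phi\wedge(\cdots)$ in $\vec{\gamma}_1$ and $\mathscr{I}_\phi(\cdots)$ in $\vec{\gamma}_3$. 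Hence $\vec{\gamma}_1$ collapses to $\frac{1}{4\pi}\Im\int_\gamma g^{-1}\otimes\h_0\wedge\bar{\partial}\phi$ and $\vec{\gamma}_3$ to $\frac{1}{4\pi}\Im\int_\gamma\big(2\,g^{-1}\otimes\s{\h_0}{\phi}\otimes\bar{\partial}\phi-g^{-1}\otimes\h_0\otimes\bar{\partial}|\phi|^2\big)$.

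The heart of the matter is then a local identity. Writing $\vec{\I}(\e_z,\e_z)=\p{z}\e_z-2(\p{z}\lambda)\e_z$ (the tangential part of $\p{z}\e_z$ being $2(\p{z}\lambda)\e_z$ by conformality) and using $\p{z}\e_{\z}=0$, I would establish
\[
\vec{H}_0\wedge\e_{\z}=2\,\p{z}\!\left(e^{-2\lambda}\,\e_z\wedge\e_{\z}\right),
\]
so that, after applying $g^{-1}$, the $\vec{\gamma}_1$-integrand is $dz$ times a total $\p{z}$-derivative of the purely imaginary bivector $e^{-2\lambda}\e_z\wedge\e_{\z}=\tfrac{i}{2}e^{-2\lambda}\e_1\wedge\e_2$. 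As this primitive is $i$ times a real, single-valued object around $p_j$ (the Gauss map extends across finite-total-curvature ends), the relation $\int_\gamma d(\cdots)=0$ forces its period to be purely imaginary, whence the whole quantity is real and $\Im\int_\gamma=0$; this gives $\vec{\gamma}_1(\phi,p_j)=0$. For $\vec{\gamma}_3$ I would rewrite the surviving integrand as $2\big(\s{\phi}{\vec{H}_0}\e_{\z}-\s{\phi}{\e_{\z}}\vec{H}_0\big)dz$, recognise it as twice the contraction of $\phi$ into $\vec{H}_0\wedge\e_{\z}=2\p{z}(e^{-2\lambda}\e_z\wedge\e_{\z})$, and commute the contraction past $\p{z}$. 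The commutator term, the contraction of $\e_z=\p{z}\phi$ into $e^{-2\lambda}\e_z\wedge\e_{\z}$, equals a multiple of $\e_z$ by \eqref{orthogonality}, while the remainder is once more a total $\p{z}$-derivative of an $i$-times-real single-valued quantity; the reality argument kills the exact part and leaves precisely the holomorphic period of $\partial\phi=\e_z\,dz$, i.e. the flux $\frac{1}{4\pi}\Im\int_\gamma\partial\phi$.

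The main obstacle will be the careful bookkeeping of the three kinds of contraction appearing ($g^{-1}\otimes(\cdot)$, the target wedge, and the target interior product) and of the numerical constants, so as to isolate the single surviving copy of $\e_z$ in $\vec{\gamma}_3$ and to match the normalisation of the flux; the conceptual content—vanishing of the master form, the total-derivative identity, and the reality/period argument—is short, but the constants demand attention. The hypotheses of embedded ends and finite total curvature enter only to guarantee that $\phi$ is conformally a punctured closed surface with $\partial\phi$ meromorphic, so that the periods $\int_\gamma\partial\phi$ are well defined and the Gauss-map primitive used above is single-valued around each $p_j$.
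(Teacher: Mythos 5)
Your strategy is sound, and it is a genuinely different route from the paper's. The paper proves the proposition by brute-force asymptotics: it writes the Weierstrass parametrisation at an embedded end (normal $(0,0,1)$, data $\alpha>0$, $\beta\in\R$), computes $|\phi|^2$, $e^{2\lambda}$ and $\h_0=\left(0,0,\beta\,\frac{dz^2}{z^2}\right)+O(|z|^{-1})$, checks that the $\s{\h_0}{\phi}$-term is $O(\log|z|)$, and reads the residue $\frac{1}{2}(0,0,\beta)$ off $\int_\gamma \frac{dz}{z}=2\pi i$; the extra term $g^{-1}\otimes\h_0\wedge\bar{\partial}\phi$ in $\vec{\gamma}_1$ is only disposed of in a later remark, again via Weierstrass asymptotics. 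You replace all of this by the pointwise identity $\vec{H}_0\wedge\e_{\z}=2\,\p{z}\left(e^{-2\lambda}\,\e_z\wedge\e_{\z}\right)$, which is correct: the tangential part of $\p{z}\e_z$ is $2(\p{z}\lambda)\e_z$ by \eqref{conforme}, and $\p{z}\e_{\z}=\frac{e^{2\lambda}}{2}\H=0$ by minimality. Your reality argument also checks out, and can be stated even more directly: since $e^{-2\lambda}\e_z\wedge\e_{\z}=i\vec{R}$ with $\vec{R}=\frac{1}{2}e^{-2\lambda}\e_1\wedge\e_2$ real and single-valued on the punctured neighbourhood, one has $\Im\left(2\,\p{z}(i\vec{R})\,dz\right)=\p{z}\vec{R}\,dz+\p{\z}\vec{R}\,d\z=d\vec{R}$, so the relevant pieces of the $\vec{\gamma}_1$- and $\vec{\gamma}_3$-integrands are literally exact $1$-forms; no extension of the Gauss map across the end is needed. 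A genuine advantage of your proof is that it nowhere uses embeddedness (or even finite total curvature): it gives the residue--flux identity for arbitrary ends, a statement the paper only reaches later, in corollary \ref{inversionmin}, via the correspondence theorem.

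The one step you deferred — the constants — is where care is required, and the outcome is instructive. Carrying your scheme out: the reduced $\vec{\gamma}_3$-integrand is $2\,\iota_{\phi}\left(\vec{H}_0\wedge\e_{\z}\right)dz=4\,\iota_{\phi}\left(\p{z}\vec{G}\right)dz$ with $\vec{G}=e^{-2\lambda}\e_z\wedge\e_{\z}$, and since $\iota_{\e_z}\vec{G}=e^{-2\lambda}\left(\s{\e_z}{\e_z}\e_{\z}-\s{\e_z}{\e_{\z}}\e_z\right)=-\frac{1}{2}\e_z$, commuting the contraction past $\p{z}$ leaves $4\,\p{z}\left(\iota_{\phi}\vec{G}\right)dz+2\,\partial\phi$. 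The exact part drops out by the reality argument, so your method yields $\vec{\gamma}_3=\frac{1}{2\pi}\,\Im\int_\gamma\partial\phi$, not $\frac{1}{4\pi}\,\Im\int_\gamma\partial\phi$. This is not a defect of your approach: it agrees exactly with the paper's own explicit computation $\vec{\gamma}_3=\frac{1}{2}(0,0,\beta)$ at a catenoidal end, where $\frac{1}{4\pi}\,\Im\int_\gamma\partial\phi=\left(0,0,\frac{\beta}{4}\right)$ since $\p{z}\phi_3=\frac{\beta}{2z}$ — that is, the normalisation stated in the proposition (and in the paper's final display equating the two periods) is off by a factor $2$ against the paper's own asymptotic computation. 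So your proof is correct modulo this normalisation, which you asserted without checking; when writing it up you should fix the constant to $\frac{1}{2\pi}$ (or adjust the flux convention) and note the discrepancy.
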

\begin{proof}
    As $\H=0$, and 
    \begin{align*}
    	g^{-1}\otimes\left(\bar{\partial}^N-\partial^\top\right)\h_0-|\h_0|^2_{WP}\,\partial\phi=\partial^N\H+g^{-1}\otimes\s{\H}{\h_0}\otimes\bar{\partial}\phi=0
    \end{align*}
    the first three residues vanish, and by \eqref{residues1234} the fourth residue $\vec{\gamma}_3$ reduces to
    \begin{align}\label{reducegamma3}
    	\vec{\gamma}_3(\phi,p_j)=-\frac{1}{4\pi}\Im\int_{\gamma}g^{-1}\otimes \h_0\otimes\bar{\partial}|\phi|^2-2\,g^{-1}\otimes\s{\h_0}{\phi}\otimes\bar{\partial}\phi.
    \end{align}
    From now on, we assume $n=3$ for simplicity.
     Let us fix some $1\leq j\leq m$ and let $p_j$ be an end of $\phi$. Taking some complex chart sending $p_j$ to $0$, we can suppose that $\phi$ is parametrised by the punctured unit disk $D^2\setminus\ens{0}$. Then can suppose up to rotation that the normal $\n$ at $p$ is $\n(p_i)=(0,0,1)$, and by the Weierstrass parametrisation, this is easy to see that the embeddedness of $p_i$ implies that there exists $\alpha>0$, $\beta\in \R$ such that
    \begin{align*}
       \phi(z)=\Re\left(\int_{\ast}^z\frac{\alpha}{w^2}dw,\int_{\ast}^z\frac{i\alpha}{w^2}dw,\int_{\ast}^{z}\frac{\beta}{w^{}}dw\right)+O(1)
    \end{align*}
    for some $\alpha>0$, $\beta\in\R$. In particular, we have
    \begin{align}\label{refgamma3}
    \left\{\begin{alignedat}{1}
          |\phi(z)|^2&=\frac{\alpha^2}{|z|^{2}}+O(1),\quad \p{\z}|\phi(z)|^2=-\frac{\alpha^2 z}{|z|^{4}}\left(1+O(|z|^2)\right)\\
    e^{2\lambda}&=2|\p{z}\phi(z)|^2=\frac{\alpha^2}{|z|^{4}}\left(1+O(|z|)^2\right)\\
    \h_0(z)&=\left(0,0,\beta\frac{dz^2}{z^2}\right)+O(1)
    \end{alignedat}\right.
  \end{align}
   Indeed, we have
   \begin{align*}
   \left\{\begin{alignedat}{1}
      	\p{z}\phi&=\frac{1}{2}\left(\frac{\alpha}{z^2},\frac{i\alpha}{z^2},\frac{\beta}{z}\right)+O(1),\quad 
   \p{z}^2\phi=-\frac{1}{2}\left(\frac{2\alpha}{z^3},\frac{2i\alpha}{z^3},\frac{\beta}{z^2}\right)+O\left(\frac{1}{|z|}\right)\\
   2(\p{z}\lambda)&=e^{-2\lambda}\p{z}(e^{2\lambda})=\frac{|z|^4}{\alpha^2}\left(-2\frac{\alpha^2}{z^3\z^2}\right)+O(1)=\frac{-2}{z}+O(1)\\
   \h_0&=2\left(\p{z}^2\phi-2(\p{z}\lambda)\p{z}\phi\right)=\left(0,0,\beta\frac{dz^2}{z^2}\right)+O\left(\frac{1}{|z|}\right)
   \end{alignedat}\right.
   \end{align*}
   Therefore,  by \eqref{refgamma3}, we have
   \begin{align}\label{reducegamma32}
   	g^{-1}\otimes \bar{\partial}|\phi|^2\otimes\h_0&=\left(0,0,\frac{|z|^4}{\alpha^2}\left(-\frac{\alpha^2 z}{|z|^4}\right)\frac{\beta}{z^2}dz\right)+O(1)
   	=\left(0,0,-\beta\frac{dz}z\right)+O(1)
   \end{align}
   and as $\s{\h_0}{\phi}=O(\log|z||z|^{-2})$, we have
   \begin{align}\label{unautre}
   	g^{-1}\otimes\s{\h_0}{\phi}\otimes \bar{\partial}|\phi|=O(\log|z|)
   \end{align}
   Therefore, putting together \eqref{reducegamma3}, \eqref{reducegamma32} and \eqref{unautre} the fourth residue is equal to
   \begin{align*}
   	\vec{\gamma_3}(\phi,p_j)= \left(0,0,\frac{1}{4\pi}\Im\int_{\gamma}\beta\frac{dz}{z}\right)=\frac{1}{2}(0,0,\beta).
   \end{align*}
   which coincides exactly with the \textit{flux}, and this  shows the identity (for minimal surfaces with embedded planar ends)
   \begin{align*}
   	\Im\,\int_{\gamma}g^{-1}\otimes\h_0\otimes\bar{\partial}|\phi|^2-2\,g^{-1}\otimes\s{\h_0}{\phi}\otimes\bar{\partial}\phi=\Im\,\int_{\gamma}\partial\phi,
   \end{align*}
   concluding the proof of the proposition.
\end{proof}

    This proposition suggests that the first and fourth residues are exchanged when we apply a conformal transformation, as in the case of the inverted catenoid, we find the same residue (this is immediate from its Weierstrass parametrisation, but for a proof, see for example \cite{beriviere}).    
    
    \subsection{Correspondence between residues and conformal invariance}\label{correspondance}

    Obviously, the four residues are invariant by rotations, translations and dilatations. However, for inversions with centre \textit{inside} $\phi(\Sigma^2)$, this is not the case as the previous example showed for inversions of minimal surfaces. There is nevertheless a simple rule under which these quantities transform, which is detailed below.
    
    \begin{theorem}[Residue correspondence]\label{galois}
    	Let $\phi:\Sigma^2\rightarrow\R^n$ be a Willmore surface and let $\iota:\R^n\setminus\ens{0}\rightarrow\R^n\setminus\ens{0}$ be the inversion centred at zero. If $\vec{\Psi}=\iota\circ \phi:\Sigma^2\setminus \phi^{-1}(\ens{0})\rightarrow\R^n$, for all $p\in \Sigma^2$, we have
    	\begin{align}\label{residusnonintro}
    	\left\{\begin{alignedat}{1}
    	    		\vec{\gamma}_0(\phi,p)&=\vec{\gamma}_3(\vec{\Psi},p)\\
    	            \vec{\gamma}_1(\phi,p)&=\vec{\gamma}_1(\vec{\Psi},p)\\
    	            \vec{\gamma}_2(\phi,p)&=-\vec{\gamma}_2(\vec{\Psi},p)\\
    	            \vec{\gamma}_3(\phi,p)&=\vec{\gamma}_0(\vec{\Psi},p).
    	\end{alignedat}\right.
    	\end{align}
    	where the residues $\vec{\gamma}_0,\vec{\gamma}_1,\vec{\gamma}_2,\vec{\gamma}_3$ are given by \eqref{residues1234}.
    \end{theorem}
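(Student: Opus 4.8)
The guiding principle is that the four residues \eqref{residues1234} are nothing but the Noether charges produced by the single master conservation law \eqref{noetherwillmore2} when the infinitesimal symmetry $\vec{X}$ runs through the generators of the conformal group of $\R^n$: translations $\vec{X}=\vec{C}$ give $\vec{\gamma}_0$, rotations $\vec{X}=\vec{C}\wedge\phi$ give $\vec{\gamma}_1$, the dilation $\vec{X}=\phi$ gives $\vec{\gamma}_2$, and the special conformal fields $\vec{X}=\mathscr{I}_{\phi}(\vec{C})=|\phi|^2\vec{C}-2\s{\phi}{\vec{C}}\phi$ give $\vec{\gamma}_3$. Since the Lagrangian used to derive \eqref{noetherwillmore2} is the pointwise conformally invariant density $|\h_0|_{WP}^2\,d\vg$, the correspondence merely records how the involution $\iota$ conjugates these generators. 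Differentiating $\iota$ along $\vec{\Psi}=\iota\circ\phi$ yields
\begin{align*}
d\iota_{\vec{\Psi}}(\vec{C})=\mathscr{I}_{\phi}(\vec{C}),\qquad d\iota_{\vec{\Psi}}(\vec{C}\wedge\vec{\Psi})=\vec{C}\wedge\phi,\qquad d\iota_{\vec{\Psi}}(\vec{\Psi})=-\phi,
\end{align*}
so that translations of $\vec{\Psi}$ are exchanged with special conformal motions of $\phi$, rotations are fixed (as $\iota$ commutes with $O(n)$), and dilations are reversed (as $\iota$ intertwines $x\mapsto\lambda x$ with $x\mapsto\lambda^{-1}x$). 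This already predicts the pattern $\vec{\gamma}_0\leftrightarrow\vec{\gamma}_3$, $\vec{\gamma}_1\mapsto\vec{\gamma}_1$, $\vec{\gamma}_2\mapsto-\vec{\gamma}_2$.

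The only analytic input needed to upgrade this bookkeeping of generators into the identities \eqref{residusnonintro} is the behaviour of the Weingarten data under $\iota$. Writing $d\iota_{\phi}=|\phi|^{-2}\rho_{\phi}$ with $\rho_{\phi}=\mathrm{Id}-2\,|\phi|^{-2}\,\phi\otimes\phi$ the Euclidean reflection fixing $\phi^{\perp}$, one first records $\partial\vec{\Psi}=|\phi|^{-4}\mathscr{I}_{\phi}(\partial\phi)$, $|\vec{\Psi}|^2=|\phi|^{-2}$, $e^{2\mu}=|\phi|^{-4}e^{2\lambda}$, and, crucially, the transformation law of the Weingarten tensor
\begin{align}\label{inversionWeingarten}
\h_0^{\vec{\Psi}}=\frac{1}{|\phi|^4}\,\mathscr{I}_{\phi}(\h_0)=\frac{1}{|\phi|^4}\left(|\phi|^2\,\h_0-2\,\s{\phi}{\h_0}\,\phi\right).
\end{align}
The key point in establishing \eqref{inversionWeingarten} is that the Hessian of $\iota$ contributes to $\partial^2\vec{\Psi}$ only through terms proportional to $\s{\partial\phi}{\partial\phi}=0$ and to purely tangential vectors, so that its normal projection drops out and $\h_0$ transforms by the pure conformal covariance $|\phi|^{-2}\rho_{\phi}$; this is consistent with the pointwise invariance $|\h_0^{\vec{\Psi}}|_{WP}^2\,d\vg_{\vec{\Psi}}=|\h_0|_{WP}^2\,d\vg$ recalled in the introduction, since $d\vg_{\vec{\Psi}}=|\phi|^{-4}d\vg$.

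With \eqref{inversionWeingarten} in hand the proof is completed by substitution. Abbreviating the $(1,0)$-form common to all four residues by $\vec{\Omega}_{\phi}=g^{-1}\otimes(\bar{\partial}^N-\bar{\partial}^\top)\h_0-|\h_0|_{WP}^2\,\partial\phi$, one feeds $\vec{\Psi}$, $\partial\vec{\Psi}$, $|\vec{\Psi}|^2$ and $\h_0^{\vec{\Psi}}$ into the expressions \eqref{residues1234} written for $\vec{\Psi}$ and checks, term by term, that the integrand of $\vec{\gamma}_0(\vec{\Psi})$ equals that of $\vec{\gamma}_3(\phi)$ modulo an exact form, and likewise that $\vec{\gamma}_1(\vec{\Psi})$ matches $\vec{\gamma}_1(\phi)$ and $\vec{\gamma}_2(\vec{\Psi})$ matches $-\vec{\gamma}_2(\phi)$. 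The auxiliary summands $g^{-1}\otimes\h_0\wedge\bar{\partial}\phi$ in $\vec{\gamma}_1$ and $-g^{-1}\otimes\h_0\otimes\bar{\partial}|\phi|^2+2\,g^{-1}\otimes\s{\h_0}{\phi}\otimes\bar{\partial}\phi$ in $\vec{\gamma}_3$ are precisely the contributions of the $\bar{\partial}\vec{X}\neq 0$ part of \eqref{noetherwillmore2} and reshuffle correctly under \eqref{inversionWeingarten} and the relation $\partial\vec{\Psi}=|\phi|^{-4}\mathscr{I}_{\phi}(\partial\phi)$, which is why the same operator $\mathscr{I}_{\phi}$ governs both $\vec{\gamma}_3$ and the inverted $(1,0)$-form. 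The remaining identities $\vec{\gamma}_3(\phi)=\vec{\gamma}_0(\vec{\Psi})$ and $\vec{\gamma}_2(\phi)=-\vec{\gamma}_2(\vec{\Psi})$ then follow at once from $\iota^2=\mathrm{Id}$.

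The genuine obstacle is \eqref{inversionWeingarten}, and more precisely the transformation of the normal connection $\D^N$ — equivalently of the splitting $\bar{\partial}=\bar{\partial}^N+\bar{\partial}^\top$ — under $\iota$, since the normal bundle of $\vec{\Psi}$ is the reflected bundle $\rho_{\phi}(T^N_{\C}\Sigma^2)$. Once the singular-looking combination $(\bar{\partial}^N-\bar{\partial}^\top)\h_0$ is shown to transform without producing uncontrolled terms, all that remains is routine contraction and collection, and the conformal invariance of $\mathscr{W}$ guarantees that the matching must hold exactly as dictated by the generator conjugation of the first step.
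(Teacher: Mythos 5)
Your plan is correct, and its architecture --- reading the four residues as Noether charges of \eqref{noetherwillmore2}, conjugating the conformal generators by $\iota$, transporting the Weingarten data through $\mathscr{I}$, and closing the remaining two identities with $\iota^2=\mathrm{Id}$ --- is exactly that of the paper's proof of theorem \ref{galois}; the paper even records a version of your generator-conjugation heuristic in a remark following the proof. Where you genuinely diverge is in how the key pointwise identity is produced. The paper never differentiates $\iota$ directly: it pushes the pointwise-invariant Lagrangian $\star\left(|\h_0|^2_{WP}\,d\vg\right)$ through the chain rule via the maps $F_1,F_2,F_3$ of \eqref{bigf123}, and the derivatives $D_{\zeta},D_{\omega},D_{\chi}$ collected in \eqref{noetherinv} deliver simultaneously the Weingarten law $\H^{0}_{\phi}=\mathscr{I}_{\vec{\Psi}}\left(\H^{0}_{\vec{\Psi}}\right)$ of \eqref{inversionWeingarten} and, after the computation \eqref{respart1}--\eqref{respart5}, the full current identity \eqref{inversionformula} for arbitrary $\vec{X}$. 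Your route --- obtaining the same law from $d\iota_{\phi}=|\phi|^{-2}\rho_{\phi}$ plus the observation that the Hessian contributions are either killed by $\s{\p{z}\phi}{\p{z}\phi}=0$ or tangential to $\vec{\Psi}$ --- is sound (the tangentiality claim checks out once one rewrites $\p{z}\phi$ in terms of $\p{z}\vec{\Psi}$) and is a shorter, more geometric entry point than the Lagrangian differentiation; what it buys is a clean derivation of \eqref{inversionWeingarten}, but what it defers is most of the paper's actual labour: your \enquote{routine contraction and collection} is precisely the product-rule computation $\D_{\p{\z}}\,\mathscr{I}_{\vec{\Psi}}\!\left(\H^{0}_{\vec{\Psi}}\right)=\mathscr{I}_{\vec{\Psi}}\!\left(\D_{\p{\z}}\H^{0}_{\vec{\Psi}}\right)+2\,\p{\z}|\vec{\Psi}|^2\,\H^{0}_{\vec{\Psi}}-4\s{\vec{\Psi}}{\H^{0}_{\vec{\Psi}}}\p{\z}\vec{\Psi}$ of \eqref{respart1}, whose correction terms are exactly the extra summands of $\vec{\gamma}_3$ in \eqref{residues1234}, together with the tangential Codazzi relation \eqref{respart2} and the algebraic checks \eqref{gamma22}--\eqref{gamma24} and \eqref{yatta0}--\eqref{yatta3} for $\vec{\gamma}_2$ and $\vec{\gamma}_1$; a complete write-up would therefore reproduce those computations anyway. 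The one simplification genuinely worth adopting from your plan is to eliminate the normal splitting at the outset via the Codazzi rewriting $g^{-1}\otimes\left(\bar{\partial}^N-\bar{\partial}^\top\right)\h_0-|\h_0|^2_{WP}\,\partial\phi=\partial\H+|\H|^2\partial\phi+2\,g^{-1}\otimes\s{\H}{\h_0}\otimes\bar{\partial}\phi$, which, combined with the classical inversion law for $\H$, disposes of your flagged obstacle about how $\bar{\partial}^N$ transports to the reflected normal bundle. One small precision: the matching is not merely \enquote{modulo an exact form} --- \eqref{inversionformula} is a pointwise identity of the currents paired with $\vec{X}$, as the remark after the paper's proof emphasizes.
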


\begin{proof}    
    If $\vec{f}_z=\p{z}\vec{\Psi}$, and $\e_z=\p{z}\phi$, we have
    \begin{align}\label{inv00}
    \left\{\begin{alignedat}{1}
        	\vec{f}_z&=\p{z}\vec{\Psi}=|\vec{\Psi}|^2\e_z-2\s{\vec{\Psi}}{\e_z}\vec{\Psi}\\
            \D_{\p{z}}f_z&=|\vec{\Psi}|^2\left(\D_{\p{z}}\e_{z}-4\s{\e_z}{\vec{\Psi}}\e_z-2\s{\e_z}{\e_z}\vec{\Psi}\right)-2\s{\vec{\Psi}}{\D_{\p{z}}\e_z}\vec{\Psi}+8\s{\vec{\Psi}}{\e_z}^2\vec{\Psi}\\
            \D_{\p{\z}}f_z&=|\vec{\Psi}|^2\left(\D_{\p{z}}\e_{\z}-4\,\Re\left(\s{\vec{\Psi}}{\e_z}\e_{\z}\right)-2|\e_z|^2\vec{\Psi}\right)-2\s{\vec{\Psi}}{\D_{\p{z}}\e_{\z}}\vec{\Psi}+8|\s{\vec{\Psi}}{\e_z }|^2\vec{\Psi}.
    \end{alignedat}\right.
    \end{align}
    We also write 
    \begin{align}\label{confpsi}
    	e^{2\mu}=2\s{\p{z}\vec{\Psi}}{\p{\z}\vec{\Psi}}=2\s{\f_z}{\f_{\z}}
    \end{align}
    for the conformal parameter of the immersion $\vec{\Psi}:\Sigma^2\setminus\phi^{-1}(\ens{0})\rightarrow \R^n$.
    Then, the \textit{pointwise} invariance of Willmore energy implies if $L=\star\big((|\H|^2-K_g)d\vg\big)$ that
    \begin{align*}
    	L(\phi,\phi_z,\D_{\p{z}}\phi_z,\D_{\p{\z}}\phi_z)&=L(\vec{\Psi},\vec{\Psi}_z,\D_{\p{z}}\vec{\Psi}_z,\D_{\p{\z}}\vec{\Psi}_{z})\\
    	&=L(\vec{\Psi},F_1(\e_z),F_2(\e_z,\D_{\p{z}}\e_z),F_3(\e_z,\D_{\p{\z}}\e_z))
    \end{align*}
    where thanks to \eqref{inv00}
    \begin{align}\label{bigf123}
    \left\{
    \begin{alignedat}{1}
    F_1(\zeta)&=|\vec{\Psi}|^2\zeta-2\s{\vec{\Psi}}{\zeta}\vec{\Psi}\\
    F_2(\zeta,\omega)&=|\vec{\Psi}|^2\left(\omega-4\s{\zeta}{\vec{\Psi}}\zeta-2\s{\zeta}{\zeta}\vec{\Psi}\right)-2\s{\vec{\Psi}}{\omega}\vec{\Psi}+8\s{\vec{\Psi}}{\zeta}^2\vec{\Psi}\\
    F_3(\zeta,\chi)&=|\vec{\Psi}|^2\left(\chi-2\left(\s{\vec{\Psi}}{\zeta}\bar{\zeta}+\s{\vec{\Psi}}{\bar{\zeta}}\zeta\right)-2|\zeta|^2\vec{\Psi}\right)-2\s{\vec{\Psi}}{\chi}\vec{\Psi}+8|\s{\vec{\Psi}}{\zeta}|^2\vec{\Psi}.
    \end{alignedat}\right.
    \end{align}
    Therefore, writing $L(\phi)=L(\phi,\phi,\phi_z,\D_{\p{z}}\phi_z,\D_{\p{\z}}\phi_z)$ and $L(\vec{\Psi})=L(\vec{\Psi},\vec{\Psi}_z,\D_{\p{z}}\vec{\Psi}_z,\D_{\p{\z}}\vec{\Psi}_{z})$, we have
    \begin{align*}
    \left\{\begin{alignedat}{1}
    	&D_{\zeta}L(\phi)=D_{\zeta}L(\vec{\Psi})\circ D_{\zeta}F_1(\e_z)+D_{\omega}L(\vec{\Psi})\circ D_{\zeta}F_2(\e_z,\D_{\p{z}}\e_z)+D_{\chi}L(\vec{\Psi})\circ D_{\zeta}F_3(\e_z,\D_{\p{z}}\e_z)\\
        &D_{\omega}L(\phi)=D_{\omega}L(\vec{\Psi})\circ D_{\omega}F(\e_z,\D_{\p{z}}\e_z)\\
        &D_{\chi}L(\phi)=D_{\chi}L(\vec{\Psi})\circ D_{\chi}F_3(\e_z,\D_{\p{\z}}\e_z).
    \end{alignedat}\right.
    \end{align*}
    So we have
    \begin{align}\label{tc1}
    \left\{\begin{alignedat}{1}
        	D_{\zeta}F_1(\zeta)&=|\vec{\Psi}|^2\,\cdot\,-2\s{\vec{\Psi}}{\,\cdot\,}\vec{\Psi}\\
            D_{\zeta}F_2(\zeta,\omega)&=-4|\vec{\Psi}|^2\left(\s{\,\cdot\,}{\vec{\Psi}}\zeta+\s{\zeta}{\vec{\Psi}}\,\cdot\,+\s{\zeta}{\,\cdot\,}\vec{\Psi}\right)+16\s{\vec{\Psi}}{\zeta}\s{\vec{\Psi}}{\,\cdot\,}\vec{\Psi}\\
            D_{\omega}F_2(\zeta,\omega)&=|\vec{\Psi}|^2\,\cdot\,-2\s{\vec{\Psi}}{\,\cdot\,}\vec{\Psi}\\
            D_{\zeta}F_3(\zeta,\chi)&=-2|\vec{\Psi}|^2\left(\s{\vec{\Psi}}{\,\cdot\,}\bar{\zeta}+\s{\vec{\Psi}}{\bar{\zeta}}\,\cdot\,+\s{\bar{\zeta}}{\,\cdot\,}\vec{\Psi}\right)+8\s{\vec{\Psi}}{\bar{\zeta}}\s{\vec{\Psi}}{\,\cdot\,}\vec{\Psi}\\
            D_{\chi}F_3(\zeta,\chi)&=|\vec{\Psi}|^2\,\cdot\,-2\s{\vec{\Psi}}{\,\cdot\,}\vec{\Psi}.
    \end{alignedat}\right.
    \end{align}
        We recall that if $\zeta=\e_z$, $\omega=\D_{\p{z}}\e_z$, $\chi=\D_{\p{\z}}\e_z$, by \eqref{expressionH}
    \begin{align*}
    	\H_{\phi}=f(\zeta)^{-1}\left(|\zeta|^2\chi+|\zeta|^2f(\zeta)^{-1}h(\zeta,\chi)-\Re\left(\s{\zeta}{\zeta}\bar{\omega}\right)-f(\zeta)^{-1}\Re\left(\s{\zeta}{\zeta}h(\zeta,\bar{\omega})\right)\right)
    \end{align*}
    while
    \begin{align*}
    	\H_{\vec{\Psi}}&=f(F_1(\zeta))^{-1}\Big\{|F_1(\zeta)|^2F_3(\zeta,\chi)+|F_1(\zeta)|^2f(F_1(\zeta))^{-1}h(F_1(\zeta),F_3(\zeta,\chi))\\
    	&-\Re\left(\s{F_1(\zeta)}{F_1(\zeta)}h(F_1(\zeta),\bar{F_2(\zeta,\omega)})\right)-f(F_1(\zeta))^{-1}\Re\left(\s{F_1(\zeta)}{F_1(\zeta)}h(F_1(\zeta),\bar{F_2(\zeta,\omega)})\right)\Big\}.
    \end{align*}
    In the forthcoming computations, we will always make the following osculating hypothesis that after taking differentiation, on evaluates at points $\zeta$ such that
    $\s{\zeta}{\zeta}=\s{F(\zeta)}{F(\zeta)}=0$, which is legitimate as we apply conformal transformations. We have
    \begin{align*}
    	D_{\zeta}\left(f(F_1(\zeta))\right)&=D_\zeta f(F_1(\zeta))\circ D_{\zeta}F_1(\zeta)=2\left(|F_1(\zeta)|^2\s{\bar{F_1}(\zeta)}{|\vec{\Psi}|^2\,\cdot\,-2\s{\vec{\Psi}}{\,\cdot\,}\vec{\Psi}}\right)\\
    	D_{\zeta}\left(f(F_1(\e_z))\right)&=e^{2\mu}\left(|\vec{\Psi}|^2f_{\z}-2\s{\vec{\Psi}}{f_{\z}}\vec{\Psi}\right)
    \end{align*}
    so
    \begin{align*}
    	D_{\zeta}|F_1(\e_z)|^2&=\s{\bar{F_1(\e_z)}}{D_{\zeta}F_1(\e_z)}=\s{f_{\z}}{|\vec{\Psi}|^2\,\cdot\,-2\s{\vec{\Psi}}{\,\cdot\,}\vec{\Psi}}\\
    	&=|\vec{\Psi}|^2f_{\z}-2\s{\vec{\Psi}}{\f_{\z}}\vec{\Psi}.
    \end{align*}
    Therefore we define
    \begin{align*}
    	\mathscr{I}_{\vec{\Psi}}(\vec{X})=|\vec{\Psi}|^2\vec{X}-2\s{\vec{\Psi}}{\vec{X}}\vec{\Psi}
    \end{align*}
    to obtain
    \begin{align}\label{inv0}
    \left\{
    \begin{alignedat}{1}
    	D_{\zeta}(f(F_1(\e_z))^{-1})&=-D_{\zeta}(f(F_1(\e_z)))f(F(\e_z))^{-2}=-16e^{-6\mu}\mathscr{I}_{\vec{\Psi}}(\f_{\z})\\
    	D_{\zeta}|F_1(\e_z)|^2&=\mathscr{I}_{\vec{\Psi}}(\f_{\z}).
    	\end{alignedat}\right.
    \end{align}
    Also, we remark that we only need to count the normal parts of the derivatives of $\H$, as they will be multiplied by $\H$ (coming from $|\H|^2$). Therefore, as we also have $h(F_1(\e_z),F_3(\e_z))=0$, we obtain
    \begin{align}\label{s1}
    	D_{\zeta}\H_{\vec{\Psi}}&=-16e^{-6\mu}\s{\mathscr{I}_{\vec{\Psi}}(\f_{\z})}{\,\cdot\,}\left(\frac{e^{4\mu}}{4}\H_{\vec{\Psi}}\right)+4e^{-4\mu}\bigg(D_{\zeta}(|F_1(\e_z)|^2)F_3(\e_z,\D_{\p{z}}\e_{\z})+|F_1(\e_z)|^2 D_{\zeta}F_3(\e_z,\D_{\p{z}}\e_{\z})\\
    	&+|F_1(\e_z)|^2f(F_1(\e_z))^{-1}D_{\zeta}(h(F_1(\e_z),F_3(\e_z,\D_{\p{z}}\e_{\z})))\circ D_{\zeta}F_1(\e_z)\\
    	&+D_{\kappa}h(F_1(\e_z),F_3(\e_z,\D_{\p{z}}\e_{\z}))\circ D_{\chi}F_3(\e_z,\D_{\p{z}}\e_z)-\s{D_{\zeta}F_1(\e_z)}{\f_z
    	}\vec{\I}(\f_{\z},\f_{\z})\bigg)\\
        &=-4e^{-2\mu}\s{\mathscr{I}_{\vec{\Psi}}(\f_{\z})}{\,\cdot\,}\H_{\vec{\Psi}}+4e^{-4\mu}\Big\{\mathrm{(I)}+\mathrm{(II)}+\mathrm{(III)}+\mathrm{(IV)}+\mathrm{(V)}\Big\}
    \end{align}
As
    \begin{align*}
    	\left\{\begin{alignedat}{1}
    	D_{\zeta}|F_1(\e_z)|^2&=\mathscr{I}_{\vec{\Psi}}(\f_{\z})\\
    	F_3(\e_{{z}},\D_{\p{z}}{\e_{{\z}}})&=\vec{\I}(\f_z,\f_{\z})=\frac{e^{2\mu}}{2}\H_{\vec{\Psi}},
    	\end{alignedat}\right.
    \end{align*}
    we obtain
    \begin{align}\label{invI}
    	\mathrm{(I)}=D_{\zeta}(|F_1(\e_z)|^2)F_3(\e_z,\D_{\p{z}}\e_{\z})=\frac{e^{2\mu}}{2}\s{\mathscr{I}_{\vec{\Psi}}(\f_{\z})}{\,\cdot\,}\H_{\vec{\Psi}}.
    \end{align}
    Then we have by \eqref{tc1}
    \begin{align*}
    	D_{\zeta}F_3(\zeta,\chi)&=-2|\vec{\Psi}|^2\left(\s{\vec{\Psi}}{\,\cdot\,}\e_{{\z}}+\s{\vec{\Psi}}{\e_{\z}}\,\cdot\,+\s{\e_{\z}}{\,\cdot\,}\vec{\Psi}\right)+8\s{\vec{\Psi}}{\e_{\z}}\s{\vec{\Psi}}{\,\cdot\,}\vec{\Psi}
    \end{align*}
    and as
    \begin{align*}
    	|F_{1}(\e_z)|^2=|\f_z|^2=\frac{e^{2\mu}}{2},
    \end{align*}
    we obtain
    \begin{align}\label{invII}
    	\mathrm{(II)}&=|F_1(\e_z)|^2 D_{\zeta}F_3(\e_z,\D_{\p{z}}\e_{\z})=\frac{e^{2\mu}}{2}\bigg\{-2|\vec{\Psi}|^2\left(\s{\vec{\Psi}}{\,\cdot\,}\e_{{\z}}+\s{\vec{\Psi}}{\e_{\z}}\,\cdot\,+\s{\e_{\z}}{\,\cdot\,}\vec{\Psi}\right)+8\s{\vec{\Psi}}{\e_{\z}}\s{\vec{\Psi}}{\,\cdot\,}\vec{\Psi}\bigg\}\nonumber\\
    	&=e^{2\mu}\bigg\{-|\vec{\Psi}|^2\left(\s{\vec{\Psi}}{\,\cdot\,}\e_{{\z}}+\s{\vec{\Psi}}{\e_{\z}}\,\cdot\,+\s{\e_{\z}}{\,\cdot\,}\vec{\Psi}\right)+4\s{\vec{\Psi}}{\e_{\z}}\s{\vec{\Psi}}{\,\cdot\,}\vec{\Psi} \bigg\}
    \end{align}
    We see that 
    \begin{align*}
    	(D_{\zeta}h(\e_z,\D_{\p{z}}\e_z))^N=-|\f_z|^2\s{\f_{\z}}{\D_{\p{z}}\f_{\z}}(\,\cdot\,)^N=0,
    \end{align*}
    so
   \begin{align}\label{invIII}
    \mathrm{(III)}^N=0.
\end{align}
    As $(D_{\kappa}h)^N=0$,
    \begin{align}\label{invIV}
    \mathrm{(IV)}^N=0,
    \end{align}
    and
    \begin{align}\label{invV}
    	\mathrm{(V)}=-\s{\mathscr{I}_{\vec{\Psi}}(\f_z)}{\,\cdot\,}\vec{\I}(\f_{\z},\f_{\z})=-\frac{e^{2\mu}}{2}\s{\mathscr{I}_{\vec{\Psi}}(\f_z)}{\,\cdot\,}\bar{\H_{\vec{\Psi}}^0}.
    \end{align}
   Therefore, by \eqref{s1}, \eqref{tc1}, \eqref{invI}, \eqref{invII}, \eqref{invIII}, \eqref{invIV}
   \begin{align*}
   	&(D_{\zeta}\H_{\vec{\Psi}})^N=-4e^{-2\mu}\s{\mathscr{I}_{\vec{\Psi}}}{\,\cdot\,}\H_{\vec{\Psi}}+4e^{-4\mu}\bigg\{\frac{e^{2\mu}}{2}\s{\mathscr{I}_{\vec{\Psi}}}{\,\cdot\,}\H_{\vec{\Psi}}
   	+e^{2\mu}\Big(-|\vec{\Psi}|^2\left(\s{\vec{\Psi}}{\,\cdot\,}\e_{{\z}}+\s{\vec{\Psi}}{\e_{\z}}\,\cdot\,+\s{\e_{{\z}}}{\,\cdot\,}\vec{\Psi}\right)\\
   	&+4\s{\vec{\Psi}}{\e_{{\z}}}\s{\vec{\Psi}}{\,\cdot\,}\vec{\Psi}\Big) 
   	-\frac{e^{2\mu}}{2}\s{\mathscr{I}_{\vec{\Psi}}(\f_z)}{\,\cdot\,}\bar{\H_{\vec{\Psi}}^0}\bigg\}^N\\
   	&=-2e^{-2\mu}\bigg(\s{\mathscr{I}_{\vec{\Psi}}(\f_z)}{\,\cdot\,}\H_{\vec{\Psi}}+\s{\mathscr{I}_{\vec{\Psi}}(\f_z)}{\,\cdot\,}\bar{\vec{H}^0_{\vec{\Psi}}}\bigg)+4e^{-2\mu}\bigg\{-|\vec{\Psi}|^2\Big(\s{\vec{\Psi}}{\,\cdot\,}\e_{{\z}}+\s{\vec{\Psi}}{\e_{\z}}\left(\,\cdot\,\right)+\s{\e_{{\z}}}{\,\cdot\,}\vec{\Psi}\Big)\\
   	&+4\s{\vec{\Psi}}{\e_{{\z}}}\s{\vec{\Psi}}{\,\cdot\,}\vec{\Psi}\bigg\}^N
   \end{align*}
   As
   \begin{align*}
   	|\H_{\vec{\Psi}}|^2\left(\det g_{\vec{\Psi}}\right)^{\frac{1}{2}}=2f(F_1(\e_z))^{\frac{1}{2}}|\H_{\vec{\Psi}}|^2,
   \end{align*}
   we obtain the identity
   \begin{align}\label{dzetaH}
   	&D_{\zeta}\left(|\H_{\vec{\Psi}}|^2\left(\det g_{\vec{\Psi}}\right)^{\frac{1}{2}}\right)=2\frac{1}{2}e^{2\mu}\s{\mathscr{I}_{\vec{\Psi}}(\f_{\z})}{\,\cdot\,}\left(\frac{e^{4\mu}}{4}\right)^{-\frac{1}{2}}|\H_{\vec{\Psi}}|^2+4\left(\frac{e^{4\mu}}{4}\right)^{\frac{1}{2}}\s{(D_{\zeta}\H_{\vec{\Psi}})^\nperp}{\H_{\vec{\Psi}}}\nonumber\\
   	&=2|\vec{H}_{\vec{\Psi}}|^2\mathscr{I}_{\vec{\Psi}}(\f_{\z})-4\bs{\left(\s{\mathscr{I}_{\vec{\Psi}}(\f_z)}{\,\cdot\,}\H_{\vec{\Psi}}+\s{\mathscr{I}_{\vec{\Psi}}(\f_z)}{\,\cdot\,}\bar{\vec{H}^0_{\vec{\Psi}}}\right)}{\vec{H}_{\vec{\Psi}}}\nonumber\\
   	&+8\bs{\bigg\{-|\vec{\Psi}|^2\Big(\s{\vec{\Psi}}{\,\cdot\,}\e_{{\z}}+\s{\vec{\Psi}}{\e_{\z}}\left(\,\cdot\,\right)+\s{\e_{{\z}}}{\,\cdot\,}\vec{\Psi}\Big)+4\s{\vec{\Psi}}{\e_{{\z}}}\s{\vec{\Psi}}{\,\cdot\,}\vec{\Psi}\bigg\}^N}{\H_{\vec{\Psi}}}\nonumber\\
   	&=-2|\H_{\vec{\Psi}}|^2\mathscr{I}_{\vec{\Psi}}(\f_{\z})-4\s{\H_{\vec{\Psi}}}{\bar{\vec{H}_{\vec{\Psi}}^0}}\mathscr{I}_{\vec{\Psi}}(\f_{z})
   	-8|\vec{\Psi}|^2\s{\e_{\z}}{\H_{\vec{\Psi}}}\vec{\Psi}
   	-8|\vec{\Psi}|^2\s{\vec{\Psi}}{\e_{\z}}\H_{\vec{\Psi}}-8|\vec{\Psi}|^2\s{\vec{\Psi}}{\H_{\vec{\Psi}}}\e_{{\z}}\nonumber\\
   	&+32\s{\vec{\Psi}}{\e_{{\z}}}\s{\vec{\Psi}}{\H_{\vec{\Psi}}}\vec{\Psi}
   \end{align}
   As $\s{\f_z}{\f_z}=0$, we trivially obtain
   \begin{align}\label{domegaH}
   	D_{\omega}\left(|\H_{\vec{\Psi}}|^2\left(\det g_{\vec{\Psi}}\right)^{\frac{1}{2}}\right)=0
   \end{align}
   Finally, as $(D_{\kappa}h)^N=0$,
   \begin{align*}
   	(D_{\chi}\H_{\vec{\Psi}})^N=f(F_1(\e_z))^{-1}|F_1(\e_z)|^2D_{\chi}F_3(\e_z,\D_{\p{z}}\e_{\z})=2e^{-2\lambda}\mathscr{I}_{\vec{\Psi}}(\,\cdot\,).
   \end{align*}
   Therefore, we deduce that 
   \begin{align}\label{dchiH}
   	D_{\chi}\left(|\H_{\vec{\Psi}}|^2\left(\det g_{\vec{\Psi}}\right)^{\frac{1}{2}}\right)=2e^{2\mu}\s{D_{\chi}\H_{\vec{\Psi}}}{\H_{\vec{\Psi}}}=4\mathscr{I}_{\vec{\Psi}}(\H_{\vec{\Psi}}).
   \end{align}
   and putting together \eqref{dzetaH}, \eqref{domegaH} and \eqref{dchiH}, we have
   \begin{align}\label{confinv1}
    \left\{\begin{alignedat}{1}
       	D_{\zeta}\left(|\H_{\vec{\Psi}}|^2\left(\det g_{\vec{\Psi}}\right)^{\frac{1}{2}}\right)&=-2|\H_{\vec{\Psi}}|^2\mathscr{I}_{\vec{\Psi}}(\f_{\z})-4\s{\H_{\vec{\Psi}}}{\bar{\vec{H}_{\vec{\Psi}}^0}}\mathscr{I}_{\vec{\Psi}}(\f_{z})-8|\vec{\Psi}|^2\s{\e_{\z}}{\H_{\vec{\Psi}}}\vec{\Psi}\\
       	&-8|\vec{\Psi}|^2\s{\vec{\Psi}}{\e_{\z}}\H_{\vec{\Psi}}-8|\vec{\Psi}|^2\s{\vec{\Psi}}{\H_{\vec{\Psi}}}\e_{{\z}}
       	+32\s{\vec{\Psi}}{\e_{{\z}}}\s{\vec{\Psi}}{\H_{\vec{\Psi}}}\vec{\Psi}\\
    D_{\chi}\left(|\H_{\vec{\Psi}}|^2\left(\det g_{\vec{\Psi}}\right)^{\frac{1}{2}}\right)&=4\mathscr{I}_{\vec{\Psi}}(\H_{\vec{\Psi}})\\
    D_{\omega}\left(|\H_{\vec{\Psi}}|^2\left(\det g_{\vec{\Psi}}\right)^{\frac{1}{2}}\right)&=0
    \end{alignedat}\right.
   \end{align}
   Now recall the identity
   \begin{align}\label{s2}
   	\star\left( K_{g_{\vec{\Psi}}}d\mathrm{vol}_{g_{\vec{\Psi}}}\right)=2f(F_1(\e_z))^{-\frac{1}{2}}\bigg(|F_3(\e_z,\D_{\p{z}}\e_{\z})+f(F_1(\e_z))^{-1}h(F_1(\e_z),F_3(\e_z,\D_{\p{z}}\e_{\z})) |^2\\-|F_2(\e_z,\D_{\p{z}}\e_z)+f(F_1(\e_z))^{-1}h(F_1(\e_z),F_2(\e_z,\D_{\p{z}}\e_z))|^2\bigg).
   \end{align}
   We first compute thanks to \eqref{tc1}
   \begin{align*}
   	D_{\zeta}F_3(\e_z,\D_{\p{z}}\e_{\z})=-2|\vec{\Psi}|^2\left(\s{\vec{\Psi}}{\,\cdot\,}\e_{{\z}}+\s{\vec{\Psi}}{\e_{{\z}}}\,\cdot\, +\s{\e_{{\z}}}{\,\cdot\,}\vec{\Psi}\right)+8\s{\vec{\Psi}}{\e_{{\z}}}\s{\vec{\Psi}}{\,\cdot\,}\vec{\Psi},
   \end{align*}
   which directly implies that 
   \begin{align*}
   	\left(D_{\zeta}F_3(\e_z,\D_{\p{z}}\e_{\z})\right)^N=\left\{-2|\vec{\Psi}|^2\left(\s{\vec{\Psi}}{\,\cdot\,}\e_{{\z}}+\s{\vec{\Psi}}{\e_{{\z}}}\,\cdot\, +\s{\e_{{\z}}}{\,\cdot\,}\vec{\Psi}\right)+8\s{\vec{\Psi}}{\e_{{\z}}}\s{\vec{\Psi}}{\,\cdot\,}\vec{\Psi}\right\}^N.
   \end{align*}
   As $(D_k h)^N=0$, we have
   \begin{align}\label{gpart1}
   	\left(D_{\zeta}\vec{\I}(\f_z,\f_{\z})\right)^N&=\left(D_{\zeta}F_3(\e_z,\D_{\p{z}}\e_{\z})+f(F_1(\e_z))^{-1}D_{\zeta}h(\e_z,\D_{\p{z}}\e_{\z})\circ DF_1(\e_z)\right)^N\nonumber\\
   	&=\left\{-2|\vec{\Psi}|^2\left(\s{\vec{\Psi}}{\,\cdot\,}\e_{{\z}}+\s{\vec{\Psi}}{\e_{{\z}}}\,\cdot\, +\s{\e_{{\z}}}{\,\cdot\,}\vec{\Psi}\right)+8\s{\vec{\Psi}}{\e_{{\z}}}\s{\vec{\Psi}}{\,\cdot\,}\vec{\Psi}\right\}^N.
   \end{align}
   Then we have
   \begin{align}\label{somepart1}
   	D_{\zeta}F_2(\e_{z},\D_{\p{z}}\e_z)&=-4|\vec{\Psi}|^2\left(\s{\,\cdot\,}{\vec{\Psi}}\e_z+\s{\e_z}{\vec{\Psi}}\,\cdot\,+\s{\e_z}{\,\cdot\,}\vec{\Psi}\right)+16\s{\vec{\Psi}}{\e_z}\s{\vec{\Psi}}{\,\cdot\,}\vec{\Psi}.
   \end{align}
   As $D_{\kappa}(h(\f_z,\D_{\p{z}}\f_z))^N=0$, we obtain
   \begin{align}\label{somepart2}
   	\left\{D_{\zeta}\left(h(F_1(\e_z),F_2(\e_z,\D_{\p{z}}\e_z))\right)\right\}^N&=\left(D_{\zeta}h(\f_z,\D_{\p{z}\f_z})\circ D_{\zeta}F_1(\e_z)\right)^N\nonumber\\
   	&=-|\f_z|^2\s{\f_{\z}}{\D_{\p{z}}\f_z}\mathscr{I}_{\vec{\Psi}}(\,\cdot\,)^N\nonumber\\
   	&=-\frac{e^{2\mu}}{2}\p{z}\left(\frac{e^{2\mu}}{2}\right)\mathscr{I}_{\vec{\Psi}}(\,\cdot\,)^N\nonumber\\
   	&=\frac{1}{2}e^{4\mu}\left(\p{z}\mu\right)\mathscr{I}_{\vec{\Psi}}(\,\cdot\,)^N.
   \end{align}
   Therefore, by \eqref{somepart1} and \eqref{somepart2}, it follows that
   \begin{align}\label{gpart2}
   	\left(D_{\zeta}\vec{\I}(\f_z,\f_z)\right)^N&=\bigg\{-4|\vec{\Psi}|^2\left(\s{\,\cdot\,}{\vec{\Psi}}\e_z+\s{\e_z}{\vec{\Psi}}\,\cdot\,+\s{\e_z}{\,\cdot\,}\vec{\Psi}\right)+16\s{\vec{\Psi}}{\e_z}\s{\vec{\Psi}}{\,\cdot\,}\vec{\Psi}\bigg\}^N\nonumber\\
   	&+4e^{-4\mu}\left(\frac{1}{2}e^{4\mu}\left(\p{z}\mu\right)\mathscr{I}_{\vec{\Psi}}(\,\cdot\,)^N\right)\nonumber\\
   	&=\bigg\{-4|\vec{\Psi}|^2\left(\s{\,\cdot\,}{\vec{\Psi}}\e_z+\s{\e_z}{\vec{\Psi}}\,\cdot\,+\s{\e_z}{\,\cdot\,}\vec{\Psi}\right)+16\s{\vec{\Psi}}{\e_z}\s{\vec{\Psi}}{\,\cdot\,}\vec{\Psi}\bigg\}^N-2(\p{z}\mu)\mathscr{I}_{\vec{\Psi}}(\,\cdot\,)\nonumber\\
   	\left(D_{\zeta}\vec{\I}(\f_{\z},\f_{\z})\right)^N&=0.
   \end{align}
   Finally, we have by 
   \begin{align}\label{dzetaK}
   	&D_{\zeta}\star\left( K_{g_{\vec{\Psi}}}d\mathrm{vol}_{g_{\vec{\Psi}}}\right)=-D_{\zeta}(f(F_1(\e_z)))f(F_1(\e_z))^{-\frac{3}{2}}\left(\frac{e^{4\mu}}{4}\left(|\H_{\vec{\Psi}}|^2-|\H_{\vec{\Psi}}^0|^2\right)\right)\nonumber\\
   	&+4e^{-2\mu}\left(2\bs{\left(D_{\zeta}\vec{\I}(\f_z,\f_{\z})\right)^N}{\vec{\I}(\f_z,\f_{\z})}-\bs{\left(D_{\zeta}\vec{\I}(\f_z,\f_z)\right)^N}{\vec{\I}(\f_{\z},\f_{\z})}\right)\nonumber\\
   	&=-2K_{g_{\vec{\Psi}}}\mathscr{I}_{\vec{\Psi}}(\f_{\z})+4\bs{(D_{\zeta}\vec{\I}(\f_z,\f_{\z}))}{\H_{\vec{\Psi}}}-2\s{\left(\D_{\zeta}\vec{\I}(\f_z,\f_z)\right)^N}{\vec{\I}(\f_{\z},\f_{\z})}\nonumber\\
   	&=-2K_{g_{\vec{\Psi}}}\mathscr{I}_{\vec{\Psi}}(\f_{\z})-8|\vec{\Psi}|^2\s{\e_{\z}}{\H_{\vec{\Psi}}}\vec{\Psi}
   	-8|\vec{\Psi}|^2\s{\vec{\Psi}}{\e_{\z}}\H_{\vec{\Psi}}-8|\vec{\Psi}|^2\s{\vec{\Psi}}{\H_{\vec{\Psi}}}\e_{{\z}}
   	+32\s{\vec{\Psi}}{\e_{{\z}}}\s{\vec{\Psi}}{\H_{\vec{\Psi}}}\vec{\Psi}\nonumber\\
   	&+4(\p{z}\mu)\mathscr{I}_{\vec{\Psi}}\left(\bar{\H^0_{\vec{\Psi}}}\right)+8|\vec{\Psi}|^2\s{\e_z}{\bar{\H^0_{\vec{\Psi}}}}\vec{\Psi}+8|\vec{\Psi}|^2\s{\e_z}{\vec{\Psi}}\bar{\H^0_{\vec{\Psi}}}+8|\vec{\Psi}|^2\s{\vec{\Psi}}{\bar{\H^0_{\vec{\Psi}}}}\e_z-32\s{\vec{\Psi}}{\e_z}\s{\vec{\Psi}}{\bar{\H^0_{\vec{\Psi}}}}\vec{\Psi}
   \end{align}
   Then we have
   \begin{align*}
   	\left(D_{\chi}\vec{\I}(\f_z,\f_{\z})\right)^N&=\mathscr{I}_{\vec{\Psi}}(\,\cdot\,),
   \end{align*}
   which implies that
   \begin{align}\label{dchiK}
   	D_{\chi}\star\left( K_{g_{\vec{\Psi}}}d\mathrm{vol}_{g_{\vec{\Psi}}}\right)&=4e^{-2\mu}\left(2\s{\mathscr{I}_{\vec{\Psi}}(\,\cdot\,)}{\frac{e^{2\mu}}{2}\H_{\vec{\Psi}}}\right)
   	=4\mathscr{I}_{\vec{\Psi}}(\H_{\vec{\Psi}}).
   \end{align}
   As $(D_{\kappa}h)^N=0$, we obtain
   \begin{align*}
   	\left(D_{\omega}\vec{\I}(\f_z,\f_z)\right)^N=\mathscr{I}_{\vec{\Psi}}(\,\cdot\,)
   \end{align*}
   and
   \begin{align}\label{domegaK}
   	D_{\omega}\star\left( K_{g_{\vec{\Psi}}}d\mathrm{vol}_{g_{\vec{\Psi}}}\right)=-4e^{-2\mu}\bs{\mathscr{I}_{\vec{\Psi}}(\,\cdot\,)}{\frac{e^{2\mu}}{2}\bar{\H_{\vec{\Psi}}^0}}
   	=-2\mathscr{I}_{\vec{\Psi}}\left(\bar{\H_{\vec{\Psi}}^0}\right).
   \end{align}
   Finally, by \eqref{dzetaK}, \eqref{domegaK} and \eqref{domegaK}, we have
   \begin{align}
   \left\{\begin{alignedat}{1}
      		&D_{\zeta}\left(\star K_{g_{\vec{\Psi}}}d\mathrm{vol}_{g_{\vec{\Psi}}}\right)
      		=-2K_{g_{\vec{\Psi}}}\mathscr{I}_{\vec{\Psi}}(\f_{\z})-8|\vec{\Psi}|^2\s{\e_{\z}}{\H_{\vec{\Psi}}}\vec{\Psi}
      		-8|\vec{\Psi}|^2\s{\vec{\Psi}}{\e_{\z}}\H_{\vec{\Psi}}-8|\vec{\Psi}|^2\s{\vec{\Psi}}{\H_{\vec{\Psi}}}\e_{{\z}}\\
      		&+32\s{\vec{\Psi}}{\e_{{\z}}}\s{\vec{\Psi}}{\H_{\vec{\Psi}}}\vec{\Psi}
      		+4(\p{z}\mu)\mathscr{I}_{\vec{\Psi}}\left(\bar{\H^0_{\vec{\Psi}}}\right)+8|\vec{\Psi}|^2\s{\e_z}{\bar{\H^0_{\vec{\Psi}}}}\vec{\Psi}+8|\vec{\Psi}|^2\s{\e_z}{\vec{\Psi}}\bar{\H^0_{\vec{\Psi}}}\\
      		&+8|\vec{\Psi}|^2\s{\vec{\Psi}}{\bar{\H^0_{\vec{\Psi}}}}\e_z-32\s{\vec{\Psi}}{\e_z}\s{\vec{\Psi}}{\bar{\H^0_{\vec{\Psi}}}}\vec{\Psi}\\
   &D_{\chi}\left(\star K_{g_{\vec{\Psi}}}d\mathrm{vol}_{g_{\vec{\Psi}}}\right)=4\mathscr{I}_{\vec{\Psi}}\left(\H_{\vec{\Psi}}\right)\label{confinv2}\\
   &D_{\omega}\left(\star K_{g_{\vec{\Psi}}}d\mathrm{vol}_{g_{\vec{\Psi}}}\right)=-2\mathscr{I}_{\vec{\Psi}}\left(\bar{\H^0_{\vec{\Psi}}}\right).
   \end{alignedat}\right.
   \end{align}
   By \eqref{confinv1} and \eqref{confinv2}, we obtain
   \begin{align}\label{dh02}
   \left\{\begin{alignedat}{1}
      	&D_{\zeta}(\star |\H^0_{\vec{\Psi}}|^2d\mathrm{vol}_{g_{\vec{\Psi}}})=-2|\H_{\vec{\Psi}}^0|^2\mathscr{I}_{\vec{\Psi}}(\f_{\z})-4\s{\H_{\vec{\Psi}}}{\bar{\H^0_{\vec{\Psi}}}}\mathscr{I}_{\vec{\Psi}}(\f_z)-4(\p{z}\mu)\mathscr{I}_{\vec{\Psi}}\left(\bar{\H^0_{\vec{\Psi}}}\right)\\
      	&-8|\vec{\Psi}|^2\s{\e_z}{\bar{\H^0_{\vec{\Psi}}}}\vec{\Psi}-8|\vec{\Psi}|^2\s{\e_z}{\vec{\Psi}}\bar{\H^0_{\vec{\Psi}}}-8|\vec{\Psi}|^2\s{\vec{\Psi}}{\bar{\H^0_{\vec{\Psi}}}}\e_z+32\s{\vec{\Psi}}{\e_z}\s{\vec{\Psi}}{\bar{\H^0_{\vec{\Psi}}}}\vec{\Psi}\\ 
        &D_{\omega}(\star|\H^0_{\vec{\Psi}}|^2d\mathrm{vol}_{g_{\vec{\Psi}}})=2\mathscr{I}_{\vec{\Psi}}\left(\bar{\H_{\vec{\Psi}}^0}\right)\\
        &D_{\chi}\left(\star |\H^0_{\vec{\Psi}}|^2d\mathrm{vol}_{g_{\vec{\Psi}}}\right)=0
   \end{alignedat}\right.
   \end{align}
   This expression can be further simplified.
   We first observe that (recalling the definition $\f_z=\p{z}\vec{\Psi}$)
   \begin{align}\label{simpler0}
   	\s{\vec{\Psi}}{\e_z}=\bs{\vec{\Psi}}{\frac{\f_z}{|\vec{\Psi}|^2}-2\s{\vec{\Psi}}{\f_z}\frac{\vec{\Psi}}{|\vec{\Psi}|^4}}=-\frac{\s{\vec{\Psi}}{\f_z}}{|\vec{\Psi}|^2}=-\frac{1}{2}\frac{\partial_{z}|\vec{\Psi}|^2}{|\vec{\Psi}|^2}
   \end{align}
   therefore
   \begin{align}\label{simpler1}
   	-8|\vec{\Psi}|^2\s{\vec{\Psi}}{\e_z}\bar{\H_{\vec{\Psi}}^0}=4\,\partial_{z}|\vec{\Psi}|^2\bar{\H_{\vec{\Psi}}^0}.
   \end{align}
   Then we compute as $\f_z=\p{z}\vec{\Psi}$ is normal to $\H_{\vec{\Psi}}^0$ that
   \begin{align*}
   	\s{\e_z}{\bar{\H_{\vec{\Psi}}^0}}=\bs{\frac{\f_z}{|\vec{\Psi}|^2}-2\s{\vec{\Psi}}{\f_z}\frac{\vec{\Psi}}{|\vec{\Psi}|^4}}{\bar{\vec{H}_{\vec{\Psi}}^0}}=-\frac{2}{|\vec{\Psi}|^4}\s{\p{z}\vec{\Psi}}{\vec{\Psi}}\s{\vec{\Psi}}{\bar{\vec{H}_{\vec{\Psi}}^0}}=-\frac{\p{z}|\vec{\Psi}|^2}{|\vec{\Psi}|^4}\s{\vec{\Psi}}{\bar{\H_{\vec{\Psi}}^0}}
   \end{align*}
   so
   \begin{align}\label{simpler2}
   	-8|\vec{\Psi}|^2\s{\e_z}{\bar{\H_{\vec{\Psi}}^0}}\vec{\Psi}=8\frac{\p{z}|\vec{\Psi}|^2}{|\vec{\Psi}|^2}\s{\vec{\Psi}}{\bar{\H_{\vec{\Psi}}^0}}\vec{\Psi}.
   \end{align}
   The next contribution is
   \begin{align}\label{simpler4}
   	-8|\vec{\Psi}|^2\s{\vec{\Psi}}{\bar{\H_{\vec{\Psi}}^0}}\e_z=	-8|\vec{\Psi}|^2\s{\vec{\Psi}}{\bar{\H_{\vec{\Psi}}^0}}\left(\frac{\p{z}\vec{\Psi}}{|\vec{\Psi}|^2}-2\frac{\s{\p{z}\vec{\Psi}}{\vec{\Psi}}}{{|\vec{\Psi}|^4}}\vec{\Psi}\right)=-8\s{\vec{\Psi}}{\bar{\H_{\vec{\Psi}}^0}}\p{z}\vec{\Psi}+8\frac{\p{z}|\vec{\Psi}|^2}{|\vec{\Psi}|^2}\s{\vec{\Psi}}{\bar{\H_{\vec{\Psi}}^0}}\vec{\Psi}.
   \end{align}
   Now, by \eqref{simpler0}, we have
   \begin{align}\label{simpler3}
   	32\s{\vec{\Psi}}{\e_z}\s{\vec{\Psi}}{\bar{\H_{\vec{\Psi}}^0}}\vec{\Psi}=32\left(-\frac{1}{2}\frac{\p{z}|\vec{\Psi}|^2}{|\vec{\Psi}|^2}\right)\s{\vec{\Psi}}{\bar{\H_{\vec{\Psi}}^0}}\vec{\Psi}=-16\frac{\p{z}|\vec{\Psi}|^2}{|\vec{\Psi}|^2}\s{\vec{\Psi}}{\bar{\H_{\vec{\Psi}}^0}}\vec{\Psi}
   \end{align}
   Finally, thanks to \eqref{simpler1}, \eqref{simpler2}, \eqref{simpler3}, \eqref{simpler4}, we get
   \begin{align}\label{simplerend}
   	&-8|\vec{\Psi}|^2\s{\e_z}{\bar{\H^0_{\vec{\Psi}}}}\vec{\Psi}-8|\vec{\Psi}|^2\s{\e_z}{\vec{\Psi}}\bar{\H^0_{\vec{\Psi}}}-8|\vec{\Psi}|^2\s{\vec{\Psi}}{\bar{\H^0_{\vec{\Psi}}}}\e_z+32\s{\vec{\Psi}}{\e_z}\s{\vec{\Psi}}{\bar{\H^0_{\vec{\Psi}}}}\vec{\Psi}\nonumber\\
   	&=\ccancel{8\frac{\p{z}|\vec{\Psi}|^2}{|\vec{\Psi}|^2}\s{\vec{\Psi}}{\bar{\H_{\vec{\Psi}}^0}}\vec{\Psi}}+4\p{z}|\vec{\Psi}|^2\bar{\H_{\vec{\Psi}}^0}-8\s{\vec{\Psi}}{\bar{\H_{\vec{\Psi}}^0}}\p{z}\vec{\Psi}+\ccancel{8\frac{\p{z}|\vec{\Psi}|^2}{|\vec{\Psi}|^2}\s{\vec{\Psi}}{\bar{\H_{\vec{\Psi}}^0}}\vec{\Psi}}-\ccancel{16\frac{\p{z}|\vec{\Psi}|^2}{|\vec{\Psi}|^2}\s{\vec{\Psi}}{\bar{\H_{\vec{\Psi}}^0}}\vec{\Psi}}\nonumber\\
   	&=4\left(\p{z}|\vec{\Psi}|^2\bar{\H_{\vec{\Psi}}^0}-2\s{\vec{\Psi}}{\bar{\H_{\vec{\Psi}}^0}}\p{z}\vec{\Psi}\right)
   \end{align}
   and thanks to \eqref{dh02} and \eqref{simplerend}, we obtain
   \begin{align}\label{noetherinv}
   	\left\{\begin{alignedat}{1}
   	D_{\zeta}(\star |\H^0_{\vec{\Psi}}|^2d\mathrm{vol}_{g_{\vec{\Psi}}})&=-2|\H_{\vec{\Psi}}^0|^2\mathscr{I}_{\vec{\Psi}}(\f_{\z})-4\s{\H_{\vec{\Psi}}}{\bar{\H^0_{\vec{\Psi}}}}\mathscr{I}_{\vec{\Psi}}(\f_z)-4(\p{z}\mu)\mathscr{I}_{\vec{\Psi}}\left(\bar{\H^0_{\vec{\Psi}}}\right)\\
   	&+4\left(\p{z}|\vec{\Psi}|^2\bar{\H_{\vec{\Psi}}^0}-2\s{\vec{\Psi}}{\bar{\H_{\vec{\Psi}}^0}}\p{z}\vec{\Psi}\right)\\ 
   	D_{\omega}(\star|\H^0_{\vec{\Psi}}|^2d\mathrm{vol}_{g_{\vec{\Psi}}})&=2\mathscr{I}_{\vec{\Psi}}\left(\bar{\H_{\vec{\Psi}}^0}\right)\\
   	D_{\chi}\left(\star |\H^0_{\vec{\Psi}}|^2d\mathrm{vol}_{g_{\vec{\Psi}}}\right)&=0
   	\end{alignedat}\right.
   \end{align}

   Finally, we obtain the \textit{pointwise} identities (valid for arbitrary immersions, not necessarily Willmore)
   \begin{align}\label{inversionWeingarten}
   \left\{\begin{alignedat}{1}
      	&-2|\H^0_{\phi}|^2\p{\z}\phi-4\s{\H_{\phi}}{\bar{\H_{\phi}^0}}\p{z}\phi-4(\p{z}\lambda)\bar{\H_{\phi}^0}
      	=-2|\H_{\vec{\Psi}}^0|^2\mathscr{I}_{\vec{\Psi}}(\p{\z}\vec{\Psi})-4\s{\H_{\vec{\Psi}}}{\bar{\H^0_{\vec{\Psi}}}}\mathscr{I}_{\vec{\Psi}}(\p{z}\vec{\Psi})\\
      	&-4(\p{z}\mu)\mathscr{I}_{\vec{\Psi}}\left(\bar{\H^0_{\vec{\Psi}}}\right)+4\left(\,\partial_{z}|\vec{\Psi}|^2\bar{\H_{\vec{\Psi}}^0}-2\s{\vec{\Psi}}{\bar{\H_{\vec{\Psi}}^0}}\p{z}\vec{\Psi}\right)\\
   &\H_{\phi}^{0}=\mathscr{I}_{\vec{\Psi}}\left(\H_{\vec{\Psi}}^0\right)
   \end{alignedat}\right.
   \end{align}
   In particular, the second identity of \eqref{inversionWeingarten} shows the point-wise conformal invariance of the Willmore energy.
  Now, recall that Noether's theorem (Theorem \ref{noethercomplexe}) states that for all infinitesimal symmetry $\vec{X}$ of a Lagrangian $L$, we have
  \begin{align*}
  	\Re\left(\D_{\partial_{z}}\left(\frac{\partial L_0}{\partial\zeta}\cdot\vec{X}-\D_{\partial_{z}}\left(\frac{\partial L_0}{\partial \omega}\right)\cdot\vec{X}+\frac{\partial L_0}{\partial \omega}\cdot \D_{\partial_{z}}\vec{X}-\frac{1}{2}\,\D_{\p{\z}}\left(\frac{\partial L_0}{\partial \chi}\right)\cdot\vec{X}+\frac{1}{2}\frac{\partial L_0}{\partial\chi}\cdot\,\D_{\p{\z}}\vec{X}\right)\right)=0
  \end{align*}
  which gives by taking the complex conjugate if $\partial_{\chi}L=0$ the identity
  \begin{align}\label{eulerlagrange}
  	\Re\left(\D_{\partial_{\z}}\left(\bar{\frac{\partial L_0}{\partial\zeta}}\cdot\vec{X}-\D_{\partial_{\z}}\bar{\left(\frac{\partial L_0}{\partial \omega}\right)}\cdot\vec{X}+\bar{\frac{\partial L_0}{\partial \omega}}\cdot \D_{\partial_{\z}}\vec{X}\right)\right)=0.
  \end{align}
  In our case, we have 
  \begin{align*}
  	L_0=\star|\H_{\vec{\Psi}}^0|^2d\mathrm{vol}_{g_{\vec{\Psi}}}
  \end{align*}
  and we compute by \eqref{dh02}
  \begin{align}\label{respart1}
  	\D_{\p{\z}}\bar{\left(\frac{\partial L_0}{\partial\omega}\right)}&=2\,\D_{\p{\z}}\left(\mathscr{I}_{\vec{\Psi}}\left(\vec{H}_{\vec{\Psi}}^0\right)\right)=2\,\D_{\p{\z}}\left(|\vec{\Psi}|^2\vec{H}_{\vec{\Psi}}^0-2\s{\vec{H}_{\vec{\Psi}}^0}{\vec{\Psi}}\vec{\Psi}\right)\nonumber\\
  	&=2\left(|\vec{\Psi}|^2\D_{\p{\z}}(\H_{\vec{\Psi}}^0)-2\s{\vec{\Psi}}{\vec{H}_{\vec{\Psi}}^0}\vec{\Psi}+\p{\z}|\vec{\Psi}|^2\H_{\vec{\Psi}}^0-2\s{\p{\z}\vec{\Psi}}{\H_{\vec{\Psi}}^0}\vec{\Psi}-2\s{\vec{\Psi}}{\H_{\vec{\Psi}}^0}\p{\z}\vec{\Psi}\right)\nonumber\\
  	&=2\,\mathscr{I}_{\vec{\Psi}}\left(\D_{\p{\z}}\left(\H_{\vec{\Psi}}^0\right)\right)+2\,\p{\z}|\vec{\Psi}|^2\H_{\vec{\Psi}}^0-4\s{\vec{\Psi}}{\H_{\vec{\Psi}}^0}\p{\z}\vec{\Psi}.
  \end{align}
  as $\p{\z}\vec{\Psi}$ is a tangent vector and $\H_{\vec{\Psi}}^0$ is a normal vector, so $\s{\H_{\vec{\Psi}}^0}{\p{\z}\vec{\Psi}}=0$. Then, we compute
  \begin{align*}
  	\D_{\p{\z}}\H_{\vec{\Psi}}^0=\D_{\p{\z}}\left(e^{-2\mu}\h^{0}_{\vec{\Psi}}\right)=-2\left(\p{\z}\mu\right)\H_{\vec{\Psi}}^0+e^{-2\mu}\D_{\p{\z}}(\h_{\vec{\Psi}}^0)=-2\left(\p{\z}\mu\right)\H_{\vec{\Psi}}^0+g^{-1}_{\vec{\Psi}}\otimes \bar{\partial}^N\h^0_{\vec{\Psi}}+\D_{\p{\z}}^{\top}\H_{\vec{\Psi}}^0
  \end{align*}
  and by now familiar computations, we also readily obtain
  \begin{align}\label{respart2}
  	\D_{\p{\z}}^{\top}\H_{\vec{\Psi}}^0=-|\H_{\vec{\Psi}}^0|^2\f_z-\s{\H_{\vec{\Psi}}}{\H_{\vec{\Psi}}^0}\f_{\z}.
  \end{align}
  By \eqref{respart1} and \eqref{respart2}, we have
  \begin{align}\label{respartbis}
  	\D_{\p{\z}}\bar{\left(\frac{\partial L_0}{\partial\omega}\right)}&=-4\left(\p{\z}\mu\right)\mathscr{I}_{\vec{\Psi}}(\H_{\vec{\Psi}}^0)+2\,g^{-1}_{\vec{\Psi}}\otimes\,\mathscr{I}_{\vec{\Psi}}(\bar{\partial}^N\h_{\vec{\Psi}}^0)-2|\H_{\vec{\Psi}}^0|^2\mathscr{I}_{\vec{\Psi}}(\f_z)-2\s{\H_{\vec{\Psi}}}{\H_{\vec{\Psi}}^0}\mathscr{I}_{\vec{\Psi}}(\f_{\z})\nonumber\\
  	&+2\,\p{\z}|\vec{\Psi}|^2\H_{\vec{\Psi}}^0-4\s{\vec{\Psi}}{\H_{\vec{\Psi}}^0}\f_{\z}.
  \end{align}
  We now trivially have
  \begin{align}\label{respart3}
  	\bar{\frac{\partial L_0}{\partial \omega}}\cdot \D_{\p{\z}}\vec{X}=2\,\mathscr{I}_{\vec{\Psi}}(\H_{\vec{\Psi}}^0)\cdot \D_{\p{\z}}\vec{X}
  \end{align}
  Finally, we have by \eqref{dh02}
  \begin{align}\label{respart4}
  	\bar{\frac{\partial L_0}{\partial\zeta}}=-2|\H_{\vec{\Psi}}^0|^2\mathscr{I}_{\vec{\Psi}}(\f_{z})-4\s{\H_{\vec{\Psi}}}{\bar{\H^0_{\vec{\Psi}}}}\mathscr{I}_{\vec{\Psi}}(\f_{\z})-4(\p{\z}\mu)\mathscr{I}_{\vec{\Psi}}\left({\H^0_{\vec{\Psi}}}\right)+4\,\partial_{\z}|\vec{\Psi}|^2\H_{\vec{\Psi}}^0-8\s{\vec{\Psi}}{\bar{\H_{\vec{\Psi}}^0}}\f_{\z}.
  \end{align}
  In particular, by \eqref{respart1}, we have
  \begin{align}\label{respart5}
  	&\bar{\frac{\partial L_0}{\partial\zeta}}-\D_{\p{\z}}\bar{\left(\frac{\partial L_0}{\partial\omega}\right)}=-\ccancel{2|\H_{\vec{\Psi}}^0|^2\mathscr{I}_{\vec{\Psi}}(\f_{z})}-4\s{\H_{\vec{\Psi}}}{\bar{\H^0_{\vec{\Psi}}}}\mathscr{I}_{\vec{\Psi}}(\f_{\z})-\colorcancel{4(\p{\z}\mu)\mathscr{I}_{\vec{\Psi}}\left({\H^0_{\vec{\Psi}}}\right)}{blue}+4\,\partial_{\z}|\vec{\Psi}|^2\H_{\vec{\Psi}}^0-8\s{\vec{\Psi}}{\bar{\H_{\vec{\Psi}}^0}}\f_{\z}\nonumber\\
  	&-\bigg(-\colorcancel{4\left(\p{\z}\mu\right)\mathscr{I}_{\vec{\Psi}}(\H_{\vec{\Psi}}^0)}{blue}+2\,g^{-1}_{\vec{\Psi}}\otimes\,\mathscr{I}_{\vec{\Psi}}(\bar{\partial}^N\h_{\vec{\Psi}}^0)-\ccancel{2|\H_{\vec{\Psi}}^0|^2\mathscr{I}_{\vec{\Psi}}(\f_z)}-2\s{\H_{\vec{\Psi}}}{\H_{\vec{\Psi}}^0}\mathscr{I}_{\vec{\Psi}}(\f_{\z})\nonumber\\
  	&+2\,\p{\z}|\vec{\Psi}|^2\H_{\vec{\Psi}}^0-4\s{\vec{\Psi}}{\H_{\vec{\Psi}}^0}\f_{\z}.\bigg)\\
  	&=-2\,g^{-1}_{\vec{\Psi}}\otimes \mathscr{I}_{\vec{\Psi}}(\bar{\partial}^N\h_{\vec{\Psi}}^0)-2\s{\H_{\vec{\Psi}}}{\H_{\vec{\Psi}}^0}\mathscr{I}_{\vec{\Psi}}(f_{\z})+2\partial_{\z}|\vec{\Psi}|^2\H_{\vec{\Psi}}^0-4\s{\vec{\Psi}}{\H_{\vec{\Psi}}^0}\f_{\z}.
  \end{align}
  By \eqref{respart3} and \eqref{respart5}, we conclude that
  \begin{align*}
  	&\bar{\frac{\partial L_0}{\partial \zeta}}-\D_{\p{\z}}\bar{\left(\frac{\partial L_0}{\partial\omega}\right)}+\bar{\frac{\partial L_0}{\partial\omega}}=\bigg\{-2\,g^{-1}_{\vec{\Psi}}\otimes \mathscr{I}_{\vec{\Psi}}(\bar{\partial}^N\h_{\vec{\Psi}}^0)-2\s{\H_{\vec{\Psi}}}{\H_{\vec{\Psi}}^0}\mathscr{I}_{\vec{\Psi}}(f_{\z})+2\partial_{\z}|\vec{\Psi}|^2\H_{\vec{\Psi}}^0-4\s{\vec{\Psi}}{\H_{\vec{\Psi}}^0}\f_{\z}\bigg\}\cdot \vec{X}\\
  	&+2\,\mathscr{I}_{\vec{\Psi}}(\H_{\vec{\Psi}}^0)\cdot \D_{\p{\z}}\vec{X}\\
  	&=-2\bigg\{\bigg(\mathscr{I}_{\vec{\Psi}}\left(g_{\vec{\Psi}}^{-1}\otimes \bar{\partial}^N\h_{\vec{\Psi}}^0+\s{\H_{\vec{\Psi}}}{\H_{\vec{\Psi}}^0}\p{\z}\vec{\Psi}\right)-{\partial}_{\z}|\vec{\Psi}|^2\H_{\vec{\Psi}}^0+2\s{\vec{\Psi}}{\H_{\vec{\Psi}}^0}\p{\z}\vec{\Psi}\bigg)\cdot \vec{X}-\mathscr{I}_{\vec{\Psi}}(\H_{\vec{\Psi}}^0)\cdot \D_{\p{\z}}\vec{X}\bigg\}.
  \end{align*}
  Finally, thanks to \eqref{respart2}, we obtain the \emph{pointwise} identity (valid for any Willmore immersion)
   \begin{align}\label{inversionformula}
   \left\{\begin{alignedat}{1}
      	&\left(g_{\phi}^{-1}\otimes \left(\bar{\partial}^N-\bar{\partial}^\top\right)\h^0_{\phi}-|\h^0_{\phi}|^2_{WP}\,\partial\phi\right)\cdot\vec{X}-g_{\phi}^{-1}\otimes\h^0_{\phi}\cdot\bar{\partial}\vec{X}\\
        &=\mathscr{I}_{\vec{\Psi}}\left(g_{\vec{\Psi}}^{-1}\otimes\left(\bar{\partial}^N-\bar{\partial}^\top\right)\h_{\vec{\Psi}}^0-|\h_{\vec{\Psi}}^0|_{WP}^2\,\partial\vec{\Psi}\right)\cdot \vec{X}
        -g^{-1}_{\vec{\Psi}}\otimes \bigg(\bar{\partial}|\vec{\Psi}|^2\otimes \h_{\vec{\Psi}}^0-2\s{\vec{\Psi}}{\h_{\vec{\Psi}}^0}\otimes \bar{\partial}\vec{\Psi}\bigg)\cdot\vec{X}
        \\
        &-g_{\vec{\Psi}}^{-1}\otimes \mathscr{I}_{\vec{\Psi}}(\h_{\vec{\Psi}}^0)\cdot \bar{\partial}\vec{X}.
   \end{alignedat}\right.
   \end{align}
    Applying Noether's theorem with $\vec{X}=\vec{C}\in\R^n$ constant, we obtain for all $p\in\Sigma^2$ by \eqref{residues1234}
    \begin{align}\label{14}
    	\vec{\gamma}_0(\phi,p)=\vec{\gamma}_3(\vec{\Psi},p)
    \end{align}
    \textit{i.e.} the fourth residue of an inversion if equal to the first residue. As the proof is symmetric in $\phi$ and $\vec{\Psi}$, we also get
    \begin{align}\label{41}
    	\vec{\gamma}_3(\phi,p)=\vec{\gamma}_0(\vec{\Psi},p).
    \end{align}
    We now turn to th e invariance by dilatations (\textit{i.e.} zith $\vec{X}=\phi$).
    To simplify notations, let
    \begin{align*}
    \vec{\alpha}=g_{\vec{\Psi}}^{-1}\otimes\left(\bar{\partial}^N-\bar{\partial}^\top\right)\h_{\vec{\Psi}}^0-|\h_{\vec{\Psi}}^0|_{WP}^2\,\partial\vec{\Psi}.
    \end{align*}
    We have
    \begin{align}\label{gamma22}
    \mathscr{I}_{\vec{\Psi}}(\vec{\alpha})\cdot\phi=|\vec{\Psi}|^2\vec{\alpha}\cdot\phi-2\s{\vec{\Psi}}{\vec{\alpha}}\vec{\Psi}\cdot\phi=\vec{\Psi}\cdot \vec{\alpha}-2\vec{\Psi}\cdot\alpha=-\vec{\Psi}\cdot\vec{\alpha}.
    \end{align}
    On the other hand, as
    as
    \begin{align*}
    2\s{\vec{\Psi}}{\h_{\vec{\Psi}}^0}\otimes \frac{\s{\bar{\partial}\vec{\Psi}}{\vec{\Psi}}}{|\vec{\Psi}|^2}=\s{\h_{\vec{\Psi}}^0}{\vec{\Psi}}\otimes \frac{\bar{\partial}|\vec{\Psi}|^2}{|\vec{\Psi}|^2},
    \end{align*}    
    we have
    \begin{align}\label{gamma23}
    	\left(\bar{\partial}|\vec{\Psi}|^2\otimes \h_{\vec{\Psi}}^0-2\s{\vec{\Psi}}{\h_{\vec{\Psi}}^0}\bar{\partial}\otimes\vec{\Psi}\right)\cdot\phi&=\left(\bar{\partial}|\vec{\Psi}|^2\otimes \h_{\vec{\Psi}}^0-2\s{\vec{\Psi}}{\h_{\vec{\Psi}}^0}\otimes \bar{\partial}\vec{\Psi}\right)\cdot\frac{\vec{\Psi}}{|\vec{\Psi}|^2}\nonumber\\
    	&=\frac{\bar{\partial}|\vec{\Psi}|^2}{|\vec{\Psi}|^2}\otimes\s{\h_{\vec{\Psi}}^0}{\vec{\Psi}}-2\s{\vec{\Psi}}{\h_{\vec{\Psi}}^0}\frac{\s{\bar{\partial}\vec{\Psi}}{\vec{\Psi}}}{|\vec{\Psi}|^2}
    	=0.
    \end{align}
    Now, we compute
    \begin{align}\label{gamma24}
    	\mathscr{I}_{\vec{\Psi}}(\h_{\vec{\Psi}}^0)\totimes\bar{\partial}\phi&=\left(|\vec{\Psi}|^2\h_{\vec{\Psi}}^0-2\s{\vec{\Psi}}{\h_{\vec{\Psi}}^0}\vec{\Psi}\right)\totimes \left(\frac{\bar{\partial}\vec{\Psi}}{|\vec{\Psi}|^2}-\frac{\bar{\partial}|\vec{\Psi}|^2}{|\vec{\Psi}|^4}\vec{\Psi}\right)\nonumber\\
    	&=-\frac{\bar{\partial}|\vec{\Psi}|^2}{|\vec{\Psi}|^2}\otimes\s{\h_{\vec{\Psi}}^0}{\vec{\Psi}}-2\s{\vec{\Psi}}{\h_{\vec{\Psi}}}\otimes\frac{\s{\bar{\partial}\vec{\Psi}}{\vec{\Psi}}}{|\vec{\Psi}|^2}+2\s{\vec{\Psi}}{\h_{\vec{\Psi}}^0}\otimes \frac{\bar{\partial}|\vec{\Psi}|^2}{|\vec{\Psi}|^4}\s{\vec{\Psi}}{\vec{\Psi}}
    	=0
    \end{align}
    Thanks to \eqref{gamma22}, \eqref{gamma23} and \eqref{gamma24}, we obtain
     \begin{align}\label{2-2}
    \vec{\gamma}_2(\phi,p)=-\vec{\gamma}_2(\vec{\Psi},p).
    \end{align}
    Finally, as $\vec{\Psi}\wedge \vec{\Psi}=0$
    \begin{align*}
    	\mathscr{I}_{\vec{\Psi}}(\,\cdot\,)\wedge \phi=|\vec{\Psi}|^2\cdot\wedge \,\phi=\cdot\wedge \vec{\Psi}
    \end{align*}
    Therefore, we have
    \begin{align}\label{yatta0}
    	\mathscr{I}_{\vec{\Psi}}(\vec{\alpha})\wedge \phi=\vec{\alpha}\wedge \vec{\Psi}=-\vec{\Psi}\wedge \left(g_{\vec{\Psi}}^{-1}\otimes\left(\bar{\partial}^N-\bar{\partial}^\top\right)\h_{\vec{\Psi}}^0-|\h_{\vec{\Psi}}^0|_{WP}^2\,\partial\vec{\Psi}\right)
    \end{align}
    Furthermore, we have
    \begin{align}\label{yatta1}
    	\left(\bar{\partial}|\vec{\Psi}|^2\otimes \h_{\vec{\Psi}}^0-2\s{\vec{\Psi}}{\h_{\vec{\Psi}}^0}\otimes \bar{\partial}\vec{\Psi}\right)\wedge \phi&=\frac{\bar{\partial}|\vec{\Psi}|^2}{|\vec{\Psi}|^2}\otimes \h_{\vec{\Psi}}^0\wedge \vec{\Psi}-\frac{2}{|\vec{\Psi}|^2}\s{\vec{\Psi}}{\h_{\vec{\Psi}}^0}\otimes \bar{\partial}\vec{\Psi}\wedge \vec{\Psi}
    \end{align}
    and as $\vec{\Psi}\wedge \vec{\Psi}=0$, we have
    \begin{align}\label{yatta2} 
    	&\mathscr{I}_{\vec{\Psi}}(\h_{\vec{\Psi}}^0)\wedge \bar{\partial}\phi=\left(|\vec{\Psi}|^2\h_{\vec{\Psi}}^0-2\s{\vec{\Psi}}{\h_{\vec{\Psi}}^0}\vec{\Psi}\right)\wedge \left(\frac{\bar{\partial}\vec{\Psi}}{|\vec{\Psi}|^2}-\frac{\bar{\partial}|\vec{\Psi}|^2}{|\vec{\Psi}|^4}\vec{\Psi}\right)\nonumber\\
    	&=\h_{\vec{\Psi}}^0\wedge\bar{\partial}\vec{\Psi}-\frac{\bar{\partial}|\vec{\Psi}|^2}{|\vec{\Psi}|^2}\otimes \h_{\vec{\Psi}}^0\wedge\vec{\Psi}-\frac{2}{|\vec{\Psi}|^2}\s{\vec{\Psi}}{\h_{\vec{\Psi}}^0}\otimes\vec{\Psi}\wedge \bar{\partial}\vec{\Psi}
    	=\h_{\vec{\Psi}}^0\wedge\bar{\partial}\vec{\Psi}-\frac{\bar{\partial}|\vec{\Psi}|^2}{|\vec{\Psi}|^2}\otimes \h_{\vec{\Psi}}^0\wedge\vec{\Psi}+\frac{2}{|\vec{\Psi}|^2}\s{\vec{\Psi}}{\h_{\vec{\Psi}}^0}\otimes\bar{\partial}\vec{\Psi}\wedge \vec{\Psi}.
    \end{align}
    Therefore, thanks to \eqref{yatta1} and \eqref{yatta2}, we get
    \begin{align}\label{yatta3}
    	\left(\bar{\partial}|\vec{\Psi}|^2\otimes \h_{\vec{\Psi}}^0-2\s{\vec{\Psi}}{\h_{\vec{\Psi}}^0}\otimes \bar{\partial}\vec{\Psi}\right)\wedge \phi+\mathscr{I}_{\vec{\Psi}}(\h_{\vec{\Psi}}^0)\wedge \bar{\partial}\phi=\h_{\vec{\Psi}}^0\wedge\bar{\partial}\vec{\Psi}.
    \end{align}
    Finally, thanks to \eqref{inversionformula}, \eqref{yatta0}, \eqref{yatta3}, we obtain
    \begin{align*}
    	&\left(g_{\phi}^{-1}\otimes \left(\bar{\partial}^N-\bar{\partial}^\top\right)\h^0_{\phi}-|\h^0_{\phi}|^2_{WP}\,\partial\phi\right)\wedge\phi-g_{\phi}^{-1}\otimes\h^0_{\phi}\wedge \bar{\partial}{\phi}\\
    	&=-\bigg(\phi\wedge \left(g_{\phi}^{-1}\otimes \left(\bar{\partial}^N-\bar{\partial}^\top\right)\h^0_{\phi}-|\h^0_{\phi}|^2_{WP}\,\partial\phi\right)+g_{\phi}^{-1}\otimes\h^0_{\phi}\wedge \bar{\partial}{\phi}\bigg)\\
    	&=\mathscr{I}_{\vec{\Psi}}\left(g_{\vec{\Psi}}^{-1}\otimes\left(\bar{\partial}^N-\bar{\partial}^\top\right)\h_{\vec{\Psi}}^0-|\h_{\vec{\Psi}}^0|_{WP}^2\,\partial\vec{\Psi}\right)\cdot \vec{X}
    	-g^{-1}_{\vec{\Psi}}\otimes \bigg(\bar{\partial}|\vec{\Psi}|^2\otimes \h_{\vec{\Psi}}^0-2\s{\vec{\Psi}}{\h_{\vec{\Psi}}^0}\otimes \bar{\partial}\vec{\Psi}\bigg)\cdot\vec{X}
    	\\
    	&-g_{\vec{\Psi}}^{-1}\otimes \mathscr{I}_{\vec{\Psi}}(\h_{\vec{\Psi}}^0)\wedge \bar{\partial}\phi
    	=-\left(\vec{\Psi}\wedge\left(g_{\vec{\Psi}}^{-1}\otimes\left(\bar{\partial}^N-\bar{\partial}^\top\right)\h_{\vec{\Psi}}^0-|\h_{\vec{\Psi}}^0|_{WP}^2\,\partial\vec{\Psi}\right)+g^{-1}_{\vec{\Psi}}\otimes\h_{\vec{\Psi}}^0\wedge \bar{\partial}\vec{\Psi}\right)
    \end{align*}
    so
    \begin{align}\label{11}
    	\vec{\gamma}_1(\phi,p)=\vec{\gamma}_1(\vec{\Psi},p)
    \end{align}
    and by \eqref{14}, \eqref{41}, \eqref{2-2},\eqref{11}, and \eqref{residusnonintro}, this concludes the proof of the theorem.
\end{proof}

\begin{rem}
	We showed in fact a much stronger property that the correspondence of residues under conformal transformations, as we actually proved the \emph{pointwise} invariance of the four integrated tensors modulo permutations and change of sign.
\end{rem}

\begin{rem}
	    For inversions of minimal surfaces, the third residue vanish, as the integrand is
	\begin{align*}
	g^{-1}\otimes \s{\H}{\h_0}\otimes \bar{\partial}|\phi|^2=0.
	\end{align*}
	Furthermore, of $\phi$ is minimal, by the Weierstrass parametrisation, we have for some $k\in\N$
	\begin{align*}
	\phi(z)=\Re\left(\frac{\alpha_1}{z^k},\cdots,\frac{\alpha_{n}}{z^k}\right)+O\left(\frac{1}{z^{k-1}}\right)
	\end{align*}
	for some $\alpha_1,\cdots,\alpha_n\in \C\setminus\ens{0}$. Therefore 
	\begin{align*}
	e^{2\lambda}&=\frac{k^2}{2}\frac{\sum_{j=1}^{n}|\alpha_j|^2}{|z|^{2(k+1)}}\left(1+O\left(|z|^2\right)\right),\qquad
	\p{\z}\phi(z)=-\frac{k}{2}\bar{\left(\frac{{\alpha}_1}{z^{k+1}},\cdots,\frac{{\alpha}_{n}}{z^{k+1}}\right)}
	\end{align*}
	and for some $\alpha\neq 0$, $\beta_1\cdots\beta_n,\gamma_1,\cdots\,\gamma_n\in \C$, we obtain (as the first order expansion of $\p{z}^2\phi$ is a tangent vector $\h_0=O(|z|^{-(k+1)})$)
	\begin{align*}
	g^{-1}\otimes \h_0\wedge\bar{\partial}\phi&=\alpha |z|^{2(k+1)}\left(\frac{\beta_1}{z^{k+1}},\cdots\,\frac{\beta_n}{z^{k+1}}\right)\wedge \bar{\left(\frac{{\alpha}_1}{z^{k+1}},\cdots,\frac{{\alpha}_n}{z^{k+1}}\right)}dz+O(1)
	=O(1).
	\end{align*}
\end{rem}

\begin{rem}
	This correspondence can be easily anticipated as follows. First, as the square of the inversion is the identity map, we now that the inversion can only exchange residues up to a factor of $\pm 1$. Furthermore, the second residue is the only real one (the other are vectorial) so the inversion can only let it invariant or change its sign. Then, wedge products do not appear by magic, so the third residue can only change by $\pm 1$. As the first residue cannot stay invariant for the inversion of a minimal surface with non-zero flux, as the first residue of any minimal surface vanishes identically, we deduce that the first and fourth residues must be exchanged, up to a multiplication by $-1$.
\end{rem}

As we have already seen, the three first residues of a minimal surface vanish, and for minimal surfaces with embedded ends, the fourth residue is nothing else that the flux. This last fact is general.

\begin{cor}\label{inversionmin}
	Let $\phi:\Sigma^2\setminus\ens{p_1,\cdots,p_m}\rightarrow\R^n$ be a complete minimal surface with finite total curvature. Then its flux of $\phi$ is equal to  
	its fourth residue as a Willmore surface, that is for all $p\in \Sigma^2$ for all smooth curve enclosing $p_j$ ($1\leq j\leq m$) and lying inside $\Sigma^2\setminus\ens{p_1,\cdots, p_m}$, we have
	\begin{align}\label{formule}
		\Im\int_{\gamma}\partial\phi=\Im\int_{\gamma}\mathscr{I}_{\phi}\left(g^{-1}\otimes \left(\bar{\partial}^N-\bar{\partial}^\top\right)\h_0-|\h_0|^2_{WP}\,\partial\phi\right)-g^{-1}\otimes \h_0\otimes\bar{\partial}|\phi|^2
	\end{align}
	where $\mathscr{I}_{\phi}(\vec{w})=|\phi|^2\vec{w}-2\s{\phi}{\vec{w}}\phi$, for any vector $\w\in \R^n$.
\end{cor}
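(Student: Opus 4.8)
The plan is to reduce \eqref{formule} to a residue computation at each end $p_j$ via the Weierstrass parametrisation, following the scheme of Proposition~\ref{fluxresidue} but now permitting ends of arbitrary multiplicity and arbitrary codimension $n\geq 3$. First I would exploit minimality: since $\H=0$, the Codazzi identity recalled in the remark after \eqref{res00} gives
\[
g^{-1}\otimes\left(\bar{\partial}^N-\bar{\partial}^\top\right)\h_0-|\h_0|_{WP}^2\,\partial\phi=\partial\H+|\H|^2\partial\phi+2\,g^{-1}\otimes\s{\H}{\h_0}\otimes\bar{\partial}\phi=0,
\]
so that the first term $\mathscr{I}_{\phi}\big(g^{-1}\otimes(\bar{\partial}^N-\bar{\partial}^\top)\h_0-|\h_0|_{WP}^2\,\partial\phi\big)$ on the right-hand side of \eqref{formule} vanishes identically. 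The claim therefore reduces to the identity already isolated in \eqref{reducegamma3}, namely that the period of the purely conformal current $g^{-1}\otimes\h_0\otimes\bar{\partial}|\phi|^2-2\,g^{-1}\otimes\s{\h_0}{\phi}\otimes\bar{\partial}\phi$ equals $-\Im\int_{\gamma}\partial\phi$.

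Next I would note that both integrands are closed $1$-forms on $\Sigma^2\setminus\ens{p_1,\cdots,p_m}$. On the left, $\partial\phi=\p{z}\phi\,dz$ is a vector-valued holomorphic $1$-form, because $\Delta\phi=0$ forces $\p{\z}(\p{z}\phi)=0$; on the right, the integrand is the Noether current $\vec{\gamma}_3$ whose closedness is precisely the conservation law of Lemma~\ref{complexcons}, applicable since a minimal surface is Willmore. Hence each integral depends only on the homology class of $\gamma$, and it suffices to evaluate both over a small circle $\gamma=\ens{|z|=r}$ in a conformal chart $z$ sending a chosen end $p_j$ to $0$.

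The heart of the argument is then the asymptotic computation at a single end. By the Weierstrass parametrisation there is $\vec{A}\in\C^n\setminus\ens{0}$ with $\s{\vec{A}}{\vec{A}}=0$ (from conformality) and an integer $k\geq 1$ such that $\p{z}\phi=\tfrac12\vec{A}\,z^{-(k+1)}+O(|z|^{-k})$, which yields explicit leading expansions of $|\phi|^2$, of $e^{2\lambda}=2|\p{z}\phi|^2$, of $\bar{\partial}|\phi|^2$, of the Weingarten tensor $\h_0=2\big(\p{z}^2\phi-2(\p{z}\lambda)\p{z}\phi\big)dz^2$, and of $\s{\h_0}{\phi}$, generalising \eqref{refgamma3}. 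Substituting these and retaining only the residue-producing $dz/z$ terms, one matches $\Im\int_{\gamma}\partial\phi$. I expect the main obstacle to be exactly this matching for ends of multiplicity $k\geq 2$ and codimension $n\geq 3$: unlike the embedded planar case of Proposition~\ref{fluxresidue}, where one normalises $\n(p_j)=(0,0,1)$ and the cross terms drop, here several a priori divergent contributions from $\bar{\partial}|\phi|^2$ and from $\s{\h_0}{\phi}$ must be shown to cancel — cancellations of the same type as those displayed in \eqref{simplerend} — leaving precisely the holomorphic period $\Im\int_{\gamma}\partial\phi$, which by definition is the flux. Summing over $p_1,\cdots,p_m$ concludes the proof.
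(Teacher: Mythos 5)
Your framework---killing the $\mathscr{I}_{\phi}$ term via $\H=0$, invoking closedness of both $1$-forms to reduce the periods to small circles around each end, and then computing locally---is sound, but the proof has a genuine gap exactly where you place the ``main obstacle'': the residue matching at an end of multiplicity $k\geq 2$ is asserted, not proved, and asserting it is circular, since the identity \eqref{formule} \emph{is} that matching. Two concrete problems. First, the expansion $\p{z}\phi=\tfrac{1}{2}\vec{A}\,z^{-(k+1)}+O(|z|^{-k})$ retains only the top Laurent coefficient, whereas the flux is the coefficient of $dz/z$, which sits $k$ orders below the leading term; the residue of $g^{-1}\otimes\h_0\otimes\bar{\partial}|\phi|^2-2\,g^{-1}\otimes\s{\h_0}{\phi}\otimes\bar{\partial}\phi$ is produced by products mixing \emph{all} intermediate Laurent coefficients of $\p{z}\phi$, of $e^{2\lambda}$ and of $\h_0$, constrained by the conformality relations $\s{\p{z}\phi}{\p{z}\phi}=0$ order by order, so leading-order data cannot decide it. Second, the simplifications that close the computation in Proposition \ref{fluxresidue}---the normalisation $\n(p_j)=(0,0,1)$ concentrating $\h_0$ in a single coordinate, and the subcriticality $\s{\h_0}{\phi}=O(\log|z|\,|z|^{-2})$ used in \eqref{unautre}---are specific to embedded planar ends in $\R^3$ and are unavailable for $k\geq 2$ or $n\geq 4$, so the analogy with the cancellations of \eqref{simplerend} does not substitute for carrying them out.

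Note that the paper's own proof deliberately avoids this computation. It composes with the inversion: by \cite{beriviere} (cf.\ section \ref{4.2}), the inverted surface $\vec{\Psi}=\iota\circ\phi$ admits near each branch point the expansion $\vec{\Psi}(z)=\Re\big((\vec{A}+\vec{B}z+\vec{C}z^2)\z^{\theta_0}\big)+\mu\,\vec{\gamma}_0|z|^{2\theta_0}\log|z|+O(|z|^{\theta_0+3})$, whose logarithmic coefficient is the first residue of $\vec{\Psi}$ and is built from the flux of $\phi$ read off the Weierstrass data; the pointwise correspondence of theorem \ref{galois}, namely $\vec{\gamma}_3(\phi,p)=\vec{\gamma}_0(\vec{\Psi},p)$, then transfers this to the fourth residue of $\phi$, and the remaining multiplicative constant is pinned to $+1$ by the single explicit case already settled in Proposition \ref{fluxresidue}. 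So to salvage your direct route you would need to perform the full Laurent-series residue computation at an end of arbitrary multiplicity in arbitrary codimension; otherwise, route the general case through the inversion and the residue correspondence, as the paper does.
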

\begin{proof}
	By the Weierstrass parametrisation, we have near a branch point 
	\begin{align*}
		\phi(z)=\Re\left(\sum_{j=1}^{\theta_0}\frac{\alpha_j^1}{z^j}+\beta_1\log(z)+O(1),\cdots,\sum_{j=1}^{\theta_0-1}\frac{\alpha_j^n}{z^j}+\beta_n\log(z)+O(1)\right)
	\end{align*}
	so the flux is $\vec{\gamma_0}=\Im(\beta_1,\cdots,\beta_n)$.
	As for the inverted minimal surface $\vec{\Psi}=\iota\circ \phi$, we have (see \cite{beriviere} or Section \ref{4.2}) close to a branch point 
	\begin{align}
	     \vec{\Psi}(z)&=\Re\left((\vec{A}+\vec{B}z+\vec{C}z^2)\z^{\theta_0}\right)+\mu\vec{\gamma}_0|z|^{2\theta_0}\log|z|+O(|z|^{\theta_0+3})
	\end{align}
	for some $\mu>0$. As thanks to \cite{beriviere}, $\vec{\gamma_0}$ is the first residue of $\vec{\Psi}$, the correspondence shows that the fourth residue of an arbitrary minimal surface is nothing else than the flux up to a constant, which is equal to $+1$ thanks to the Proposition \ref{fluxresidue}.
\end{proof}

\begin{rem}
	We now see that Theorem \ref{A} in the introduction is the combination of Theorem \ref{correspondance} and of corollary \ref{inversionmin}.
\end{rem}

	\section{Meromorphic quartic form and Willmore surfaces in $S^n$}
	\subsection{Algebraic structure of the quartic form}
	
	  	On $\R^{n+2}$ introduce the Lorentzian metric of signature $(1,n+1)$
    	\begin{align*}
    		h=-dx_0^2+\sum_{j=1}^{n+1}dx_j^2
    	\end{align*}
    	and denote by $S^{n,1}$ the unit Lorentzian sphere, defined by
    	\begin{align*}
    		S^{n,1}=\R^{n+2}\cap\ens{x=(x_0,x_1,\cdots,x_{n+1}):|x|_h^2=-x_0^2+\sum_{j=1}^{n+1}x_j^2=1}.
    	\end{align*}
    	Let $\psi_{\phi}:\Sigma^2\rightarrow S^{n,1}\subset \R^{n+2}$ be the section defined on the normal bundle $T_{\C}^N\Sigma^2$, for all normal section $\vv{\xi}$ by
    	\begin{align*}
    	{\psi}_{\phi}(\vv{\xi})=\s{\vec{H}}{\vv{\xi}}(\vec{a}+\phi)+\vv{\xi}
    	\end{align*}
    	where $\vec{a}=(1,0,\cdots,0)\in \R^{n+2}$, and $\phi\in \R^{n+1}$ (resp. $\vv{\xi}$) is identified with $(0,\phi)\in \R^{n+2}$ (resp. $(0,\vv{\xi})$). 
    	Then for all normal section $\vv{\xi}$ such that $|\vv{\xi}|=1$, we have
    	\begin{align*}
    		\s{\psi_{\phi}}{\psi_{\phi}}_h=-\s{\H}{\vv{\xi}}^2+\s{\H}{\vv{\xi}}^2|\phi|^2+|\vv{\xi}|^2=1,
    	\end{align*}	
    	and $\psi_{\phi}:\Sigma^2\rightarrow S^{n,1}$ is called the pseudo Gauss map of $\phi$. If $n=3$, and $\n$ is the unit normal we can choose $\vv{\xi}=\n$  (the unit normal) which gives
    	\begin{align*}
    			\psi_{\phi}=(H,\phi H+\n).
    	\end{align*}
    	Then we have the following result of Bryant. 
    	\begin{theorem}
    		Let $\phi:\Sigma^2\rightarrow S^3$ be a smooth immersion of an oriented surface and endow $\Sigma^2$ with the induced conformal structure. Then $\psi_{\phi}:\Sigma^2\rightarrow S^{3,1}$ is weakly conformal, it is an immersion away from the umbilic locus of $\phi$, and if $\phi$ is a Willmore immersion, the $4$-form $\mathscr{Q}_{\phi}$ defined by
    		\begin{align*}
    		\mathscr{Q}_{\phi}=\s{\partial^2\psi_{\phi}}{\partial^2\psi_{\phi}}_h
    		\end{align*}
    			is a holomorphic quartic form. Furthermore, $\phi:\Sigma^2\rightarrow S^3$ is a Willmore surface if and only if $\psi_{\phi}:\Sigma^2\rightarrow S^{3,1}$ is harmonic.
    	\end{theorem}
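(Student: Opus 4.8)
The plan is to work in a local conformal chart $z$ with $g=e^{2\lambda}\,dz\otimes d\z$ and to use the moving frame $(\e_z,\e_{\z},\n,\phi)$ adapted to the tower $\Sigma^2\hookrightarrow S^3\hookrightarrow\R^4$, where $\phi$ is the $\R^4$-position vector (the unit normal to $S^3$) and $\n$ is the Gauss map of $\phi$ inside $S^3$. From $\s{\e_z}{\e_z}=0$, $\s{\e_z}{\e_{\z}}=\tfrac12 e^{2\lambda}$, $|\phi|^2=|\n|^2=1$ and the remaining orthogonality relations one derives the structure equations $\p{z}\e_{\z}=\tfrac12 e^{2\lambda}H\,\n-\tfrac12 e^{2\lambda}\phi$ and $\p{z}\n=-H\e_z-H_0\e_{\z}$, where $H$ is the scalar mean curvature and $H_0=2e^{-2\lambda}\s{\D_{\p{z}}\e_z}{\n}$ the scalar Weingarten coefficient, so that $\h_0=e^{2\lambda}H_0\,dz^2$. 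The decisive feature is the term $-\tfrac12 e^{2\lambda}\phi$: it encodes the curvature of $S^3$ and is responsible for every appearance of the constant curvature in the final identities.

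First I would differentiate $\psi_\phi=(H,\,\phi H+\n)$. A direct computation in which the terms $(\p{z}\phi)H$ and $-H\e_z$ cancel yields $\p{z}\psi_\phi=\big(\p{z}H,\ (\p{z}H)\phi-H_0\e_{\z}\big)$. Weak conformality $\s{\p{z}\psi_\phi}{\p{z}\psi_\phi}_h=0$ is then immediate from $|\phi|^2=1$, $\s{\phi}{\e_{\z}}=0$ and $\s{\e_{\z}}{\e_{\z}}=0$ together with the negative sign of the first coordinate in $h$; and $\s{\p{z}\psi_\phi}{\p{\z}\psi_\phi}_h=\tfrac12 e^{2\lambda}|H_0|^2$ shows the pulled-back metric is $e^{2\lambda}|H_0|^2\,dz\,d\z$, so $\psi_\phi$ is an immersion exactly off the zero set of $H_0$, i.e. off the umbilic locus \eqref{umbiliclocus}. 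These three assertions require no Willmore hypothesis.

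The heart of the argument is the tension field. Differentiating once more and using the Codazzi identity $\p{\z}(e^{2\lambda}H_0)=e^{2\lambda}\p{z}H$ (equivalently \eqref{codazzi}) to annihilate the coefficient of $\e_{\z}$, one obtains $\p{z}\p{\z}\psi_\phi=(\p{z}\p{\z}H)\,(1,\phi)-\tfrac12 e^{2\lambda}|H_0|^2\,(0,\n)$, which has no $\e_z$ or $\e_{\z}$ component. Since the $h$-unit normal of $S^{3,1}\subset\R^{4,1}$ at $\psi_\phi$ is the position vector $\psi_\phi$ itself, the map $\psi_\phi$ is harmonic iff $\p{z}\p{\z}\psi_\phi$ is proportional to $\psi_\phi=(H,H\phi+\n)$; comparing the three components along $(\,\cdot\,,\phi,\n)$ this is equivalent to $\p{z}\p{\z}H+\tfrac12 e^{2\lambda}H|H_0|^2=0$. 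Using the Gauss equation $K=1+H^2-|H_0|^2$, i.e. $H^2-K+1=|H_0|^2$, this is precisely $\Delta_g H+2H(H^2-K+1)=0$, the Willmore equation in $S^3$, which establishes the equivalence between $\phi$ Willmore and $\psi_\phi$ harmonic.

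Finally I would deduce holomorphy of $\mathscr{Q}_\phi$ from conformality and harmonicity alone. Weak conformality first guarantees that $\s{\p{z}^2\psi_\phi}{\p{z}^2\psi_\phi}_h$ is a genuine section of $K_{\Sigma^2}^4$: under a holomorphic change $w=w(z)$ the non-tensorial corrections carry factors $\s{\p{w}\psi_\phi}{\p{w}\psi_\phi}_h=0$ and $\s{\p{w}^2\psi_\phi}{\p{w}\psi_\phi}_h=\tfrac12\p{w}\s{\p{w}\psi_\phi}{\p{w}\psi_\phi}_h=0$, so they drop. When $\phi$ is Willmore, $\p{z}\p{\z}\psi_\phi=\mu\,\psi_\phi$ with $\mu=-\tfrac12 e^{2\lambda}|H_0|^2$, whence $\p{\z}\,\s{\p{z}^2\psi_\phi}{\p{z}^2\psi_\phi}_h=2\s{\p{z}(\mu\psi_\phi)}{\p{z}^2\psi_\phi}_h=2(\p{z}\mu)\s{\psi_\phi}{\p{z}^2\psi_\phi}_h+2\mu\s{\p{z}\psi_\phi}{\p{z}^2\psi_\phi}_h$, and both inner products vanish by weak conformality (the first from $\p{z}\s{\psi_\phi}{\p{z}\psi_\phi}_h=\s{\p{z}\psi_\phi}{\p{z}\psi_\phi}_h+\s{\psi_\phi}{\p{z}^2\psi_\phi}_h=0$ together with $\s{\p{z}\psi_\phi}{\p{z}\psi_\phi}_h=0$, the second as above). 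Hence $\p{\z}\mathscr{Q}_\phi=0$. I expect the main obstacle to be the bookkeeping in the second-derivative computation, and in particular getting the $S^3$-curvature term and the Codazzi cancellation exactly right; by contrast the passage from harmonicity and conformality to holomorphy is short and robust.
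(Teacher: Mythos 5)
Your proposal is correct, but it proves more of the statement from scratch than the paper does, and along a genuinely different route at the two key points. For weak conformality and the identification of the pulled-back metric with $|H_0|^2$ times a multiple of $g$ (so that $\psi_{\phi}$ immerses exactly off the umbilic locus), your computation coincides with the paper's, up to an immaterial constant: your closed form $\p{z}\psi_{\phi}=\big(\p{z}H,(\p{z}H)\phi-H_0\e_{\z}\big)$ is exactly the $n=3$ specialisation of the paper's formula $(\D_{\p{z}}\psi)(\vv{\xi})=\s{\D^{N}_{\p{z}}\H}{\vv{\xi}}(\vec{a}+\phi)-\s{\H_0}{\vv{\xi}}\e_{\z}$, and your factor $\tfrac12 e^{2\lambda}|H_0|^2$ versus the paper's $\tfrac14 e^{2\lambda}|H_0|^2$ is a harmless normalisation. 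Where you diverge: the paper's own proof of this theorem simply cites Bryant \cite{bryant} for the holomorphy of $\mathscr{Q}_{\phi}$ and Ejiri \cite{ejiri} for the harmonicity equivalence, and then establishes holomorphy independently in theorem \ref{int} by a long scalar computation — writing $\mathscr{Q}_{\phi}=\big\{e^{2\lambda}(\partial^2H\,H_0-\partial H\,\partial H_0)+\tfrac{e^{4\lambda}}{4}H^2H_0^2\big\}dz^4$ and verifying $\bar{\partial}\mathscr{Q}_{\phi}=0$ using the Willmore equation $\partial\bar{\partial}H+\tfrac{e^{2\lambda}}{2}|H_0|^2H=0$, Codazzi, and the Liouville equation. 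You instead supply both missing pieces self-containedly: your mixed-derivative identity $\p{z}\p{\z}\psi_{\phi}=(\p{z}\p{\z}H)(1,\phi)-\tfrac12 e^{2\lambda}|H_0|^2(0,\n)$ (the $\e_z$-coefficient $\p{\z}H-\p{z}\bar{H_0}-2(\p{z}\lambda)\bar{H_0}$ indeed vanishes by the conjugate of Codazzi \eqref{codazzi}) combined with the observation that the $h$-normal of $S^{3,1}$ is the position vector correctly reduces harmonicity to $\p{z}\p{\z}H+\tfrac12 e^{2\lambda}|H_0|^2H=0$, i.e.\ $\Delta_gH+2H(H^2-K+1)=0$ via the Gauss equation; and your two-line holomorphy argument from $\p{z}\p{\z}\psi_{\phi}=\mu\,\psi_{\phi}$ together with $\s{\psi_{\phi}}{\p{z}^2\psi_{\phi}}_h=\s{\p{z}\psi_{\phi}}{\p{z}^2\psi_{\phi}}_h=0$ is valid (flat derivatives in $\R^{4,1}$ commute, so $\p{\z}\p{z}^2\psi_{\phi}=\p{z}(\mu\psi_{\phi})$) and considerably more conceptual than the paper's computation, as is your tensoriality check using conformality. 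What your abstract route does \emph{not} buy is the explicit representation \eqref{intrinsèque} of $\mathscr{Q}_{\phi}$ as a function of the Weingarten tensor $\h_0$, which is the paper's real motive for the brute-force computation: that formula, not mere holomorphy, is what drives the later removability-of-singularities analysis at branch points (theorem \ref{devh0}); your argument also presupposes smoothness of $\phi$ throughout, which is all this theorem claims, so no gap results.
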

    \begin{proof}
    	We first check that  $\psi_{\phi}$ is 
    	(weakly) conformal. Writing $\psi$ for $\psi_{\phi}$, we have
    	\begin{align*}
    		\p{z}\psi_{}=(\p{z}H,H\p{z}\phi+\p{z}H\phi+\p{z}\n)
    	\end{align*}
    	so
    	\begin{align*}
    		&\s{\p{z}\psi_{}}{\p{z}\psi_{}}_h=-(\p{z}H)^2+H^2\s{\p{z}\phi}{\p{z}\phi}+(\p{z}H)^2|\phi|^2+\s{\p{z}\n}{\p{z}\n}+2H\p{z}H\s{\p{z}\phi}{\phi}\\
    		&+2H\s{\p{z}\phi}{\p{z}\n}+2\p{z}H\s{\phi}{\p{z}\n}
    		=\s{\p{z}\n}{\p{z}\n}+2H\s{\p{z}\phi}{\p{z}\n}-2\p{z}H\s{\p{z}\phi}{\n}
    		=\s{\p{z}\n}{\p{z}\n}+2H\s{\p{z}\phi}{\p{z}\n}.
    	\end{align*}
    	We have
    	\begin{align*}
    		\p{z}\n=-\frac{e^{2\lambda}}{2}H\p{z}\phi-\frac{e^{2\lambda}}{2}H_0\p{\z}\phi
    	\end{align*}
    	so
    	\begin{align*}
    		\s{\p{z}\n}{\p{z}\n}&=2HH_0\s{\p{z}\phi}{\p{\z}\phi}=e^{2\lambda}\,H H_0,\qquad
    		\s{\p{z}\phi}{\p{z}\n}=-H_0\s{\p{z}\phi}{\p{\z}\phi}=-\frac{e^{2\lambda}}{2}H_0
    	\end{align*}
    	and this gives 
    	\begin{align*}
    		\s{\p{z}\psi}{\p{z}\psi}_h=0,
    	\end{align*}
    	showing the weak conformality of $\psi$. Furthermore, the pull-back of the Lorentzian metric $h$ on $\Sigma^2$ exactly gives the Willmore energy, which explains the name pseudo Gauss map, by analogy with minimal surfaces and total Gauss curvature. Indeed, one has
    	\begin{align*}
    		\s{\p{z}\psi}{\p{\z}\psi}_h=\frac{e^{2\lambda}}{4}|H_0|^2=\frac{1}{4}\left(|\H|^2-K_g+1\right)d\vg.
    	\end{align*}
    	Therefore
    	\begin{align*}
    		W(\phi)=\int_{\Sigma^2}\left(|\H|^2-K_g+1\right)d\vg=4\int_{\Sigma^2} \psi_{\phi}^\ast (d\mathrm{vol}_h).
    	\end{align*}
    	As the metric is non-positive definite, this does not imply anything on the quantization of the Willmore energy. The holomorphy of $\mathscr{Q}_{\phi}$ can be found in \cite{bryant}, theorem B, and shall be treated in general in the next theorem, once we find a pleasant expression to work with of $\mathscr{Q}_{\phi}$. Finally the last assertion can be found in a general context in \cite{ejiri}.
    \end{proof}

    	We have the following expression of the quartic form $\mathscr{Q}_{\phi}$.

    	\begin{lemme}\label{s3}
    		Let $\phi:\Sigma^2\rightarrow S^3$ be a smooth immersion of an oriented surface $\Sigma^2$. Then we have in any conformal chart
    		\begin{align}\label{Q=Q4}
    			\mathscr{Q}_{\phi}&=\s{\D_{\partial_z}^2\psi_{\phi}}{\D_{\partial_z}^2\psi_{\phi}}dz^{4}\nonumber\\
    			&=e^{2\lambda}\left(\s{\D_{\partial_z}^\nperp\D_{\partial_z}^\nperp\H}{\H_0}-\s{\D_{\partial_z}^\nperp\H}{\D_{\partial_z}^\nperp\H_0}\right)dz^{4}+\frac{e^{4\lambda}}{4}\left(1+|\H|^2\right)\s{\H_0}{\H_0}dz^4
    		\end{align}
    	\end{lemme}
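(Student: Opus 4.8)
The plan is to prove the identity by a direct computation of the second derivative of the pseudo Gauss map in the frame adapted to $S^3\subset\R^4\subset\R^5$, exploiting the null structure already used to establish the weak conformality of $\psi_{\phi}$. First I would simplify $\p{z}\psi_{\phi}$. Writing $\vec{a}+\phi=(1,\phi)\in\R^5$ and inserting the Weingarten relation $\p{z}\n=-H\e_z-H_0\e_{\z}$ obtained in the previous theorem, the term $H\e_z$ arising from $\p{z}(\phi H)$ cancels the corresponding term in $\p{z}\n$, leaving the compact expression $\p{z}\psi_{\phi}=(\p{z}H)(\vec{a}+\phi)-H_0\,(0,\e_{\z})$. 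Both $\vec{a}+\phi$ and $(0,\e_{\z})$ are $h$-null (since $|\phi|_h^2=-1+|\phi|^2=0$ on $S^3$ and $\s{\e_{\z}}{\e_{\z}}=0$ by conformality) and mutually $h$-orthogonal, which re-derives $\s{\p{z}\psi}{\p{z}\psi}_h=0$.

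Next I would observe that the quartic is insensitive to the Christoffel ambiguity in $\D_{\p{z}}^2\psi$. Differentiating $\s{\p{z}\psi}{\p{z}\psi}_h=0$ gives $\s{\p{z}^2\psi}{\p{z}\psi}_h=0$, and combined with $\s{\p{z}\psi}{\p{z}\psi}_h=0$ this shows that $\s{\p{z}^2\psi}{\p{z}^2\psi}_h$ is unchanged if $\p{z}^2\psi$ is altered by any multiple of $\p{z}\psi$. Hence one may compute with the plain ambient second derivative $\p{z}^2\psi$, and the two vanishing pairings are exactly what guarantees that $\s{\p{z}^2\psi}{\p{z}^2\psi}_h\,dz^4$ is a well-defined section of $K_{\Sigma^2}^4$.

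Then I would differentiate once more, using $\p{z}(\vec{a}+\phi)=(0,\e_z)$ and the structure equation in $S^3\subset\R^4$, namely $\p{z}\e_{\z}=\D_{\p{z}}\e_{\z}=\tfrac12 e^{2\lambda}(H\n-\phi)$, whose normal-to-surface part is $\vec{\I}(\e_z,\e_{\z})=\tfrac12 e^{2\lambda}H\n$ and whose $-\tfrac12 e^{2\lambda}\phi$ part is the second fundamental form of $S^3$ in $\R^4$, that is $-\s{\e_z}{\e_{\z}}\phi$. This expands $\p{z}^2\psi$ over the five vectors $\vec{a}+\phi$, $(0,\e_z)$, $(0,\e_{\z})$, $(0,\n)$, $(0,\phi)$ with coefficients $\p{z}^2H$, $\p{z}H$, $-\p{z}H_0$, $-\tfrac12 e^{2\lambda}HH_0$, $\tfrac12 e^{2\lambda}H_0$. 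Contracting with the only nonzero $h$-pairings $\s{\vec{a}+\phi}{(0,\phi)}=1$, $\s{(0,\e_z)}{(0,\e_{\z})}=\tfrac12 e^{2\lambda}$, $\s{(0,\n)}{(0,\n)}=1$ and $\s{(0,\phi)}{(0,\phi)}=1$ collapses the sum to
\begin{align*}
\s{\p{z}^2\psi}{\p{z}^2\psi}_h=e^{2\lambda}\big(\p{z}^2H\,H_0-\p{z}H\,\p{z}H_0\big)+\tfrac14 e^{4\lambda}H_0^2(1+H^2).
\end{align*}
Rewriting $\p{z}H\,\n=\D_{\p{z}}^N\H$ and $\p{z}^2H\,\n=\D_{\p{z}}^N\D_{\p{z}}^N\H$ (valid since $\D_{\p{z}}^N\n=0$ in codimension one), together with $\s{\H_0}{\H_0}=H_0^2$ and $|\H|^2=H^2$, yields the stated formula.

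The main obstacle is purely organizational: controlling the five-term expansion of $\p{z}^2\psi$ and the accompanying sign bookkeeping. The one genuinely geometric point to get right is the $-\tfrac12 e^{2\lambda}\phi$ contribution to $\p{z}\e_{\z}$ coming from the curvature of $S^3$ inside $\R^4$; this is precisely the term that produces the additive $1$ in the factor $(1+|\H|^2)$, so it must be retained with the correct coefficient throughout.
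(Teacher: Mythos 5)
Your proposal is correct and is essentially the paper's own computation: the paper likewise reduces $\D_{\p{z}}\psi_{\phi}$ to $\s{\D^{\nperp}_{\p{z}}\H}{\vv{\xi}}(\vec{a}+\phi)-\s{\H_0}{\vv{\xi}}\e_{\z}$ via the Weingarten relation, differentiates once more, and contracts with the Lorentzian pairing using the same null relations and $\s{\phi}{\D_{\p{z}}\e_{\z}}=-\frac{e^{2\lambda}}{2}$, the only difference being that it works in $S^n$ with arbitrary normal sections $\vv{\xi},\vv{\eta}$ and then specializes to $\vv{\xi}=\vv{\eta}=\n$, where $\D^{\nperp}\n=0$ makes its covariant second derivative coincide with your plain $\partial_z^2\psi$. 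Your supplementary observation that the null relations $\s{\partial_z\psi}{\partial_z\psi}_h=\s{\partial_z^2\psi}{\partial_z\psi}_h=0$ render the quartic insensitive to the Christoffel ambiguity (and make $\s{\partial_z^2\psi}{\partial_z^2\psi}_h\,dz^4$ a well-defined section of $K_{\Sigma^2}^4$) is correct and is left implicit in the paper.
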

     \begin{proof}
     	If $\phi:\Sigma^2\rightarrow S^n$, dropping the index $\phi$ for simplicity,  we obtain 
     	\begin{align*}
     	(\D_{\p{z}}{\psi})(\vv{\xi})&=\D_{\p{z}}\left({\psi}\left({\xi}\right)\right)-{\psi}\left(\D_{\p{z}}^N\vv{\xi}\right)\\
     	&=\left(\s{\D_{\p{z}}^N\H}{\vv{\xi}}+\s{\H}{\D_{\p{z}}^N\vv{\xi}}\right)(\vec{a}+\phi)+\s{\H}{\vv{\xi}}\e_z+\D_{\p{z}}\vv{\xi}-\left(\s{\H}{\D_{\p{z}}\vv{\xi}}(\vec{a}+\phi)+\D_{\p{z}}^N\vv{\xi}\right)\\
     	&=\s{\D_{\p{z}}^N\H}{\vv{\xi}}(\vec{a}+\phi)+\s{\H}{\vv{\xi}}\e_z+\D_{\p{z}}^\top\vv{\xi}.
     	\end{align*}
     	As
     	\begin{align*}
     	\D_{\p{z}}^\top\vv{\xi}=-\s{\H}{\vv{\xi}}\e_z-\s{\vec{H}_0}{\vv{\xi}}\e_{\z},
     	\end{align*}
     	one obtains
     	\begin{align*}
     	\left(\D_{\p{z}}{\psi}\right)(\vv{\xi})=\s{\D_{\p{z}}^N\H}{\vv{\xi}}(\vec{a}+\phi)-\s{\H_0}{\vv{\xi}}\e_{\z}.
     	\end{align*}
     	Then we have
     	\begin{align*}
     	\left(\D_{\p{z}}\D_{\p{z}}{\psi}\right)\left(\vv{\xi}\right)&=\D_{\p{z}}\left(\D_{\p{z}}{\psi}(\vv{\xi})\right)-\left(\D_{\p{z}}{\psi}\right)(\D_{\p{z}}^\nperp\vv{\xi})\\
     	&=\left(\s{\D_{\p{z}}^N\D_{\p{z}}^N\H}{\vv{\xi}}+\s{\D_{\p{z}}\vec{H}}{\D_{\p{z}}\vv{\xi}}\right)(\vec{a}+\phi)+\s{\D_{\p{z}}^N\H}{\vv{\xi}}\e_z-\left(\s{\D_{\p{z}}^\nperp\H_0}{\vv{\xi}}+\s{\H_0}{\D_{\p{z}}^\nperp\vv{\xi}}\right)\e_{\z}\\
     	&-\s{\H_0}{\vv{\xi}}\D_{\p{z}}\e_{\z}-\left(\s{\D_{\p{z}}^N\vec{H}}{\D_{\p{z}}^N\vv{\xi}}(\vec{a}+\phi)-\s{\H_0}{\D_{\p{z}}\vv{\xi}}\e_{\z}\right)\\
     	&=\s{\D_{\p{z}}^N\D_{\p{z}}^N\H}{\vv{\xi}}(\vec{a}+\phi)+\s{\D_{\p{z}}^N\H}{\vv{\xi}}\e_{z}-\s{\D_{\p{z}}^N\vec{H}_0}{\vv{\xi}}\e_{\z}-\s{\vec{H}_0}{\vv{\xi}}\D_{\p{z}}\e_{\z}.
     	\end{align*}
     	As $\s{\phi}{\D_{\p{z}}\e_{\z}}=-\dfrac{e^{2\lambda}}{2}$, one immediately obtains
     	\begin{align*}
     	&\s{\D_{\p{z}}\D_{\p{z}}{\psi}}{\D_{\p{z}}\D_{\p{z}}{\psi}}_h(\vv{\xi},\vv{\eta})=\frac{e^{2\lambda}}{2}\left(\s{\D_{\p{z}}^N\D_{\p{z}}^N\H}{\vv{\xi}}\s{\H_0}{\vv{\eta}}-\s{\D_{\p{z}}^N\vec{H}}{\vv{\xi}}\s{\D_{\p{z}}^N\H_0}{\vv{\eta}}\right)\\
     	&+\frac{e^{2\lambda}}{2}\left(\s{\D_{\p{z}}^N\D_{\p{z}}^N\H}{\vv{\eta}}\s{\H_0}{\vv{\xi}}-\s{\D_{\p{z}}^N\H}{\vv{\eta}}\s{\D_{\p{z}}^N\H_0}{\vv{\xi}}\right)
     	+\frac{e^{4\lambda}}{4}\s{\H_0}{\vv{\xi}}\s{\H_0}{\vv{\eta}}(1+|\H|^2)
     	\end{align*}
     	so for $n=3$, we have a global non-zero section $\n:\Sigma^2\rightarrow S^3$ of $\mathscr{N}$, so taking $\vv{\xi}=\vv{\eta}=\n$, we obtain the expression of the lemma.
     \end{proof}
     The next step is to show that $\mathscr{Q}_{\phi}$ admits an intrinsic expression whose principal term only depends on the Weingarten tensor $\vec{h}_0$.
    
    \begin{defi}\label{tensorproduct}
    	If $\Sigma^2$ is a closed Riemann surface, $n\geq 1$ is a fixed integer, $(M^n,h)$ is a smooth Riemannian manifold, $\s{\,\cdot\,}{\,\cdot\,}$ its scalar product, $p_1,p_2,q_1,q_2\geq 0$, and
    	\begin{align*}
    		(\vec{\alpha_1},\vec{\alpha_2})\in \Gamma(K_{\Sigma^2}^{p_1}\otimes \bar{K}_{\Sigma^2}^{\,q_1},T_{\C}M^n)\times \Gamma(K_{\Sigma^2}^{p_1}\otimes\bar{K}_{\Sigma^2}^{\,q_2},T_{\C}M^n)
    	\end{align*}
    	 are continuous sections with values in $T_{\C}M^n$, we define
    	\begin{align*}
    		\vec{\alpha}_1\totimes\vec{\alpha_2}\in \Gamma(K_{\Sigma^2}^{p_1+p_2}\otimes\bar{K}_{\Sigma^2}^{\,q_1+q_2},\C)
    	\end{align*}
    	by 
    	\begin{align*}
    		\vec{\alpha}_1\totimes\vec{\alpha}_2=\s{\vec{f}_1(z)}{\vec{f}_2(z)}dz^{p_1+p_2}\otimes d\z^{\,q_1+q_2}
    	\end{align*}
    	if in a local complex chart $z$ we have the expressions
    	\begin{align*}
    		\left\{\begin{alignedat}{1}
    		\vec{\alpha}_1&=\vec{f}_1(z)dz^{p_1}\otimes d\z^{q_1}\\
    		\vec{\alpha}_2&=\vec{f}_2(z)dz^{p_2}\otimes d\z^{q_2}.
    		\end{alignedat}\right.
    	\end{align*}
    \end{defi}

    \begin{theorem}\label{int}
    	Let $\phi:\Sigma^2\rightarrow S^3$ be a smooth immersion. Then we have
    	\begin{align}\label{intrinsèque}
    		\mathscr{Q}_{\phi}&=g^{-1}\totimes\left(\partial^N\bar{\partial}^N\vec{h}_0\totimes \vec{h}_0-\partial^N\vec{h}_0\totimes\bar{\partial}^N\vec{h}_0\right)+\frac{1}{4}(1+|\H|^2)\h_0\totimes \h_0\nonumber\\
    		&=g^{-1}\otimes\left(\partial\bar{\partial}\h_0\totimes\h_0-\partial\h_0\totimes\bar{\partial}\h_0\right)+\left(\frac{1}{4}(1+|\H|^2)+|\h_0|^2_{WP}\right)\h_0\totimes\h_0+\s{\H}{\h_0}^2
    	\end{align}
    	is a quartic differential, that is a section of $K_{\Sigma^2}^4$. Furthermore, if $\phi$ is a smooth Willmore surface, $\mathscr{Q}_{\phi}$ is holomorphic.
    \end{theorem}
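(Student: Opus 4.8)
The starting point is the expression \eqref{Q=Q4} of Lemma \ref{s3}, which already reduces $\mathscr{Q}_{\phi}$ in a conformal chart to the three normal quantities $\D_{\p{z}}^N\D_{\p{z}}^N\H$, $\D_{\p{z}}^N\H$ and $\D_{\p{z}}^N\H_0$, plus the curvature term $\tfrac14 e^{4\lambda}(1+|\H|^2)\s{\H_0}{\H_0}$. The plan is to trade the derivatives of $\H$ for derivatives of $\h_0$ through the Codazzi identity \eqref{codazzi}, namely $\partial^N\H=g^{-1}\otimes\bar{\partial}^N\h_0$, that is $e^{2\lambda}\D_{\p{z}}^N\H=\D_{\p{\z}}^N(e^{2\lambda}\H_0)$ in the chart $z$.

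Concretely, I would unwind the operators $\partial^N,\bar{\partial}^N,g^{-1},\totimes$ on the first intrinsic summand, using $\h_0=(e^{2\lambda}\H_0)\,dz^2$ and the rules of Section \ref{4}. The factor $g^{-1}\otimes\big(\partial^N\bar{\partial}^N\h_0\totimes\h_0\big)$ has $dz^4$-coefficient $\s{\D_{\p{z}}^N\D_{\p{\z}}^N(e^{2\lambda}\H_0)}{\H_0}$, and Codazzi gives
\begin{align*}
\D_{\p{z}}^N\D_{\p{\z}}^N(e^{2\lambda}\H_0)=\D_{\p{z}}^N\!\left(e^{2\lambda}\D_{\p{z}}^N\H\right)=(\p{z}e^{2\lambda})\,\D_{\p{z}}^N\H+e^{2\lambda}\,\D_{\p{z}}^N\D_{\p{z}}^N\H,
\end{align*}
while $g^{-1}\otimes\big(\partial^N\h_0\totimes\bar{\partial}^N\h_0\big)$ contributes $(\p{z}e^{2\lambda})\s{\H_0}{\D_{\p{z}}^N\H}+e^{2\lambda}\s{\D_{\p{z}}^N\H_0}{\D_{\p{z}}^N\H}$. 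The two terms carrying $\p{z}e^{2\lambda}$ cancel by symmetry of $\s{\,\cdot\,}{\,\cdot\,}$, leaving exactly $e^{2\lambda}\big(\s{\D_{\p{z}}^N\D_{\p{z}}^N\H}{\H_0}-\s{\D_{\p{z}}^N\H}{\D_{\p{z}}^N\H_0}\big)\,dz^4$, which together with $\h_0\totimes\h_0=e^{4\lambda}\s{\H_0}{\H_0}\,dz^4$ reproduces \eqref{Q=Q4} and establishes the first line of \eqref{intrinsèque}. The second line then follows by writing $\partial=\partial^N+\partial^\top$, $\bar{\partial}=\bar{\partial}^N+\bar{\partial}^\top$ and inserting the tangential derivative formula $\bar{\D}_{\e_{\z}}\h_0=-|\h_0|_{WP}^2\,\e_z-\s{\H}{\H_0}\,\e_{\z}$ from the proof of Lemma \ref{complexcons} together with its $\partial^\top$ analogue; since $\h_0$ is normal one has $\s{\h_0}{\e_z}=\s{\h_0}{\e_{\z}}=0$, so most cross terms drop and the surviving tangential contributions reorganise into the correction terms $|\h_0|_{WP}^2\,\h_0\totimes\h_0$ and $\s{\H}{\h_0}^2$.

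That $\mathscr{Q}_{\phi}$ is a genuine section of $K_{\Sigma^2}^4$ is transparent from the intrinsic right-hand side of \eqref{intrinsèque}: the operators $g^{-1}$, $\totimes$, and the Dolbeault-type operators $\partial^N,\bar{\partial}^N$ on tensor powers of $K_{\Sigma^2}$ twisted by the holomorphic normal bundle are all tensorial, and a degree count shows the output lands in $K_{\Sigma^2}^4$. Equivalently, one checks that $\s{\D_{\p{z}}^2\psi_{\phi}}{\D_{\p{z}}^2\psi_{\phi}}_h$ is unaffected by the non-tensorial corrections arising in a conformal change of chart, because those corrections are proportional to $\s{\p{z}\psi_{\phi}}{\p{z}\psi_{\phi}}_h$ and to $\s{\p{z}\psi_{\phi}}{\D_{\p{z}}^2\psi_{\phi}}_h=\tfrac12\p{z}\s{\p{z}\psi_{\phi}}{\p{z}\psi_{\phi}}_h$, both of which vanish by the weak conformality of $\psi_{\phi}$.

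Finally, for the holomorphy I would invoke the restated theorem of Bryant, so that $\phi$ Willmore is equivalent to $\psi_{\phi}:\Sigma^2\rightarrow S^{3,1}$ being harmonic, and use that $S^{3,1}$ has constant sectional curvature. Holomorphy of $\mathscr{Q}_{\phi}=\s{\D_{\p{z}}^2\psi_{\phi}}{\D_{\p{z}}^2\psi_{\phi}}_h\,dz^4$ means $\p{\z}\s{\D_{\p{z}}^2\psi_{\phi}}{\D_{\p{z}}^2\psi_{\phi}}_h=0$; harmonicity gives $\D_{\p{\z}}\p{z}\psi_{\phi}=0$, so commuting $\D_{\p{\z}}$ past $\D_{\p{z}}\D_{\p{z}}$ produces only the curvature term $R(\p{\z}\psi_{\phi},\p{z}\psi_{\phi})\p{z}\psi_{\phi}$, which in constant curvature is a combination of $\s{\p{z}\psi_{\phi}}{\p{z}\psi_{\phi}}_h\,\p{\z}\psi_{\phi}$ and $\s{\p{\z}\psi_{\phi}}{\p{z}\psi_{\phi}}_h\,\p{z}\psi_{\phi}$; pairing against $\D_{\p{z}}^2\psi_{\phi}$ and using conformality (hence $\s{\p{z}\psi_{\phi}}{\D_{\p{z}}^2\psi_{\phi}}_h=0$) annihilates every surviving term. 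The main obstacle I anticipate is purely bookkeeping, namely tracking the tangential corrections in the passage from the normal operators to the full operators and confirming that they assemble exactly into $|\h_0|_{WP}^2\,\h_0\totimes\h_0+\s{\H}{\h_0}^2$; by contrast the holomorphy is a short and robust space-form computation once harmonicity of $\psi_{\phi}$ is available.
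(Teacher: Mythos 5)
Your derivation of the two identities in \eqref{intrinsèque} follows the paper's own route essentially verbatim: the Codazzi identity $\partial^N\H=g^{-1}\otimes\bar{\partial}^N\h_0$ together with the cancellation of the two $\p{z}(e^{2\lambda})$ terms is exactly the paper's computation \eqref{stepQ4} (run in the opposite direction), and your passage to the second line via $\partial=\partial^N+\partial^\top$, the tangential formulas for $\partial^\top\h_0$ and $\bar{\partial}^\top\h_0$, and the normality of $\h_0$ is the same bookkeeping the paper performs, yielding the same corrections $|\h_0|^2_{WP}\,\h_0\totimes\h_0+\s{\H}{\h_0}^2$. Your tensoriality argument is in fact more explicit than the paper's, which merely asserts the formula defines a tensor: your observation that under a holomorphic change of chart the non-tensorial terms are multiples of $\s{\p{z}\psi_{\phi}}{\p{z}\psi_{\phi}}_h$ and of its $z$-derivative, both vanishing by weak conformality of $\psi_{\phi}$, is correct.

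Where you genuinely diverge is the holomorphy. The paper proves $\bar{\partial}\mathscr{Q}_{\phi}=0$ by a direct chart computation in $\R^3$ after a stereographic projection, combining the scalar Willmore equation $\partial\bar{\partial}H+\frac{e^{2\lambda}}{2}|H_0|^2H=0$, the Codazzi relation $\partial H=2(\p{\z}\lambda)H_0+\bar{\partial}H_0$, and the Liouville equation, checking term by term in \eqref{i}--\eqref{v} that the derivative vanishes; this is self-contained given the Willmore equation, though it silently uses the conformal invariance of $\mathscr{Q}_{\phi}$ under the projection. You instead invoke the harmonic-map characterization and run the classical constant-curvature argument: $\D_{\p{\z}}\p{z}\psi_{\phi}=0$, the commutator contributes only $R(\p{\z}\psi_{\phi},\p{z}\psi_{\phi})\p{z}\psi_{\phi}$, and every surviving pairing is killed by $\s{\p{z}\psi_{\phi}}{\p{z}\psi_{\phi}}_h=0$ and $\s{\p{z}\psi_{\phi}}{\D_{\p{z}}^2\psi_{\phi}}_h=0$. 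I have checked this computation; it is correct and shorter. Two caveats, though. First, your route rests on the equivalence between the Willmore equation for $\phi$ and harmonicity of $\psi_{\phi}$, which the paper states but does not prove (it cites \cite{bryant} and, for the general assertion, \cite{ejiri}); so your proof is conditional on an external result where the paper's is a closed computation. Second, you should say a word about which second derivative enters $\mathscr{Q}_{\phi}$: the flat second derivative in $\R^{4,1}$ differs from the $S^{3,1}$-covariant one by the second fundamental form term $-\s{\p{z}\psi_{\phi}}{\p{z}\psi_{\phi}}_h\,\psi_{\phi}$, which vanishes precisely by weak conformality, so the two readings of $\partial^2\psi_{\phi}$ agree and your commutation argument does apply to the form as defined.
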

    \begin{proof}
    	By \eqref{codazzi}, we have
    	\begin{align*}
    		\D_{\p{z}}^N\H=e^{-2\lambda}\D_{\p{\z}}^N\h_0
    	\end{align*}
       	Then we have, identifying by an abuse of notation $\h_0$ and $e^{2\lambda}\H_0$
       	\begin{align*}
       		\D_{\partial_z}^\nperp\vec{H}_0&=\D_{\partial_z}^\nperp\left(e^{-2\lambda}e^{2\lambda}\H_0\right)
       		=\partial_z(e^{-2\lambda})e^{2\lambda}\H_0+e^{-2\lambda}\D_{\partial_z}^\nperp(e^{2\lambda}\H_0)
       		=\partial_z(e^{-2\lambda})\vec{h}_0+e^{-2\lambda}\D_{\partial_z}^\nperp\vec{h}_0.
       	\end{align*}
       	Therefore
       	\begin{align}\label{stepQ4}
       		&\s{\D_{\partial_z}^\nperp\D_{\partial_z}^\nperp\H}{\H_0}-\s{\D_{\partial_z}^\nperp\H}{\D_{\partial_z}^\nperp\H_0}=\s{\D_{\partial_z}^\nperp\left(e^{-2\lambda}\D_{\partial_{\z}}^\nperp\vec{h}_0\right)}{\H_0}-e^{-2\lambda}\s{\D_{\partial_{\z}}^\nperp\vec{h}_0}{\partial_z(e^{-2\lambda})\vec{h}_0+e^{-2\lambda}\D_{\partial_z}^\nperp\vec{h}_0}\nonumber\\
       		&=e^{-2\lambda}\partial_z(e^{-2\lambda})\s{\D_{\partial_{\z}}^\nperp\vec{h}_0}{\vec{h}_0}+e^{-4\lambda}\s{\D_{\partial_z}^\nperp\D_{\partial_{\z}}^\nperp\vec{h}_0}{\vec{h}_0}-e^{-2\lambda}\partial_z(e^{-2\lambda})\s{\D_{\partial_z}^\nperp\vec{h}_0}{\vec{h}_0}-e^{-4\lambda}\s{\D_{\partial_{\z}}^\nperp\vec{h}_0}{\D_{\partial_z}^\nperp\vec{h_0}}\nonumber\\
       		&=e^{-4\lambda}\left(\s{\D_{\partial_z}^\nperp\D_{\partial_{\z}}^\nperp\vec{h}_0}{\vec{h}_0}-\s{\D_{\partial_z}^\nperp\vec{h}_0}{\D_{\partial_{\z}}^\nperp\vec{h}_0}\right)
       	\end{align}
       	We deduce from \eqref{Q=Q4} and \eqref{stepQ4} that
       	\begin{align*}
       		\mathscr{Q}&=e^{2\lambda}\left(\s{\D_{\partial_z}^\nperp\D_{\partial_z}^\nperp\H}{\H_0}-\s{\D_{\partial_z}^\nperp\H}{\D_{\partial_z}^\nperp\H_0}\right)dz^{4}+\frac{1}{4}(1+|\H|^2)\h_0\totimes\h_0\\
       		&=e^{-2\lambda}\left(\s{\D_{\partial_z}^\nperp\D_{\partial_{\z}}^\nperp\vec{h}_0}{\vec{h}_0}-\s{\D_{\partial_z}^\nperp\vec{h}_0}{\D_{\partial_{\z}}^\nperp\vec{h}_0}\right)dz^2+\frac{1}{4}(1+|\H|^2)\h_0\otimes \h_0\\
       		&=g^{-1}\otimes\left(\partial^N\bar{\partial}^N\h_0\totimes\h_0-\partial^N\h_0\totimes\bar{\partial}^N\h_0\right)+\frac{1}{4}(1+|\H|^2)\h_0\totimes\h_0.
       	\end{align*}
       	We see that this formula describes a well-defined tensor for any immersion. 
       	Now we note that actually we can obtain the second expression without the normal derivatives. Indeed, we have
       	\begin{align*}
       		\partial^\top\h_0&=-\s{\H}{\h_0}\otimes \partial\phi-g^{-1}\otimes (\h_0\totimes\h_0)\otimes\bar{\partial}\phi\\
       		\bar{\partial}^\top\h_0&=-|\h_0|^2_{WP}\;g\otimes \partial\phi-\s{\H}{\h_0}\otimes \bar{\partial}\phi
       	\end{align*}
       	so as $g=2\partial\phi\,\dot{\otimes}\,\bar{\partial}\phi$, we have
       	\begin{align*}
       		\partial^N\bar{\partial}^\top\h_0=-\frac{1}{2}g\otimes \left(|\h_0|_{WP}^2\;\h_0+\s{\H}{\h_0}\H\right)
       	\end{align*}
       	and
       	\begin{align*}
       		\partial^N\bar{\partial}^N\h_0\totimes\h_0=\partial^N\bar{\partial}\h_0\totimes\h_0-\partial^N\bar{\partial}^\top\h_0=\partial\bar{\partial}\h_0\totimes\h_0+\frac{1}{2}g\otimes\left(|\h_0|^2_{WP}\;\h_0\totimes\h_0+\s{\H}{\h_0}^2\right)
       	\end{align*}
       	while
       	\begin{align*}
       		\partial^N\h_0\totimes\bar{\partial}^N\h_0&=\partial\h_0\totimes\bar{\partial}\h_0-\partial^\top\h_0\totimes\bar{\partial}\h_0-\partial\h_0\totimes\bar{\partial}^\top\h_0+\partial^\top\h_0\totimes\bar{\partial}\h_0
       		=\partial\h_0\totimes\bar{\partial}\h_0-\partial^\top\h_0\totimes\bar{\partial}^\top\h_0.
       	\end{align*}
       	As $\partial\phi\totimes\partial\phi=0$ by conformality, one has
       	\begin{align*}
       		\partial^\top\h_0\totimes\bar{\partial}^\top\h_0=\frac{1}{2}g\otimes \left(|\h_0|_{WP}^2\;\h_0\totimes\h_0+\s{\H}{\h_0}^2\right),
       	\end{align*}
        so we deduce that
        \begin{align*}
        	\mathscr{Q}=g^{-1}\otimes\left(\partial\bar{\partial}\h_0\totimes\h_0-\partial\h_0\totimes\bar{\partial}\h_0\right)+\left(\frac{1}{4}(1+|\H|^2)+|\h_0|_{WP}^2\right)\h_0\totimes\h_0+\s{\H}{\h_0}^2.
        \end{align*}
       	Now suppose that $\phi:\Sigma^2\rightarrow S^3$ is a \emph{smooth} Willmore immersion.
       	To see that $\mathscr{Q}_{\phi}$ is holomorphic, the expression of the previous lemma is more useful, as the Willmore equation is more easily stated with respect to $\H$. We remark that for a stereographic projection $\pi:S^3\rightarrow \R^3$, the quartic form becomes (without changing the notations for the involved quantities)
       	\begin{align*}
       		\mathscr{Q}_{\phi}=g^{-1}\totimes \left(\partial^N\bar{\partial}^N\h_0\totimes \h_0-\partial^N\h_0\totimes \bar{\partial}^N\h_0\right)+\frac{1}{4}|\H|^2\h_0\totimes\h_0
       	\end{align*}
       	so in a conformal chart $z:D^2\rightarrow \Sigma^2$, we have by \eqref{Q=Q4}
       	\begin{align}\label{i-1}
       		\mathscr{Q}_{\phi}&=e^{2\lambda}\left(\s{\partial^2\H}{\H_0}-\s{\partial\H}{{\partial}\H_0}\right)dz^4+\frac{e^{4\lambda}}{4}|\H|^2\s{\H_0}{\H_0}dz^4\nonumber\\
       		&=\left\{e^{2\lambda}(\partial^2H\,H_0-\partial H\, \partial H_0)+\frac{e^{4\lambda}}{4}H^2H_0^2\right\}dz^4
       	\end{align}       	
       	Recall that the Willmore equation is equivalent in $\R^3$ to 
       	\begin{align*}
       		\partial\bar{\partial}H+\frac{e^{2\lambda}}{2} |H_0|^2H=0
       	\end{align*}
       	First, we have
        \begin{align}\label{i}
        	\bar{\partial}\partial^2H=\partial(\partial\bar{\partial}H)=-e^{2\lambda}(\p{z}\lambda)|H_0|^2H-\frac{e^{2\lambda}}{2}\left(\partial H_0 \bar{H}_0 H+{\partial}\bar{H_0}H_0H+|H_0|^2\partial H\right).
        \end{align}
        Then we have
        \begin{align*}
        	\partial H=e^{-2\lambda}\bar{\partial}(e^{2\lambda}H_0)=2(\p{\z}\lambda)H_0+\bar{\partial}H_0
        \end{align*}
        Finally, we obtain
        \begin{align}\label{ii}
        	\bar{\partial}\left(\partial^2H \,H_0\right)=&-e^{2\lambda}(\p{z}\lambda)|H_0|^2H H_0-\frac{e^{2\lambda}}{2}\left(\partial H_0\, |H_0|^2H+\partial \bar{H_0}\,H H_0^2+\partial H\, |H_0|^2H_0\right)\nonumber\\
        	&+\partial^2H\left(\partial H-2(\p{\z}\lambda)H_0\right)\nonumber\\
        	&=\partial^2 H(\partial H-2(\p{\z}\lambda)H_0)-\frac{e^{2\lambda}}{2}|H_0|^2(\partial (H H_0)+2(\p{\z}\lambda) H H_0)-\frac{e^{2\lambda}}{2}\partial\bar{H_0}H H_0^2.
        \end{align}
        Then we have
        \begin{align*}
        	\bar{\partial}(\partial H\,\partial H_0)&=\partial\bar{\partial}H\,\partial H_0+\partial H\partial\bar{\partial}H_0
        	=-\frac{e^{2\lambda}}{2}|H_0|^2H \partial H_0+\partial H \partial(\partial H-2(\p{\z}\lambda)H_0)\\
        	&=\partial^2 H\,\partial H-2(\p{z\z}^2\lambda)\partial H\, H_0-2(\p{\z}\lambda)\partial H\partial H_0-\frac{e^{2\lambda}}{2}|H_0|^2H\, \partial H_0
        \end{align*}
        By the Liouville equation, we have
        \begin{align*}
        	4\,\p{z\z}^2\lambda=\Delta \lambda=-e^{2\lambda}K_g,
        \end{align*}
        so we obtain as $K_g=H^2-|H_0|^2$
        \begin{align}\label{iii}
        	\bar{\partial}(\partial H\,\partial H_0)&=\partial^2 H \,\partial H+\frac{e^{2\lambda}}{2}(H^2-|H_0|^2)\partial H\,H_0-2(\p{\z}\lambda)\partial H\,\partial H_0-\frac{e^{2\lambda}}{2}|H_0|^2\partial H_0\, H\nonumber\\
        	&=\partial^2H\,\partial H+\frac{e^{2\lambda}}{2}H^2\,\partial H\,H_0-\frac{e^{2\lambda}}{2}|H_0|^2\left(\partial(H H_0)\right)-2(\p{\z}\lambda)\partial H\,\partial H_0
        \end{align}
        Therefore, by \eqref{ii} and \eqref{iii}, we have
        \begin{align*}
        	\bar{\partial}\left(\partial^2H\,H_0-\partial H\partial H_0\right)=-2(\p{\z}\lambda)(\partial^2 H\,H_0-\partial H\,\partial H_0)-\frac{e^{4\lambda}}{2}H H_0(2(\p{z}\lambda)|H_0|^2+\partial H\, H+\partial \bar{H_0}\,H_0)
        \end{align*}
        so
        \begin{align*}
        	e^{2\lambda}\bar{\partial}(\partial^2H\,H_0-\partial H\partial H_0)=-\p{\z}(e^{2\lambda})(\partial^2H\, H_0-\partial H\,\partial H_0)-\frac{e^{4\lambda}}{4}HH_0\left(2(\p{z}\lambda)|H_0|^2+\partial H\,H+\partial \bar{H}_0\,H_0\right),
        \end{align*}
        which reduces to
        \begin{align}\label{iv}
        	\bar{\partial}\left(e^{2\lambda}\left(\partial^2H\,H_0-\partial H\,\partial H_0\right)\right)&=-\frac{e^{4\lambda}}{2}H H_0(2(\p{z}\lambda)|H_0|^2+\partial H H+\partial \bar{H}_0\, H_0)\\
        	&=-\frac{e^{4\lambda}}{2}HH_0(\bar{\partial}HH_0+\partial H H).
        \end{align}
        The end is easy, as
        \begin{align}\label{v}
        	&\bar{\partial}\left(\frac{e^{4\lambda}}{4}H^2H_0^2\right)=e^{4\lambda}(\p{\z}\lambda)H^2H_0^2+\frac{e^{4\lambda}}{2}\left(\bar{\partial}H\,H H_0^2+\bar{\partial}H_0 H_0 H^2\right)\nonumber\\
        	&=e^{4\lambda}(\p{\z}\lambda)H^2H_0^2+\frac{e^{4\lambda}}{2}H_0H \left(\bar{\partial}H H_0+(\partial H-2(\p{\z}\lambda)H_0)H\right)
        	=\frac{e^{4\lambda}}{2}H H_0(\bar{\partial}H H_0+\partial H H).
        \end{align}
        Therefore, by \eqref{i-1}, \eqref{iv} and \eqref{v}, we deduce that 
        \begin{align*}
        \bar{\partial}\mathscr{Q}_{\phi}=\bar{\partial}\left(e^{2\lambda}\left(\partial H\, H_0-\partial H\,\partial H_0\right)\right)dz^4\otimes d\z+\bar{\partial}\left(\frac{e^{4\lambda}}{4}H^2H_0^2\right)dz^4\otimes d\z=0.
        \end{align*}
        Therefore $\mathscr{Q}_{\phi}$ is a holomorphic section of $K_{\Sigma^2}^4$ if $\phi:\Sigma^2\rightarrow S^3$ is a smooth Willmore surface.
       \end{proof}
   
   \begin{remimp}
   	We see that the tensor $\mathscr{Q}_{\phi}$ as defined in \eqref{intrinsèque} is well-defined for any immersion $\phi:\Sigma^2\rightarrow S^n$ for any $n\geq 3$ as the equation defining $\mathscr{Q}_{\phi}$ makes sense in any codimension, but we shall see that it is \emph{not} holomorphic in general in dimension $n\geq 4$ (see Section \ref{s4}).
   	
   	Furthermore, one might think that the Definition \ref{tensorproduct} is a bit artificial, as in codimension $1$, we have $\h_0=h_0\n$ for a scalar quadratic differential $h_0$, and as $\partial^N\n=0$, we have
   	\begin{align*}
   		\mathscr{Q}_{\phi}=g^{-1}\otimes\left(\partial\bar{\partial}h_0\otimes h_0-\partial h_0\otimes\bar{\partial}h_0\right)+\frac{1}{4}\left(1+|\H|^2\right)h_0\otimes h_0.
   	\end{align*}
   	However, not only for the generalisation in $S^4$, but already in the proof in the case of $S^3$ of the generalisation of Bryant's classification, it will be absolutely crucial to see $\mathscr{Q}_{\phi}$ as a function of the \emph{vectorial} $\h_0$ (see the proof of Theorem \ref{devh0} for more details).
   \end{remimp}

    \subsection{Asymptotic behaviour of the quartic form at branch points}\label{4.2}

    Bryant's theorem asserts that for any branched immersion $\phi:\Sigma^2\rightarrow \R^3$, if the quartic form $\mathscr{Q}_{\phi}=0$, then $\phi$ is the inversion of a complete minimal surface with finite total curvature. The partial converse is furnished by the following result.
    \begin{theorem}\label{devinv}
    	Let $\Sigma^2$ be a Riemann surface of genus $\gamma$, and $\phi:\Sigma^2\rightarrow\R^3$ be a non-completely umbilic branched Willmore surface. If $\phi$ is the inversion of a minimal surface if and only if $\mathscr{Q}_{\phi}=0$ is holomorphic. Furthermore, provided $\mathscr{Q}_{\phi}$, the dual minimal surface has zero flux if and only if $\phi$ is a \emph{true} Willmore surface.
    \end{theorem}
    \begin{proof}
    	Let $\vec{\Psi}:\Sigma^2\setminus\ens{p_1,\cdots,p_m}\rightarrow\R^3$ be a complete minimal surface with finite total curvature. Then $h_{\phi}$ is holomorphic, and $H_{\vec{\Psi}}=0$. As
    	\begin{align*}
    		\mathscr{Q}_{\vec{\Psi}}=g_{\vec{\Psi}}^{-1}\otimes\left(\partial\bar{\partial}h_{\vec{\Psi}}^0\otimes h_{\vec{\Psi}}^0-\partial h_{\vec{\Psi}}^0\otimes \bar{\partial} h_{\vec{\Psi}}^0\right)+\frac{1}{4}H_{\vec{\Psi}}^2h_{\vec{\Psi}}^2,
    	\end{align*}
    	we trivially obtain $\mathscr{Q}_{\vec{\Psi}}=0$, and as $\mathscr{Q}_{\vec{\Psi}}$ is conformally invariant, we deduce if $\phi$ is a compact inversion of $\vec{\Psi}$ that $\mathscr{Q}_{\phi}=0$. 
    	
    	Conversely, assume that $\mathscr{Q}_{\phi}=0$. Then Bryant's theorem (\cite{bryant}) implies that the dual Willmore 
    	$\vec{\Psi}:\Sigma^2\setminus\mathscr{U}_{\phi}\rightarrow\R^3$ surface is constant $\vec{\Psi}\equiv p\in S^3$, where 
    	\begin{align*}
    		\mathscr{U}_{\phi}=\Sigma^2\cap\ens{z:|\h_0(z)|^2_{WP}d\vg=0}
    	\end{align*}
    	is the umbilic locus. As the complement of $\mathscr{U}_{\phi}$ is an open dense set, and for some stereographic projection $\pi:S^3\setminus\ens{p}\rightarrow\R^3$, the composition $\pi\circ \phi:\Sigma^2\setminus\mathscr{U}_{\phi}\rightarrow\R^3$ has zero mean-curvature, we deduce that actually $\pi\circ \phi:\Sigma^2\setminus\phi^{-1}(\ens{p})\rightarrow\R^3$ has vanishing mean-curvature.
    	
    	 As there exists no compact minimal surface in $\R^3$, the set $\phi^{-1}(\ens{p})\subset \Sigma^2$ is non-empty, and the minimal surface $\pi\circ \phi:\Sigma^2\setminus\phi^{-1}(\ens{p})\rightarrow\R^3$ is a complete minimal surface with finite total curvature (by the conformal invariance of the Willmore energy). As $\phi^{-1}(\ens{p})$ can contain branch points, the ends of the dual minimal surface need not be embedded (as the inversions of any minimal surfaces with non-embedded ends show). 
    	Finally, the assertion on the residues is a direct consequence of the correspondence \ref{galois}.
    \end{proof}

    We first recall a preliminary lemma from \cite{lamm}.
    
    \begin{lemme}\label{polesorder}
    	Let $\phi:\Sigma^2\rightarrow\R^3$ be a branched Willmore surface, with branching divisor $D=\sum_{i=1}^{m}\theta_0(p_i) p_i$, where $p_1,\cdots,p_m\in\Sigma^2$ are distinct point and $\theta_0\geq 1$ are the multiplicities at the branch points, and $D_0=p_1+\cdots+p_m\in \mathrm{Div}(\Sigma^2)$. Then the meromorphic quartic form $\mathscr{Q}_{\phi}$ has poles of order at most two at each $p_i$, for $i=1,\cdots,m$, so it is a holomorphic section of the line bundle $\mathscr{L}=K_{\Sigma^2}^{4}\otimes \mathscr{O}(2D_0)$, where $K_{\Sigma^2}$ is the canonical bundle of $\Sigma^2$.
    \end{lemme}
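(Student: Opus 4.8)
The plan is to argue locally at each branch point and reduce everything to a pointwise growth estimate on the coefficient of the quartic form. Fix $i$ and work in a conformal chart $z:D^2\to\Sigma^2$ with $z(0)=p_i$, writing $\mathscr{Q}_{\phi}=q(z)\,dz^4$. By theorem \ref{int} the coefficient $q$ is holomorphic on the punctured disk $D^2\setminus\{0\}$, so the only point at issue is its behaviour as $z\to0$. Since, by definition \ref{branchedpoint}, the geometric data $e^{2\lambda}$, $\D\n$, $\H$ and $\h_0$ all satisfy polynomial growth bounds near the branch point, $q$ is of at most polynomial growth and hence meromorphic at $0$; consequently it suffices to prove the pointwise bound $|q(z)|\le C|z|^{-2}$. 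Indeed, this bound makes $z^2q(z)$ bounded and holomorphic on $D^2\setminus\{0\}$, so by Riemann's removable singularity theorem it extends holomorphically across $0$, forcing $q$ to have a pole of order at most two at $p_i$.

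First I would record the asymptotic development of the immersion at the branch point. From definition \ref{branchedpoint} one has $|\phi(z)|\simeq|z|^{\theta_0}$ and $|\p{z}\phi(z)|\simeq|z|^{\theta_0-1}$, hence $e^{2\lambda}\simeq|z|^{2\theta_0-2}$ and $2\p{z}\lambda=(\theta_0-1)/z+O(1)$, exactly as computed in \eqref{devsurfmin}. The essential structural feature—already visible in \eqref{devsurfmin} in the proof of theorem \ref{devinv}—is that the leading terms of $\p{z}^2\phi$ are \emph{tangential}: the tangent plane at $p_i$ is spanned by $\Re\vec A,\Im\vec A$ where $\p{z}\phi\sim\tfrac{\theta_0}{2}\vec A z^{\theta_0-1}$, so the normal projection defining the scalar $2\s{\vec{\I}(\e_z,\e_z)}{\n}$ loses one order, and $\h_0$ and $\H$ therefore grow strictly more slowly than the naive counting from $|\phi|\simeq|z|^{\theta_0}$ would suggest. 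I would extract from \cite{beriviere} (or re-derive, following the analysis underlying definition \ref{branchedpoint}) the sharp orders of $\h_0$ and $\H$, keeping track of the possible $\log|z|$ correction that appears when $\theta_0=1$.

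Next I would insert these developments into the intrinsic expression of theorem \ref{int},
\[
\mathscr{Q}_{\phi}=Q(\h_0)+\left(\tfrac14(1+|\H|^2)+|\h_0|^2_{WP}\right)\h_0\totimes\h_0+\s{\H}{\h_0}^2,\qquad Q(\vec\alpha)=g^{-1}\otimes\left(\partial\bar{\partial}\vec\alpha\totimes\vec\alpha-\partial\vec\alpha\totimes\bar{\partial}\vec\alpha\right),
\]
and estimate term by term. The purely algebraic contributions $|\H|^2\,\h_0\totimes\h_0$, $|\h_0|^2_{WP}\,\h_0\totimes\h_0$ and $\s{\H}{\h_0}^2$ are controlled directly by the orders of $\H$ and $\h_0$ found above and turn out to be $O(|z|^{-2})$. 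For the differential term $Q(\h_0)$ I would exploit the same cancellation as in theorem \ref{devinv}, namely that $Q(f_1\overline{f_2}\,dz^2)=0$ whenever $f_1,f_2$ are holomorphic: writing the leading part of $\h_0$ as a sum of terms $\vec\Lambda\,z^{a}\bar z^{b}\,dz^2$, the most singular pieces organise into such holomorphic-times-antiholomorphic combinations and drop out of $Q$, leaving a remainder that is again $O(|z|^{-2})$. Combining the algebraic and differential estimates yields $|q(z)|\le C|z|^{-2}$.

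Finally I would conclude as in the first paragraph: the bound forces $q$ to have a pole of order at most two at each $p_i$, so $\mathscr{Q}_{\phi}$ is a global meromorphic quartic differential with $\mathrm{div}(\mathscr{Q}_{\phi})+2D_0\ge0$, i.e. a holomorphic section of $\mathscr{L}=K_{\Sigma^2}^4\otimes\mathscr{O}(2D_0)$. The main obstacle is the middle stage: the naive orders coming from $|\phi|\simeq|z|^{\theta_0}$ are too singular, and it is only the tangential/normal structure of the branch-point development together with the algebraic identity $Q=0$ on holomorphic–antiholomorphic quadratic differentials that produces the cancellations pinning the pole order down to exactly two. Obtaining those developments with sufficient precision—and disposing of the $\theta_0=1$ logarithm a posteriori by meromorphy, precisely as in theorem \ref{devinv}—is where the real work lies.
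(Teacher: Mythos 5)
Your proposal is correct and takes essentially the same route as the paper's proof of lemma \ref{polesorder}: both rest on the branch-point expansion of $\phi$ (hence of $\h_0$, including the worst-case term $\vec{A}_1\,\z^{\theta_0}z^{-1}\,dz^2$) from \cite{beriviere}, order counting of the algebraic terms against $g^{-1}$, the null identity $Q(f_1\bar{f_2}\,dz^2)=0$, and the a posteriori removal of $\log|z|$ contributions for $\theta_0=1$ by meromorphy. The only cosmetic difference is that you conclude via the pointwise bound $|q(z)|\le C|z|^{-2}$ together with a removable-singularity argument for $z^{2}q(z)$, whereas the paper computes the Laurent leading coefficient $-\theta_0(\theta_0-1)\s{\vec{A}_0}{\vec{A}_1}\,dz^4/z^2$ explicitly.
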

\begin{proof}
	In the case of zero residue, by \cite{beriviere} for in some conformal chart $D^2\rightarrow\Sigma^2$, we have
	\begin{align}\label{obs}
	\phi(z)&=\Re\left(\vec{A} z^{\theta_0}+\vec{B}z^{\theta_0+1}+\vec{C}z^{\theta_0-\alpha}\z^{\theta_0}\right)+O(|z|^{\theta_0+2-\epsilon})
	\end{align}
	for some $\alpha\leq \theta_0-1$. In the worst case $\alpha=\theta_0-1$, an analogous computation as in Theorem \ref{devinv} gives for some constants $\vec{A}_0$, $\vec{A}_1\in \C^n$
	\begin{align*}
	\h_0(z)=\left(\vec{A}_0 z^{\theta_0-1}+\vec{A}_1\frac{\z^{\theta_0}}{z}\right)dz^2+O(|z|^{\theta_0-\epsilon}).
	\end{align*}
Therefore as $\H=O(|z|^{1-\theta_0})$, $\h_0=O(|z|^{\theta_0-1})$, we have
\begin{align*}
	|\H|^2\h_0\totimes\h_0=O(1),
\end{align*}
and
	\begin{align*}
	\mathscr{Q}&=g^{-1}\otimes (\partial\bar{\partial}\h_0\otimes \h_0-\partial\h_0\otimes\bar{\partial}\h_0)+\frac{1}{4}|\H|^2\h_0\totimes\h_0
	=-|z|^{2-2\theta_0}\s{\vec{A}_0}{A_1}\theta_0(\theta_0-1)z^{\theta_0-2}\frac{\z^{\theta_0-1}}{z}dz^4\\
	&+|z|^{2-2\theta_0}|\vec{A}_0|^2\bigg\{\left(-\theta_0\frac{\z^{\theta_0-1}}{z^2}\right)\left(\frac{\z^{\theta_0}}{z}\right)-\left(-\frac{\z^{\theta_0}}{z^2}\right)\left(\theta_0\frac{\z^{\theta_0}}{z}\right)\bigg\}dz^4+O\left(\frac{1}{|z|}\right)\\
	&=-\s{\vec{A}_0}{\vec{A}_1}\theta_0(\theta_0-1)\frac{dz^4}{z^2}+O\left(\frac{1}{|z|}\right)
	\end{align*}
	so the poles of $\h_0$ are of order at most $2$. For $\theta_0=1$, as we cannot neglect the residue, we also get in general a pole of order at most $2$, as the higher order of singularity of $\h_0$ is
	\begin{align*}
		\vec{\gamma}_0\frac{\z}{z}dz^2
	\end{align*}
	so the same computation applies.
\end{proof}

\begin{lemme}\label{secondresidue}
	Let $\phi:D^2\rightarrow \R^n$ be a branched Willmore disk, of branch point of order $\theta_0\geq 1$ and second residue such that $r(\phi,0)\leq \max\ens{0,\theta_0-3}$. Then 
	\begin{align*}
		|z|^{\epsilon}\mathscr{Q}_{\phi}\in L^{\infty}(D^2)\qquad\text{for all}\;\, \epsilon>0.
	\end{align*}
\end{lemme}
\begin{proof}
	First assume that $\theta_0\geq 3$. Then we have (up to renormalisation)
	\begin{align*}
	    &\phi(z)=\frac{2}{\theta_0}\,\Re\left(\vec{A}_0z^{\theta_0}\right)+O(|z|^{\theta_0+1-\epsilon}),\qquad
		e^{2\lambda}=|z|^{2\theta_0-2}\left(1+O(|z|)\right)\\
		&\H=\Re\left(\frac{\vec{C}_2}{z^{\theta_0-3}}\right)+O(|z|^{4-\theta_0-\epsilon}).
	\end{align*}
	Therefore, we have as $2\H=\Delta_g\phi$
	\begin{align*}
		\p{\z}\left(\p{z}\phi\right)=\frac{1}{2}|z|^{2\theta_0-2}\Re\left(\frac{\vec{C}_2}{z^{\theta_0-3}}\right)+O(|z|^{\theta_0+2-\epsilon})=\frac{1}{2}\Re\left(\vec{C}_2z^2\z^{\theta_0-1}\right)+O(|z|^{\theta_0+2-\epsilon})=O(|z|^{\theta_0+1-\epsilon})
	\end{align*}
	Integrating yields by Proposition \ref{integrating} (for some $\vec{A}_1,\vec{A}_2\in \C^n$)
	\begin{align*}
		\p{z}\phi=\vec{A}_0z^{\theta_0-1}+\vec{A}_1z^{\theta_0}+\vec{A}_2z^{\theta_0+1}+O(|z|^{\theta_0+2-\epsilon}).
	\end{align*}
	As  $\phi$ is conformal, we have
	\begin{align*}
		0=\s{\p{z}\phi}{\p{z}\phi}=\s{\vec{A}_0}{\vec{A}_0}z^{2\theta_0-2}+2\s{\vec{A}_0}{\vec{A}_1}z^{2\theta_0-1}+\left(\s{\vec{A}_1}{\vec{A}_1}+2\s{\vec{A}_0}{\vec{A}_2}\right)z^{2\theta_0}+O(|z|^{2\theta_0+1-\epsilon}).
	\end{align*}
	Therefore, we have
	\begin{align*}
		\s{\vec{A}_0}{\vec{A}_0}=\s{\vec{A}_0}{\vec{A}_1}=0.
	\end{align*}
	This implies that 
	\begin{align*}
		e^{2\lambda}=2|\p{z}\phi|^2=2|\vec{A}_0|^2|z|^{2\theta_0-2}+4\,\Re\left(\s{\bar{\vec{A}_0}}{\vec{A}_1}z^{\theta_0}\z^{\theta_0-1}\right)+O(|z|^{2\theta_0})=|z|^{2\theta_0-2}\left(1+2\,\Re\left(\alpha_0z\right)+O(|z|^2)\right),
	\end{align*}
	and
	\begin{align*}
		2(\p{z}\lambda)=\frac{(\theta_0-1)}{z}+\alpha_0+O(|z|).
	\end{align*}
	Therefore, we get
	\begin{align*}
		\frac{1}{2}\h_0=\p{z}^2\phi-2(\p{z}\lambda)\p{z}\phi=\left(\vec{A}_1-\alpha_0\vec{A}_0\right)z^{\theta_0-1}+\left(\vec{A}_2-\alpha_0\vec{A}_1\right)z^{\theta_0}+O(|z|^{\theta_0+1-\epsilon}).
	\end{align*}
	Therefore, we compute
	\begin{align*}
		&\partial\h_0=2(\theta_0-1)\left(\vec{A}_1-\alpha_0\vec{A}_0\right)z^{\theta_0-2}+O(|z|^{\theta_0-1})=O(|z|^{\theta_0-2})\\
		&\bar{\partial}\h_0=O(|z|^{\theta_0-\epsilon})\\
		&\partial\bar{\partial}\h_0=O(|z|^{\theta_0-1-\epsilon}).
	\end{align*}
	Therefore, we have
	\begin{align*}
		Q(\h_0)=\partial\bar{\partial}\h_0\totimes\h_0-\partial\h_0\totimes\bar{\partial}\h_0=O(|z|^{\theta_0-2-\epsilon})\times O(|z|^{\theta_0-\epsilon})-O(|z|^{\theta_0-1-\epsilon})\times O(|z|^{\theta_0-1-\epsilon})=O(|z|^{2\theta_0-2-2\epsilon})
	\end{align*}
	and as $\left(|\H|^2+|\h_0|^2_{WP}\right)\h_0\totimes\h_0=O(|z|^2)$ and $\s{\H}{\h_0}^2=O(|z|^2)$, we have
	\begin{align*}
		\mathscr{Q}_{\phi}=g^{-1}\otimes Q(\h_0)+\left(\frac{1}{4}|\H|^2+|\h_0|^2_{WP}\right)\h_0\totimes\h_0+\s{\H}{\h_0}^2=O(|z|^{-\epsilon}),
	\end{align*}
	and this concludes the proof of the Lemma (the cases $\theta_0=2$ is similar, and the case $\theta_0=1$ is trivial as $Q(\h_0)\in L^{\infty}$ whenever $\phi$ is smooth).
\end{proof}

\begin{rem}
	It is proved in \cite{blow-up} (see \cite{blow-up2} for the min-max case) that the second residue of variational Willmore (branched) immersions satisfies $r(p)\leq \max\ens{\theta_0(p)-2,0}$, and this is why some non-trivial work has to be done in the main Theorem of this article.
\end{rem}

    Let us now recall the proof of the main Theorem of \cite{lamm}.
    
    \begin{theorem}\label{easygeneralisation}
    	Let $\phi:S^2\rightarrow\R^3$ be a non-completely umbilic Willmore sphere with at most three branch points. Then $\phi$ is the inversion of a minimal surface, and furthermore, $\phi$ is a true Willmore sphere if and only if its dual minimal surface has zero-flux.
    \end{theorem}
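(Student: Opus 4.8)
The plan is to reduce everything to a degree count on the Riemann sphere, using the pole-order bound of Lemma \ref{polesorder} to force the quartic form to vanish identically. First I would record that, $\phi$ being a branched Willmore surface, its quartic form $\mathscr{Q}_{\phi}$ is a meromorphic quartic differential whose singularities at the branch points are controlled by Lemma \ref{polesorder}. Writing $p_1,\dots,p_m$ for the branch points (so $m\leq 3$ by hypothesis) and $D_0=p_1+\cdots+p_m\in\mathrm{Div}(S^2)$, that lemma asserts that $\mathscr{Q}_{\phi}$ extends to a holomorphic section of the line bundle
\begin{align*}
\mathscr{L}=K_{S^2}^{4}\otimes\mathscr{O}(2D_0).
\end{align*}

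Next I would compute the degree of $\mathscr{L}$. On $S^2$ the canonical bundle has degree $\deg K_{S^2}=2\gamma-2=-2$ (with $\gamma=0$), and $\deg D_0=m$, so
\begin{align*}
\deg\mathscr{L}=4\deg K_{S^2}+2\deg D_0=-8+2m\leq -8+6=-2<0,
\end{align*}
where the inequality is exactly where the assumption $m\leq 3$ is used. Since a holomorphic line bundle of strictly negative degree over a compact Riemann surface admits no nonzero holomorphic section, I conclude that $\mathscr{Q}_{\phi}\equiv 0$.

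It remains only to feed this vanishing into the correspondence already established. As $\mathscr{Q}_{\phi}=0$ is in particular holomorphic, the genus-zero statement of theorem \ref{devinv} applies directly: a non-completely umbilic Willmore sphere whose quartic form is holomorphic is the inversion of a complete minimal surface in $\R^3$ with finite total curvature, and by the same theorem (which invokes the residue correspondence of theorem \ref{galois}) the dual minimal surface has vanishing flux if and only if $\phi$ is a \emph{true} Willmore sphere. This is precisely the two assertions of the theorem.

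I do not expect a serious obstacle at this stage: the genuinely delicate inputs—the asymptotic expansion of $\h_0$ and $\H$ at branch points yielding the order-two pole bound (Lemma \ref{polesorder}), and the identification of holomorphy of $\mathscr{Q}_{\phi}$ with the existence of a dual minimal surface and of the flux with the true/variational property (theorem \ref{devinv})—are already in place, so the only new content is the elementary degree computation together with the vanishing theorem for negative line bundles. The one point worth flagging is that the bound $m\leq 3$ is sharp for this argument: with four or more branch points one would have $\deg\mathscr{L}\geq 0$, the vanishing of $\mathscr{Q}_{\phi}$ would no longer be automatic, and one would be forced into the finer singularity analysis of theorem \ref{devh0}.
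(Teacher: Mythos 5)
Your proposal is correct and follows essentially the same route as the paper: the paper likewise invokes Lemma \ref{polesorder} to view $\mathscr{Q}_{\phi}$ as a holomorphic section of $K_{S^2}^{4}\otimes\mathscr{O}(2D_0)$, computes $\deg=2m-8<0$ for $m\leq 3$, concludes $\mathscr{Q}_{\phi}=0$ since negative line bundles have no nonzero holomorphic sections, and then applies Bryant's theorem together with the residue correspondence of theorem \ref{galois} for the flux equivalence. Your routing of the final step through theorem \ref{devinv} rather than citing Bryant's theorem directly is only a packaging difference, since \ref{devinv} encapsulates exactly those two inputs.
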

    \begin{proof}
    	If $\phi:S^2\rightarrow \R^3$ has $m$ distinct branch points $p_1,\cdots,p_m$, $D_0=p_1+\cdots +p_m$, and $m\leq 3$, the degree of the line bundle
    	degree of $\mathscr{L}={K}^{4}_{S^2}\otimes \mathscr{O}(2D_0)$ is equal to
    	\begin{align*}
    		\mathrm{deg}(\mathscr{L})=4\,\mathrm{deg}(K_{S^2})+2m=2m-8<0
    	\end{align*}
    	and as $\mathscr{Q}_{\phi}\in H^0(S^2,\mathscr{L})$, we deduce as negative line bundle do have non-zero holomorphic sections
    	that $\mathscr{Q}_{\phi}=0$, so Bryant's theorem gives the first conclusion. As true Willmore spheres have vanishing residues, the correspondence \ref{galois} shows the last equivalence.
    	In general, by the Riemann-Roch theorem, if $m\geq 4$, as $\mathrm{deg}(\mathscr{L}^\ast\otimes K_{S^2})=6-2m<0$, we have
    	\begin{align*}
    	    \mathrm{dim}\,H^0(S^2,\mathscr{L})=1-0+\mathrm{deg}(\mathscr{L})=2m-7\geq 1,
    	\end{align*}
    	and we cannot conclude so easily.
    \end{proof}

    As there exist minimal spheres with two ends and arbitrary large (in absolute value) total curvature, there exists thereby Willmore spheres with less that two branch points of arbitrary large multiplicities at branch points. This fact suggests that the theorem shall always hold true, as the holomorphy of the quartic form only depends on the local expansion \ref{obs}.

    \subsection{Refined estimates for the Weingarten tensor}

We now state our main theorem in full generality.
  
    \begin{theorem}\label{devh0}
    	Let $\Sigma^2$ be a closed Riemann surface, $n\geq 3$ and $\phi:\Sigma^2\rightarrow S^n$ be a branched Willmore surface, with branching divisor
    	\begin{align*}
    		\theta_0(p_1)p_1+\cdots+\theta_0(p_m)p_m\in \mathrm{Div}(\Sigma^2).
    	\end{align*}
    	Suppose that for all $j\in\ens{1,\cdots,m}$ whenever $1\leq \theta_0(p_j)\leq  3$, the first residue $\vec{\gamma}_0(p_j)$ of $\phi$ vanishes and whenever $\theta_0(p_j)\geq 2$, the second residue $r_j(p_j)\in \ens{0,\cdots,\theta_0(p_j)-1}$ satisfies $r(p_j)\leq  \theta_0(p_j)-2$. Then the quartic differential $\mathscr{Q}_{\phi}$ has poles of order at most $1$ at branch point of order $\theta_0\geq 4$ and is in \emph{bounded} in a neighbourhood of branch points of order $1\leq \theta_0\leq 3$. Furthermore, suppose further  $\mathscr{Q}_{\phi}$ is meromorphic. 
    	Then 
    	\begin{align}\label{Qishol}
    		\mathscr{Q}_{\phi}\;\,\text{is holomorphic.}
    	\end{align}  
    	In particular, if $\Sigma^2$ has genus zero, then $\mathscr{Q}_{\phi}=0$, $\phi:\Sigma^2\rightarrow S^3$ is the inverse stereographic projection of a complete branched minimal surface in $\R^3$ with finite total curvature. The dual minimal surface has \emph{vanishing flux} if and only if $\phi$ is a \emph{true} Willmore sphere.
    \end{theorem}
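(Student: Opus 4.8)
The plan is to localise everything at the branch points and reduce the global assertion to an asymptotic analysis of the Weingarten tensor. Away from $p_1,\dots,p_m$ the map $\phi$ is a smooth Willmore immersion, so $\mathscr{Q}_{\phi}$ is holomorphic there (automatically for $n=3$ by theorem \ref{int}, and by the standing meromorphy hypothesis for $n\geq 4$); hence the whole problem becomes controlling the growth of $\mathscr{Q}_{\phi}$ as $z\to 0$ in a conformal chart $z:D^2\to\Sigma^2$ sending a branch point $p_j$ of multiplicity $\theta_0$ to $0$. The decisive structural input is the representation \eqref{algebra}–\eqref{intrinsèque} of theorem \ref{int},
\begin{align*}
\mathscr{Q}_{\phi}=Q(\h_0)+\left(\tfrac14(1+|\H|^2)+|\h_0|_{WP}^2\right)\h_0\totimes\h_0+\s{\H}{\h_0}^2,\qquad Q(\vec{\alpha})=g^{-1}\otimes\left(\partial\bar{\partial}\vec{\alpha}\totimes\vec{\alpha}-\partial\vec{\alpha}\totimes\bar{\partial}\vec{\alpha}\right),
\end{align*}
together with the vanishing $Q(f_1\bar{f_2}\,dz^2)=0$ for holomorphic $f_1,f_2$ already exploited in theorem \ref{devinv}.

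First I would upgrade the first–order expansion of theorem \ref{dev1storder},
\begin{align*}
\phi(z)=\Re\left(\vec{A} z^{\theta_0}+\vec{B} z^{\theta_0+1}+\vec{C} z\z^{\theta_0}\right)+\vec{D}\,|z|^{2\theta_0}\log|z|+O(|z|^{\theta_0+2-\epsilon}),
\end{align*}
in which $\vec{D}$ is proportional to the first residue $\vec{\gamma}_0(p_j)$, to a higher–order one by bootstrapping the conservative form of the Willmore equation (lemma \ref{complexcons} and the Codazzi identity \eqref{codazzi}). This produces enough further terms of $\phi$, and hence of $\h_0=2(\p{z}^2\phi-2(\p{z}\lambda)\p{z}\phi)\,dz^2$ and of $\H$, to evaluate $\mathscr{Q}_{\phi}$ to the relevant order. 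The role of the hypothesis $\vec{\gamma}_0(p_j)=0$ for $1\leq\theta_0\leq 3$ is precisely to kill the logarithmic term $\vec{D}\,|z|^{2\theta_0}\log|z|$, which is the only obstruction to boundedness in that range; for $\theta_0\geq 4$ the residue term is too high–order to damage the pole estimate.

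Next I would insert these expansions into the algebraic formula. Using $\H=O(|z|^{1-\theta_0})$ and $\h_0=O(|z|^{\theta_0-1})$, the three algebraic terms $|\H|^2\h_0\totimes\h_0$, $|\h_0|_{WP}^2\h_0\totimes\h_0$ and $\s{\H}{\h_0}^2$ are all $O(1)$, so the singular behaviour comes entirely from $Q(\h_0)$. The leading contributions of $\h_0$ are of the schematic form $\vec{\Lambda}\,z^{a}\z^{b}\,dz^2$, i.e. products of a holomorphic and an anti-holomorphic factor, on which $Q$ vanishes exactly as in theorem \ref{devinv}; it is this cancellation that improves the naive bound of lemma \ref{polesorder} (pole of order $\leq 2$) to a pole of order $\leq 1$ for $\theta_0\geq 4$ and to boundedness for $1\leq\theta_0\leq 3$. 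The hard part will be this bookkeeping: it is the heavy formal computation underlying the theorem, and the delicacy lies in isolating precisely which monomials survive in the cross terms of $Q(\h_0)$, since any would-be term of size $|z|^{-1}$ must be shown either to cancel or to carry an irreducible $\z$-dependence.

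Finally I would convert these growth bounds into holomorphy via the meromorphy hypothesis. For $1\leq\theta_0\leq 3$ the bound $\mathscr{Q}_{\phi}=O(1)$ plus meromorphy gives a removable singularity, so $\mathscr{Q}_{\phi}$ extends holomorphically across $p_j$. For $\theta_0\geq 4$, meromorphy and $\mathscr{Q}_{\phi}=O(|z|^{-1})$ force $\mathscr{Q}_{\phi}=(a_{-1}z^{-1}+\mathrm{hol})\,dz^4$ for a constant $a_{-1}$; comparing this purely $z$-holomorphic simple pole with the leading $O(|z|^{-1})$ term produced by the expansion, which carries genuine $\z$-dependence, one sees the two are compatible only if $a_{-1}=0$, so again the singularity is removable. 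Hence $\mathscr{Q}_{\phi}$ is a holomorphic section of $K_{\Sigma^2}^4$ on all of $\Sigma^2$. For the genus-zero addendum, $\deg K_{S^2}^4=-8<0$ forces $\mathscr{Q}_{\phi}\equiv 0$, whereupon Bryant's theorem identifies $\phi$ with the inverse stereographic projection of a complete branched minimal surface of finite total curvature, and the residue correspondence (theorem \ref{galois}, through theorem \ref{devinv} and corollary \ref{inversionmin}) identifies the flux of the dual surface with $\vec{\gamma}_0$, so vanishing flux is equivalent to $\phi$ being a \emph{true} Willmore sphere.
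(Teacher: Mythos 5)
Your setup (localisation at the branch points, the representation of $\mathscr{Q}_{\phi}$ through $Q(\h_0)$, the vanishing of $Q$ on split tensors $\vec{\Lambda}f_1(z)\bar{f_2(z)}\,dz^2$, and the bootstrap of the expansion via the conservative form of the equation) matches the paper, but your final step contains a genuine gap that invalidates the argument for every $\theta_0\geq 2$. You claim that after the split-tensor cancellations the surviving $O(|z|^{-1})$ term of $\mathscr{Q}_{\phi}$ \emph{carries genuine $\z$-dependence}, so that meromorphy alone forces its residue to vanish. This is false: carrying out the expansion, the paper finds in \eqref{singularityQ}
\begin{align*}
Q(\h_0)=(\theta_0-1)(\theta_0-2)\,\s{\vec{A}_1}{\vec{C}_1}\,\frac{1}{z}+O(|z|^{-\epsilon}),
\end{align*}
a simple pole of purely holomorphic type, perfectly compatible with meromorphy; the entire difficulty of the theorem is to prove the scalar cancellation $\s{\vec{A}_1}{\vec{C}_1}=0$, and no soft comparison of the kind you propose can produce it. Similarly, the improvement of the naive order-two pole of lemma \ref{polesorder} to order one is not a consequence of $Q(f_1\bar{f_2}\,dz^2)=0$ alone: it requires showing that the coefficient $\vec{C}_0$ of the leading singularity $\Re\left(\vec{C}_0 z^{1-\theta_0}\right)$ of $\H$ vanishes, which the paper extracts from the conformality relations $\s{\p{z}\phi}{\p{z}\phi}=0$ — the identity $\frac{1}{2(\theta_0+1)}\s{\vec{A}_0}{\bar{\vec{B}_0^1}}+\frac{1}{8\theta_0}|\vec{C}_0|^2=0$ forces $|\vec{C}_0|^2=0$ in \eqref{alphaless21}.

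What actually closes the argument is a combination you never invoke: (i) the cancellation laws coming from conformality, as above; (ii) meromorphy used not merely for removability at the end but as an \emph{input} at higher orders of the expansion of $\mathscr{Q}_{\phi}$ — the coefficients of the non-meromorphic monomials $\z\,dz^4$ and $z^{\theta_0}\z^{2-\theta_0}dz^4$ must vanish, yielding the linear system \eqref{system} and, via the Cauchy--Schwarz determinant \eqref{determinant}, the alternative that either $\s{\vec{A}_1}{\vec{C}_1}=\s{\vec{A}_1}{\vec{A}_1}=0$ or $\vec{A}_1$ and $\vec{C}_1$ are proportional; and (iii) the Noether conservation law associated with the invariance by inversions, evaluated through the computer-assisted expansions of \cite{sagepaper}, which gives $|\vec{A}_1|^2\s{\vec{A}_1}{\vec{C}_1}=0$ for $\theta_0\geq 5$, with separate arguments for $\theta_0=4$ (an additional logarithmic coefficient forcing $\s{\vec{A}_1}{\vec{C}_1}\bar{\s{\vec{C}_1}{\vec{C}_1}}=0$) and for $\theta_0=2,3$. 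Note also that for $\theta_0=3$ the hypothesis $\vec{\gamma}_0=0$ is used not to kill a logarithmic term in $\phi$, as you assert, but through the conformality identity $0=\s{\vec{A}_0}{\vec{\gamma}_1}=-\left(\s{\vec{A}_0}{\vec{\gamma}_0}+\s{\vec{A}_1}{\vec{C}_1}\right)$ to force the residue cancellation itself. Without these ingredients the simple pole cannot be removed, so your proof does not go through.
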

	\begin{proof}
    \textbf{Part 1. Introduction.}
	First, we recall that
	\begin{align}\label{willmorealt1}
		&-4\,\Im\left(g^{-1}\otimes\left( \bar{\partial}^N\h_0+\s{\H}{\h_0}\otimes\bar{\partial}\phi\right)\right)=\left(\ast_g d\H-3\ast_g(d\H)^{\nperp}+\star \left(\H\wedge d\n\right)\right)
	\end{align}
	and
	\begin{align}\label{willmorealt2}
		\Im\left(g^{-1}\otimes \left(\bar{\partial}^N\h_0+\s{\H}{\h_0}\otimes \bar{\partial}\phi\right)\right)&=\Im\left(\partial\H+|\H|^2\partial\phi+2g^{-1}\otimes \s{\H}{\h_0}\otimes\bar{\partial}\phi\right).
	\end{align}
	Taking some stereographic projection whose base point is not included in $\phi(\Sigma^2)$, we can suppose by the conformal invariance of the Willmore energy that $\phi:\Sigma^2\rightarrow \R^n$.
	
	We fix some $j\in \ens{1,\cdots,m}$ and we choose some open $U\subset \Sigma^2$ such that $p_j\in U$, and a conformal chart $z:U\rightarrow D^2\subset \C$ such that $z(p_i)=0$. Therefore, we can suppose that $\phi:D^2\setminus\ens{0}\rightarrow \R^n$ is a Willmore disk, with a branch point at $0$ of order $\theta_0=\theta_0(p_j)\geq 1$. We note that in particular $\phi\in C^{\infty}(D^2\setminus\ens{0})$, so there will be no regularity issues in the application of Poincaré lemma.
	
	As the first residue $\vec{\gamma}_0=\vec{\gamma}_0(p_j)$ is defined as in \cite{beriviere}, we have
	\begin{align*}
		d\left(\ast_g d\H-3\ast_g(d\H)^N+\star (\H\wedge d\n)\right)=4\pi\vec{\gamma}_0\,\delta_0
	\end{align*}
	where $\delta_0$ is the Dirac mass at $0\in D^2$, 
	we have by \eqref{willmorealt1} and \eqref{willmorealt2}
	\begin{align*}
		d\,\Im\left(g^{-1}\otimes\left( \bar{\partial}^N\h_0+\s{\H}{\h_0}\otimes\bar{\partial}\phi\right)\right)=-\pi\vec{\gamma}_0\delta_0
	\end{align*}
	\begin{rem}
		If we take instead $\vec{\gamma}_0$ as in our definition in \eqref{residues1234}, it is changed by a $-4$ factor.
	\end{rem}
    In particular, the $1$-form
	\begin{align*}
		\Im\left(\partial\H+|\H|^2\partial\phi+2g^{-1}\otimes \s{\H}{\h_0}\otimes\bar{\partial}\phi+\vec{\gamma}_0\,\partial\log|z|\right)
	\end{align*}
	is closed on $D^2\setminus\ens{0}$ and has zero winding number (around $0$), so it is exact and by Poincaré lemma, and there exists a smooth function $\vec{L}:D^2\setminus\ens{0}\rightarrow\R^n$ such that we have
	\begin{align*}
		d\vec{L}=
		\Im\left(\partial\H+|\H|^2\partial\phi+2g^{-1}\otimes \s{\H}{\h_0}\otimes\bar{\partial}\phi+\vec{\gamma}_0\,\partial\log|z|\right).
	\end{align*}
	The canonical complex structure induced from $\C$ on $D^2_{\ast}=D^2\setminus\ens{0}$ yields a direct sum decomposition of the $\C$-vector space $\Omega^1(D^2\setminus\ens{0},\C^n)$ of $1$-differential forms with values in $\C^n$ as
	\begin{align}\label{directsum}
		\Omega^1(D^2_{\ast},\C^n)=\Omega^{(1,0)}(D^2_{\ast},\C^n)\oplus \Omega^{(0,1)}(D^2_{\ast},\C^n).
	\end{align}
	In other word, if $z$ is the global complex coordinate, then
	\begin{align*}
		\Omega^{(1,0)}(D^2_{\ast},\C^n)&=\Omega^1(D^2_{\ast},\C^n)\cap\ens{\omega: \omega=\vec{F}\,dz,\; \text{for some}\;\, \vec{F}\in C^{\infty}(D^2_{\ast},\C^n)},\\
		\Omega^{(0,1)}(D^2_{\ast},\C^n)&=\Omega^1(D^2_{\ast},\C^n)\cap\ens{\omega: \omega=\vec{F}\,d\z,\; \text{for some}\;\, \vec{F}\in C^{\infty}(D^2_{\ast},\C^n)}.
	\end{align*} 
	As
	\begin{align*}
		\vec{\alpha}=\partial\H+|\H|^2\partial\phi+2g^{-1}\otimes \s{\H}{\h_0}\otimes\bar{\partial}\phi+\vec{\gamma}_0\,\partial\log|z|\in \Omega^{(1,0)}(D^2\setminus\ens{0}),
	\end{align*}
	and
	\begin{align*}
		\bar{\vec{\alpha}}\in \Omega^{(0,1)}(D^2\setminus\ens{0}),
	\end{align*}
	thanks to the decomposition
	\begin{align*}
		d\vec{L}=\partial\vec{L}+\bar{\partial}\vec{L}\in \Omega^{(1,0)}(D^2_{\ast},\C^n)\oplus \Omega^{(0,1)}(D^2_{\ast},\C^n)
	\end{align*}
	we must have by the direct sum decomposition of $\Omega^1(D^2_{\ast},\C^n)$ in \eqref{directsum}
	\begin{align*}
		2i\partial\vec{L}=\partial\H+|\H|^2\partial\phi+2g^{-1}\otimes \s{\H}{\h_0}\otimes\bar{\partial}\phi+\vec{\gamma}_0\,\partial\log|z|
	\end{align*}
	and rearranging this expression, we obtain
	\begin{align}\label{globaldisk}
		\partial\left(\H-2i\vec{L}+\vec{\gamma}_0\log|z|\right)=-|\H|^2\partial\phi-2g^{-1}\otimes \s{\H}{\h_0}\otimes\bar{\partial}\phi.
	\end{align}
	
	We now describe the strategy of the proof. In the following proof, we will first see that as variational Willmore surfaces have second residue $r(0)\leq \max\ens{\theta_0-2,0}$ that the quartic form admits poles of order at most $1$. Then, by using the meromorphy of $\mathscr{Q}_{\phi}$ and the extensive computer-assisted computations of \cite{sagepaper}, we will derive some special cancellations which will make the poles of order $1$ vanish.

	\textbf{Part 2. Cancellation and conservation laws.}

	\textbf{Step 1. First order expansion when $\theta_0\geq 3$}.
	
	As $r(0)\leq \theta_0-2$, there exists $\vec{C}_1\in \C^n$ such that (by \cite{beriviere})
	\begin{align}
		\H=\Re\left(\frac{\vec{C}_1}{z^{\theta_0-2}}\right)+O(|z|^{3-\theta_0-\epsilon}).
	\end{align}

	By Theorem \ref{dev1storder}, there exists $\vec{A}_0\in \C^n$, which we normalise to verify
	\begin{align*}
		|\vec{A}_0|^2=\frac{1}{2},
	\end{align*}
	such that
	\begin{align}\label{1stdevelopment}
	\left\{
	\begin{alignedat}{1}
			\partial_z\phi&=\vec{A}_0\,z^{\theta_0-1}+O(|z|^{\theta_0}),\\
	g&=|z|^{2(\theta_0-1)}\left(1+O(|z|)\right)|dz|^2,\\
	\H&=\Re\left(\frac{\vec{C}_1}{z^{\theta_0-2}}\right)+O(|z|^{2-\theta_0-\epsilon})\\
	\h_0&=O(|z|^{\theta_0-1}).
	\end{alignedat}\right.
	\end{align}
	The last estimate on $\h_0$ comes from the fact that $e^{-\lambda}\h_0\in L^{\infty}(D^2)$ by \cite{beriviere}.
	Therefore, one has by \eqref{1stdevelopment}
	\begin{align}\label{0stequation}
		|\H|^2\p{z}\phi+2\,g^{-1}\otimes\s{\H}{\h_0}\otimes \bar{\partial}\phi &=
		O(|z|^{2-\theta_0-\epsilon}).
	\end{align}
	As a result, we obtain by \eqref{1stdevelopment} and \eqref{globaldisk}
	\begin{align}\label{firstdev}
		&\partial\left(\H-2i\vec{L}+\vec{\gamma}_0\log|z|\right)=-|\H|^2\partial\phi-2\,g^{-1}\otimes\s{\H}{\h_0}\otimes\bar{\partial}\phi\nonumber=O(|z|^{2-\theta_0-\epsilon}).
	\end{align}
    Here we see that we must suppose $\theta_0\geq 3$ to carry on the general computation. Then by Proposition \ref{integrating} there exists
	 $\vec{Q}\in C^{\infty}(D^2\setminus\ens{0},\C^n)\cap L^2(D^2,|z|^{\theta_0-1}|dz|^2)$ such that
	\begin{align*}
	\partial \vec{Q}=-|\H|^2\partial\phi-2\,g^{-1}\otimes \s{\H}{\h_0}\otimes \bar{\partial}\phi
	\end{align*}
	and
	\begin{align*}
		\vec{Q}&=O(|z|^{3-\theta_0-\epsilon}).
	\end{align*}
	Therefore, we obtain
	\begin{align}
		\partial\left(\vec{H}-2i\partial\vec{L}+\vec{\gamma}_0\log|z|-\vec{Q}\right)=0,\quad \text{on}\;\, D^2\setminus\ens{0}
	\end{align}
	and there exists $\vec{C}_1\in \C^n$ such that (as $r(0)\leq \theta_0-2$)
	\begin{align*}
		&\H-2i\vec{L}+\vec{\gamma}_0\log|z|=\frac{\bar{\vec{C}_1}}{\z^{\theta_0-2}}+\vec{Q}+O(|z|^{3-\theta_0})=\frac{\bar{\vec{D}_1}}{\z^{\theta_0-2}}
		+O(|z|^{3-\theta_0-\epsilon})
	\end{align*}
	As $\vec{\H}$ and $\vec{L}$ are \textit{real}, one has
	\begin{align}\label{0orderH}
		\H+\vec{\gamma}_0\log|z|&=\Re\left(\frac{\vec{D}_1}{z^{\theta_0-2}}\right)+O(|z|^{3-\theta_0-\epsilon}).
	\end{align}
	and the equation \eqref{0orderH} reduces to
	\begin{align}\label{2ndH}
		&\H+\vec{\gamma}_0\log|z|=\Re\bigg(\frac{\vec{C}_1}{z^{\theta_0-2}}\bigg)+O(|z|^{3-\theta_0-\epsilon}).
	\end{align}
	We shall keep in mind that the only important constants are $\vec{A}_j,\vec{B}_j,\vec{C}_j$ for $0\leq j\leq 2$, and that the other are simply artefacts of the integrations, but do not play any role. This will become transparent when we will obtain the expansion of $\h_0$ with respect to $\{\vec{A}_j,\vec{C}_j\}_{0\leq j\leq 2}$ (we will actually show that $\vec{B}_0$ vanish).
	
	We recall that by definition of the mean curvature,
	\begin{align*}
		\Delta\phi=4\p{z}\p{\z}\phi=2e^{2\lambda}\H.
	\end{align*}
	and an easy computation shows that for some $\alpha_0\in\C$, we have
	\begin{align}\label{liouville}
		e^{2\lambda}=|z|^{2\theta_0-2}\left(1+2\,\Re\left(\alpha_0 z\right)+O(|z|^{2-\epsilon})\right).
	\end{align}
	Let us check this fact.
	The Liouville equation shows that 
	\begin{align}\label{liouville-infty}
		-\Delta\lambda=e^{2\lambda}K_g+2\pi(\theta_0-1)\delta_0,
	\end{align}
	where $\delta_0$ is the Dirac mass at $0\in D^2$.
	Therefore, the function $u:D^2\rightarrow\R$ defined by 
	\begin{align*}
		e^{2u}=|z|^{2-2\theta_0}e^{2\lambda}
	\end{align*}
	satisfies the following Liouville equation
	\begin{align}\label{liouville-infty+1}
		-\Delta u=e^{2\lambda}K_g,
	\end{align}
	and as $\phi\in W^{2,p}(D^2)$ for all $p<\infty$, we have
	\begin{align*}
	|e^{2\lambda}K_g|\leq \frac{1}{2}|\vec{\I}|_g^2\in \bigcap_{p<\infty}L^p(D^2)
	\end{align*}
	so by a classical Calder\'{o}n-Zygmund estimate, we have
	\begin{align*}
		u\in \bigcap_{p<\infty}W^{2,p}(D^2)\subset \bigcap_{\alpha<1}C^{1,\alpha}(D^2).
	\end{align*}
	In particular, we have
	\begin{align*}
		e^{2u}\in \bigcap_{\alpha<1}C^{1,\alpha}(D^2).
	\end{align*}
    and the expansion \eqref{liouville} simply corresponds to the first order Taylor expansion of $e^{2u}$, as we know that $e^{2u(0)}=1$ by the normalisation we made. Furthermore, as $\theta_0\geq 3$, the logarithm term is an error in \eqref{2ndH}, so we have
	\begin{align*}
		\p{\z}\left(\p{z}\phi\right)&=\frac{1}{4}\left(\vec{C}_1z\z^{\theta_0-1}+\bar{\vec{C}_1}z^{\theta_0-1}\z\right)+O(|z|^{\theta_0+1-\epsilon}).
	\end{align*}
	Therefore, for some $\vec{A}_1,\vec{A}_2\in \C^n$ (as $\vec{A}_0$ has already been defined in \eqref{1stdevelopment}), one obtains
	\begin{align}\label{devordre2}
		\p{z}\phi&=\vec{A}_0z^{\theta_0-1}+\vec{A}_1z^{\theta_0}+\vec{A}_2z^{\theta_0+1}+\frac{1}{4\theta_0}\vec{C}_1z\z^{\theta_0}+\frac{1}{8}\bar{\vec{C}_1}\z^{2}z^{\theta_0-1}+O(|z|^{\theta_0+2-\epsilon}).
	\end{align}
	Here is the first crucial step of the proof. As $\phi$ is conformal, we have
	\begin{align*}
		\s{\p{z}\phi}{\p{z}\phi}=0,
	\end{align*}
	and in the product, we see that we must neglect all term of order more than $|z|^{2\theta_0+1}$. This yields
	\begin{align}\label{cancel0}
		\s{\p{z}\phi}{\p{z}\phi}&=\s{\vec{A}_0}{\vec{A}_0}z^{2\theta_0-2}+2\s{\vec{A}_0}{\vec{A}_1}z^{2\theta_0-1}+\left(\s{\vec{A}_1}{\vec{A}_1}+2\s{\vec{A}_0}{\vec{A}_2}\right)z^{2\theta_0}\nonumber\\
		&+\frac{1}{2\theta_0}\s{\vec{A}_0}{\vec{C}_1}|z|^{2\theta_0}+\frac{1}{4}\s{\vec{A}_0}{\bar{\vec{C}_1}}z^{2\theta_0-2}\z^2 
		+O(|z|^{2\theta_0+1-\epsilon}).
	\end{align}
	Therefore, we have
	\begin{align}\label{cancellations}
		\left\{\begin{alignedat}{1}
		&\s{\vec{A}_0}{\vec{A}_0}=0,\qquad \s{\vec{A}_0}{\vec{A}_1}=0,\qquad \s{\vec{A}_1}{\vec{A}_1}+2\s{\vec{A}_0}{\vec{A}_2}=0\\
		&\s{\vec{A}_0}{\vec{C}_1}=\s{\bar{\vec{A}_0}}{\vec{C}_1}=0.
		\end{alignedat}\right.
	\end{align}
	Summing up, we have the following expansions
	\begin{align}\label{3rddev}
	\left\{\begin{alignedat}{1}
			\p{z}\phi&=\vec{A}_0z^{\theta_0-1}+\vec{A}_1z^{\theta_0}+\vec{A}_2z^{\theta_0+1}+\frac{1}{4\theta_0}\vec{C}_1z\,\z^{\theta_0}+\frac{1}{8}\bar{\vec{C}_1}z^{\theta_0-1}\z^2+O(|z|^{\theta_0+2-\epsilon})\\
	        \H&=\Re\left(\frac{\vec{C}_1}{z^{\theta_0-2}}\right)+O(|z|^{3-\theta_0-\epsilon}).
	\end{alignedat}\right.
	\end{align}
	We check that these expansions are consistent, as
	\begin{align*}
		\H&=\frac{1}{2}\Delta_g\phi=2e^{-2\lambda}\p{z\z}^2\phi=2z^{1-\theta_0}\z^{1-\theta_0}\left(\frac{1}{4}\vec{C}_1z\z^{\theta_0-1}+\frac{1}{4}\bar{\vec{C}_1}z^{\theta_0-1}\z\right)+O(|z|^{3-\theta_0-\epsilon})\\
		&=\frac{1}{2}\left(\vec{C}_1z^{2-\theta_0}+\bar{\vec{C}_1}\z^{2-\theta_0}\right)+O(|z|^{3-\theta_0-\epsilon})=\Re\left(\frac{\vec{C}_1}{z^{\theta_0-2}}\right)+O(|z|^{3-\theta_0-\epsilon}).
	\end{align*}	
	\textbf{In particular, by Proposition \ref{parenthesis}, we have $\n\in W^{2,\infty}(D^2)$}. We will see how this improvement of regularity shows that the poles of the quartic form are of order at most $1$.
	
	\textbf{Step 3. Removability of poles of order $2$ of the quartic form $\mathscr{Q}_{\phi}$}.
	
	We have $e^{2\lambda}=2\s{\p{z}\phi}{\p{\z}\phi}$ so by \eqref{cancellations}
	\begin{align}\label{generalmetric}
		e^{2\lambda}&=|z|^{2\theta_0-2}+4\,\Re\left(\s{\bar{\vec{A}_0}}{\vec{A}_1}z^{\theta_0}\z^{\theta_0-1}\right)+4\,\Re\left(\s{\bar{\vec{A}_0}}{\vec{A}_2}z^{\theta_0+1}\z^{\theta_0-1}\right)+2|\vec{A}_1|^2|z|^{2\theta_0}+P(|z|^{2\theta_0})\nonumber\\
		&=|z|^{2\theta_0-2}\left(1+2\,\Re\left(\alpha_0 z+\alpha_1 z^2\right)+2|\vec{A}_1|^2|z|^2+O(|z|^{3-\epsilon})\right),
	\end{align}
	where we defined
	\begin{align}\label{defalpha01}
		\left\{\begin{alignedat}{1}
		\alpha_0&=\s{\bar{\vec{A}_0}}{\vec{A}_1}\\
		\alpha_1&=\s{\bar{\vec{A}_0}}{\vec{A}_2}
		\end{alignedat}\right.
	\end{align}
    Therefore, we obtain
	\begin{align*}
		\p{z}(e^{2\lambda})&=(\theta_0-1)z^{\theta_0-2}\z^{\theta_0-1}+\theta_0\,\alpha_0|z|^{2\theta_0-2}+(\theta_0-1)\bar{\alpha_0}z^{\theta_0-2}\z^{\theta_0}+(\theta_0+1)\alpha_1 z^{\theta_0}\z^{\theta_0-1}\\
		&+(\theta_0-1)\bar{\alpha_1}z^{\theta_0-2}\z^{\theta_0+1}+2\theta_0|\vec{A}_1|^2z^{\theta_0-1}\z^{\theta_0}+O(|z|^{2\theta_0-\epsilon})\\
		&=|z|^{2\theta_0-2}\left(\frac{(\theta_0-1)}{z}+\theta_0\alpha_0+(\theta_0-1)\bar{\alpha_0}\frac{\z}{z}+(\theta_0+1)\alpha_1z+(\theta_0-1)\bar{\alpha_1}\frac{\z^2}{z}+2\theta_0|\vec{A}_1|^2\z+O(|z|^{2-\epsilon})\right)
	\end{align*}
	and
	\begin{align}
	e^{-2\lambda}
	&=|z|^{2-2\theta_0}\left(1-\alpha_0z-\bar{\alpha_0}\z+\left(\alpha_0^2-\alpha_1\right)z^2+\left(\bar{\alpha_0}^2-\bar{\alpha_1}\right)\z^2-2\left(|\vec{A}_1|^2-|\alpha_0|^2\right)|z|^2+O(|z|^{3-\epsilon})\right)
	\end{align}
	as
	\begin{align*}
		4\,\Re\left(\alpha_0 z\right)^2=(\alpha_0z+\bar{\alpha_0}\z)^2=\alpha_0^2z^2+\bar{\alpha_0}^2\z^2+2|\alpha_0|^2|z|^2=2\,\Re\left(\alpha_1^2z^2\right)+2|\alpha_0|^2|z|^2
	\end{align*}
	Therefore, we obtain 
	\begin{align*}
		&2(\p{z}\lambda)=e^{-2\lambda}\p{z}(e^{2\lambda})=\left(1-\alpha_0z-\bar{\alpha_0}\z+\left(\alpha_0^2-\alpha_1\right)z^2+\left(\bar{\alpha_0}^2-\bar{\alpha_1}\right)\z^2-2\left(|\vec{A}_1|^2-|\alpha_0|^2\right)|z|^2+O(|z|^{3-\epsilon})\right)\times \\
		&\left(\frac{(\theta_0-1)}{z}+\theta_0\alpha_0+(\theta_0-1)\bar{\alpha_0}\frac{\z}{z}+(\theta_0+1)\alpha_1z+(\theta_0-1)\bar{\alpha_1}\z+2\theta_0|\vec{A}_1|^2\z+O(|z|^{2-\epsilon})\right)\\
		&=\frac{(\theta_0-1)}{z}+\alpha_0+\left(2\alpha_1-\alpha_0^2\right)z+\left(2|\vec{A}_1|^2-|\alpha_0|^2\right)\z+O(|z|^{2-\epsilon}).
	\end{align*}
	so
	\begin{align}\label{dzlambda}
		2(\p{z}\lambda)	&=\frac{(\theta_0-1)}{z}+\alpha_0+\left(2\alpha_1-\alpha_0^2\right)z+\left(2|\vec{A}_1|^2-|\alpha_0|^2\right)\z+O(|z|^{2-\epsilon}).
	\end{align}
	We finally come to the expansion of the Weingarten tensor
	First, we have
	\begin{align}\label{gendevphi0}
		\p{z}\phi=\vec{A}_0z^{\theta_0-1}+\vec{A}_1z^{\theta_0}+\vec{A}_2z^{\theta_0+1}+\frac{1}{4\theta_0}\vec{C}_1z\z^{\theta_0}+\frac{1}{8}\bar{\vec{C}_1}z^{\theta_0-1}\z^2+O(|z|^{\theta_0+2-\epsilon})
	\end{align}
	so
    \begin{align*}
    \p{z}^2\phi&=(\theta_0-1)\vec{A}_0z^{\theta_0-2}+\theta_0\vec{A}_1z^{\theta_0-1}+(\theta_0+1)\vec{A}_2z^{\theta_0}+\frac{1}{4\theta_0}\vec{C}_1\z^{\theta_0}+\frac{(\theta_0-1)}{8}\bar{\vec{C}_1}z^{\theta_0-2}\z^2+O(|z|^{\theta_0+1-\epsilon}).
     \end{align*}
     Then we have
     \begin{align*}
     	&2\left(\p{z}\lambda\right)\p{z}\phi=\left(\frac{(\theta_0-1)}{z}+\alpha_0+\left(2\alpha_1-\alpha_0^2\right)z+\left(2|\vec{A}_1|^2-|\alpha_0|^2\right)\z+O(|z|^{2-\epsilon})\right)\times \\
     	&\left(\vec{A}_0z^{\theta_0-1}+\vec{A}_1z^{\theta_0}+\vec{A}_2z^{\theta_0+1}+\frac{1}{4\theta_0}\vec{C}_1z\z^{\theta_0}+\frac{1}{8}\bar{\vec{C}_1}z^{\theta_0-1}\z^2+O(|z|^{\theta_0+2-\epsilon})\right)\\
     	&=(\theta_0-1)\vec{A}_0z^{\theta_0-2}+(\theta_0-1)\vec{A}_1z^{\theta_0-1}+(\theta_0-1)\vec{A}_2z^{\theta_0}+\frac{(\theta_0-1)}{4\theta_0}\vec{C}_1\z^{\theta_0}+\frac{(\theta_0-1)}{8}\bar{\vec{C}_1}z^{\theta_0-2}\z^2\\
     	&+\alpha_0\vec{A}_0z^{\theta_0-1}+\alpha_0\vec{A}_1z^{\theta_0}+(2\alpha_1-\alpha_0^2)\vec{A}_0z^{\theta_0}+\left(2|\vec{A}_1|^2-|\alpha_0|^2\right)\vec{A}_0z^{\theta_0-1}\z+O(|z|^{\theta_0+1-\epsilon}).
     \end{align*}
     Therefore, we deduce that
     \begin{align*}
     	&\p{z}^2\phi-2(\p{z}\lambda)\p{z}\phi=\colorcancel{(\theta_0-1)\vec{A}_0z^{\theta_0-2}}{red}+\theta_0\vec{A}_1z^{\theta_0-1}+(\theta_0+1)\vec{A}_2z^{\theta_0}+\frac{1}{4\theta_0}\vec{C}_1\z^{\theta_0}+\colorcancel{\frac{(\theta_0-1)}{8}\bar{\vec{C}_1}z^{\theta_0-2}\z^2}{blue}\\
     	&-\bigg\{\colorcancel{(\theta_0-1)\vec{A}_0z^{\theta_0-2}}{red}+(\theta_0-1)\vec{A}_1z^{\theta_0-1}+(\theta_0-1)\vec{A}_2z^{\theta_0}+\frac{(\theta_0-1)}{4\theta_0}\vec{C}_1\z^{\theta_0}+\colorcancel{\frac{(\theta_0-1)}{8}\bar{\vec{C}_1}z^{\theta_0-2}\z^2}{blue}\\
     	&+\alpha_0\vec{A}_0z^{\theta_0-1}+\alpha_0\vec{A}_1z^{\theta_0}+(2\alpha_1-\alpha_0^2)\vec{A}_0z^{\theta_0}+\left(2|\vec{A}_1|^2-|\alpha_0|^2\right)\vec{A}_0z^{\theta_0-1}\z\bigg\}+O(|z|^{\theta_0+1-\epsilon})\\
     	&=\left(\vec{A}_1-\Big(2|\vec{A}_1|^2-|\alpha_0|^2\Big)\vec{A}_0\z\right)z^{\theta_0-1}+\left(2\vec{A}_2-\alpha_0\vec{A}_1-(2\alpha_1-\alpha_0^2)\vec{A}_0\right)z^{\theta_0}-\frac{(\theta_0-2)}{4\theta_0}\vec{C}_1\z^{\theta_0}+O(|z|^{\theta_0+1-\epsilon})
     \end{align*}
	Finally, we have
	\begin{align}\label{2ndh0}
		&\h_0(z)=2e^{2\lambda}\p{z}(e^{-2\lambda}\p{z}\phi)dz^2=\left(\p{z}^2\phi-2(\p{z}\lambda)\p{z}\phi\right)dz^2\nonumber\\
		&=2\left(\vec{A}_1-\alpha_0\vec{A}_0-\Big(2|\vec{A}_1|^2-|\alpha_0|^2\Big)\vec{A}_0\z\right)z^{\theta_0-1}+2\left(2\vec{A}_2-\alpha_0\vec{A}_1-(2\alpha_1-\alpha_0^2)\vec{A}_0\right)z^{\theta_0}-\frac{(\theta_0-2)}{2\theta_0}\vec{C}_1\z^{\theta_0}\nonumber\\
		&+O(|z|^{\theta_0+1-\epsilon})
	\end{align}
	We recall that the only (possibly) singular part of Bryant's quartic form $\mathscr{Q}_{\phi}$ when $\theta_0\geq 2$, is
	\begin{align*}
		Q(\h_0)=g^{-1}\otimes\left(\partial\bar{\partial}\h_0\totimes\h_0-\partial\h_0\totimes\bar{\partial}\h_0\right).
	\end{align*}
	Using
	\begin{align*}
		\s{\vec{A}_0}{\vec{A}_0}=\s{\vec{A}_0}{\vec{A}_1}=\s{\vec{A}_0}{\vec{C}_1}=0,
	\end{align*}
	and the fact (already used in several places) that for any quadratic differential 
	\begin{align*}
	\vec{\alpha}\in \Gamma(K_{D^2_{\ast}}^2\,,\C^n)
	\end{align*}
	such $\vec{\alpha}=\vec{\Lambda}f_1(z)f_2(\z)dz^2$, where $\vec{\Lambda}\in\C^n$ is fixed and $f_1,f_2:D^2_{\ast}\rightarrow\C$ are holomorphic, we have
	\begin{align*}
		Q(\vec{\alpha})=\s{\vec{\Lambda}}{\vec{\Lambda}}\,g^{-1}\otimes\left(f_1'\bar{f_2'}\cdot f_1f_2-f_1'\bar{f_2}\cdot f_1\bar{f_2'}\right)=0,
	\end{align*}
	we obtain
	\begin{align}\label{singularityQ}
		Q(\h_0)&={(\theta_0-1)(\theta_0-2)}\s{\vec{A}_1}{\vec{C}_1}\frac{1}{z}+O(|z|^{-\epsilon}).
	\end{align}
	so the poles of $\mathscr{Q}_{\phi}$ are of order at most $1$, and this extends Bryant's theorem for \emph{variational} branched Willmore spheres with less than $7$ branch points by Riemann-Roch theorem.
	
	\begin{rem}
		Notice that the quartic form would also be holomorphic with if $\vec{A}_1=\vec{A}_2=0$ in the expansion \eqref{gendevphi0} of $\phi$. In this case, no assumption is needed on the first or second residue (take any inversion of a minimal surface as in the proof of Theorem \ref{devinv}). However, there are no analytic way to have access to this harmonic part coming from integration of .
	\end{rem}
	
	The end of the proof will be devoted to the derivation of the cancellation of $\s{\vec{A}_1}{\vec{C}_1}=0$. We will see that this fact is a direct consequence of the conservation laws.

	\begin{rem}\label{importantremarkerros}
		One can wonder why we only obtain power functions, as $\phi$ is \emph{not} smooth through the branch point. However, the bootstrap procedure we have implemented in the first steps of shows we will have only power functions in the expansion of $\H$ until we get to 
		\begin{align*}
			\partial\left(\H-2i\vec{L}+\vec{\gamma}_0\log|z|\right)=\cdots+\frac{\vec{E}}{z}+O(|z|^{-\epsilon}).
		\end{align*}
		for some $\vec{E}\in\mathbb{C}^n$,
		which will make a logarithm term appear, and gives
		\begin{align*}
			\H=\cdots+\left(2\,\Re(\vec{E})-\vec{\gamma}_0\right)\log|z|+O(|z|^{1-\epsilon}).
		\end{align*}
		In the next expansions, as we only make products, integration of derivation of tensors, we see that the only possible components in the Taylor expansion of $\phi$ are
		\begin{align*}
			z^a\z^b\log^p|z|\quad a,b\in\Z,\quad p\in \N.
		\end{align*}
		In particular, no fractional powers of the type $|z|^{\alpha}$ for some $\alpha\in (0,\infty)\setminus\N$ may occur in the Taylor expansion of $\phi$, although the branched immersion $\phi$ is in general \emph{not} smooth. As $\phi$ is continuous on $D^2$, terms of the type
		\begin{align*}
			\Re(z^{\alpha}\z^{\beta})
		\end{align*}
		were excluded from the beginning, if $\alpha,\beta\in \R$, $\alpha+\beta\in (0,\infty)$ and $\alpha\notin\Z$ or $\beta\notin\Z$, as the angle function is not a well-defined continuous function on $D^2$.
		
		In particular, all errors of the type
		\begin{align*}
			O(|z|^{a-\epsilon})
		\end{align*}
		for some $a\in\Z$ could be replaced by 
		\begin{align*}
			O(|z|^{a}\log^p|z|)
		\end{align*}
		for some $p\in \N$ sufficiently large enough, and more importantly, errors can be differentiated (and integrated by Proposition \ref{integrating}) as polynomials, in the following sense : for all $\vec{F}\in \ens{\phi,\p{z}\phi,\p{\z}\phi,\H,\h_0}$, if
		\begin{align*}
			\vec{F}=\vec{F}_0+O(|z|^{a-\epsilon})
		\end{align*}
		for some $a\in \Z$ and some function $\vec{F}_0$, rational in $z,\z$, and polynomial in $\log|z|$, we have for all $\alpha,\beta\in \N$
		\begin{align*}
			\p{z}^{\alpha}\p{\z}^{\beta}\vec{F}=\p{z}^\alpha\p{\z}^\beta\vec{F}_0+O(|z|^{a-\alpha-\beta-\epsilon}).
		\end{align*}
	\end{rem}

 \textbf{Step 4. Conservation and cancellation laws for     $\theta_0\geq 4$ : invariance by inversions.}
 
 We stress out the following remark.
 
 \textbf{\emph{ From this point, we will need to use the computer-assisted proof from \cite{sagepaper}.}}
 
 Let $\vec{F}\in C^{\infty}(D^2\setminus\ens{0},\C^n)$ such that
 \begin{align*}
 	\vec{\beta}= \mathscr{I}_{{\phi}}\left(\partial\H+|\H|^2\partial\phi+2\,g^{-1}\otimes\s{\H}{\h_0}\totimes\bar{\partial}\phi\right)-g^{-1}\otimes\left(\bar{\partial}|\phi|^2\otimes\h_0-2\s{\phi}{\h_0}\otimes\bar{\partial}\phi\right)=\vec{F}(z)dz
 \end{align*}
 where for all vector $\vec{X}\in \C^n$, we have
 \begin{align*}
 	\mathscr{I}_{\phi}(\vec{X})=|\phi|^2\vec{X}-2\s{\phi}{\vec{X}}\phi.
 \end{align*}
 The conservation law associated to the invariance by inversions of the Willmore energy shows that $\Im(\vec{\beta})$ is closed. Furthermore, as $\vec{\beta}$ is a $\C^n$-valued $1$-form of type $(1,0)$, there exists a smooth function $\vec{F}\in C^{\infty}(D^2\setminus\ens{0},\R^n)$ such that
 \begin{align}\label{defF}
 	\vec{\beta}=\vec{F}(z)dz.
 \end{align}
 In particular, we have
 \begin{align}\label{d2=0}
 	d\,\vec{\beta}=\partial\vec{\beta}+\bar{\partial}\vec{\beta}=\p{z}\vec{F}(z)dz\wedge dz+\p{\z}\vec{F}(z)d\z\wedge dz=\p{\z}\vec{F}(z)\,d\z \wedge dz.
 \end{align}
 Therefore, we have by \eqref{d2=0}
 \begin{align}\label{key}
 	0=d\,\Im(\vec{\beta})=\Im(d\vec{\beta})=\Im(\p{\z}\vec{F}(z)\,d\z\wedge dz)=-2\,\Re\left(\p{\z}\vec{F}(z)\right)\,dx_1\wedge dx_2
 \end{align}
 as
 \begin{align*}
 	d\z\wedge dz=\left(dx_1-idx_2\right)\wedge (dx_1+idx_2)=2i\, dx_1\wedge dx_2.
 \end{align*}
 Finally, we deduce from \eqref{defF} and \eqref{key} that the $1$-form $\Im(\vec{\beta})$ is closed if and only if
 \begin{align}\label{eqinv}
 	\Re\left(\p{\z}\vec{F}(z)\right)=0,
 \end{align} 
 Thanks to \cite{sagepaper},
 the coefficient $\vec{\Omega}\in \C^n$ in
 \begin{align}
 	\frac{\z^{\theta_0+2}}{z}
 \end{align}
 in the Taylor expansion of 
 \[
 \Re(\p{\z}\vec{F}(z))=0,
 \] is given by 
 \begin{align}\label{yang}
 \vec{\Omega}=\frac{4 \, {\left(\theta_{0}^{2} \overline{\alpha_{0}}^{3} + 2 \, \theta_{0} \overline{\alpha_{0}}^{3} - 3 \, \overline{\alpha_{0}}^{3}\right)} \overline{\vec{A}_{0}}\cdot \bar{\vec{A}_0}}{\theta_{0}}\,\vec{A}_0-\frac{4 \, {\left(\theta_{0}^{2} \overline{\alpha_{0}}^{3} + 2 \, \theta_{0} \overline{\alpha_{0}}^{3} - 3 \, \overline{\alpha_{0}}^{3}\right)} \vec{A}_{0}\cdot  \overline{\vec{A}_{0}}}{\theta_{0}}\bar{\vec{A}_0}
\end{align}
 As $\s{\vec{A}_0}{\vec{A}_0}=0$ while $|\vec{A}_0|^2=\dfrac{1}{2}$, we obtain by \eqref{yang}
 \begin{align}\label{eqinv2}
 	\vec{\Omega}&=-\frac{4 \, {\left(\theta_{0}^{2} \overline{\alpha_{0}}^{3} + 2 \, \theta_{0} \overline{\alpha_{0}}^{3} - 3 \, \overline{\alpha_{0}}^{3}\right)} \vec{A}_{0}\cdot  \overline{\vec{A}_{0}}}{\theta_{0}}\bar{\vec{A}_0}=-\frac{2}{\theta_0}(\theta_0^2+2\theta_0-3)\bar{\alpha_0}^3\bar{\vec{A}_0}\\
 	&=-\frac{2}{\theta_0}(\theta_0+3)(\theta_0-1)\bar{\alpha_0}^3\bar{\vec{A}_0}=0.
 \end{align}
 Now, as $\vec{A}_0\neq 0$ by the very definition of a branch point of order $\theta_0\geq 1$, and $\theta_0\geq 2$ (as $\theta_0\geq 4$ in this step), we thereby deduce that
 \begin{align*}
 	\alpha_0=2\s{\bar{\vec{A}_0}}{\vec{A}_1}=0
 \end{align*}
 so we recover the previous result.

    \textbf{Step 5. Cancellation laws for $\theta_0\geq 5$.}
    
    We find in \cite{sagepaper} that the \emph{fourth} order  expansion of the quartic form is for $\theta_0\geq 5$
    \begin{align}
    &	\mathscr{Q}_{\phi}=(\theta_0-1)(\theta_0-2)\s{\vec{A}_1}{\vec{C}_1}\frac{dz^4}{z}+\bigg\{(\theta_0-2)(\theta_0-3)\s{\vec{A}_1}{\vec{C}_2}+2\theta_0(\theta_0-2)\s{\vec{A}_2}{\vec{C}_1}\bigg\}dz^4\nonumber\\
    &+\bigg\{(\theta_0-3)(\theta_0-4)\s{\vec{A}_1}{\vec{C}_3}+3(\theta_0+1)(\theta_0-2)\s{\vec{A}_3}{\vec{C}_1}+6\s{\bar{\vec{A}_0}}{\vec{A}_2}\s{\vec{A}_1}{\vec{C}_1}+2(\theta_0-1)(\theta_0-3)\s{\vec{A}_2}{\vec{C}_2}\bigg\}z\,dz^4\nonumber\\
    &-6(\theta_0-2)\left(|\vec{A}_1|^2\s{\vec{A}_1}{\vec{C}_1}-\s{\bar{\vec{A}_1}}{\vec{C}_1}\s{\vec{A}_1}{\vec{A}_1}\right)\z\, dz^4\nonumber\\
    &-\frac{3(\theta_0-2)}{2\theta_0}\left(|\vec{C}_1|^2\s{\vec{A}_1}{\vec{A}_1}-\s{\vec{A}_1}{\vec{C}_1}\s{\vec{A}_1}{\bar{\vec{C}_1}}\right)z^{\theta_0}\z^{2-\theta_0}dz^4+O(|z|^3)
    \end{align}
    \normalsize
    If we suppose that $\mathscr{Q}_{\phi}$ is meromorphic, then we obtain
    \begin{align}\label{system}
    \left\{\begin{alignedat}{1}
    |\vec{A}_1|^2\s{\vec{A}_1}{\vec{C}_1}&=\s{\bar{\vec{A}_1}}{\vec{C}_1}\s{\vec{A}_1}{\vec{A}_1}\\
    |\vec{C}_1|^2\s{\vec{A}_1}{\vec{A}_1}&=\s{\vec{A}_1}{\bar{\vec{C}_1}}\s{\vec{A}_1}{\vec{C}_1},
    \end{alignedat}\right.
    \end{align}
    Remarking that is a linear system in $(\s{\vec{A}_1}{\vec{C}_1},\s{\vec{A}_1}{\vec{A}_1})$, we can recast \eqref{system} as
    \begin{align}\label{salvationmatrix}
    \begin{dmatrix}
    |\vec{A}_1|^2 & -\s{\bar{\vec{A}_1}}{\vec{C}_1}\\
    -\s{\vec{A}_1}{\bar{\vec{C}_1}} & |\vec{C}_1|^2
    \end{dmatrix}
    \begin{dmatrix}
    \s{\vec{A}_1}{\vec{C}_1}\\
    \s{\vec{A}_1}{\vec{A}_1}
    \end{dmatrix}=0.
    \end{align}
    Thanks to Cauchy-Schwarz inequality, we obtain
    \begin{align}\label{determinant}
    \det 	\begin{dmatrix}
    |\vec{A}_1|^2 & -\s{\bar{\vec{A}_1}}{\vec{C}_1}\\
    -\s{\vec{A}_1}{\bar{\vec{C}_1}} & |\vec{C}_1|^2
    \end{dmatrix}=|\vec{A}_1|^2|\vec{C}_1|^2-|\s{\vec{A}_1}{\bar{\vec{C}_1}}|^2\geq 0.
    \end{align}
    Therefore, if the determinant is positive, we obtain
    \begin{align*}
    \s{\vec{A}_1}{\vec{C}_1}=0,
    \end{align*}
    and the holomorphy of the quartic form, and if the determinant vanishes,we obtain
    \begin{align*}
    \vec{A}_1\;\,\text{and}\;\, \vec{C}_1\;\,\text{are proportional}.
    \end{align*}
    But in general this is not enough to conclude.

    \textbf{Step 6. Conclusion for $\theta_0\geq 5$.}
    
    From now on, we suppose thanks to \eqref{system} that
    \begin{align}\label{step5}
    |\vec{A}_1|^2\s{\vec{A}_1}{\vec{C}_1}=\s{\bar{\vec{A}_1}}{\vec{C}_1}\s{\vec{A}_1}{\vec{A}_1}.
    \end{align}
    By adopting the notations of step $3$, we compute in \cite{sagepaper} that the coefficient in
    \begin{align*}
    	z^{\theta_0+3}
    \end{align*}
    in
    \begin{align*}
    	\Re\left(\p{\z}\vec{F}(z)\right)=0
    \end{align*}
    defined in \eqref{defF} is equal, for some $\lambda_1,\lambda_2,\lambda_3\in \C$ to
    \begin{align}\label{endend}
    	-\frac{2(\theta_0-4)}{\theta_0^2(\theta_0-3)}|\vec{A}_1|^2\s{\vec{A}_1}{\vec{C}_1}\vec{A}_0+\lambda_1\bar{\vec{A}_0}+\lambda_2\vec{A}_1+\lambda_3\bar{\vec{A}_1}=0
    \end{align}
    As
    \begin{align*}
    	\s{\vec{A}_0}{\vec{A}_0}=\s{\vec{A}_0}{\vec{A}_1}=\s{\vec{A}_0}{\bar{\vec{A}_1}}=0,
    \end{align*}
    the vector $\vec{A}_0\in \C^n\setminus\ens{0}$ (as $\phi$ has a branch point of multiplicity $\theta_0\geq 0$, the vector $\vec{A}_0$ is non-zero by definition) is linearly independent with $\bar{\vec{A}_0},\vec{A}_1$ and $\bar{\vec{A}_1}$, so \eqref{endend} implies that
    \begin{align}\label{endendend}
    	-\frac{2(\theta_0-4)}{\theta_0^2(\theta_0-3)}|\vec{A}_1|^2\s{\vec{A}_1}{\vec{C}_1}=0.
    \end{align}
    As $\theta_0\geq 5$, we deduce that
    \begin{align}\label{alphaomega}
    	|\vec{A}_1|^2\s{\vec{A}_1}{\vec{C}_1}=0.
    \end{align}
    Therefore, either $\vec{A}_1=0$, or $\s{\vec{A}_1}{\vec{C}_1}=0$. As both alternatives show that
    \begin{align}\label{fin}
    	\s{\vec{A}_1}{\vec{C}_1}=0,
    \end{align}
    we are done as the quartic form admits the following Taylor expansion
    \begin{align*}
    	\mathscr{Q}_{\phi}=(\theta_0-1)(\theta_0-2)\s{\vec{A}_1}{\vec{C}_1}\frac{dz^4}{z}+O(1).
    \end{align*} 
    This concludes the proof of the case $\theta_0\geq 5$.

    \textbf{Step 7. Case $\theta_0=4$.}
    
    In this case, we can show that the fifth order expansion of the quartic form is the following
    	\begin{align*}
    	\mathscr{Q}_{\phi}&=6\s{\vec{A}_1}{\vec{C}_1}\frac{dz^4}{z}-12\left(|\vec{A}_1|^2\s{\vec{A}_1}{\vec{C}_1}-\s{\bar{\vec{A}_1}}{\vec{C}_1}\s{\vec{A}_1}{\vec{A}_1}\right)\z\, dz^4\\
    	&-\frac{3}{4}\left(|\vec{C}_1|^2\s{\vec{A}_1}{\vec{A}_1}-\s{\vec{A}_1}{\bar{\vec{C}_1}}\s{\vec{A}_1}{\vec{C}_1}\right)z^{\theta_0}\,\z^{2-\theta_0}\,dz^4-\frac{3}{8}\s{\vec{A}_1}{\vec{C}_1}\bar{\s{\vec{C}_1}{\vec{C}_1}}\frac{\z^4}{z}\log|z|+O(|z|^4).
    	\end{align*}
    	Therefore, we obtain the additional relation
    	\begin{align}\label{add4}
    		\s{\vec{A}_1}{\vec{C}_1}\bar{\s{\vec{C}_1}{\vec{C}_1}}=0
    	\end{align}
    	and thanks to \eqref{determinant} and the following discussion, we have either
    	\begin{align*}
    		|\vec{A}_1|^2|\vec{C}_1|^2-|\s{\vec{A}_1}{\bar{\vec{C}_1}}|^2>0
    	\end{align*}
    	and
    	\begin{align*}
    		\s{\vec{A}_1}{\vec{C}_1}=\s{\vec{A}_1}{\vec{A}_1}=0
    	\end{align*}
        or
        \begin{align*}
        	|\vec{A}_1|^2|\vec{C}_1|^2-|\s{\vec{A}_1}{\bar{\vec{C}_1}}|^2=0.
        \end{align*}
        Then if $\vec{A}_1=0$ or $\vec{C}_1=0$, we are done as $\s{\vec{A}_1}{\vec{C}_1}=0$, and otherwise, there exists $\lambda\in \C\setminus\ens{0}$ such that
        \begin{align*}
        	\vec{C}_1=\lambda\,\vec{A}_1.
        \end{align*}
        But this implies by \eqref{add4} that
        \begin{align*}
        	0=\s{\vec{A}_1}{\vec{C}_1}\bar{\s{\vec{C}_1}{\vec{C}_1}}=\bar{\lambda}|\s{\vec{A}_1}{\vec{C}_1}|^2
        \end{align*}
        and as $\lambda\neq 0$, we obtain
        \begin{align*}
        	\s{\vec{A}_1}{\vec{C}_1}=0
        \end{align*}
        and this concluded the proof of the case $\theta_0=4$.
    
    
    \textbf{Step 8. Case $\theta_0=3$.} In this case, we will check directly the holomorphy of the quartic form for \emph{true} Willmore disks.
    
    Recall that the expansion \eqref{gendevphi0} is valid for all $\theta_0\geq 3$ and yields
    	\begin{align*}
    \p{z}\phi=\vec{A}_0z^{\theta_0-1}+\vec{A}_1z^{\theta_0}+\vec{A}_2z^{\theta_0+1}+\frac{1}{4\theta_0}\vec{C}_1z\z^{\theta_0}+\frac{1}{8}\bar{\vec{C}_1}z^{\theta_0-1}\z^2+O(|z|^{\theta_0+2-\epsilon})
    \end{align*}
    so taking $\theta_0=3$ in this equation, we find
    \begin{align}\label{3dzphi0}
    	\p{z}\phi=\vec{A}_0z^2+\vec{A}_1z^3+\vec{A}_2z^4+\frac{1}{12}\vec{C}_1z\z^3+\frac{1}{8}\bar{\vec{C}_1}|z|^4+O(|z|^{5-\epsilon})
    \end{align}
    
    First, as for all $\theta_0\geq 3$, we have $\H=O(|z|^{2-\theta_0})$, and $\p{z}\phi=O(|z|^{\theta_0-1})$, we deduce that
    \begin{align}\label{osc}
    |\H|^2\p{z}\phi=O(|z|^{3-\theta_0}).
    \end{align}
    Furthermore, we have
    \begin{align*}
    \h_0&=2\left(\vec{A}_1-\alpha_0\vec{A}_0\right)z^{\theta_0-1}dz^2+O(|z|^{\theta_0}),\qquad
    \H=\frac{1}{2}\frac{\vec{C}_1}{z^{\theta_0-2}}+\frac{1}{2}\frac{\bar{\vec{C}_1}}{\z^{\theta_0-2}}+O(|z|^{3-\theta_0-\epsilon})
    \end{align*}
    so (as $\s{\vec{A}_0}{\vec{C}_1}=\s{\bar{\vec{A}_0}}{\vec{C}_1}=0$)
    \begin{align}\label{genpart1}
    \s{\H}{\h_0}=\s{\vec{A}_1}{\vec{C}_1}z\,dz+\s{\vec{A}_1}{\bar{\vec{C}_1}}z^{\theta_0-1}\z^{2-\theta_0}\, dz+O(|z|^{2})
    \end{align}
    Now, as
    \begin{align*}
    \p{z}\phi=\vec{A}_0z^{\theta_0-1}+O(|z|^{\theta_0}),\quad e^{2\lambda}=|z|^{2\theta_0-2}+O(|z|^{2\theta_0-1})
    \end{align*}
    we trivially have
    \begin{align}\label{genpart2}
    e^{-2\lambda}\p{\z}\phi=\bar{\vec{A}_0}z^{1-\theta_0}+O(|z|^{2-\theta_0-\epsilon}).
    \end{align}
    Finally, by \eqref{genpart1} and \eqref{genpart2}, we have
    \begin{align*}
    g^{-1}\otimes\s{\H}{\h_0}\otimes\bar{\partial}\phi=\s{\vec{A}_1}{\vec{C}_1}\bar{\vec{A}_0}z^{2-\theta_0}\,dz+\s{\vec{A}_1}{\bar{\vec{C}_1}}\bar{\vec{A}_0}\z^{2-\theta_0}\,dz+O(|z|^{3-\theta_0})
    \end{align*}
    so we obtain by \eqref{osc} the equation
    \begin{align}\label{3boot2}
    \partial\left(\vec{H}-2i\vec{L}+\vec{\gamma}_0\log|z|\right)&=-|\H|^2\partial\phi-2\,g^{-1}\otimes\s{\H}{\h_0}\otimes\bar{\partial}\phi\\
    &=-2\s{\vec{A}_1}{\vec{C}_1}z^{2-\theta_0}-2\s{\vec{A}_1}{\bar{\vec{C}_1}}\bar{\vec{A}_0}\z^{2-\theta_0}+O(|z|^{3-\theta_0-\epsilon}).
    \end{align}
    Taking $\theta_0=3$ in \eqref{3boot2} yields
    \begin{align*}
    \partial\left(\vec{H}-2i\vec{L}+\vec{\gamma}_0\log|z|\right)=-2\s{\vec{A}_1}{\vec{C}_1}\bar{\vec{A}_0}\frac{dz}{z}-2\s{\vec{A}_1}{\bar{\vec{C}_1}}\frac{dz}{\z}+O(|z|^{-\epsilon}).
    \end{align*}
    so for some $\bar{\vec{D}_2}\in\mathbb{C}^n$, we have
    \begin{align}\label{3h1}
    \H-2i\vec{L}+\vec{\gamma}_0\log|z|=\bar{\vec{D}_2}-4\s{\vec{A}_1}{\vec{C}_1}\bar{\vec{A}_0}\log|z|-2\s{\vec{A}_1}{\bar{\vec{C}_1}}\bar{\vec{A}_0}\frac{z}{\z}+O(|z|^{1-\epsilon})
    \end{align}
    Therefore, if we define
    \begin{align}\label{3defconts1}
    \left\{
    \begin{alignedat}{1}
    \vec{C}_2&=\Re\left(\vec{D}_2\right)\in\R^n\\
    \vec{B}_1&=-2\s{\bar{\vec{A}_1}}{\vec{C}_1}\vec{A}_0\in\mathbb{C}^n\\
    \vec{\gamma}_1&=-\vec{\gamma}_0-4\,\Re\left(\s{\vec{A}_1}{\vec{C}_1}\bar{\vec{A}_0}\right)\in\mathbb{R}^n
    \end{alignedat}\right.
    \end{align}
    we obtain by \eqref{3h1}
    \begin{align}\label{3h2}
    \H=\Re\left(\frac{\vec{C}_1}{z}+\vec{B}_1\frac{\z}{z}\right)+\vec{C}_2+\vec{\gamma}_1\log|z|+O(|z|^{1-\epsilon}).
    \end{align}
    Now, we have
    \begin{align*}
    	e^{2\lambda}&=|z|^{2\theta_0-2}\left(1+2\,\Re\left(\alpha_0z+\alpha_1z^2\right)+2|\vec{A}_1|^2|z|^2+O(|z|^{3-\epsilon}\right)\\
    	&=|z|^{4}+\alpha_0z^3\z+\bar{\alpha_0}z^2\z^3+\alpha_1z^4\z^2+\bar{\alpha_1}z^2\z^4+2|\vec{A}_1|^2|z|^6+O(|z|^{7-\epsilon}).
    \end{align*}
    and recall the equation
    \begin{align}\label{nandoumo}
    	\Delta\phi=\frac{e^{2\lambda}}{2}\H.
    \end{align}
    To obtain a second order expansion of the right-hand side of \eqref{nandoumo}, we only need to develop $e^{2\lambda}$ up to order $2$, and we compute directly
	\begin{align}\label{dev3}
		&\frac{e^{2\lambda}}{2}\H=\frac{1}{2}\Big(|z|^4+\alpha_0z^3\z^2+\bar{\alpha_0}z^2\z^3+O(|z|^6)\Big)\cdot \left(\frac{1}{2}\frac{\vec{C_1}}{z}+\frac{1}{2}\frac{\bar{\vec{C}_1}}{\z}+\frac{1}{2}\vec{B}_1\frac{\z}{z}+\frac{1}{2}\bar{\vec{B}_1}\frac{z}{\z}+\vec{C}_2+\vec{\gamma}_1\log|z|+O(|z|^{1-\epsilon})\right)\nonumber\\
		&=\frac{1}{2}\bigg(\frac{1}{2}\vec{C}_1z\z^2+\frac{1}{2}\bar{\vec{C}_1}z^2\z+\frac{1}{2}\vec{B}_1z\z^3+\frac{1}{2}\bar{\vec{B}_1}z^3\z+\vec{C}_2|z|^4+\vec{\gamma}_1|z|^4\log|z|\nonumber\\
		&+\frac{\alpha_0}{2}\vec{C}_1|z|^4+\frac{1}{2}\alpha_0\bar{\vec{C}_1}z^3\z+\frac{1}{2}\bar{\alpha_0}\vec{C}_1z\z^3+\frac{1}{2}\bar{\alpha_0}\bar{\vec{C}_1}|z|^4+O(|z|^{5-\epsilon})\bigg)\nonumber\\
		&=\frac{1}{2}\bigg(\frac{1}{2}\vec{C}_1z\z^2+\frac{1}{2}\bar{\vec{C}_1}z^2\z+\frac{1}{2}\left(\vec{B}_1+{\alpha_0}\vec{C}_1\right)z\z^3+\frac{1}{2}\left(\bar{\vec{B}_1}+\alpha_0\bar{\vec{C}_1}\right)z^3\z+\left(\vec{C}_2+\Re)\alpha_0\vec{C}_1\right)|z|^4\nonumber\\
		&+\vec{\gamma_1}|z|^4\log|z|+O(|z|^{5-\epsilon})\bigg)
	\end{align}
	Therefore, by \eqref{3dzphi0}, \eqref{nandoumo} and \eqref{dev3}, and Proposition \ref{integrating}, there exists $\vec{A}_3\in\C^n$ such that
	\begin{align}\label{3end}
		\p{z}\phi&=\vec{A}_0z^{2}+\vec{A}_1z^3+\vec{A}_2z^4+\vec{A}_3z^5+\frac{1}{2}\bigg(\frac{1}{6}\vec{C}_1z\z^3+\frac{1}{4}\bar{\vec{C}_1}|z|^4+\frac{1}{8}\left(\vec{B}_1+\bar{\alpha_0}\vec{C}_1\right)z\z^4+\frac{1}{4}\left(\bar{\vec{B}_1}+\alpha_0\bar{\vec{C}_1}\right)z^3\z^2\nonumber\\
		&+\frac{1}{3}\left(\vec{C}_2+\Re(\alpha_0\vec{C}_1)\right)z^2\z^3+\frac{\vec{\gamma}_1}{3}z^2\z^3\left(\log|z|-\frac{1}{6}\right)\bigg)+O(|z|^{6-\epsilon})\nonumber\\
		&=\vec{A}_0z^2+\vec{A}_1z^3+\vec{A}_2z^4+\vec{A}_3z^5+\frac{1}{12}\vec{C}_1z\z^3+\frac{1}{8}\bar{\vec{C}_1}|z|^4+\frac{1}{16}\left(\vec{B}_1+\bar{\alpha_0}\vec{C}_1\right)z\z^4+\frac{1}{8}\left(\bar{\vec{B}_1}+\alpha_0\bar{\vec{C}_1}\right)\nonumber\\
		&+\frac{1}{6}\left(\vec{C}_2+\Re(\alpha_0\vec{C}_1)-\frac{\vec{\gamma}_1}{6}\right)z^2\z^3+\frac{\vec{\gamma}_1}{6}z^2\z^3\log|z|+O(|z|^{6-\epsilon}).
	\end{align}
	As $\phi$ is conformal, we have $\s{\p{z}\phi}{\p{z}\phi}=0$ and we compute easily by \eqref{3end} that 
	\begin{align*}
		0=\s{\p{z}\phi}{\p{z}\phi}=\s{\vec{A}_0}{\vec{A}_0}z^{4}+\cdots+\frac{1}{3}\s{\vec{A}_0}{\vec{\gamma}_1}z^4\z^3\log|z|+O(|z|^{8-\epsilon})
	\end{align*}
	so
	\begin{align}\label{3cancel}
		\s{\vec{A}_0}{\vec{A}_0}=\s{\vec{A}_0}{\vec{\gamma}_1}=0.
	\end{align}
	Now,  by \eqref{3defconts1}, and \eqref{3cancel}, we have as $|\vec{A}_0|^2=\dfrac{1}{2}$
	\begin{align}\label{3true}
		0=\s{\vec{A}_0}{\vec{\gamma}_1}=-\s{\vec{A}_0}{\vec{\gamma}_0+2\s{\vec{A}_1}{\vec{C}_1}\bar{\vec{A}_0}+2\bar{\s{\vec{A}_1}{\vec{A}_1}}\vec{A}_0}=-\left(\s{\vec{A}_0}{\vec{\gamma}_0}+\s{\vec{A}_1}{\vec{C}_1}\right)
	\end{align}	
	sp for a \emph{true} Willmore disk, we have $\vec{\gamma}_0=0$,a nd we deduce by \eqref{3true} that
	\begin{align}
		\s{\vec{A}_1}{\vec{C}_1}=0,
	\end{align}
	proving the holomorphy of $\mathscr{Q}_{\phi}$ at a \emph{true} branch point of multiplicity $\theta_0=3$, as
	\begin{align*}
		\mathscr{Q}_{\phi}=2\s{\vec{A}_1}{\vec{C}_1}\frac{dz^4}{z}+O(1).
	\end{align*}
	\begin{rem}
		It does not seem possible to remove this pole in general for branch points of multiplicity $\theta_0=3$ and non-zero residue.
	\end{rem}
	
	\textbf{Step 9. Case $\theta_0=1,2$.} Then both residues vanish, so $\phi$ is smooth and $\mathscr{Q}_{\phi}$ is holomorphic (see Lemma \cite{secondresidue} for a more general proof of this fact).
    
    \textbf{Part 3. Conclusion.}

    Now we suppose that $\mathscr{Q}_{\phi}=0$ and $n=3$. We remark that this is always the case if $\phi$ is a \emph{true} Willmore sphere as $K_{S^2}^4$ is a negative holomorphic bundle (see Proposition \ref{bundletrivial} for instance). Then by Bryant's theorem, some stereographic projection $\pi:S^3\setminus\ens{p}\rightarrow \R^3$ makes the mean curvature of $\pi\circ \phi:\Sigma^2\setminus\phi^{-1}(\ens{p})$ vanish identically.

    As there does not exist compact minimal surfaces in $\R^3$, $\phi^{-1}(\ens{p})$ is not empty, and the same reasoning as in \cite{bryant} shows that the minimal surface $\pi\circ\phi:\Sigma^2\setminus\phi^{-1}(\ens{p})$ is complete. The proof is almost trivial, as a divergent sequence $\ens{q_k}_{k\in \N}$ in $\Sigma^2\setminus \phi^{-1}(\ens{p})$ must converge to some point of $\phi^{-1}(\ens{p})$, but this implies as $\phi$ is continuous that $\phi(q_k)\rightarrow p\in S^3$ as $k\rightarrow\infty$, so $\pi\circ \phi(q_k)\rightarrow \infty$ in $\R^3$.
    
    By the conformal invariance of the Willmore energy, $\pi\circ\phi$ has finite total curvature, but it can have interior branch points.
    \end{proof}

Finally, we expand on Remark \ref{importantremarkerros} to stress out that although branched Willmore spheres are not in general smooth through their branch points, they nevertheless always admit in their Taylor expansion only \emph{integer} powers of $z,\z$ and $\log|z|$. 

\begin{cor}
	Let $n\geq 3$, and $\phi\in C^{0}(D^2,\R^n)\cap C^{\infty}(D^2\setminus\ens{0},\R^n)$ be a Willmore disk, with a unique branch point located at $0$ of multiplicity $\theta_0\geq 1$. Then there exists $\vec{A}_0\in \C^n\setminus\ens{0}$ such that
	\begin{align*}
		\phi(z)=\Re\left(\vec{A}_0z^{\theta_0}\right)+O\left(|z|^{\theta_0+1}\log|z|\right)
	\end{align*}
	and for all $m\geq \theta_0+1$, there exists
	\begin{align*}
		\ens{\vec{A}_{k,l,p}: k,l\in \Z,\;\, \theta_0+1\leq k+l\leq m,\;\, p\in \N}\subset (\mathbb{C}^n)^{\,\Z\times \Z\times \N}
	\end{align*}
	and $p_m\in\N$ such that
	\begin{align}\label{taylorgeneral}
		\phi(z)=\Re\left(\vec{A}_0z^{\theta_0}+\sum_{k,l,p}^{}\vec{A}_{k,l,p}z^k\z^l\log^p|z|\right)+O\left(|z|^{m+1}\log^{p_m}|z|\right),
	\end{align} 
	where the $\vec{A}_{k,l,p}\in \C^n$ are almost all zero, that is, all but finitely many. 
\end{cor}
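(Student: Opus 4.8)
The plan is to prove the structural Taylor expansion \eqref{taylorgeneral} by induction on the order $m$, using the weak Willmore equation \eqref{globaldisk} as the engine of a bootstrap. The key observation, already isolated in remark \ref{importantremarkerros}, is that the class of functions
\begin{align*}
	\mathscr{F}=\ens{\,\text{finite sums}\;\,\Re\bigg(\sum_{k,l,p}\vec{A}_{k,l,p}\,z^k\z^l\log^p|z|\bigg),\;\, k,l\in\Z,\;\, p\in\N\,}
\end{align*}
is closed under the three operations that appear in all our computations: multiplication, application of $\p{z}$ and $\p{\z}$, and integration against $\p{z}$ (or $\p{\z}$) via the primitive constructed in proposition \ref{integrating}. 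Differentiation sends $z^k\z^l\log^p|z|$ to an $\mathbb{R}$-linear combination of terms of the same type with one unit less in the holomorphic degree (and possibly lowering $p$), and the primitive operator of proposition \ref{integrating} raises the degree by one, introducing at most one extra power of $\log|z|$; products of two such monomials are again of this type. Crucially, no fractional powers $|z|^{\alpha}$ with $\alpha\notin\Z$ can ever be generated, because the seed data — $\p{z}\phi=\vec{A}_0z^{\theta_0-1}+O(|z|^{\theta_0-\epsilon})$ and the conformal factor $e^{2\lambda}=|z|^{2\theta_0-2}e^{2u}$ with $e^{2u}\in\bigcap_{\alpha<1}C^{1,\alpha}$ whose Taylor coefficients are genuine integer-power polynomials — lie in $\mathscr{F}$, and the single $C^{1,\alpha}$ regularity forbids the half-integer angular terms $\Re(z^\alpha\z^\beta)$ with noninteger $\alpha$ or $\beta$.

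First I would establish the base case: the development $\phi(z)=\Re(\vec{A}_0 z^{\theta_0})+O(|z|^{\theta_0+1}\log|z|)$ is exactly the content of the first order development obtained in \textbf{Step 1} of the proof of theorem \ref{devh0} (together with theorem \ref{dev1storder}), so the claim holds for $m=\theta_0$. For the inductive step I would assume \eqref{taylorgeneral} at order $m$ for $\phi$, hence — by the closure of $\mathscr{F}$ under $\p{z},\p{\z}$ — corresponding developments in $\mathscr{F}$ for $\p{z}\phi$, $e^{2\lambda}$, $e^{-2\lambda}$, $\p{z}\lambda$, $\H$ and $\h_0$, each to the appropriate truncation order. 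One then feeds these into the right-hand side of \eqref{globaldisk},
\begin{align*}
	\partial\left(\H-2i\vec{L}+\vec{\gamma}_0\log|z|\right)=-|\H|^2\partial\phi-2\,g^{-1}\otimes\s{\H}{\h_0}\otimes\bar{\partial}\phi,
\end{align*}
whose right-hand side is a product of elements of $\mathscr{F}$ and therefore again lies in $\mathscr{F}$, now known to one order higher. Applying proposition \ref{integrating} to this $(1,0)$-form produces $\H$ (which is real, so one takes $\Re$) to one further order, still inside $\mathscr{F}$ and with only integer powers of $z,\z,\log|z|$. Then the relation $\Delta\phi=4\p{z}\p{\z}\phi=2e^{2\lambda}\H$, combined once more with proposition \ref{integrating}, upgrades the development of $\p{z}\phi$, and hence of $\phi$, by one order, completing the induction. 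The exact mechanism by which a $\dfrac{\vec E}{z}$ coefficient in the integrand produces a new $\log|z|$ term — and nothing worse — is precisely the phenomenon described in remark \ref{importantremarkerros}, and this is what forces $p_m$ to grow but keeps every coefficient attached to an integer monomial.

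The main obstacle is purely bookkeeping rather than conceptual: one must verify carefully that the degree shifts are consistent, i.e. that at each bootstrap step the error term genuinely improves from $O(|z|^{m+1-\epsilon})$ to $O(|z|^{m+2}\log^{p}|z|)$ and never stalls, and that the logarithmic exponent $p_m$ stays finite at each fixed order (it does, since at order $m$ only finitely many monomials $z^k\z^l$ with $\theta_0+1\le k+l\le m$ occur and each carries a bounded power of $\log|z|$). The one place demanding care is the replacement, justified in remark \ref{importantremarkerros}, of every error of the form $O(|z|^{a-\epsilon})$ by an error $O(|z|^{a}\log^{p}|z|)$ that may be differentiated and integrated term-by-term; I would state this as a lemma — that $\mathscr{F}$ together with its error classes is stable under $\p{z}$, $\p{\z}$, products, and the primitive of proposition \ref{integrating} — and then the corollary follows by iterating the two structural identities \eqref{globaldisk} and $\Delta\phi=2e^{2\lambda}\H$ finitely many times until the desired order $m$ is reached. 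No new analytic input beyond the $C^{1,\alpha}$ regularity of \cite{beriviere} and proposition \ref{integrating} is required.
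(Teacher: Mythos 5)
Your proof is correct and follows essentially the same route as the paper: the corollary is obtained there exactly by iterating the bootstrap of the proof of theorem \ref{devh0} — the weak equation \eqref{globaldisk}, the relation $\Delta\phi=2e^{2\lambda}\H$, and the primitive of proposition \ref{integrating} — with remark \ref{importantremarkerros} guaranteeing that only integer powers of $z,\z$ and $\log|z|$ are generated and that the error gains one order at each step. Your formalisation of this as an induction on $m$ via the stability of the class $\mathscr{F}$ under products, $\p{z},\p{\z}$ and integration is precisely the mechanism the paper invokes.
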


\begin{rem}
	The proof of the main Theorem \ref{devh0} gives in particular an algorithm to compute all the coefficients in the Taylor expansion of a branched Willmore surface, which was implemented in \cite{sagepaper}. 
\end{rem}

\section{Willmore spheres in $S^4$}\label{s4}
\subsection{Removability of the poles of the meromorphic differentials}

We fix a closed Riemann surface $\Sigma^2$. We recall that we defined in Section \ref{4} for immersions $\phi:\Sigma^2\rightarrow S^4$ on $\C^6$ the $\C$-extension of the Lorentzian metric $h$ on $\R^6$ defined by
\begin{align*}
	h=-dx_0^2+dx_1^2+dx_2^2+dx_3^2+dx_4^2+dx_5^2,
\end{align*}
which permitted to define the section $\psi_{\phi}\in \Gamma((T_{\C}^\nperp \Sigma^2)^\ast\otimes \C^{6})$ defined by
\begin{align*}
	\psi_{\phi}(\vv{\xi})=\s{\H}{\vv{\xi}}(\vec{a}+\phi)+\vv{\xi}
\end{align*}
for all $\vv{\xi}\in \Gamma(T_{\C}^\nperp\Sigma^2)$. As $T_{\C}^\nperp \Sigma^2$ decomposes as 
\begin{align*}
	T_{\C}^\nperp \Sigma^2=(T_{\C}^\nperp \Sigma^2)^{(1,0)}\oplus (T_{\C}^\nperp \Sigma^2)^{(0,1)}=\mathscr{N}\oplus \bar{\mathscr{N}}
\end{align*}
according to the eigenspaces of the almost complex structure $J$ corresponding to the eigenvalues $i$ and $-i$, this permits to identify the holomorphic line bundle structure on $T_{\C}^\nperp \Sigma^2$ with $\mathscr{N}=(T_{\C}^\nperp\Sigma^2)^{(1,0)}$.  In particular, we also have a decomposition
\begin{align*}
	\psi_{\phi}=\psi_{\phi}^{(1,0)}+\psi_{\phi}^{(0,1)}=\psi_{\phi}^{(1,0)}+\bar{\psi_{\phi}^{(1,0)}}
 \end{align*}
where $\psi_{\phi}^{(1,0)}\in \Gamma(\mathscr{N}^\ast\otimes\C^6)$ (resp. $\psi_{\phi}^{(0,1)}\in \Gamma(\bar{\mathscr{N}}^\ast\otimes\C^6)$), which simply means that $\psi_{\phi}^{(1,0)}$ vanishes on $\bar{\mathscr{N}}$ (resp. on $\mathscr{N}$), so defines a section of $\mathscr{N}\otimes \C^6$ (resp. $\bar{\mathscr{N}}\otimes\C^6$). For notational convenience, we shall write $\Psi=\psi_{\phi}^{(1,0)}$.  The pseudo Gauss map $\mathscr{G}:\Sigma^2\rightarrow \mathbb{P}^{4,1}$, is then defined as $\mathscr{G}=[\Psi]$, where $\mathbb{P}^{4,1}$ is the indefinite complex projective plane, defined by
\begin{align*}
	\mathbb{P}^{4,1}=\mathbb{P}^5\cap\ens{[Z]: \s{Z}{\bar{Z}}_h>0}.
\end{align*}
We remark that the indefinite Hermitian product $\s{\,\cdot\,}{\bar{\,\cdot\,}}_h:\mathscr{N}^\ast\otimes \bar{\mathscr{N}}^\ast\rightarrow\C$ furnishes a non-vanishing section of $\mathscr{N}^\ast\otimes \bar{\mathscr{N}}^\ast$, which makes this line bundle holomorphically trivial. In \cite{montiels4}, the three following sections are introduced
\begin{align}\label{defabc}
\left\{\begin{alignedat}{1}
	\mathscr{T}_{\phi}&=\s{\partial^2\Psi}{\partial\bar{\Psi}}_h\in \Gamma(K_{\Sigma^2}^3\otimes \mathscr{N}^\ast\otimes \bar{\mathscr{N}}^\ast)\\
    \mathscr{Q}_{\phi}&=2\s{\partial^2\Psi}{\partial^2\bar{\Psi}}_h\in \Gamma(K_{\Sigma^2}^4\otimes \mathscr{N}^\ast\otimes \bar{\mathscr{N}}^\ast)\\
    \mathscr{O}_{\phi}&=\s{\partial^2\Psi}{\partial^2\Psi}_h\otimes\s{\partial^2\bar{\Psi}}{\partial^2\bar{\Psi}}\in \Gamma({K}_{\Sigma^2}^8\otimes \mathscr{N}^\ast\otimes \bar{\mathscr{N}}^\ast\otimes \mathscr{N}^\ast\otimes \bar{\mathscr{N}}^\ast).
\end{alignedat}\right.
\end{align}

where we noted for simplicity of notation $\partial=\partial^N$ and $\bar{\partial}=\bar{\partial}^N$ the operators defined in \eqref{delbar} (this shall not imply any confusion, as we will only deal with normal sections in this section). Let us recall a useful lemma from \cite{montiels4}.
\begin{lemme}(Montiel)
	Let $\phi:\Sigma^2\rightarrow S^4$ be a Willmore surface. Then we have
\begin{align}\label{partialdel}
\left\{\begin{alignedat}{1}
&\bar{\partial}\mathscr{T}_{\phi}=0\\
&\bar{\partial}\mathscr{Q}_{\phi}={K^\nperp}g\otimes \mathscr{T}_{\phi}\\
&\bar{\partial}\mathscr{O}_{\phi}=2\mathscr{D}\otimes \mathscr{T}_{\phi}
\end{alignedat}\right.
\end{align}
where $K^N=R^N(\e_z,\e_{\z},\vv{\xi},\bar{\vv{\xi}})$ is the normal curvature (where $\vv{\xi}\in \Gamma(\mathscr{N})$ is any section such that $|\vv{\xi}|=1$) and where $\mathscr{D}\in \Gamma(K_{\Sigma^2}^5\otimes \bar{K}_{\Sigma^2}\otimes\mathscr{N}^\ast\otimes \bar{\mathscr{N}}^\ast)$ is a non-zero section.
\end{lemme}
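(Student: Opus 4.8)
The plan is to deduce all three identities from a single engine—the harmonicity of the pseudo Gauss map—after first recording the algebraic (isotropy) relations forced by conformality. Since $\s{\psi_{\phi}}{\psi_{\phi}}_h=1$ with $\psi_{\phi}=\Psi+\bar{\Psi}$ and the two summands of opposite bidegree, and since the conformality of $\psi_{\phi}$ reads $\s{\partial\psi_{\phi}}{\partial\psi_{\phi}}_h=0$, separating by bundle type yields
\begin{align*}
\s{\Psi}{\Psi}_h=0,\qquad \s{\partial\Psi}{\partial\Psi}_h=0.
\end{align*}
Differentiating these with $\partial$ gives in turn $\s{\partial\Psi}{\Psi}_h=0$ and $\s{\partial^2\Psi}{\Psi}_h=-\s{\partial\Psi}{\partial\Psi}_h=0$. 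These relations are precisely what makes the various pairings collapse below.

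The second ingredient is the result quoted above (Bryant in $S^3$, Ejiri in general) that $\phi$ is Willmore if and only if $\psi_{\phi}$ is harmonic into the Lorentzian pseudosphere. Harmonicity means the ambient $\C^6$-Laplacian of $\psi_{\phi}$ is normal to the pseudosphere, i.e. proportional to $\psi_{\phi}$; written in the normal-bundle formalism this is a second-order equation of the schematic form $\bar{\partial}\partial\Psi=\rho\,\Psi$ up to lower-order terms, together with its conjugate for $\bar{\Psi}$. I would then prove $\bar{\partial}\mathscr{T}_{\phi}=0$ by Leibniz,
\begin{align*}
\bar{\partial}\mathscr{T}_{\phi}=\s{\bar{\partial}\partial^2\Psi}{\partial\bar{\Psi}}_h+\s{\partial^2\Psi}{\bar{\partial}\partial\bar{\Psi}}_h,
\end{align*}
commuting $\bar{\partial}$ past the $\partial$'s in the first term so as to substitute the harmonic equation (its $\Psi$-output being annihilated against $\partial\bar{\Psi}$ by isotropy) and to produce a curvature commutator that is again killed by the pairing with $\partial\bar{\Psi}$; in the second term the conjugate harmonic equation reduces everything to $\s{\partial^2\Psi}{\Psi}_h=0$ and a term that cancels the remainder. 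This is the cleanest of the three, and it is the holomorphy of $\mathscr{T}_{\phi}$ that underlies the hypothesis $\mathscr{T}_{\phi}=0$ in Theorem \ref{I}.

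For $\bar{\partial}\mathscr{Q}_{\phi}$ and $\bar{\partial}\mathscr{O}_{\phi}$ I would run the identical computation, the only new feature being that the second factor now carries an extra holomorphic derivative ($\partial^2\bar{\Psi}$ in place of $\partial\bar{\Psi}$, and likewise in the factors of $\mathscr{O}_{\phi}$). Commuting $\bar{\partial}$ through this extra $\partial$ forces a commutator $[\bar{\partial},\partial]$ acting on a normal section, which by the \emph{Ricci equation} equals multiplication by the normal curvature, producing the factor $K^N g$; the surviving pairing then reassembles exactly into $\mathscr{T}_{\phi}$, giving $\bar{\partial}\mathscr{Q}_{\phi}=K^N g\otimes\mathscr{T}_{\phi}$. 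For $\mathscr{O}_{\phi}=\s{\partial^2\Psi}{\partial^2\Psi}_h\otimes\s{\partial^2\bar{\Psi}}{\partial^2\bar{\Psi}}_h$ the Leibniz rule yields two contributions, and each, after the same harmonic/isotropy/Ricci reduction, is proportional to $\mathscr{T}_{\phi}$; packaging the two resulting coefficient tensors into the stated section $\vec{\delta}\in\Gamma(K_{\Sigma^2}^5\otimes\bar{K}_{\Sigma^2}\otimes\mathscr{N}^\ast\otimes\bar{\mathscr{N}}^\ast)$ gives $\bar{\partial}\mathscr{O}_{\phi}=2\vec{\delta}\otimes\mathscr{T}_{\phi}$. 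The main obstacle I anticipate is the bookkeeping of bundle types and the exact normalization of the harmonic equation in the twisted bundle $\mathscr{N}^\ast\otimes\C^6$: one must keep $\partial$, $\bar{\partial}$ as the operators \eqref{delbar} on normal sections, correctly identify the commutator with $K^N$ via Ricci (rather than with the tangential or ambient curvature), and verify that no spurious $\bar{\mathscr{T}}_{\phi}$-term appears, so that the right-hand sides are genuinely proportional to $\mathscr{T}_{\phi}$ as claimed.
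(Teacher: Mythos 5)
First, a point of reference: the paper does not prove this lemma at all — it is imported verbatim from Montiel's paper \cite{montiels4} with a citation — so there is no internal proof to compare you against. Your sketch is modelled on what Montiel actually does (harmonicity of the conformal Gauss lift, isotropy relations, structure equations), which is the right general frame, but as written it contains a genuine gap at exactly the point where the content of the lemma lies.

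The first problem is one of justification: the bilinear isotropy relations you derive, $\s{\Psi}{\Psi}_h=\s{\partial\Psi}{\partial\Psi}_h=\s{\partial\Psi}{\Psi}_h=\s{\partial^2\Psi}{\Psi}_h=0$, do not control the \emph{mixed} pairings $\s{\partial\Psi}{\bar{\Psi}}_h$, $\s{\Psi}{\partial\bar{\Psi}}_h$, $\s{\partial^2\Psi}{\bar{\Psi}}_h$, $\s{\partial\Psi}{\partial\bar{\Psi}}_h$ that appear once you substitute the harmonic equation and its conjugate. These do vanish, but for a different reason: one needs the explicit light-cone frame computation, $\left(\D_{\p{z}}\psi_{\phi}\right)(\vv{\xi})=\s{\D_{\p{z}}^{N}\H}{\vv{\xi}}(\vec{a}+\phi)-\s{\H_0}{\vv{\xi}}\e_{\z}$ together with $\s{\vec{a}+\phi}{\vec{a}+\phi}_h=-1+|\phi|^2=0$ and the parallelism of the pairing $\s{\Psi}{\bar{\Psi}}_h$, none of which follows from "isotropy" as you invoke it. The second and more serious problem is your attribution of the right-hand sides to the Ricci commutator. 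For $\mathscr{Q}_{\phi}$ the nonvanishing term comes primarily from differentiating the mixed conformality relation $\s{\partial\Psi}{\partial\bar{\Psi}}_h=0$, which gives $\s{\partial\Psi}{\partial^2\bar{\Psi}}_h=-\s{\partial^2\Psi}{\partial\bar{\Psi}}_h=-\mathscr{T}_{\phi}$; the commutator only contributes to the coefficient $K^N g$. For $\mathscr{O}_{\phi}$ your scheme fails outright: each factor $\s{\partial^2\Psi}{\partial^2\Psi}_h$ and $\s{\partial^2\bar{\Psi}}{\partial^2\bar{\Psi}}_h$ involves only \emph{pure-type} pairings, and if the harmonic equation really were $\bar{\partial}\partial\Psi=\rho\,g\otimes\Psi$ "up to lower-order terms" in the vague sense you state, then every term produced by Leibniz — including the commutator terms, which are of the form $K^N\s{\partial\Psi}{\partial^2\Psi}_h$ — is annihilated by the very relations $\s{\Psi}{\partial^2\Psi}_h=\s{\partial\Psi}{\partial^2\Psi}_h=0$ (and their conjugates) on your own list. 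Carried out as described, your computation proves $\bar{\partial}\mathscr{O}_{\phi}=0$, which contradicts the statement, since $\vec{\delta}$ is a nonzero section and $\mathscr{T}_{\phi}$ need not vanish in higher genus.

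The resolution, and the missing idea, is that the precise first-order structure of the lift equation is decisive and cannot be waved away: pairings such as $\s{\partial\Psi}{\bar{\partial}\Psi}_h$ are \emph{not} zero (in the frame above they carry a factor $\s{\e_{\z}}{\e_z}=e^{2\lambda}/2$), so a first-order term along $\bar{\partial}\Psi$ in the equation for $\bar{\partial}\partial\Psi$ produces surviving contributions in $\bar{\partial}\mathscr{O}_{\phi}$, and it is only by expanding $\partial^2\Psi$ and $\bar{\partial}\Psi$ in an adapted pseudo-orthonormal frame and tracking the coefficient functions through the structure equations — as Montiel does — that one identifies the survivors as $2\vec{\delta}\otimes\mathscr{T}_{\phi}$ with the stated $\vec{\delta}\in\Gamma(K_{\Sigma^2}^5\otimes\bar{K}_{\Sigma^2}\otimes\mathscr{N}^\ast\otimes\bar{\mathscr{N}}^\ast)$. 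Until you write down the twisted harmonic equation exactly, including which first-order terms it carries and with which coefficients, and redo the $\mathscr{O}_{\phi}$ computation with it, the third identity (and the exact coefficient $K^N g$ in the second) is not established; only $\bar{\partial}\mathscr{T}_{\phi}=0$ can be completed along the lines you propose.
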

In particular, we see that $\mathscr{Q}_{\phi}$ and $\mathscr{O}_{\phi}$ are not holomorphic in general if the genus of $\Sigma^2$ is not zero. 

Suppose one moment that $\Sigma^2$ has genus $0$, and that the immersion $\phi:S^2\rightarrow S^4$ is smooth, as $\mathscr{T}_{\phi}$ is holomorphic, and $\mathscr{N}^\ast \otimes \bar{\mathscr{N}}^\ast$ is holomorphically trivial, we have
\begin{align*}
	\mathscr{A}_{\phi}\in H^0(K_{S^2}^3\otimes \mathscr{N}^\ast\otimes \bar{\mathscr{N}}^\ast)\simeq H^0(K_{S^2}^3)
\end{align*}
so as $K_{S^2}^3$ is a negative bundle, we deduce that $\mathscr{T}_{\phi}=0$, so by \ref{partialdel} the sections $\mathscr{Q}_{\phi}$ and $\mathscr{O}_{\phi}$ are holomorphic, so they also vanish by the same remark on $\mathscr{N}^\ast\otimes \bar{\mathscr{N}}^\ast$. 

We can easily compute (see \cite{montiels4}, Remark 5)
\begin{align*}
	\mathscr{T}_{\phi}=g^{-1}\otimes \left(\bar{\partial}^N\h_0\totimes J\h_0\right)=g^{-1}\otimes \left(\bar{\partial}\h_0\totimes J\h_0\right),
\end{align*}
where $J$ is the almost complex structure defined in section $\ref{4}$. As at a branch point $p\in S^2$ of multiplicity $\theta_0\geq 1$, for some complex coordinate $z:D^2\rightarrow S^2$ sending $0$ to $p$, we have \text{a priori} the estimates
\begin{align*}
	\h_0=O(|z|^{\theta_0-1}),\quad e^{2\lambda}=|z|^{2-2\theta_0}\left(1+O(|z|)\right),
\end{align*}
which implies that
\begin{align*}
	\mathscr{T}_{\phi}=O(|z|^{2-2\theta_0}|z|^{\theta_0-2}|z|^{\theta_0-1})=O(|z|^{-1}).
\end{align*}
Therefore, $\mathscr{T}_{\phi}$ has poles order of at most $1$. In particular, if $D=\sum_{j=1}^{m} \theta_0(p_j)p_j$ is the branching divisor of $\phi$, and $D_0=p_1+\cdots+p_m$, then
\begin{align*}
\mathscr{A}_{\phi}\in H^0(K_{S^2}^3\otimes \mathscr{O}(D_0)\otimes \mathscr{N}^\ast\otimes\bar{\mathscr{N} }^\ast)\simeq H^0(K_{S^2}^3\otimes \mathscr{O}(D_0))
\end{align*}
 and as
\begin{align*}
	\mathrm{deg}(K_{S^2}^3\otimes \mathscr{O}(D_0))=m-6<0
\end{align*}
for $m\leq 5$, we deduce that $\mathscr{T}_{\phi}$, is equal to zero for branched Willmore spheres with less than $5$ branch points, so $\mathscr{Q}_{\phi}$ and $\mathscr{O}_{\phi}$ are meromorphic.

We now come back to the general case where $\Sigma^2$ is an arbitrary closed Riemann surface. By \eqref{partialdel}, we only know that $\mathscr{T}_{\phi}$ is meromorphic, so $\mathscr{Q}_{\phi}$ and $\mathscr{O}_{\phi}$ are not even meromorphic, and we cannot get a partial result on the classification. 

Now assume that $\phi$ is variational.

Taking a stereographic projection $S^4\rightarrow\R^4$ of $\phi$ of centre outside of $\phi(\Sigma^2)\subset S^4$, by conformal invariance of Willmore energy, we can see $\phi$ as a Willmore immersion $\Sigma^2\rightarrow\R^4$. If $p\in \Sigma^2$ is a branch point of $\phi$ of multiplicity $\theta_0\geq 3$, there exists by the proof of Theorem \ref{devh0}. a complex coordinate $z:D^2\rightarrow S^2$ sending $0$ to $p$ such that for some $\vec{A}_1\in \C^n$, we have
\begin{align}\label{3form1}
	\h_0=\vec{A}_1z^{\theta_0-1}+O(|z|^{\theta_0-\epsilon})
\end{align}
for all $\epsilon>0$ (we only need the first upper regularity at branch points for the holomorphy of $\mathscr{T}_{\phi}$). In particular, we deduce that
\begin{align}\label{3form2}
	\bar{\partial}\h_0=O(|z|^{\theta_0-1-\epsilon})
\end{align}
and as by definition of a branch point of multiplicity $\theta_0\geq 2$, there exists $\lambda>0$ such that
\begin{align*}
	g=e^{2\lambda}|dz|^2=\lambda|z|^{2\theta_0-2}\left(1+O(|z|)\right)|dz|^2
\end{align*}
we deduce by by \eqref{3form1} and \eqref{3form2} that
\begin{align}\label{Tholomorphic}
	\mathscr{T}_{\phi}&=g^{-1}\otimes\left(\bar{\partial}\h_0\totimes J\h_0\right)=O(|z|^{-\epsilon})\qquad \text{for all}\;\, \epsilon>0.
\end{align}
$\mathscr{T}_{\phi}$ is holomorphic everywhere on $z(D^2)$ by a classical singularity removability result.


Therefore, we have established the following.

\begin{prop}
	Let $\Sigma^2$ be a closed Riemann surface, and $\phi:\Sigma^2\rightarrow S^4$ be a branched Willmore surface. The $3$-form $\mathscr{T}_{\phi}$ defined by
	\begin{align}
		\mathscr{T}_{\phi}=g^{-1}\otimes\left(\bar{\partial}\h_0\totimes J\h_0\right)
	\end{align}
	is a holomorphic section of $K_{\Sigma^2}^3$. In particular, if $\Sigma^2$ has genus $0$, then $\mathscr{T}_{\phi}$ vanishes and the respectively $4$-forms and $8$-forms $\mathscr{Q}_{\phi}$ and $\mathscr{O}_{\phi}$ defined in  are meromorphic. 
\end{prop}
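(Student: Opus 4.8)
The plan is to establish the holomorphy of $\mathscr{T}_{\phi}$ first, and then to read off the two ``in particular'' assertions from line‑bundle degree considerations combined with Montiel's relations \eqref{partialdel}. Away from the branch points $p_{1},\dots,p_{m}$ the map $\phi$ is a smooth Willmore immersion, so \eqref{partialdel} gives $\bar{\partial}\mathscr{T}_{\phi}=0$; since $\mathscr{N}^{\ast}\otimes\bar{\mathscr{N}}^{\ast}$ is holomorphically trivialised by the indefinite Hermitian pairing, this exhibits $\mathscr{T}_{\phi}$ as a holomorphic section of $K_{\Sigma^2}^{3}$ on $\Sigma^{2}\setminus\{p_{1},\dots,p_{m}\}$. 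It then suffices to prove that $\mathscr{T}_{\phi}$ is locally bounded near each $p_{j}$, for then Riemann's removable singularity theorem applied to the local coefficient function forces a holomorphic extension. At a branch point of multiplicity $\theta_{0}\geq 2$ I would invoke the refined development of $\h_0$ coming from Theorem \ref{devh0} and Corollary \ref{corholomorphy} (equation \eqref{utiles4}): its leading term $2\vec{A}_{1}z^{\theta_{0}-1}dz^{2}$ is \emph{holomorphic}, so the antiholomorphic dependence enters only one order higher, giving $\bar{\partial}\h_{0}=O(|z|^{\theta_{0}-1})$ as in \eqref{3form2} rather than the crude $O(|z|^{\theta_{0}-2})$. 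Together with $\h_{0}=O(|z|^{\theta_{0}-1})$ and $g^{-1}=O(|z|^{2-2\theta_{0}})$ this yields $\mathscr{T}_{\phi}=O(1)$, exactly as recorded in \eqref{Tholomorphic}, so the singularity is removable.

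The remaining case $\theta_{0}=1$ is the most delicate. If the first residue $\vec{\gamma}_{0}$ vanishes there, then by Corollary \ref{corholomorphy} the immersion is smooth through the branch point and there is nothing to prove. If $\vec{\gamma}_{0}\neq 0$, the development of Corollary \ref{precisereg} contains the genuinely singular term $-\tfrac12\vec{\gamma}_{0}\,\z\,z^{-1}dz^{2}$ in $\h_{0}$, so a priori $\mathscr{T}_{\phi}$ carries a pole of order one. I would then compute its residue, which reduces to a contraction of $\vec{\gamma}_{0}$, $J\vec{\gamma}_{0}$ and the holomorphic coefficient $\vec{A}_{1}$ of $\h_{0}$, and show it vanishes using the orthogonality $\s{\vec{\gamma}_{0}}{J\vec{\gamma}_{0}}=0$ (as $J$ is a rotation of the normal plane) together with the conformality relation $\s{\vec{A}_{0}}{\vec{\gamma}_{0}}=0$ from the asymptotic analysis, after decomposing every vector along the $\pm i$–eigenspaces $\mathscr{N},\bar{\mathscr{N}}$ of $J$, which are isotropic for the bilinear pairing $\s{\,\cdot\,}{\,\cdot\,}$. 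This residue cancellation is the principal analytic obstacle of the statement.

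For the two special cases I would argue by degree. On the sphere $\mathrm{deg}\,K_{S^{2}}^{3}=-6<0$, hence $H^{0}(S^{2},K_{S^{2}}^{3})=0$ and the holomorphic section $\mathscr{T}_{\phi}$ vanishes identically; Montiel's relations \eqref{partialdel} then give $\bar{\partial}\mathscr{Q}_{\phi}=\bar{\partial}\mathscr{O}_{\phi}=0$ off the branch points, and since $\mathscr{Q}_{\phi},\mathscr{O}_{\phi}$ are smooth there with at most polar growth at the finitely many isolated branch points, they are meromorphic. On the torus $K_{T^{2}}$ is trivial, so $\dim H^{0}(T^{2},K_{T^{2}}^{3})=1$ and a nonzero holomorphic section is nowhere vanishing; it therefore suffices to produce one zero of $\mathscr{T}_{\phi}$. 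At a branch point of multiplicity $\theta_{0}\geq 2$ one has $\h_{0}(p)=0$, and the bounded limit of $\mathscr{T}_{\phi}$ there equals, up to a nonzero constant, the contraction $\s{\vec{C}_{1}}{J\vec{A}_{1}}$.

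I expect the hard part to be precisely this last vanishing, parallel to the $\theta_{0}=1$ residue: writing $\vec{A}_{1}$ and $\vec{C}_{1}$ along $\mathscr{N}\oplus\bar{\mathscr{N}}$ and using $J=i$ on $\mathscr{N}$ reduces $\s{\vec{C}_{1}}{J\vec{A}_{1}}$ to a multiple of $\s{\vec{A}_{1}}{\vec{C}_{1}}$, which is zero by the cancellation law \eqref{0supercancel}; the delicate point is the eigenspace bookkeeping for $J$ that guarantees the cross terms between $\mathscr{N}$ and $\bar{\mathscr{N}}$ also cancel. Once $\mathscr{T}_{\phi}(p)=0$ one concludes $\mathscr{T}_{\phi}\equiv 0$ on $T^{2}$, and the meromorphy of $\mathscr{Q}_{\phi}$ and $\mathscr{O}_{\phi}$ follows verbatim from the genus zero argument.
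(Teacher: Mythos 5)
Your skeleton coincides with the paper's: holomorphy away from the branch points via Montiel's relations \eqref{partialdel}, boundedness at branch points of multiplicity $\theta_0\geq 2$ from the refined expansion of $\h_0$ in Corollary \ref{corholomorphy} (your estimate is exactly \eqref{3form2}--\eqref{Tholomorphic}), the degree argument on $S^2$ followed by \eqref{partialdel} for the meromorphy of $\mathscr{Q}_{\phi}$ and $\mathscr{O}_{\phi}$, and constancy plus one zero on the torus. Two of your steps, however, do not go through as written.

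The genuine gap is your treatment of a multiplicity-one branch point with $\vec{\gamma}_0\neq 0$. The skewness of $J$ with respect to the bilinear pairing (that is, $\s{x}{Jx}=0$) kills only the apparent leading singularity $\tfrac14\s{\vec{\gamma}_0}{J\vec{\gamma}_0}\,\z\,z^{-2}$; the actual residue of the simple pole of $\mathscr{T}_{\phi}$ is, up to nonzero constants, the pairing $\s{\vec{\gamma}_0}{J(\vec{A}_1-\alpha_0\vec{A}_0)}$ of the residue with the \emph{regular} coefficient of $\h_0$ from Corollary \ref{precisereg}, and neither $\s{\vec{A}_0}{\vec{\gamma}_0}=0$ nor the skewness of $J$ controls that quantity. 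No such cancellation exists in the paper: the remark after Corollary \ref{precisereg} stresses that the analogous coefficient $\s{\vec{A}_1}{\vec{\gamma}_0}$ in the quartic form \enquote{does not come from the conformality} (the inverted catenoid shows such pairings need not vanish), and accordingly every $S^4$ statement in the paper (Theorems \ref{J}, \ref{s4vanish}, \ref{s4generalfinal}) carries the hypothesis that the first residue vanish at multiplicity-one branch points. The proposition must be read with that hypothesis, under which those points are smooth by Corollary \ref{corholomorphy} and there is nothing to remove; your residue-cancellation claim would prove strictly more than the paper does, and as sketched it is unjustified.

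In the torus step your reduction of $\s{\vec{C}_1}{J\vec{A}_1}$ to \enquote{a multiple of $\s{\vec{A}_1}{\vec{C}_1}$} is false as stated: decomposing normal parts along the isotropic eigenspaces, $\s{\vec{A}_1}{\vec{C}_1}=\s{\vec{A}_1^+}{\vec{C}_1^-}+\s{\vec{A}_1^-}{\vec{C}_1^+}$ while $\s{\vec{C}_1}{J\vec{A}_1}=i\bigl(\s{\vec{A}_1^+}{\vec{C}_1^-}-\s{\vec{A}_1^-}{\vec{C}_1^+}\bigr)$, so vanishing of the sum does not kill the difference---precisely the cross-term issue you flagged. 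It can be repaired, but only by invoking the full dichotomy behind \eqref{system} and \eqref{0supercancel}: either $\s{\vec{A}_1}{\vec{A}_1}=\s{\vec{A}_1}{\vec{C}_1}=0$, in which case the normal part of $\vec{A}_1$ lies in a single eigenspace, $J\vec{A}_1=\pm i\vec{A}_1$ there, and $\s{\vec{C}_1}{J\vec{A}_1}=\pm i\s{\vec{C}_1}{\vec{A}_1}=0$; or $\vec{C}_1$ and $\vec{A}_1$ are proportional, and then $\s{\vec{C}_1}{J\vec{A}_1}$ is a multiple of $\s{\vec{A}_1}{J\vec{A}_1}=0$ by skewness. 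The paper's own route for the torus avoids this bookkeeping entirely: it uses the expansion of $\h_0$ at the branch point of multiplicity $\theta_0\geq 2$ to bound $\mathscr{T}_{\phi}=O(|z|)$ there, so the constant holomorphic section of the trivial bundle $K_{T^2}^3$ vanishes without any scalar identity.
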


This is not by chance that to denote Montiel's quartic form, we used the same notations as Bryant's quartic form, as the object of the next proposition is to show that they virtually coincide, an that the form  $\mathscr{O}_{\phi}$ of degree $8$ enjoys a similar null structure.

\begin{theorem}\label{threedev}
	Let $\phi:\Sigma^2\rightarrow S^4$ be a smooth immersion. Then we have
	\small
	\begin{align}\label{48formula}
		&\mathscr{Q}_{\phi}=\,g^{-1}\otimes \left(\partial^N\bar{\partial}^N\h_0\,\dot{\otimes}\, \h_0-\partial^N\h_0\otimes\bar{\partial}^N\h_0
		\right)+\frac{1}{4}(1+|\H|^2)\h_0\,\dot{\otimes}\, \h_0\nonumber\\
		&\mathscr{O}_{\phi}=g^{-2}\otimes\bigg\{\frac{1}{4}(\partial^N\bar{\partial}^N\h_0\,\dot{\otimes}\,\partial^N\bar{\partial}^N\h_0)\otimes (\h_0\,\dot{\otimes}\,\h_0)+\frac{1}{4}(\partial^N\h_0\,\dot{\otimes}\, \partial^N\h_0)\otimes (\bar{\partial}^N\h_0\,\dot{\otimes}\, \bar{\partial}^N\h_0)\nonumber\\
		&-\frac{1}{2}(\partial^N\bar{\partial}^N\h_0\totimes\partial^N\h_0)\otimes(\bar{\partial}^N\h_0\totimes\h_0)-\frac{1}{2}(\partial^N\bar{\partial}^N\h_0\totimes\bar{\partial}^N\h_0)\otimes(\partial^N\h_0\totimes\h_0)+\frac{1}{2}(\partial^N\bar{\partial}^N\h_0\totimes\h_0)\otimes(\partial^N\h_0\totimes\bar{\partial}^N\h_0)\bigg\}\nonumber\\
		&+\frac{1}{4}(1+|\H|^2)\,g^{-1}\otimes\left\{\frac{1}{2}(\partial^N\bar{\partial}^N\h_0\totimes \h_0)\otimes (\h_0\totimes\h_0)-(\partial^N\h_0\totimes\h_0)\otimes (\bar{\partial}^N\h_0\totimes\h_0)+\frac{1}{2}(\partial^N\h_0\totimes\bar{\partial}^N\h_0)\otimes (\h_0\totimes\h_0)\right\}\nonumber\\
		&+\frac{1}{64}\left(1+|\H|^2\right)^2\,\left(\h_0\totimes\h_0\right)^2.
	\end{align}
	\normalsize
\end{theorem}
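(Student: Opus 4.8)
The goal of Theorem \ref{threedev} is to express Montiel's $4$- and $8$-forms $\mathscr{Q}_{\phi}$ and $\mathscr{O}_{\phi}$, which are defined intrinsically through the pseudo Gauss map $\Psi=\psi_\phi^{(1,0)}$ via the Hermitian products $\s{\partial^2\Psi}{\partial^2\bar{\Psi}}_h$ and $\s{\partial^2\Psi}{\partial^2\Psi}_h\otimes\s{\partial^2\bar{\Psi}}{\partial^2\bar{\Psi}}_h$, purely in terms of the Weingarten tensor $\h_0$, its normal derivatives, and the scalar $(1+|\H|^2)$. The plan is to mimic exactly the computation already carried out for $S^3$ in Lemma \ref{s3} and Theorem \ref{int}, but now keeping track of the $\mathscr{N}^\ast\otimes\bar{\mathscr{N}}^\ast$-valued structure rather than specializing to a global unit normal section.

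First I would compute $\D_{\p{z}}\Psi$ and $\D_{\p{z}}^2\Psi$ as sections valued in $\mathscr{N}^\ast\otimes\C^6$, applied to an arbitrary normal section $\vv{\xi}\in\Gamma(\mathscr{N})$, repeating the derivation in the proof of Lemma \ref{s3}. That computation gives, verbatim as in the $S^3$ case,
\begin{align*}
(\D_{\p{z}}^2\Psi)(\vv{\xi})=\s{\D_{\p{z}}^N\D_{\p{z}}^N\H}{\vv{\xi}}(\vec{a}+\phi)+\s{\D_{\p{z}}^N\H}{\vv{\xi}}\e_z-\s{\D_{\p{z}}^N\H_0}{\vv{\xi}}\e_{\z}-\s{\H_0}{\vv{\xi}}\D_{\p{z}}\e_{\z},
\end{align*}
using only $\bar{\D}_{\p{z}}\e_{\z}=0$ and $\bar{\D}_{\p{z}}\vv{\xi}=\D_{\p{z}}^\top\vv{\xi}=-\s{\H}{\vv{\xi}}\e_z-\s{\H_0}{\vv{\xi}}\e_{\z}$, which hold in any codimension. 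Then, taking the $h$-pairing of two such second derivatives and using $\s{\phi}{\D_{\p{z}}\e_{\z}}=-e^{2\lambda}/2$, $\s{\e_z}{\e_{\z}}=e^{2\lambda}/2$, and the isotropy relations \eqref{orthogonality}, I would extract the bilinear form on $\mathscr{N}\otimes\bar{\mathscr{N}}$ that defines $\mathscr{Q}_{\phi}$. The term-by-term reduction $e^{2\lambda}(\s{\D_{\p{z}}^N\D_{\p{z}}^N\H}{\H_0}-\s{\D_{\p{z}}^N\H}{\D_{\p{z}}^N\H_0})=e^{-2\lambda}(\s{\D_{\p{z}}^N\D_{\p{\z}}^N\h_0}{\h_0}-\s{\D_{\p{z}}^N\h_0}{\D_{\p{\z}}^N\h_0})$ is identical to \eqref{stepQ4}, and yields the claimed formula for $\mathscr{Q}_{\phi}$, confirming it coincides with Bryant's quartic form rewritten with $\frac14(1+|\H|^2)$ in place of $\frac14|\H|^2$.

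For $\mathscr{O}_{\phi}=\s{\partial^2\Psi}{\partial^2\Psi}_h\otimes\s{\partial^2\bar{\Psi}}{\partial^2\bar{\Psi}}_h$, the strategy is to compute the symmetric (holomorphic-type) pairing $\s{\partial^2\Psi}{\partial^2\Psi}_h$, a section of $K^4\otimes\mathscr{N}^\ast\otimes\mathscr{N}^\ast$, again from the displayed expression for $(\D_{\p{z}}^2\Psi)(\vv{\xi})$, now pairing the $\vv{\xi}$- and $\vv{\eta}$-slots symmetrically. Each such factor produces a $\frac12(1+|\H|^2)$-weighted $\h_0\totimes\h_0$ piece together with the "null'' pieces built from $\partial^N\bar{\partial}^N\h_0$, $\partial^N\h_0$ and $\h_0$, entirely parallel to the $\mathscr{Q}_{\phi}$ reduction but keeping two free normal slots. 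Tensoring the $(1,0)$-factor with its conjugate and expanding $\bigl(\tfrac12(1+|\H|^2)\h_0\totimes\h_0+(\text{null part})\bigr)\otimes(\text{conjugate})$ reproduces the three grouped brackets in \eqref{48formula}: the quadratic-in-null-derivative group, the $\frac14(1+|\H|^2)$ cross group, and the $\frac{1}{64}(1+|\H|^2)^2(\h_0\totimes\h_0)^2$ group. The main obstacle is purely bookkeeping: one must correctly reduce $\D_{\p{z}}^N\H_0$, $\D_{\p{z}}^N\H$ and $\D_{\p{z}}^N\D_{\p{z}}^N\H$ to the intrinsic operators $\partial^N\h_0$, $\bar{\partial}^N\h_0$, $\partial^N\bar{\partial}^N\h_0$ via the conformal-factor identities and the Codazzi relation \eqref{codazzi}, and then track every $e^{2\lambda}$ weight through the $g^{-2}$ normalization of an $8$-differential without sign or combinatorial error. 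These reductions are each instances of \eqref{stepQ4}, so no new analytic input is required beyond the algebra already validated for $\mathscr{Q}_{\phi}$; the verification of the many cross terms is exactly the kind of formal computation recorded in \cite{sagepaper}, and I would defer the exhaustive expansion there while presenting the structural reduction above.
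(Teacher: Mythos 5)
Your treatment of $\mathscr{Q}_{\phi}$ is correct and is the paper's own route: the second-derivative formula for $\psi_{\phi}$ from the proof of Lemma \ref{s3} is valid in any codimension, and the paper evaluates the resulting bilinear form on the null frame $(\e_1,\e_2)$ of $T_{\C}^{\nperp}\Sigma^2$ using $\s{\vec{F}}{\e_1}\s{\vec{G}}{\e_2}+\s{\vec{F}}{\e_2}\s{\vec{G}}{\e_1}=\s{\vec{F}}{\vec{G}}$ together with the reduction \eqref{stepQ4}, exactly as you propose.

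For $\mathscr{O}_{\phi}$, however, there is a genuine gap. You assert that each symmetric factor $\s{\partial^2\Psi}{\partial^2\Psi}_h$ ``produces a $\frac{1}{2}(1+|\H|^2)$-weighted $\h_0\totimes\h_0$ piece together with null pieces'' and that tensoring this with its conjugate yields the three grouped brackets of \eqref{48formula}. This is false as stated. In the null frame $\e_1=\frac{1}{\sqrt{2}}(\n_1+i\,\n_2)$, $\e_2=\frac{1}{\sqrt{2}}(\n_1-i\,\n_2)$, with $\s{\e_1}{\e_1}=\s{\e_2}{\e_2}=0$ and $\s{\e_1}{\e_2}=1$, the factor evaluated on $(\e_1,\e_1)$ equals $\left(e^{-2\lambda}\left(h^1_{z\z}h^1-h^1_zh^1_{\z}\right)+\frac{1}{4}(1+|\H|^2)(h^1)^2\right)dz^4$, where $h^1=\s{\h_0}{\e_2}$ is a single frame component. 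Since $\h_0\totimes\h_0=\s{\h_0}{\h_0}=2h^1h^2$, the quantity $(h^1)^2$ admits no expression as an invariant pairing of $\h_0$ with itself: each factor separately is frame-dependent, and only the product of the $(\e_1,\e_1)$ and $(\e_2,\e_2)$ evaluations is a well-defined section of $K_{\Sigma^2}^8\otimes\mathscr{N}^\ast\otimes\bar{\mathscr{N}}^\ast\otimes\mathscr{N}^\ast\otimes\bar{\mathscr{N}}^\ast$. The entire content of the theorem is therefore the conversion of the mixed component products, such as $h^1_{z\z}h^1h^2_zh^2_{\z}+h^2_{z\z}h^2h^1_zh^1_{\z}$ or $(h^1)^2\left(h^2_{z\z}h^2-h^2_zh^2_{\z}\right)+(h^2)^2\left(h^1_{z\z}h^1-h^1_zh^1_{\z}\right)$, into $\totimes$-pairings, which the paper accomplishes through the chain of symmetrization (``circular permutation'') identities built on $F^1G^2+F^2G^1=\s{\vec{F}}{\vec{G}}$, namely \eqref{step1}--\eqref{step4} and \eqref{O2}; this is exactly where the coefficients $\frac{1}{4}$, $\pm\frac{1}{2}$ and the relative signs in \eqref{48formula} come from, and they cannot be obtained by formally expanding a product of two (non-existent) invariant decompositions of the individual factors. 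Your deferral of the expansion to \cite{sagepaper} is also not faithful here: unlike the later holomorphy arguments, the paper's proof of this identity is self-contained, and the recombination algebra is its core step rather than routine bookkeeping.
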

\begin{proof} 
	\textbf{For the sake of simplicity of notations, we will write $\partial$ (resp. $\bar{\partial}$) instead of $\partial^N$ (resp. $\bar{\partial}^N$)}
	We take some conformal chart $z$ such that we have a local orthonormal frame $(\n_1,\n_2)$ of the normal bundle. If $J$ is the almost complex structure introduced in Section \ref{4}, we recall that $J\n_1=-\n_2$. In particular, defining 
	\begin{align*}
		\e_1=\frac{1}{\sqrt{2}}(\n_1+i\,\n_2),\quad \e_2=\frac{1}{\sqrt{2}}(\n_1-i\,\n_2),
	\end{align*}
	then as $T_{\C}^\nperp\Sigma^2$ splits in
	\begin{align*}
		T_{\C}^\nperp\Sigma^2=\mathscr{N}\oplus \bar{\mathscr{N}},
	\end{align*}
	where $\mathscr{N}$ (resp. $\bar{\mathscr{N}}$) is the eigenspace of $J$ associated to the eigenvalue $i$ (resp. $-i$), and the eigenvector vector $\e_1$ (resp. $\e_2$) is a local trivialisation of $\mathscr{N}$ (resp. $\bar{\mathscr{N}}$), and for all section $\vec{F}\in \Gamma(T_{\C}^\nperp\Sigma^2)$, we shall adopt the notational convention
	\begin{align*}
		\vec{F}={F}^1\,\e_1+{F}^2\,\e_2.
	\end{align*}
	Note that $(\e_1,\e_2)$ is an orthonormal basis of $T_{\C}^\nperp\Sigma^2$ for the Hermitian product $\s{\,\cdot\,}{\bar{\,\cdot\,}}$, which implies that
	\begin{align*}
		\s{\e_1}{\e_1}=\s{\e_2}{\e_2}=0,\quad \s{\e_1}{\e_2}=1
	\end{align*}
	so in particular, we have (if $\vec{G}=G_1\e_1+G_2\e_2$ is a normal section)
	\begin{align}\label{twisted}
	    \s{\vec{F}}{\vec{F}}=2F^1F^2,\quad 
		\s{\vec{F}}{\vec{G}}=F^1G^2+F^2G^1.
	\end{align}
	We write
	\begin{align*}
		\h_0&=h^1\,\e_1+h^2\,\e_2,\quad
		\partial\h_0=h^1_{z}\,\e_1+h^2_{z}\,\e_2,\quad
		\bar{\partial}\h_0=h^1_{\z}\,\e_1+h^2_{\z}\,\e_2,\quad \partial\bar{\partial}\h_0=h^1_{z\z}\,\e_1+h^2_{z\z}\,\e_2.
	\end{align*}
	Then we recall that for all $\vv{\xi},\vv{\eta}\in \Gamma(T_{\C}^\nperp\Sigma^2)$,
	\begin{align*}
	&\s{\D_{\p{z}}\D_{\p{z}}{\psi}}{\D_{\p{z}}\D_{\p{z}}{\psi}}_h(\vv{\xi},\vv{\eta})=\frac{e^{2\lambda}}{2}\left(\s{\D_{\p{z}}^N\D_{\p{z}}^N\H}{\vv{\xi}}\s{\H_0}{\vv{\eta}}-\s{\D_{\p{z}}^N\vec{H}}{\vv{\xi}}\s{\D_{\p{z}}^N\H_0}{\vv{\eta}}\right)\\
	&+\frac{e^{2\lambda}}{2}\left(\s{\D_{\p{z}}^N\D_{\p{z}}^N\H}{\vv{\eta}}\s{\H_0}{\vv{\xi}}-\s{\D_{\p{z}}^N\H}{\vv{\eta}}\s{\D_{\p{z}}^N\H_0}{\vv{\xi}}\right)
	+\frac{e^{4\lambda}}{4}\s{\H_0}{\H_0}(1+|\H|^2)\\
	&=\frac{1}{2}\,g^{-1}\otimes\bigg\{\s{\partial\bar{\partial}\h_0}{\vv{\xi}}\otimes\s{\h_0}{\vv{\eta}}+\s{\partial\bar{\partial}\h_0}{\vv{\eta}}\otimes\s{\h_0}{\vv{\xi}}-\s{\partial\h_0}{\vv{\xi}}\otimes\s{\bar{\partial}\h_0}{\vv{\eta}}-\s{\partial\h_0}{\vv{\eta}}\otimes\s{\partial\h_0}{\vv{\xi}}\bigg\}\\
	&+\frac{1}{4}(1+|\H|^2)\s{\h_0}{\vv{\xi}}\otimes\s{\h_0}{\vv{\eta}}.
	\end{align*}
	Furthermore, we note that
	\begin{align*}
		\s{\vec{F}}{\e_1}\s{\vec{G}}{\e_2}+\s{\vec{F}}{\e_2}\s{\vec{G}}{\e_1}&=F^2G^1+F^1G^2=\s{\vec{F}}{\vec{G}}.
	\end{align*}
	Therefore we deduce that (as $\psi_{\phi}={\Psi}+\bar{\Psi}$)
	\begin{align*}
		\mathscr{Q}_{\phi}&=2\s{\partial^2{\Psi}}{\partial^2\bar{\Psi}}=2\s{\partial^2\psi}{\partial^2\psi}_h(\e_1,\e_2)\\
		&=\,g^{-1}\otimes\left(\partial\bar{\partial}\h_0\totimes\h_0-\partial\h_0\totimes\bar{\partial}\h_0\right)+\frac{1}{4}\left(1+|\H|^2\right)\h_0\totimes\h_0,
	\end{align*}
	so this justifies the introduction of the factor $2$ in the definition of $\mathscr{Q}_{\phi}$, as we recover the same expression of Bryant's quartic form, virtually extended to immersions in $S^4$.
	Then we have
	\begin{align}\label{O1}
		\mathscr{O}_{\phi}&=\s{\partial^2\Psi}{\partial^2\Psi}\otimes\s{\partial^2\bar{\Psi}}{\partial^2\bar{\Psi}}=\s{\partial^2\psi}{\partial^2\psi}(\e_1,\e_1)\otimes \s{\partial^2\psi}{\partial^2\psi}(\e_2,\e_2)\nonumber\\
		&=\left(e^{-2\lambda}\left(h^1_{z\z}h^1-h^1_zh^1_{\z}\right)+\frac{1}{4}(1+|\H|^2)(h^1)^2\right)\left(e^{-2\lambda}\left(h_{z\z}^2h^2-h^2_zh^2_{\z}\right)+\frac{1}{4}(1+|\H|^2)(h^2)^2\right)dz^8\nonumber\\
		&=e^{-4\lambda}\left(h_{z\z}h_{z\z}^2h^1h^2+h_z^1h_{\z}^1h_{z}^2h_{\z}^2-h_{z\z}^1h^1h_z^2h_{z}^2-h_{z\z}^2h^2h_z^1h_{\z}^1\right)\nonumber\\
		&+\frac{1}{4}(1+|\H|^2)e^{-2\lambda}\bigg\{(h^1)^2\left(h_{z\z}^2h^2-h_z^2h_{\z}^2\right)+(h^2)^2\left(h_{z\z}^1h^1-h_{z}^1h_{\z}^1\right)\bigg\}
		+\frac{1}{16}(1+|\H|^2)^2(h^1h^2)^2\nonumber\\
		&=e^{-4\lambda}\mathrm{(I)}+\frac{1}{4}(1+|\H|^2)e^{-2\lambda}\mathrm{(II)}+\frac{1}{64}(1+|\H|^2)^2(\h_0\totimes\h_0)\otimes (\h_0\totimes\h_0)
	\end{align}
	with evident definitions of $\mathrm{(I)}$ and $\mathrm{(II)}$, as $h^1h^2=\dfrac{1}{2}\h_0\totimes \h_0$ by \eqref{twisted}.
	We compute
	\begin{align*}
		(h^1)^2(h_{z\z}^2h^2-h^2_zh^2_{\z})&=h^1h^2((h^1h^2_{z\z}+h^2h^1_{z\z})-h^2h^1_{z\z})-h^1h^2_zh^1h_{\z}^2\\
		&=\frac{1}{2}(\h_0\totimes\h_0)\otimes (\partial\bar{\partial}\h_0\totimes\h_0)-(h^2)^2h_{z\z}^1h^1-h^1h^2_zh^1h^2_{\z}.
	\end{align*}
	so
	\begin{align*}
		(h^1)^2(h_{z\z}^2h^2-h^2_zh^2_{\z})+(h^2)^2(h^1_{z\z}h^1-h^1_zh^1_{\z})=\frac{1}{2}(\h_0\totimes\h_0)\otimes (\partial\bar{\partial}\h_0\totimes\h_0)-h^1h_z^2h^1h_{\z}^2-h^2h_z^1h^2h_{\z}^1
	\end{align*}
	and
	\begin{align*}
		h^1h_z^2h^1h_{\z}^2+h^2h_z^1h^2h_{\z}^1&=((h^1h_{\z}^2+h^2h_{\z}^1)-h^2h_{\z}^1)h^1h_{z}^2+((h^2h_{\z}^1+h^1h_{\z}^2)-h^1h_{\z}^2)h^2h_{z}^1\\
		&=(\h_0\totimes\bar{\partial}\h_0)(h^1h_z^2+h^2h_z^1)-\frac{1}{2}(\h_0\totimes\h_0)(h^1_{\z}h^2_{z}+h^2_{\z}h^1_{z})\\
		&=(\partial\h_0\totimes \h_0)\otimes(\bar{\partial}\h_0\otimes\h_0)-\frac{1}{2}(\partial\h_0\totimes\bar{\partial}\h_0)\otimes (\h_0\totimes\h_0).
	\end{align*}
	We deduce that
	\begin{align}\label{O2}
		\mathrm{(II)}=\frac{1}{2}(\partial\bar{\partial}\h_0\totimes\h_0)\otimes(\h_0\totimes\h_0)+(\partial\h_0\totimes\h_0)\otimes(\bar{\partial}\h_0\otimes\h_0)-\frac{1}{2}(\partial\h_0\totimes\bar{\partial}\h_0)\otimes(\h_0\totimes\h_0).
	\end{align}
	The idea here is to make circular permutations to obtain non circular computations.  The first two terms already have the good algebraic structure as
	\begin{align}
		h^1_{z\z}h_{z\z}^2h^1h^2+h^1_zh^2_zh^1_{\z}h_{\z}^2=\frac{1}{4}(\partial\bar{\partial}\h_0\totimes\partial\bar{\partial}\h_0)\otimes (\h_0\totimes\h_0)+\frac{1}{4}(\partial\h_0\totimes\partial\h_0)\otimes (\bar{\partial}\h_0\totimes\bar{\partial}\h_0).
	\end{align}
	Then we have
	\begin{align*}
		h^1_{z\z}h^1h_z^2h_{\z}^2&=((h_{z\z}^1h_z^2+h_{z\z}^2h_z^1)-h_{z\z}^2h^1_z)h^1h_{\z}=(\partial\bar{\partial}\h_0\totimes\partial\h_0)h^1h^2_{\z}-h^2_{z\z}h^1_zh^1h^2_{\z}\\
		h^2_{z\z}h^2h^1_zh^1_{\z}&=(\partial\bar{\partial}\h_0\totimes\partial\h_0)h^2h^1_{\z}-h^1_{z\z}h^2_zh^2h^1_{\z}
	\end{align*}
	therefore
	\begin{align}\label{step1}
		h^1_{z\z}h^1h_z^2h_{\z}^2+h^2_{z\z}h^2h^1_zh^1_{\z}=(\partial\bar{\partial}\h_0\totimes\partial\h_0)(\bar{\partial}\h_0\totimes\h_0)-h^2_{z\z}h^1_zh^1h_{\z}^2-h^1_{z\z}h^2_zh^2h^1_{\z}.
	\end{align}
	Then
	\begin{align*}
		h^2_{z\z}h_z^1h^1h_{\z}^2&=((h_{z\z}^2h^1+h^1_{z\z}h^2)-h^1_{z\z}h^2)h^1_zh_{\z}^2=(\partial\bar{\partial}\h_0\totimes\h_0)h^1_zh^2_{\z}-h^1_{z\z}h^2h^1_zh_{\z}^2\\
		h^1_{z\z}h^2_zh^2h^1_{\z}&=(\partial\bar{\partial}\h_0\totimes\h_0)h^2_zh^1_{\z}-h^2_{z\z}h^1h^2_zh^1_{\z}
	\end{align*}
	so
	\begin{align}\label{step2}
		h^2_{z\z}h^1_zh^1h_{\z}^2+h^1_{z\z}h^2_zh^2h^1_{\z}=(\partial\bar{\partial}\h_0\totimes\h_0)\otimes(\partial\h_0\totimes\bar{\partial}\h_0)-h^1_{z\z}h^2h^1_zh^2_{\z}-h^2_{z\z}h^1h^2_zh^1_{\z}.
	\end{align}
	We are almost done, as
	\begin{align*}
		h^1_{z\z}h^2h_z^1h_{\z}^2&=(\partial\bar{\partial}\h_0\totimes\bar{\partial}\h_0)h^2h^1_z-h^2_{z\z}h^1_{\z}h^2h^1_z\\
		h_{z\z}^2h^1h_z^2h_{\z}^1&=(\partial\bar{\partial}\h_0\totimes\bar{\partial}\h_0)h^1h^2_z-h^1_{z\z}h_{\z}^2h^1h_{\z}^2,
	\end{align*}
	so
	\begin{align}\label{step3}
		h^1_{z\z}h^2h^1_zh^2_{\z}+h^2_{z\z}h^1h^2_zh^1_{\z}=(\partial\bar{\partial}\h_0\totimes\bar{\partial}\h_0)\otimes (\partial\h_0\totimes\h_0)-(h^1_{z\z}h^1h^2_zh_{\z}^2+h^2_{z\z}h^2h^1_zh^1_{\z})
	\end{align}
	and we recognize the left-hand side of \eqref{step1}. Taking the signs in account, we have
	\begin{align}\label{step4}
		h^1_{z\z}h^1h^2_zh_{\z}^2+h^2_{z\z}h^2h^1_zh^1_{\z}&=\frac{1}{2}(\partial\bar{\partial}\h_0\totimes\partial\h_0)\otimes(\bar{\partial}\h_0\totimes\h_0)+\frac{1}{2}(\partial\bar{\partial}\h_0\totimes\bar{\partial}\h_0)\otimes ({\partial}\h_0\otimes\h_0)\nonumber\\
		&-\frac{1}{2}(\partial\bar{\partial}\h_0\totimes\h_0)\otimes(\partial\h_0\totimes\bar{\partial}\h_0).
	\end{align}
	Therefore, we have
	\begin{align}\label{O3}
		\mathrm{(I)}&=h^1_{z\z}h_{z\z}^2h^1h^2+h^1_zh^2_zh^1_{\z}h_{\z}^2-(h^1_{z\z}h^1h^2_zh_{\z}^2+h^2_{z\z}h^2h^1_zh^1_{\z})\nonumber\\
		&=\frac{1}{4}(\partial\bar{\partial}\h_0\totimes\partial\bar{\partial}\h_0)\otimes (\h_0\otimes\h_0)+\frac{1}{4}(\partial\h_0\totimes\partial\h_0)\otimes (\bar{\partial}\h_0\totimes\bar{\partial}\h_0)\nonumber\\
		&-\frac{1}{2}(\partial\bar{\partial}\h_0\totimes\partial\h_0)\otimes(\bar{\partial}\h_0\totimes\h_0)-\frac{1}{2}(\partial\bar{\partial}\h_0\totimes\bar{\partial}\h_0)\otimes ({\partial}\h_0\otimes\h_0)\nonumber\nonumber\\
		&+\frac{1}{2}(\partial\bar{\partial}\h_0\totimes\h_0)\otimes(\partial\h_0\totimes\bar{\partial}\h_0)
	\end{align}
	so putting together \eqref{O1}, \eqref{O2}, \eqref{O3}, we obtain the expression announced in the proposition.
\end{proof}

\begin{rem}
	Without our analysis from the previous section, the $8$-differential $\mathscr{O}_{\phi}$ would have poles of order at most $4$, and by Riemann-Roch theorem, this would allow a generalisation of Montiel's theorem for branched Willmore spheres with less than $3$ branch points, while the very first step of the proof where we show the additional regularity at branch points would raise this number of branch points to $7$.
\end{rem}


\begin{theorem}\label{s4vanish}
	Let $\Sigma^2$ be a closed Riemann surface, $\phi:S^2\rightarrow S^4$ be a  branched Willmore surface such that for all $p\in \Sigma^2$ the first and second residue $\vec{\gamma}_0(p)$ and $r(p)$ satisfy
	\begin{align*}
	\left\{
	\begin{alignedat}{2}
		&\vec{\gamma}_0(p)=0\qquad \qquad&&\text{if}\;\, 1\leq \theta_0(p)\leq 3\\
		&r(p)\leq \theta_0(p)-2\qquad \qquad&&\text{if}\;\, \theta_0(p)\geq 2.
		\end{alignedat}\right.
	\end{align*}
	 If the cubic form $\mathscr{T}_{\phi}$ vanishes, the respectively quartic and octic forms $\mathscr{Q}_{\phi}$ and $\mathscr{O}_{\phi}$ are holomorphic. In particular, if $\Sigma^2$ has genus $0$, then the respectively cubic, quartic and octic \emph{holomorphic} differentials $\mathscr{T}_{\phi}$, $\mathscr{Q}_{\phi}$, and $\mathscr{O}_{\phi}$ vanish identically.
\end{theorem}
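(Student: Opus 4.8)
The plan is to reduce the whole statement to the removability of the singularities of $\mathscr{Q}_{\phi}$ and $\mathscr{O}_{\phi}$ at the branch points, exactly as was done for $S^3$ in the proof of Theorem \ref{devh0}, and then to close the genus-zero case by a negativity-of-line-bundles argument. First I would observe that, since $\mathscr{T}_{\phi}=0$ by hypothesis, Montiel's identities \eqref{partialdel} give at once $\bar{\partial}\mathscr{Q}_{\phi}=K^N g\otimes \mathscr{T}_{\phi}=0$ and $\bar{\partial}\mathscr{O}_{\phi}=2\vec{\delta}\otimes \mathscr{T}_{\phi}=0$ away from the branch locus. Hence both $\mathscr{Q}_{\phi}$ and $\mathscr{O}_{\phi}$ are meromorphic sections (of $K_{\Sigma^2}^4$ and $K_{\Sigma^2}^8$ twisted by the holomorphically trivial factors $\mathscr{N}^\ast\otimes\bar{\mathscr{N}}^\ast$), and the only remaining task is to show that their poles at the branch points are removable.

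For $\mathscr{Q}_{\phi}$ this is immediate: the $S^4$ expression \eqref{48formula} of $\mathscr{Q}_{\phi}$ is verbatim Bryant's quartic form, and the branch-point analysis of Theorem \ref{devh0} is codimension-independent. I would therefore invoke Corollary \ref{corholomorphy} directly, which, under the residue hypotheses (vanishing of $\vec{\gamma}_0$ forcing smoothness at multiplicity-one points via \cite{beriviere}, and the cancellation laws \eqref{0supercancel} together with the expansion \eqref{utiles4} at higher multiplicity), asserts that $\mathscr{Q}_{\phi}$ is holomorphic once it is meromorphic. This settles the quartic form with essentially no new work.

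The octic form $\mathscr{O}_{\phi}$ is the crux, and I expect it to be the main obstacle. A naive estimate $\h_0=O(|z|^{\theta_0-1})$ substituted into \eqref{48formula} only forbids poles of order larger than $4$, which is too weak. The remedy is the normal-derivative-free expression \eqref{publiable}: into it one feeds the refined expansion \eqref{utiles4} of $\h_0$ at a branch point of multiplicity $\theta_0\ge 2$ (available precisely because the residue conditions yield $\n\in C^{1,1}$), together with the cancellation laws \eqref{0supercancel} — namely $\s{\vec{A}_0}{\vec{A}_0}=\s{\vec{A}_0}{\vec{A}_1}=\s{\vec{A}_0}{\vec{C}_1}=0$, $\s{\vec{A}_1}{\vec{C}_1}=0$ and $|\vec{C}_1|^2\s{\vec{A}_1}{\vec{A}_1}=0$. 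Using repeatedly the null identity $Q(\vec{\Lambda}f_1(z)\bar{f_2(z)}\,dz^2)=0$ for the leading meromorphic-times-antimeromorphic factors, I would show that every a priori singular contribution in \eqref{publiable} cancels, so that $\mathscr{O}_{\phi}$ is bounded; this is exactly the kind of bookkeeping that requires the computer-assisted computations of \cite{sagepaper}, since no short algebraic shortcut analogous to the quartic case is available. At multiplicity-one branch points the vanishing of $\vec{\gamma}_0$ makes $\phi$ smooth, so no pole occurs there. Being meromorphic and bounded at every branch point, $\mathscr{O}_{\phi}$ is holomorphic, which completes the first assertion for arbitrary closed $\Sigma^2$.

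Finally, for the genus-zero conclusion I would specialise to $\Sigma^2=S^2$, where $K_{S^2}$ has degree $-2$, so that $K_{S^2}^3$, $K_{S^2}^4$ and $K_{S^2}^8$ all have negative degree; tensoring by the holomorphically trivial line bundles $\mathscr{N}^\ast\otimes\bar{\mathscr{N}}^\ast$ (and, for $\mathscr{O}_{\phi}$, two copies thereof) preserves negativity. Since a negative holomorphic line bundle over $S^2$ admits no nonzero holomorphic section, the holomorphic form $\mathscr{T}_{\phi}$ vanishes identically — so in genus zero the hypothesis $\mathscr{T}_{\phi}=0$ is automatic — and likewise the holomorphic forms $\mathscr{Q}_{\phi}$ and $\mathscr{O}_{\phi}$ obtained above must vanish identically, as claimed.
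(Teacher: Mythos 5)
Your proposal follows the paper's proof essentially verbatim: meromorphy of $\mathscr{Q}_{\phi}$ and $\mathscr{O}_{\phi}$ from Montiel's identities \eqref{partialdel} once $\mathscr{T}_{\phi}=0$, holomorphy of $\mathscr{Q}_{\phi}$ by invoking the codimension-independent branch-point analysis of Theorem \ref{devh0} (Corollary \ref{corholomorphy}), removability of the octic form's poles by inserting the expansion \eqref{utiles4} and the cancellation laws \eqref{0supercancel} into the normal-derivative-free formula \eqref{publiable} with the computer-assisted bookkeeping of \cite{sagepaper}, and the genus-zero vanishing via negativity of $K_{S^2}^{k}$ tensored with the holomorphically trivial factor $\mathscr{N}^\ast\otimes\bar{\mathscr{N}}^\ast$. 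The paper records all of this more tersely — citing Theorem \ref{devh0} for the quartic form, chapter $4$ of \cite{sagepaper} for the octic form, and Riemann-Roch together with proposition \ref{bundletrivial} for the final assertion — but your argument is identical in substance.
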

\begin{proof}
	If $\mathscr{T}_{\phi}=0$, then $\mathscr{Q}_{\phi}$ and $\mathscr{O}_{\phi}$ are meromorphic. Then, Theorem \ref{devh0} applies and shows that $\mathscr{Q}_{\phi}$ is holomorphic.
	
	To see that $\mathscr{O}_{\phi}$ is holomorphic is a bit more delicate and is the object of Chapter $4$ in \cite{sagepaper}. Notice also that this octic differential is holomorphic once $\mathscr{Q}_{\phi}$ and $\mathscr{Q}_{\phi}$ are holomorphic.
 \end{proof}
We now recall one of Montiel's main theorem of \cite{montiels4}.
\begin{theorem}[Montiel]\label{montiel1}
	Let $\phi:\Sigma^2\rightarrow S^4$ be a branched Willmore sphere, and $\mathscr{G}:\Sigma^2\rightarrow\C\mathbb{P}^{4,1}$ be its pseudo Gauss map. Then $\mathscr{G}$ is meromorphic, of anti-holomorphic, or lies in a null totally geodesic complex hypersurface of the null quadric $Q^{3,1}\subset \C\mathbb{P}^{4,1}$, defined by
	\begin{align}
		Q^{3,1}=\C\mathbb{P}^{4,1}\cap\ens{[Z]: \s{Z}{\bar{Z}}_h=0}.
	\end{align} 
\end{theorem}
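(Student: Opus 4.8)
The plan is to reduce Montiel's trichotomy to the simultaneous vanishing of the three holomorphic differentials $\mathscr{T}_{\phi}$, $\mathscr{Q}_{\phi}$, $\mathscr{O}_{\phi}$ attached to the pseudo Gauss map, and then to read off the three possibilities from the structure of the harmonic sequence of $\mathscr{G}$. First I would recall that, because $\phi$ is Willmore, the pseudo Gauss map $\mathscr{G}=[\Psi]$ (with $\Psi=\psi_{\phi}^{(1,0)}$) is a weakly conformal harmonic map into the indefinite projective space $\mathbb{C}\mathbb{P}^{4,1}$, and that by the definitions \eqref{defabc} the three objects $\mathscr{T}_{\phi}$, $\mathscr{Q}_{\phi}$, $\mathscr{O}_{\phi}$ are sections of $K_{\Sigma^2}^3$, $K_{\Sigma^2}^4$, $K_{\Sigma^2}^8$ twisted by the holomorphically trivial line bundle $\mathscr{N}^\ast\otimes\bar{\mathscr{N}}^\ast$. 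Since $\Sigma^2=S^2$ has genus $0$, the canonical bundle $K_{S^2}$ is negative, so $H^0(S^2,K_{S^2}^k)=0$ for every $k\geq 1$; combined with the holomorphy established in theorem \ref{s4vanish} (itself resting on the branch-point expansions of theorem \ref{devh0} and corollary \ref{corholomorphy}) this forces
\[
\mathscr{T}_{\phi}=\mathscr{Q}_{\phi}=\mathscr{O}_{\phi}=0 .
\]

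Second, I would translate these three identities, via the expressions of theorem \ref{threedev} and the definitions \eqref{defabc}, into isotropy conditions on the holomorphic osculating data of $\Psi$. Concretely, $\mathscr{T}_{\phi}=0$ gives $\s{\partial^2\Psi}{\partial\bar{\Psi}}_h=0$, $\mathscr{Q}_{\phi}=0$ gives $\s{\partial^2\Psi}{\partial^2\bar{\Psi}}_h=0$, and $\mathscr{O}_{\phi}=0$ forces $\s{\partial^2\Psi}{\partial^2\Psi}_h=0$ (or the conjugate identity). These say that the second derivative $\partial^2\Psi$ is $h$-null and $h$-orthogonal to the first two terms of the conjugate harmonic sequence, i.e. that the $(1,0)$-osculating flag of $\mathscr{G}$ is totally isotropic up to the relevant order.

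Third, I would invoke the classification of harmonic two-spheres into (indefinite) complex projective space in the spirit of Eells--Wood and Chern--Wolfson: the harmonic map $\mathscr{G}$ is generated from a holomorphic (equivalently meromorphic) or antiholomorphic datum by iterated $\partial'$ transforms and their conjugates, and the vanishing of the associated differentials is exactly the condition that this harmonic sequence degenerates. Carrying out the case analysis, the three surviving possibilities are that the $\partial$-sequence collapses immediately, so that $\mathscr{G}$ is holomorphic; that the $\bar{\partial}$-sequence collapses, so that $\mathscr{G}$ is antiholomorphic; or that $\Psi$ together with its derivatives spans an isotropic subspace, so that $\mathscr{G}$ maps into a null totally geodesic complex hypersurface of the null quadric $Q^{3,1}$. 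These correspond respectively, through the Penrose twistor fibration and through stereographic projection, to the two geometric alternatives of theorem \ref{J}.

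The hard part will be twofold. On the analytic side, one must make sense of the harmonic sequence and of the isotropy conditions across the branch points of $\phi$, where $\Psi$ and its derivatives degenerate; here the purely integer-power Taylor expansions of corollary \ref{corholomorphy}, and the resulting holomorphic extension of the differentials, are what allow the osculating flag to be prolonged holomorphically over the punctures. On the algebraic side, the indefinite signature of $h$ means that the positive-definite orthogonality arguments of Eells--Wood must be replaced by ones valid for the Lorentzian Hermitian product $\s{\cdot}{\bar{\cdot}}_h$, and one must separate the genuinely isotropic locus (image in $Q^{3,1}$) from the $\pm$-holomorphic cases; verifying that the three cases are mutually exclusive and jointly exhaustive under the vanishing of all three differentials is the crux of Montiel's argument that I would follow.
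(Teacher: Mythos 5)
You should first be aware that the paper contains no proof of this statement: it is introduced by the phrase \enquote{We now recall one of Montiel's main theorem of \cite{montiels4}} and is used as a black box, the paper's own contributions being the neighbouring results (theorem \ref{threedev}, theorem \ref{s4vanish}, theorem \ref{devh0} and corollary \ref{corholomorphy}) which make the cited trichotomy applicable in the branched setting. So there is no in-paper argument to compare yours against line by line. That said, your sketch does follow the route of Montiel's original proof: harmonicity of the pseudo Gauss map, vanishing of $\mathscr{T}_{\phi}$, $\mathscr{Q}_{\phi}$, $\mathscr{O}_{\phi}$ on the sphere via the cascade \eqref{partialdel} (and you are right to insist that $\mathscr{Q}_{\phi}$ and $\mathscr{O}_{\phi}$ become holomorphic only \emph{after} $\mathscr{T}_{\phi}=0$, not merely because $\phi$ is Willmore), followed by a degeneration analysis of the osculating data of $\mathscr{G}$ in the indefinite space $\C\mathbb{P}^{4,1}$.

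As a proof, however, your third step — which is the entire content of the theorem — is announced rather than executed, and two concrete points would have to be supplied. First, $\mathscr{O}_{\phi}=0$ only gives the \emph{pointwise} vanishing of the product $\s{\partial^2\Psi}{\partial^2\Psi}_h\otimes\s{\partial^2\bar{\Psi}}{\partial^2\bar{\Psi}}_h$; to upgrade this to the alternative \enquote{one factor vanishes identically} you need each factor to be separately holomorphic (resp.\ antiholomorphic) once $\mathscr{T}_{\phi}=\mathscr{Q}_{\phi}=0$, together with connectedness and unique continuation — this is precisely the analytic hinge of Montiel's argument, and it is not delivered by the Eells--Wood formalism off the shelf, since the orthogonality lemmas of that theory rely on positive definiteness, which fails for $\s{\,\cdot\,}{\bar{\,\cdot\,}}_h$; separating the genuinely isotropic case $\mathscr{G}(\Sigma^2)\subset Q^{3,1}$ from the $\pm$-holomorphic ones is exactly where the definite-case arguments break down, as you yourself flag without resolving. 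Second, in the branched setting the holomorphy of $\mathscr{T}_{\phi}$, hence the whole cascade, requires the vanishing of the first residue $\vec{\gamma}_0$ at branch points of multiplicity one, exactly as in the hypotheses of theorem \ref{s4vanish}; your step 1 invokes that theorem but silently drops this hypothesis, so as written your argument does not cover an arbitrary branched Willmore sphere, which is what the statement as printed asserts. If your intention is to reprove Montiel's theorem rather than cite it, these are the two places where the real work lies; otherwise the indefinite-signature case analysis should simply be quoted from \cite{montiels4}, as the paper does.
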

In the third case, the condition is equivalent to the following assertion : there exists a null vector $p\in \R^6$ such that $\s{\psi_{\phi}^{(1,0)}}{q}=0$. Up to scaling, we have $q=-(a+p)$ for some $p\in S^4$, and $a=(1,0,\cdots,0)\in \R^5$ and this is equivalent to
\begin{align*}
	\s{\H}{\vv{\xi}}\left(1-\s{\phi}{p}\right)-\s{\phi}{p}=0,
\end{align*}
for all $\vv{\xi}\in T_{\C}^\nperp \Sigma^2$. Therefore, we have
\begin{align*}
	\H=\frac{p^N}{1-\s{\phi}{p}},
\end{align*}
but this exactly means that the mean curvature of $\pi_p\circ \phi:S^2\setminus\phi^{-1}(\ens{p})\rightarrow\R^4$ (where $\pi_p:S^2\setminus\ens{p}\rightarrow\R^4$ is the stereographic projection based in $p$) vanishes identically. In particular the dual minimal surface is complete and has finite total curvature by the conformal invariance of the Willmore energy, and furthermore, has zero flux if and only $\phi$ is a true Willmore sphere by Theorem \ref{galois}. However, the number of ends of the dual minimal surface is not given easily thanks to the more complicated relationship between the order of branch points of minimal surfaces and the multiplicities appearing in the Jorge-Meeks formula. Nevertheless, the Willmore energy is still quantized by $4\pi$ for Willmore spheres in these class. We shall see shortly that his phenomenon is valid for all Willmore spheres.

\subsection{Twistor constructions}

We refer to \cite{bryant2} for references on the material introduced here. Let $\mathbb{H}$ be the real division algebra of quaternions. A convenient notation is to write every quaternion as $q=z_0+jz_1$, where $z_0,z_1\in \C$, and $j\in \mathbb{H}$ is such that
\begin{align*}
	j^2=-1,\quad zj=j\z
\end{align*}
for all $z\in \C$. For all $\vec{v}\in \mathbb{H}^2\setminus\ens{0}$, let $\vec{v}\,\C$ and $\vec{v}\, \mathbb{H}$ the complex line and quaternion line associated to $\vec{v}$. As the preceding definition of $\mathbb{H}$ makes it a $\C$-vector space, where $\C$ acts on $\H$ by right multiplication, we can view $\vec{v}\,\C\subset \vec{v}\,\mathbb{H}$. Identifying $\mathbb{H}^2$ with $\C^4$ thanks to the map $\varphi :\C^4\rightarrow \mathbb{H}^2$, such that for all $z=(z_0,z_1,z_2,z_3)\in \C^4$
\begin{align*}
	\varphi(z)=(z_0+jz_1,z_2+jz_3),
\end{align*}
the map
\begin{align}\label{fibration}
	\begin{aligned}
	 \mathbb{H}^2\setminus\ens{0}&\rightarrow \mathbb{H}\mathbb{P}^1=\mathbb{P}(\mathbb{H})\\
	 \vec{v}\,\C&\mapsto \vec{v}\,\mathbb{H}
	\end{aligned}
\end{align}
induced a well-defined map $T:\mathbb{C}\mathbb{P}^3\rightarrow \mathbb{H}\mathbb{P}^1$, which is nothing else than the Penrose fibration. As $T^{-1}(\vec{v}\,\mathbb{H})$ is equal to the complex lines of $\vec{v}\,\mathbb{H}\simeq \C^2$, the fibres are bi-holomorphic to $\C\mathbb{P}^1$, and it is proved in \cite{bryant2} that this map is a surjective submersion, so we obtain a fibration
\[
\begin{tikzcd}
\mathbb{C}\mathbb{P}^1 \arrow{r}{\iota} & \mathbb{C}\mathbb{P}^3 \arrow{d}{T}\\
& \mathbb{H}\mathbb{P}^1.
\end{tikzcd}
\]
Then for all smooth immersion $\phi:\Sigma^2\rightarrow S^4$, we can define a section
\begin{align*}
	\bar{\partial}\phi\wedge \xi\in \Gamma(\mathscr{N}^\ast\otimes \bar{K}_{\Sigma^2}\otimes \wedge^2\C^5)
\end{align*}
the class of this section in $\mathbb{C}\mathbb{P}^9$ is the Penrose lifting $\vec{\Gamma}_{\phi}:\Sigma^2\rightarrow\mathbb{C}\mathbb{P}^9$. Actually, by the Veronese embedding $\mathbb{C}\mathbb{P}^3=\mathbb{C}\mathbb{P}^9=\mathbb{P}(\wedge^2\C^5)$, one can check that we obtain a map $\vec{\Gamma}_{\phi}:\Sigma^2\rightarrow\mathbb{C}\mathbb{P}^3$, as the special expansion of $\phi$ at branch points first proved in \cite{beriviere} shows that $\widetilde{\mathscr{G}}$ is well-defined at branch points. This phenomenon is very similar to minimal surfaces in Euclidean spaces. We recall the following theorem of Montiel.

\begin{theorem}
	The holomorphic locus (resp. anti-holomorphic) of the pseudo Gauss map $\vec{\mathscr{G}}_{\phi}:\Sigma^2\rightarrow\mathbb{C}\mathbb{P}^{4,1}$ and of Penrose lifting $\vec{\Gamma}_{\phi}:\mathbb{C}\mathbb{P}^1\rightarrow \mathbb{C}\mathbb{P}^3$ of a conformal immersion $\phi:S^2\rightarrow S^4$ are equal.
\end{theorem}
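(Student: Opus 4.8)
The plan is to write both maps in a single adapted complex frame and to show that, modulo the tautological (connection) term along the section itself, the failure of holomorphicity of each lift is governed by the \emph{same} scalar: the $\mathscr{N}$-component of the Weingarten tensor. Following the frame introduced in the proof of Theorem \ref{threedev}, I would fix locally the unitary complex normal frame $\vec{e}_1=\tfrac{1}{\sqrt{2}}(\vec{n}_1+i\,\vec{n}_2)$ and $\vec{e}_2=\overline{\vec{e}_1}$, which trivialise $\mathscr{N}$ and $\bar{\mathscr{N}}$ and satisfy $\s{\vec{e}_1}{\vec{e}_1}=\s{\vec{e}_2}{\vec{e}_2}=0$, $\s{\vec{e}_1}{\vec{e}_2}=1$; and I would write $\h_0=h^1\vec{e}_1+h^2\vec{e}_2$. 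Trivialising $\mathscr{N}$ by $\vec{e}_1$, the pseudo Gauss map reads $\mathscr{G}=[\vec{\Psi}]$ with $\vec{\Psi}=\s{\H}{\vec{e}_1}(\vec{a}+\phi)+\vec{e}_1$, while the Penrose lift is $\Gamma_\phi=[\vec{e}_{\bar z}\wedge\vec{e}_1]$, the decomposable $2$-vector $\bar{\partial}\phi\wedge\xi$ read through the Veronese embedding. First I would record the structure equations in this frame, in particular $\D_{\p{\z}}\vec{e}_{\bar z}=2(\p{\z}\lambda)\vec{e}_{\bar z}+\tfrac12 e^{2\lambda}\overline{\H_0}$, the tangential relation $\D^\top_{\p{\z}}\vec{e}_1=-\s{\H}{\vec{e}_1}\vec{e}_{\bar z}-\overline{H_0^1}\,\vec{e}_z$, and $\D^N_{\p{\z}}\vec{e}_1=\mu\,\vec{e}_1$, the last holding because $\vec{e}_1$ trivialises the holomorphic line bundle $\mathscr{N}$.

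Next I would differentiate each lift. For the twistor lift, Leibniz gives $\D_{\p{\z}}(\vec{e}_{\bar z}\wedge\vec{e}_1)$ as a term proportional to $\vec{e}_{\bar z}\wedge\vec{e}_1$ (the connection part, harmless for the projective class) plus a transverse part equal, up to nonzero constants, to $\overline{h^1}\,\vec{e}_2\wedge\vec{e}_1$ and $\overline{h^1}\,\vec{e}_{\bar z}\wedge\vec{e}_z$, where one uses $\overline{\H_0}=\overline{H_0^1}\vec{e}_2+\overline{H_0^2}\vec{e}_1$ together with $\vec{e}_1\wedge\vec{e}_1=\vec{e}_{\bar z}\wedge\vec{e}_{\bar z}=0$. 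For the pseudo Gauss map, the identity $(\D_{\p{z}}\psi)(\vec{\xi})=\s{\D^N_{\p{z}}\H}{\vec{\xi}}(\vec{a}+\phi)-\s{\H_0}{\vec{\xi}}\vec{e}_{\bar z}$ from the proof of Lemma \ref{s3}, conjugated and combined with $\D^N_{\p{\z}}\vec{e}_1=\mu\,\vec{e}_1$, yields $\D_{\p{\z}}\vec{\Psi}\equiv\s{\D^N_{\p{\z}}\H}{\vec{e}_1}(\vec{a}+\phi)-\overline{H_0^1}\,\vec{e}_z\pmod{\vec{\Psi}}$.

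Then I would carry out the projective bookkeeping. The class $[\vec{\Psi}]$ is holomorphic exactly when $\D_{\p{\z}}\vec{\Psi}\wedge\vec{\Psi}=0$; since $\vec{a}+\phi$, $\vec{e}_z$, $\vec{e}_1$ are linearly independent away from branch points and $\vec{\Psi}$ has no $\vec{e}_z$-component, the transverse coefficient must vanish, forcing $\overline{H_0^1}=0$, i.e. $h^1=0$, and conversely $h^1\equiv0$ on an open set kills the remaining derivative term through the Codazzi identity $\D^N_{\p{\z}}\H=e^{-2\lambda}\D^N_{\p{z}}\overline{\h_0}$. Likewise $\Gamma_\phi$ is holomorphic exactly when the two transverse $2$-vectors $\vec{e}_2\wedge\vec{e}_1$ and $\vec{e}_{\bar z}\wedge\vec{e}_z$, both independent of $\vec{e}_{\bar z}\wedge\vec{e}_1$, drop out, and each carries the factor $\overline{h^1}$, so the condition is again $h^1\equiv0$. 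Hence the two holomorphic loci coincide; the anti-holomorphic statement follows by the conjugate computation with $\partial$ in place of $\bar{\partial}$, which reduces both conditions to $h^2\equiv0$, matching Montiel's trichotomy in Theorem \ref{montiel1}.

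The \textbf{main obstacle} is twofold. First, the projective reduction must genuinely isolate a single transverse direction: one must verify that the ambient vectors $\{\vec{a}+\phi,\vec{e}_z,\vec{e}_1\}$ and the $2$-vectors $\{\vec{e}_{\bar z}\wedge\vec{e}_1,\vec{e}_2\wedge\vec{e}_1,\vec{e}_{\bar z}\wedge\vec{e}_z\}$ are linearly independent, which rests on $d\phi$ having rank two and on both lifts landing in their claimed targets; I would restrict to $\Sigma^2$ minus the branch points for this. Second, and this is the real point on $S^2$, one must extend the identification across branch points: using the asymptotic expansions of $\h_0$ and $\phi$ (Corollary \ref{corholomorphy}, resp. the development first proved in \cite{beriviere}) I would check that both $\mathscr{G}$ and $\Gamma_\phi$ extend continuously and meromorphically through each branch point and that the equivalence $h^1\equiv0\Leftrightarrow(\text{holomorphy})$ persists there, so that the two loci — which are open and, by real-analyticity, also closed — agree on all of $S^2$.
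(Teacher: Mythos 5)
You should first know how the paper itself handles this statement: it gives \emph{no} proof at all. The theorem is quoted verbatim with the words \enquote{We recall the following theorem of Montiel} and rests entirely on the citation \cite{montiels4}, so there is no internal argument to compare against, and your attempt has to stand on its own. On the positive side, your frame computation identifies the correct mechanism, and it is the one underlying Montiel's result: writing $\h_0=h^1\vec{e}_1+h^2\vec{e}_2$ in the eigenframe of $J$, the $(0,1)$-obstruction of both lifts is carried by the $\mathscr{N}$-component $h^1$, and the conjugated Codazzi identity closes the converse direction. Your structure equations and the transverse decompositions are correct as far as they go.

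The genuine gap is that your own formulas contradict the pointwise statement you claim to prove. For the twistor lift you find that the transverse part of $\D_{\p{\z}}(\e_{\z}\wedge \vec{e}_1)$ is a combination of $\vec{e}_2\wedge\vec{e}_1$ and $\e_{\z}\wedge\e_z$ with both coefficients proportional to $\bar{h^1}$, so its pointwise holomorphic locus is exactly $\{h^1=0\}$. But for the pseudo Gauss map the transverse part of $\D_{\p{\z}}\vec{\Psi}$ is $\s{\D^{N}_{\p{\z}}\H}{\vec{e}_1}(\vec{a}+\phi)-\bar{H_0^1}\,\e_z$, and since $\vec{\Psi}$ has $\vec{e}_1$-coefficient equal to $1$ while this transverse part has none, holomorphy \emph{at a point} forces both $\bar{H_0^1}=0$ \emph{and} $\s{\D^{N}_{\p{\z}}\H}{\vec{e}_1}=0$; by your own Codazzi identity the second quantity equals $e^{-2\lambda}\left(\p{z}\bar{h^1}+c\,\bar{h^1}\right)$ for a connection coefficient $c$, which is nonzero at a simple zero of $h^1$ (take $h^1$ behaving like $\z$ near a point). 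So with the pointwise definition the two loci are $\{h^1=0\}$ versus $\{h^1=0\}\cap\{\p{z}h^1+c\,h^1=0\}$, and they are \emph{not} equal in general. Your proof silently switches, in the converse direction, to \enquote{$h^1\equiv 0$ on an open set}, i.e.\ to the locus understood as the maximal open set of holomorphy; with that definition both loci equal the interior of $\{h^1=0\}$ and your computation does prove the theorem — but you must fix this definition explicitly and abandon the \enquote{exactly when} pointwise phrasing, since that is how the statement is actually used in the paper (the global equivalence for Montiel's trichotomy). Finally, the closing topological step fails twice over: a general conformal immersion is not real-analytic (only Willmore immersions would be), so \enquote{open and, by real-analyticity, also closed} is unjustified; and if each locus genuinely were open and closed in the connected sphere it would be empty or everything, which would absurdly force every conformal immersion to have an everywhere- or nowhere-holomorphic Gauss map. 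Note also that the stated hypothesis is an \emph{immersion}, so there are no branch points and your second \enquote{obstacle} is vacuous for the statement as written; the extension across branch points is a separate issue the paper handles later, via the expansions of \cite{beriviere}, when applying the theorem to branched Willmore spheres.
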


Therefore, by Theorem \ref{montiel1}, we can assume up to replacing $\phi$ by $-\phi$, that $\mathscr{G}:\Sigma^2\rightarrow\mathbb{C}\mathbb{P}^{4,1}$ is holomorphic. To be able to conclude, we need to prove that whenever the Penrose lifting ${\Gamma}_{\phi}:\Sigma^2\rightarrow\C\mathbb{P}^3$ of a branched Willmore sphere $\phi:\Sigma^2\rightarrow \mathbb{H}\mathbb{P}^1$ is holomorphic (a condition equivalent to the holomorphy of the pseudo Gauss map), then the following diagram commutes
\[
\begin{tikzcd}
-\Sigma^2\arrow[swap]{rd}{\phi} \arrow{r}{\vec{\Gamma}_{\phi}} & \mathbb{C}\mathbb{P}^3 \arrow{d}{T}\\%
& \mathbb{H}\mathbb{P}^1
\end{tikzcd}
\]
where we identified $S^4$ with $\mathbb{H}\mathbb{P}^1$. Indeed, the long exact sequence of homotopy derived from \eqref{fibration} show that $\mathbb{H}\mathbb{P}^1$ is simply connected, while it is proved in \cite{bryant2} that $\mathbb{H}\mathbb{P}^1$ can be equipped with a metric of constant sectional curvature $1$, so is isometric to $S^4$ by a classical theorem of Riemannian geometry. As the commutativity of this diagram is also proved in the aforementioned paper, we are done.

As $\vec{\Gamma}_{\phi}:\Sigma^2\rightarrow\mathbb{C}\mathbb{P}^3$ is holomorphic, by a theorem of Chow (\cite{chow}), its image is an algebraic curve, and its projection in $S^4$ through the Penrose fibration is an algebraic surface in $S^4$ which coincides with the original Willmore sphere $\phi:S^2\rightarrow S^4$. Therefore we have proved the following.

\begin{theorem}\label{s4generalfinal}
	Let $\Sigma^2$ be a closed Riemann surface and $\phi:\Sigma^2\rightarrow S^4$ be a branched Willmore surface such that 
	$p\in \Sigma^2$ the first and second residue $\vec{\gamma}_0(p)$ and $r(p)$ satisfy
	\begin{align*}
	\left\{
	\begin{alignedat}{2}
	&\vec{\gamma}_0(p)=0\qquad \qquad&&\text{if}\;\, 1\leq \theta_0(p)\leq 3\\
	&r(p)\leq \theta_0(p)-2\qquad \qquad&&\text{if}\;\, \theta_0(p)\geq 2.
	\end{alignedat}\right.
	\end{align*}
	Then $\mathscr{T}_{\phi}$ is holomorphic, and if $\mathscr{T}_{\phi}=0$, then the meromorphic $4$ and $8$-forms $\mathscr{Q}_{\phi}$ and $\mathscr{O}_{\phi}$ are holomorphic. If 
	$\mathscr{T}_{\phi}=\mathscr{Q}_{\phi}=\mathscr{O}_{\phi}=0$, the pseudo Gauss map $\mathscr{G}:\Sigma^2\rightarrow \mathbb{C}\mathbb{P}^{4,1}$  of $\phi$ is either holomorphic or anti-holomorphic, or lies in a null totally geodesic hypersurface of the null quadric $Q^{3,1}\subset \mathbb{C}\mathbb{P}^{4,1}$. In the first case, $\phi$ is the image by the Penrose twistor fibration of a (singular) algebraic curve $C\subset \mathbb{C}\mathbb{P}^3$, and in the other case, $\phi$ is the inverse stereographic projection of a complete (branched)  minimal surface with finite total curvature in $\R^4$ and zero flux. Furthermore, the two possibilities coincide if and only if the algebraic curve $C\subset \mathbb{C}\mathbb{P}^3$ lies in some hypersurface $H\simeq  \mathbb{C}\mathbb{P}^2\subset\mathbb{C}\mathbb{P}^3$. In particular, the hypothesis are always satisfied for a Willmore sphere.
\end{theorem}

Furthermore, let us note that for a Willmore sphere $\phi:S^2\rightarrow S^4$ which is the Penrose twistor projection of an algebraic curve of $\mathbb{C}\mathbb{P}^3$ of degree $d$, we have 
\begin{align*}
	W(\phi)=\int_{S^2}(1+|\H|^2)d\vg=4\pi d,
\end{align*}
while for inverse stereographic projections of minimal surfaces, the energy is also quantized by $4\pi$ thanks to the Jorge-Meeks formula (see \cite{jorge} and the preceding section, as this formula is valid in any codimension). For a more detailed discussion on the minimizers of the Willmore energy for spheres in $S^4$ with respect to the regular homotopy class, we refer to the paper of Montiel \cite{montiels4}, and for a formula relating the degree of the dual algebraic curve with geometric invariants, we refer to the Plücker formula presented in the book of Griffiths and Harris (\cite{griffiths}).

\section{Appendix}

\subsection{Line bundles and complex operators}\label{line}

For the definitions of line bundles and divisors, we refer to \cite{bost}. If $\mathscr{L}$ is a holomorphic line bundle on a compact Riemann surface $\Sigma^2$, we note $\Gamma(\Sigma^2,\mathscr{L})$ the space of its smooth sections. All subsequently considered line bundle are always holomorphic. We simply recall that a divisor on $\Sigma^2$ is an element of the free abelian group $\mathrm{Div}(\Sigma^2)$ generated by the points of $\Sigma^2$, that is a formal sum
\begin{align*}
D=\sum_{p\in \Sigma^2}^{}n_p(D)\,p
\end{align*}
where $n_p(D)\in \Z$ for all $p\in \Sigma^2$ and $n_p(D)$ is equal to zero for almost all $p\in \Sigma^2$, that is, for all but finitely many. 

Defining an equivalence relation on the set $\mathscr{E}$ of pairs $(\mathscr{L},s)$, where $\mathscr{L}$ is a line bundle and $s$ is a non-zero meromorphic section, by $(\mathscr{L},s)\sim (\mathscr{L}',s')$ if $s^{-1}\otimes s'$ is a holomorphic section of $\mathscr{L}^\ast\otimes \mathscr{L}'$, we have an isomorphism
\begin{align*}
\mathscr{E}/\sim&\rightarrow\mathrm{Div}(\Sigma)\\
[(\mathscr{L},s)]&\mapsto \mathrm{div}(s).
\end{align*}
As $[\mathscr{L},s]\otimes [\mathscr{L}',s']=[\mathscr{L}\otimes \mathscr{L}',s\otimes s']$, $\mathrm{div}$ is a group homomorphism. Furthermore, the injectivity is easy to see, as $(\mathscr{L},s)\sim (\mathscr{L}',s')$ if and only if $s^{-1}\otimes s'$ is holomorphic, that is
\begin{align*}
0=\mathrm{div}(s^{-1}\otimes s')=\mathrm{div}(s^{-1})+\mathrm{div}(s')=-\mathrm{div}(s)+\mathrm{div}(s').
\end{align*}
In the surjectivity is constructed for any divisor $D\in \mathrm{Div}(\Sigma)$ a canonical line bundle $(\mathscr{O}(D),1_{\mathscr{O}(D)})$, and the injectivity shows that for all non-zero meromorphic section $s$ of a line bundle $\mathscr{L}$, we have
\begin{align}\label{trivialbundle}
\mathscr{L}\simeq \mathscr{O}(\mathrm{div}(s)).
\end{align}

We now come to the degree of a line bundle. If $\Sigma^2$ is a compact connected Riemann surface, we will denote $K_{\Sigma^2}=T^{\ast}\Sigma^2$ its canonical line bundle, which is simply the dual tangent bundle. We define the degree of an holomorphic line bundle $\mathscr{L}$ as its first Chern class, \textit{i.e.} $\mathrm{deg}(\mathscr{L})=c_1(\mathscr{L})\in H^2(\Sigma^2,\Z)\simeq \Z$. As $c_1$ is a morphism, we have in particular for all line bundles $\mathscr{L},\mathscr{L}'$
\begin{align}\label{morphism}
\mathrm{deg}(\mathscr{L}\otimes \mathscr{L}')=\mathrm{deg}(\mathscr{L})+\mathrm{deg}(\mathscr{L}'),\quad \mathrm{deg}(\mathscr{L}^\ast)=-\mathrm{deg}(\mathscr{L}).
\end{align}
We have for example thanks to Gauss-Bonnet and Poincaré-Hopf theorems and the definition of the degree as given in \cite{bost}
\begin{align*}
\mathrm{deg}(K_{\Sigma^2})=2g-2
\end{align*}
where $g$ is the genus of $\Sigma^2$.
Finally, this is not hard to check that for any divisor $D$, we have
\begin{align}\label{degree}
\mathrm{deg}(\mathscr{O}(D))=\sum_{p\in\Sigma}^{}n_p(D),
\end{align}
Finally, we note the following easy consequence of \eqref{trivialbundle} and \eqref{degree}.
\begin{prop}\label{bundletrivial}
	A holomorphic line bundle on a compact connected Riemann surface of negative degree has no non-zero holomorphic section.
\end{prop}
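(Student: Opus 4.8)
The plan is to argue by contradiction using the two structural facts already established in this appendix, namely the isomorphism \eqref{trivialbundle} expressing any line bundle through the divisor of one of its meromorphic sections, and the degree formula \eqref{degree}. So suppose $\mathscr{L}$ is a holomorphic line bundle on the compact connected Riemann surface $\Sigma^2$ with $\mathrm{deg}(\mathscr{L})<0$, and suppose toward a contradiction that $s$ is a non-zero holomorphic section of $\mathscr{L}$.

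The decisive observation — and essentially the only content of the argument — is that the divisor of a non-zero \emph{holomorphic} section is effective. Indeed, on a connected surface a non-identically-zero holomorphic section has only isolated zeros, each of finite positive order, and by compactness of $\Sigma^2$ these zeros are finite in number; since $s$ has no poles, every coefficient in
\begin{align*}
\mathrm{div}(s)=\sum_{p\in\Sigma^2}n_p(\mathrm{div}(s))\,p
\end{align*}
satisfies $n_p(\mathrm{div}(s))\geq 0$.

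From here the conclusion is immediate. First I would invoke \eqref{trivialbundle} with the section $s$ to obtain $\mathscr{L}\simeq\mathscr{O}(\mathrm{div}(s))$, so that $\mathrm{deg}(\mathscr{L})=\mathrm{deg}(\mathscr{O}(\mathrm{div}(s)))$. Then I would apply the degree formula \eqref{degree} to this divisor, giving
\begin{align*}
\mathrm{deg}(\mathscr{L})=\sum_{p\in\Sigma^2}n_p(\mathrm{div}(s))\geq 0,
\end{align*}
where the inequality is exactly the effectivity established above. This contradicts the hypothesis $\mathrm{deg}(\mathscr{L})<0$, so no non-zero holomorphic section of $\mathscr{L}$ can exist.

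There is no real obstacle in this proof: the only point that requires a word of justification is the effectivity of $\mathrm{div}(s)$ for a holomorphic $s$, which rests on the absence of poles together with the finiteness of the zero set guaranteed by compactness and connectedness. Everything else is a direct substitution into the two cited formulas, and additivity of the degree \eqref{morphism} is not even needed for this particular statement.
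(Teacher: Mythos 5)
Your proof is correct and follows essentially the same route as the paper's: both apply the isomorphism \eqref{trivialbundle} to the given section and conclude via the degree formula \eqref{degree}, the paper reading it contrapositively (a section of a negative-degree bundle must have more poles than zeroes, hence is not holomorphic) while you frame the identical fact as a contradiction with the effectivity of $\mathrm{div}(s)$. The brief justification you give for effectivity (isolated zeros of positive order, finitely many by compactness) is a welcome explicit detail the paper leaves implicit.
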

\begin{proof}
	By the isomorphism \eqref{trivialbundle}, if $\mathscr{L}$ is a line bundle with negative degree, for all non-zero meromorphic section $s$ of $\mathscr{L}$ we have $\mathscr{L}\simeq\mathscr{O}(\mathrm{div}s)$ and this implies that 
	\begin{align*}
	\mathrm{deg}(\mathrm{div}(s))=\sum \mathrm{zeroes}(s)-\sum \mathrm{poles}(s) <0
	\end{align*}
	therefore non-zero meromorphic sections are never holomorphic.
\end{proof}
The existence of holomorphic sections of holomorphic bundles is the subject of the next theorem.

Let $\mathscr{L}$ be a holomorphic line bundle on a compact Riemann surface $\Sigma^2$, and $\bar{\partial}_{\mathscr{L}}$ the complex elliptic operator
\begin{align*}
\bar{\partial}_{\mathscr{L}}:\Gamma(\Sigma^2,\mathscr{L})\rightarrow\Gamma(\Sigma^2,\mathscr{L}\otimes \bar{K}_{\Sigma^2})
\end{align*}
defined as follows : for all \textit{smooth} section $s$ of $\mathscr{L}$, for all local trivialisation $(U,s_U)$ of $\mathscr{L}$, as $s_U$ never vanishes on $U$, there exists a unique smooth function $f$ such that
\begin{align*}
s=fs_U.
\end{align*}
and on $U$, we define
\begin{align*}
\bar{\partial}_{\mathscr{L}}s=\bar{\partial}f\otimes s\in \mathscr{L}\otimes \bar{K}_{\Sigma^2}.
\end{align*}
This definition does not depend on the local trivialisation, as the transition functions are holomorphic. In particular, the kernel of $\bar{\partial}_{\mathscr{L}}$ is $H^0(\Sigma,\mathscr{L})$, and its cokernel,
\begin{align*}
\Gamma(\Sigma^2,\mathscr{L}\otimes \bar{K}_{\Sigma^2})/\bar{\partial}_{\mathscr{L}}\left(\Gamma(\Sigma^2,\mathscr{L})\right)
\end{align*}
is called the first Dolbeault cohomology group of $\mathscr{L}$ and denoted by $H^1(\Sigma,\mathscr{L})$. 

A theorem of Dolbeault asserts that $H^0(\Sigma^2,\mathscr{L})$ and $H^1(\Sigma^2,\mathscr{L})$ are finite dimensional complex vector spaces.
Actually, this can be easily verified, as  the ellipticity of $\bar{\partial}_{\mathscr{L}}$ (coupled with the fact that $\bar{\partial}_{\mathscr{L}}$ acts on a \emph{compact} manifold) ensures that $\bar{\partial}_{\mathscr{L}}$ is a Fredholm operator, that is
\begin{align*}
\mathrm{dim}\,\mathrm{Ker}(\bar{\partial}_{\mathscr{L}})<\infty,\quad \mathrm{dim}\,\mathrm{Coker}(\bar{\partial}_{\mathscr{L}})<\infty.
\end{align*}
The index of a Fredholm operator is defined by
\begin{align*}
\mathrm{Ind}(\bar{\partial}_{\mathscr{L}})&=\mathrm{dim}\,\mathrm{Ker}(\bar{\partial}_{\mathscr{L}})-\mathrm{dim}\,\mathrm{Coker}(\bar{\partial}_{\mathscr{L}})
=\mathrm{dim}\,H^0(\Sigma^2,\mathscr{L})-\mathrm{dim}\,H^1(\Sigma^2,\mathscr{L}).
\end{align*}
Computing the index of this operator is nothing else than the classical Riemann-Roch theorem.
\begin{theorem}
	Let $\Sigma^2$ be a compact connected Riemann surface of genus $g$ and $\mathscr{L}$ be a holomorphic line bundle on $\Sigma^2$. Then
	\begin{align}
	\mathrm{dim}\,H^0(\Sigma^2,\mathscr{L})-\mathrm{dim}\,H^1(\Sigma^2,\mathscr{L})=1-g+\mathrm{deg}(\mathscr{L}).
	\end{align}
\end{theorem}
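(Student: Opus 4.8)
The plan is to reduce to the bundles $\mathscr{O}(D)$ and then to induct on the number of points of the divisor $D$. By the isomorphism \eqref{trivialbundle} every holomorphic line bundle is of the form $\mathscr{O}(D)$ for some $D\in \mathrm{Div}(\Sigma^2)$, so it suffices to prove the formula for such bundles. Introducing the Euler characteristic
\begin{align*}
\chi(\mathscr{L})=\mathrm{dim}\,H^0(\Sigma^2,\mathscr{L})-\mathrm{dim}\,H^1(\Sigma^2,\mathscr{L}),
\end{align*}
which is precisely the index of $\bar{\partial}_{\mathscr{L}}$, the target identity becomes $\chi(\mathscr{O}(D))=1-g+\mathrm{deg}(\mathscr{O}(D))$, where $\mathrm{deg}(\mathscr{O}(D))=\sum_p n_p(D)$ by \eqref{degree}. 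Since any divisor is obtained from the zero divisor by adding or removing finitely many points, and since by \eqref{morphism} the degree jumps by $\pm 1$ under such an operation, the whole statement will follow from the base case $D=0$ as soon as I show that $\chi$ also jumps by $\pm 1$.

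For the induction step I would fix a point $p\in \Sigma^2$ and compare $\mathscr{L}$ with $\mathscr{L}(p)=\mathscr{L}\otimes \mathscr{O}(p)$. A meromorphic section of $\mathscr{O}(p)$ with a simple pole at $p$ realises an inclusion of sheaves $\mathscr{L}\hookrightarrow \mathscr{L}(p)$ whose cokernel is the skyscraper sheaf $\C_p$ supported at $p$ with one-dimensional stalk, yielding
\begin{align*}
0\rightarrow \mathscr{L}\rightarrow \mathscr{L}(p)\rightarrow \C_p\rightarrow 0.
\end{align*}
Passing to the associated long exact sequence and using $H^0(\Sigma^2,\C_p)=\C$ and $H^1(\Sigma^2,\C_p)=0$, I obtain
\begin{align*}
0\rightarrow H^0(\mathscr{L})\rightarrow H^0(\mathscr{L}(p))\rightarrow \C\rightarrow H^1(\mathscr{L})\rightarrow H^1(\mathscr{L}(p))\rightarrow 0.
\end{align*}
The alternating sum of the dimensions of the terms of an exact sequence vanishes, so $\chi(\mathscr{L}(p))-\chi(\mathscr{L})=1$, exactly matching $\mathrm{deg}(\mathscr{L}(p))-\mathrm{deg}(\mathscr{L})=1$. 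Running the same computation with the pair $(\mathscr{L}(-p),\mathscr{L})$ handles the removal of a point, so the quantity $\chi-\mathrm{deg}$ is invariant under adding or subtracting any point, and the induction closes.

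It remains to settle the base case $\mathscr{L}=\mathscr{O}$. As $\Sigma^2$ is compact and connected, every holomorphic section of $\mathscr{O}$ is a global holomorphic function, hence constant, so $\mathrm{dim}\,H^0(\Sigma^2,\mathscr{O})=1$; combined with $\mathrm{dim}\,H^1(\Sigma^2,\mathscr{O})=g$ this gives $\chi(\mathscr{O})=1-g=1-g+\mathrm{deg}(\mathscr{O})$. The hard part will be exactly this last equality, which is where all the genuine analytic content sits: it identifies the dimension of the Dolbeault cohomology group $H^1(\Sigma^2,\mathscr{O})=\mathrm{Coker}(\bar{\partial}_{\mathscr{O}})$ with the topological genus, and I would establish it through Hodge theory on $\Sigma^2$ together with Serre duality $H^1(\Sigma^2,\mathscr{O})\simeq H^0(\Sigma^2,K_{\Sigma^2})^\ast$ and the classical fact that the space of holomorphic one-forms has dimension $g$. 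A secondary, more technical obstacle is that the long exact sequence above must be produced inside the analytic $\bar{\partial}$-framework of the excerpt rather than quoted from abstract sheaf cohomology; concretely, this means applying the snake lemma to the Dolbeault resolutions of the three sheaves, or else solving $\bar{\partial}$-equations with a prescribed simple singularity at $p$ to realise the connecting homomorphism $\C\to H^1(\mathscr{L})$ and the surjectivity onto $H^1(\mathscr{L}(p))$. Everything else reduces to additivity of Euler characteristics and of the degree recorded in \eqref{morphism} and \eqref{degree}.
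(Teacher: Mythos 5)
The paper itself offers no proof of this statement: it is quoted as the classical Riemann--Roch theorem, with \cite{bost} as the reference, so your proposal has to be judged on its own terms. Your outline is the standard skyscraper-sheaf induction, and most of it is sound: the six-term exact sequence giving the jump $\chi(\mathscr{L}(p))=\chi(\mathscr{L})+1$, the bookkeeping via \eqref{morphism} and \eqref{degree}, and your honest flagging of the two technical points (realising the long exact sequence inside the paper's Dolbeault framework, and the base case $\dim H^1(\Sigma^2,\mathscr{O})=g$, which is indeed where the analytic content lives and which you correctly route through Hodge theory and Serre duality — though note that if $g$ means topological genus, the count $\dim H^0(\Sigma^2,K_{\Sigma^2})=g$ is itself Hodge theory, so phrase it carefully to avoid circularity).

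There is, however, one genuine gap in your logical order. Your opening reduction claims that \eqref{trivialbundle} makes every holomorphic line bundle of the form $\mathscr{O}(D)$. It does not: \eqref{trivialbundle} only says that a bundle \emph{already equipped with} a non-zero meromorphic section $s$ is isomorphic to $\mathscr{O}(\mathrm{div}(s))$. That every holomorphic line bundle on a compact Riemann surface admits such a section is a nontrivial theorem of the same depth as Riemann--Roch — a priori the Picard group could be strictly larger than the divisor class group — so it cannot be assumed as an input. Fortunately your own argument repairs this if you reorder it. The short exact sequence $0\rightarrow\mathscr{L}\rightarrow\mathscr{L}(p)\rightarrow \C_p\rightarrow 0$ is obtained by tensoring $0\rightarrow\mathscr{O}\rightarrow\mathscr{O}(p)\rightarrow\C_p\rightarrow 0$ with the locally free $\mathscr{L}$, hence the jump formula holds for an \emph{arbitrary} holomorphic line bundle; and the Fredholm property of $\bar{\partial}_{\mathscr{L}}$ recorded in the paper makes $\chi(\mathscr{L})$ a well-defined integer. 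Iterating gives $\dim H^0(\Sigma^2,\mathscr{L}(np))\geq \chi(\mathscr{L})+n>0$ for $n$ large, producing a non-zero holomorphic section $s$ of $\mathscr{L}(np)$, whence $\mathscr{L}\simeq\mathscr{O}(\mathrm{div}(s)-np)$ by \eqref{trivialbundle}, and only now is the reduction to bundles $\mathscr{O}(D)$, and thence to the base case $\mathscr{O}$, legitimate. Prove the jump formula first, deduce the existence of meromorphic sections from it, then reduce; with that reordering and the base case carried out, your proof closes.
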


Furthermore, a theorem of duality of Serre shows that
\begin{align*}
H^1(\Sigma^2,\mathscr{L})\simeq H^0(\Sigma^2,\mathscr{L}^\ast\otimes K_{\Sigma^2}).
\end{align*}
In particular, if $\mathrm{deg}(\mathscr{L}^\ast\otimes K_{\Sigma^2})<0$, \textit{i.e.}
\begin{align*}
\mathrm{deg}(\mathscr{L})>2g-2,
\end{align*}
then the line bundle $\mathscr{L}^\ast\otimes K_{\Sigma^2}$ is holomorphically trivial and
\begin{align*}
\mathrm{dim}\, H^0(\Sigma^2,\mathscr{L})=1-g+\mathrm{deg}(\mathscr{L}).
\end{align*}

\subsection{Almost-harmonic equation and approximate parametrix of $\bar{\partial}$ operator}

\begin{lemme}\label{pseudoDelta}
	Let $\Sigma^2$ be closed Riemann surface, $n\geq 3$, and $\phi:\Sigma^2\rightarrow \R^n$ be a smooth immersion. Then its Gauss map $\n:\Sigma^2\rightarrow \mathscr{G}_{n-2}(\R^n)$ satisfies the following almost-harmonic equation
	\begin{align}\label{eq1}
	\Delta_g\n+|d\n|_g^2\n=8\,g^{-1}\otimes \Im\left(\star\left(\bar{\partial}\H\wedge \partial\phi\right)\right)+2i\star g^{-2}\otimes \left(\bar{\h}_0\wedge \h_0\right).
	\end{align}
\end{lemme}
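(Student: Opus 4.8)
The plan is to express the Gauss map $\n$ as a wedge of tangent frame vectors and differentiate twice, converting the resulting second-order derivatives into mean-curvature and Weingarten-tensor data via the structure equations already established in the excerpt. Since $n\geq 3$ and $\phi$ is conformal, I would work in a conformal chart where $\e_z=\p{z}\phi$, $\e_{\z}=\p{\z}\phi$, $e^{2\lambda}=2\s{\e_z}{\e_{\z}}$, and the normal bundle carries its complex structure $J$ with $\h_0=2\vec{\I}(\e_z,\e_z)dz^2$. The unit normal (as a decomposable $(n-2)$-vector) satisfies $\n=e^{-2\lambda}\star(\e_1\wedge\e_2)$ in codimension one, and in higher codimension one works with the appropriate Grassmannian-valued object; the key computational input is the behaviour of $\D_{\e_z}\n$ and $\D_{\e_z}^N$ on normal sections, exactly as computed in the proof of Lemma \ref{complexcons} where $\D_{\e_z}\n=-H\e_z-\H_0\e_{\z}$ was obtained.

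\textbf{Key steps.} First I would compute $\Delta_g\n=4e^{-2\lambda}\D_{\p{z}}\D_{\p{\z}}\n$, splitting each covariant derivative into tangential and normal parts. The tangential contributions reassemble into the term $|d\n|_g^2\,\n$ on the left-hand side, since $|d\n|_g^2=2(|\H|^2+|\h_0|^2_{WP})$ (up to normalisation) measures precisely the failure of $\n$ to be parallel. Second, the normal part of $\D_{\p{z}}\D_{\p{\z}}\n$ is where the Codazzi identity \eqref{codazzi}, namely $\partial^N\H=g^{-1}\otimes\bar{\partial}^N\h_0$, enters: differentiating $\D_{\e_{\z}}\n=-\bar{H}\e_{\z}-\bar{\H_0}\e_z$ and projecting onto the normal bundle produces a term in $\D_{\p{z}}^N\H$, which by Codazzi equals $g^{-1}\otimes\bar{\partial}^N\h_0$, and this is exactly the content of the first term $8e^{-2\lambda}\Im(\star(g^{-1}\otimes\bar{\partial}\h_0\wedge\partial\phi))$ on the right. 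Third, the curvature term $2i\star(\bar{\H}_0\wedge\H_0)$ arises from the normal curvature $R^N$, i.e. from the non-commutativity $[\D_{\p{z}}^N,\D_{\p{\z}}^N]$ acting on the normal frame, which vanishes identically in codimension one but is genuinely present when $n\geq 4$ and is governed by the Ricci/Gauss equation for the normal connection.

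\textbf{The main obstacle} will be the bookkeeping of the normal curvature term and making sure the almost-complex structure $J$ on $T_{\C}^N\Sigma^2$ is used consistently, so that the factor $2i$ and the wedge $\bar{\H}_0\wedge\H_0$ come out with the correct sign and normalisation. Concretely, I would verify that $\star(\bar{\H_0}\wedge\H_0)$ reproduces $R^N(\e_z,\e_{\z})$ applied to the normal frame, using the relation between the Weingarten operator and the second fundamental form together with the Ricci equation $R^N(\e_z,\e_{\z})\vv{\xi}=\vec{\I}(\e_z,\,\cdot\,)\wedge\vec{\I}(\e_{\z},\,\cdot\,)$-type identity. Everything else is a careful but routine repetition of the frame computations in Lemma \ref{complexcons}; the only conceptually delicate point is keeping the tangential pieces on the left (assembling into $|d\n|_g^2\,\n$) separate from the genuinely normal, curvature-carrying pieces on the right. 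I would conclude by identifying $\Im(\star(g^{-1}\otimes\bar{\partial}\h_0\wedge\partial\phi))$ with the assembled normal derivative term, completing \eqref{eq1}.
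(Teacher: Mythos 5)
Your overall route coincides with the paper's: represent the Gauss map as the $(n-2)$-vector $\n=2ie^{-2\lambda}\star(\e_{\z}\wedge\e_z)$, differentiate twice by the Leibniz rule, let the tangential parts of $\D_{\p{\z}}\H$ and $\D_{\p{\z}}\H_0$ assemble, via the Gauss equation $|\H|^2+|\H_0|^2=\frac{1}{2}|d\n|_g^2$, into the $|d\n|_g^2\,\n$ term, and let Codazzi convert the normal derivatives into $\bar{\partial}\h_0$. Two points in your plan are off, and one is a genuine gap. The smaller one: the formula you cite as the key input, $\D_{\e_z}\n=-H\e_z-\H_0\e_{\z}$, holds only in codimension one, where $\n$ is a vector; for the Grassmannian-valued Gauss map the correct first-order identity is $\D_{\p{z}}\n=i\star\left(\H\wedge\e_z+\e_{\z}\wedge\H_0\right)$, obtained after the $\lambda$-terms cancel in the wedge representation. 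Relatedly, the term $8e^{-2\lambda}\Im\left(\star\left(g^{-1}\otimes\bar{\partial}\h_0\wedge\partial\phi\right)\right)$ is produced \emph{jointly} by $\D_{\p{\z}}\H\wedge\e_z$ (via Codazzi) and by $\e_{\z}\wedge\D_{\p{\z}}\H_0$; you account only for the first contribution, which would cost you the imaginary-part structure and the factor $8$.

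The genuine gap is your diagnosis of the term $2i\star(\bar{\H}_0\wedge\H_0)$. It does not come from the commutator $[\D^N_{\p{z}},\D^N_{\p{\z}}]$ or the Ricci equation, and your proposed verification that $\star(\bar{\H}_0\wedge\H_0)$ reproduces $R^N(\e_z,\e_{\z})$ applied to the normal frame does not even typecheck: the left-hand side is an $(n-2)$-vector, of the same nature as $\n$, whereas $R^N(\e_z,\e_{\z})$ applied to a normal vector is a normal vector. In the actual computation no normal frame is ever differentiated and no commutator appears: the term arises from the plain Leibniz term $\D_{\p{\z}}\e_{\z}\wedge\H_0$, whose normal part is $\vec{\I}(\e_{\z},\e_{\z})\wedge\H_0=\frac{e^{2\lambda}}{2}\,\bar{\H}_0\wedge\H_0$, while its conformal-factor piece $e^{-2\lambda}\p{\z}(e^{2\lambda})\,\e_{\z}\wedge\H_0$ cancels against the matching piece $e^{2\lambda}\p{\z}(e^{-2\lambda})\,\e_{\z}\wedge\H_0$ coming from $\e_{\z}\wedge\D_{\p{\z}}\H_0$. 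Your sanity check is fine — in codimension one $\H_0=H_0\,\n$ forces $\bar{\H}_0\wedge\H_0=0$, consistent with the vanishing of the normal curvature — and indeed the $2$-vector $\bar{\H}_0\wedge\H_0$ encodes normal-curvature data; but as a proof strategy, chasing $R^N$ would have you computing an object that never enters the calculation, while overlooking the elementary second-fundamental-form term that does. The "main obstacle" you identify dissolves entirely once the correct wedge formula for $\D_{\p{z}}\n$ is differentiated directly.
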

\begin{proof}
		As $\n=2ie^{-2\lambda}\ast(\e_{\z}\wedge\e_z)$ we have
	\begin{align}\label{lap0}
	\D_{\p{z}}\n&=2i\p{z}(e^{-2\lambda})\star(\e_{\z}\wedge\e_z)+i\star\left(\H\wedge \e_{z}+\e_{\z}\wedge \H_0\right)+2ie^{-2\lambda}\star\left(\e_{\z}\wedge \D^{\top}_{\p{z}}\e_{\z}\right) \nonumber\\
	&=e^{2\lambda}\p{z}(e^{-2\lambda})\n+i\star\left(\H\wedge \e_z+\e_{\z}\wedge\H_0\right)+e^{-2\lambda}\p{z}(e^{2\lambda})\n\nonumber\\
	&=i\star\left(\H\wedge \e_z+\e_{\z}\wedge\H_0\right)\nonumber\\
	\D_{\p{\z}}\D_{\p{z}}\n&=i\star(\D_{\p{\z}}\H\wedge \e_z+\vec{H}\wedge \D_{\p{\z}}\e_z+\D_{\p{\z}}\e_{\z}\wedge \H_0+\e_{\z}\wedge \D_{\p{\z}}\H_0)
	\end{align}
	Then we compute
	\begin{align*}
	\D_{\p{\z}}\H&=\bar{\partial}\H+\D^{\top}_{\p{\z}}\H\\
	&=\bar{\partial}\H-\s{\H}{\bar{\H_0}}\e_z-|\H|^2\e_{\z}
	\end{align*}
	therefore
	\begin{align}\label{lap1}
	\D_{\p{\z}}\H\wedge \e_z=\bar{\partial}\H\wedge \e_z-|\H|^2\e_{\z}\wedge\e_z.
	\end{align}
	Then as $\D_{\p{\z}}\e_{z}=\dfrac{e^{2\lambda}}{2}\H$, we have
	\begin{align}\label{lap2}
	\H\wedge \D_{\p{\z}}\e_z=0
	\end{align}
	Now we obtain
	\begin{align}\label{lap3}
	\D_{\p{\z}}\e_{\z}\wedge \H_0=\frac{e^{2\lambda}}{2}\bar{\H}_0\wedge \H_0+e^{-2\lambda}\p{\z}(e^{2\lambda})\e_{\z}\wedge \H_0
	\end{align}
	Then
	\begin{align}\label{lap4}
	\e_{\z}\wedge\D_{\p{\z}}\H_0&=e^{2\lambda}\p{\z}(e^{-2\lambda})\e_{\z}\wedge \H_0+2e^{-2\lambda}\e_{\z}\wedge \D_{\p{\z}}^\nperp(\vec{\I}(\e_z,\e_z))+\e_{\z}\wedge \D^{\top}_{\p{\z}}\H_0\nonumber\\
	&=e^{2\lambda}\p{\z}(e^{-2\lambda})\e_{\z}\wedge \H_0+\e_{\z}\wedge {\partial}\H-|\H_0|^2\e_{\z}\wedge \e_z
	\end{align}
	Finally, we obtain by \eqref{lap0}, \eqref{lap1}, \eqref{lap2}, \eqref{lap3}, \eqref{lap4}
	\begin{align*}
	\D_{\p{\z}}\D_{\p{z}}\n&=i\star\bigg(\e_{\z}\wedge\bar{\partial}\H-|\H|^2\e_{\z}\wedge \e_z+\frac{e^{2\lambda}}{2}\bar{\H}_0\wedge \H_0+e^{-2\lambda}\p{\z}(e^{2\lambda})\e_{\z}\wedge \H_0\\
	&+e^{2\lambda}\p{\z}(e^{-2\lambda})\e_{\z}\wedge\H_0+\e_{\z}\wedge \partial \H-|\H_0|^2\e_{\z}\wedge \e_z\bigg)\\
	&=-\frac{e^{2\lambda}}{2}(|\H|^2+|\H_0|^2)\n+2\,\Im \left(\star\left(\bar{\partial}\H\wedge \e_z\right)\right)+\frac{e^{2\lambda}}{2}i\star(\bar{\H}_0\wedge \H_0)
	\end{align*}
	as 
	\begin{align*}
	|\H_0|^2=|\H|^2-K_g+K_h
	\end{align*}
	and 
	\begin{align*}
	|\vec{\I}|^2_g=4|\H|^2-2K_g+2K_h
	\end{align*}
	we obtain
	\begin{align*}
	|\H|^2+|\H_0|^2=2|\H|^2-K_g+K_h=\frac{1}{2}|\vec{\I}|_g^2=\frac{1}{2}|d\n|_g^2
	\end{align*}
	therefore as
	\begin{align*}
	\Delta_g\n+|d\n|_g^2\n=8\,e^{-2\lambda}\Im\left(\star\left(\bar{\partial}\H\wedge \e_z\right)\right)+2i\star (\bar{\H}_0\wedge \H_0).
	\end{align*}
	which is the expected almost harmonic equation. In particular, we see that for $n=3$, $\phi$ has constant mean curvature if and only if $\h_0$ is holomorphic, and by \eqref{eq1} this is equivalent to the harmonicity of $\n:\Sigma^2\rightarrow S^2$. Finally, we note that the equation is indeed real, as for any complex vector $\w$
	\begin{align*}
	i\bar{\w}\wedge \w=i(\Re\w-i\Im\w)\wedge (\Re\w+i\Im \w)=-2\Re\w\wedge \Im\w
	\end{align*}
	and this concludes the proof.
\end{proof}

\begin{prop}\label{parenthesis}
	Let $n\geq 3$, $\phi\in C^{\infty}(D^2\setminus\ens{0},\R^n)$ be a branched Willmore disk with a unique branch point at zero of multiplicity $\theta_0\geq 3$. If we have for some $\vec{C}_1\in \C^n\setminus\ens{0}$ and some $\alpha\leq \theta_0-2$
	\begin{align*}
		\H=\Re\left(\frac{\vec{C}_1}{z^{\alpha}}\right)+O(|z|^{1-\alpha})
	\end{align*}
	for all $\epsilon>0$, if $\n$ is the unit normal of $\phi$, we have
	\begin{align}
		\n\in C^{1,1}(D^2,\mathscr{G}_{n-2}(\R^n)).
	\end{align}
\end{prop}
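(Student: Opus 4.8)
The plan is to show that the branch-point assumption $\H=\Re(\vec{C}_1/z^\alpha)+O(|z|^{1-\alpha})$ with $\alpha\le\theta_0-2$ forces the unit normal $\n$ to have a locally bounded Hessian, via the almost-harmonic equation \eqref{eq1} of Lemma \ref{pseudoDelta}. The key point is that $\Delta_g\n+|d\n|_g^2\n$ is a sum of terms built from $\H_0$ and $g^{-1}\otimes\bar\partial\h_0$, and one must track the worst possible singularity of each in powers of $|z|$.

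First I would rewrite \eqref{eq1} in the flat coordinate $z$, multiplying through by $e^{2\lambda}$ so that the left side reads $\Delta\n$ plus lower-order (pointwise) terms, and I would estimate the right side. Using the refined developments already established for $\theta_0\ge 3$ (equations \eqref{1stdevelopment}, \eqref{2ndh0} and \eqref{generalmetric}), namely $e^{2\lambda}\simeq|z|^{2\theta_0-2}$, $\p{z}\phi=\vec{A}_0z^{\theta_0-1}+O(|z|^{\theta_0})$, $\h_0=O(|z|^{\theta_0-1})$, together with the hypothesis $\H=O(|z|^{-\alpha})$, I would first check that $|d\n|_g^2=\tfrac12|\vec{\I}|_g^2=2|\H|^2-K_g+K_h$ and that $\bar\partial\h_0$ inherits a development whose leading term is controlled by $\vec{C}_1$ through the Codazzi identity \eqref{codazzi}, which gives $\partial^N\H=g^{-1}\otimes\bar\partial^N\h_0$. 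The aim is to show that each summand on the right-hand side of the $z$-form of \eqref{eq1}, after multiplication by $e^{2\lambda}$, is bounded in $L^\infty$ (or at least in a space mapping into $C^{1,1}$ under $\Delta^{-1}$); here the constraint $\alpha\le\theta_0-2$ is exactly what makes the term $i\star(\bar\H_0\wedge\H_0)$, which behaves like $e^{2\lambda}|\H||\H_0|\sim|z|^{2\theta_0-2-\alpha+(\theta_0-1)}$ in the relevant normalization, integrate to a bounded right-hand side rather than a blowing-up one.

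The main obstacle will be the mixed term $e^{-2\lambda}\Im(\star(g^{-1}\otimes\bar\partial\h_0\wedge\partial\phi))$: one cannot simply bound $\bar\partial\h_0$ naively, since $\h_0$ is only $O(|z|^{\theta_0-1})$ and differentiating could lose an order. Instead I would substitute the Codazzi relation to replace $g^{-1}\otimes\bar\partial\h_0$ by $\partial^N\H=\partial\H+|\H|^2\partial\phi+g^{-1}\otimes\s{\H}{\h_0}\otimes\bar\partial\phi$, and then differentiate the hypothesized development of $\H$ term by term — legitimate by Remark \ref{importantremarkerros}, since errors may be differentiated as polynomials. With $\H=\Re(\vec{C}_1 z^{-\alpha})+O(|z|^{1-\alpha})$ one has $\partial\H=O(|z|^{-\alpha-1})$, and wedging against $\partial\phi=O(|z|^{\theta_0-1})\,dz$ then dividing by $e^{2\lambda}$ produces a term of order $|z|^{-\alpha-1+\theta_0-1-(2\theta_0-2)}=|z|^{-\alpha-\theta_0}$ times $|z|^{\theta_0-1}$, and one checks that the orthogonality relations $\s{\vec{A}_0}{\vec{C}_1}=\s{\bar{\vec{A}_0}}{\vec{C}_1}=0$ from \eqref{cancellations2} kill the genuinely singular contribution, leaving a bounded remainder precisely when $\alpha\le\theta_0-2$.

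Once the right-hand side $\vec{f}$ of $\Delta\n=\vec{f}$ is shown to lie in $L^\infty(D^2)$ (indeed in $\bigcap_{p<\infty}L^p$, or in a Zygmund-type space), I would invoke Calderón–Zygmund theory: $\Delta\n\in L^p$ for all $p<\infty$ gives $\n\in W^{2,p}$ for all $p<\infty$, and to upgrade to $C^{1,1}$ I would either exhibit $\vec{f}\in L^\infty$ and use the sharp statement $\Delta^{-1}(L^\infty)\subset C^{1,\mathrm{Log}}$ refined by the explicit power-log structure of $\vec{f}$, or argue directly that the explicit leading terms integrate to functions with bounded second derivatives while the remainder lies in $C^{1,1}$. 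The honest difficulty is bookkeeping the cancellations: one must verify that no term of homogeneity exactly $|z|^{-2}$ survives in $\Delta\n$ after all substitutions, since such a term would obstruct $W^{2,\infty}$; this is where the algebraic identities \eqref{cancellations2} and the hypothesis $\alpha\le\theta_0-2$ must be used in concert, and I expect this to be the crux of the argument.
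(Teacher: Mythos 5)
There is a genuine gap at the final step, and it is exactly the point you flagged as the crux. Your argument correctly reaches $\Delta\n\in L^{\infty}(D^2)$ (the paper does the same: by Codazzi, $\bar{\partial}\h_0=g\otimes\partial\H+K_g\,g\otimes\partial\phi$, the Liouville equation gives $e^{2\lambda}K_g\in L^{\infty}$, and pure order-counting $\partial\H\wedge\partial\phi=O(|z|^{-\alpha-1})\cdot O(|z|^{\theta_0-1})=O(1)$ bounds the right-hand side — note that no cancellation law is needed here, so your appeal to $\s{\vec{A}_0}{\vec{C}_1}=\s{\bar{\vec{A}_0}}{\vec{C}_1}=0$ from \eqref{cancellations2} is superfluous, which matters since the proposition is meant to be usable inside the proof of theorem \ref{devh0}). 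But $\Delta\n\in L^{\infty}$ only yields $\D^2\n\in BMO$, i.e.\ $\n\in C^{1,\mathrm{Log}}$, and the inclusion $\Delta^{-1}(L^{\infty})\subset C^{1,1}$ is false; your fallback, "argue directly that the explicit leading terms integrate to functions with bounded second derivatives while the remainder lies in $C^{1,1}$," faces the same obstruction for the remainder, which is again only controlled in $L^{\infty}$ on the right-hand side. So as written the proposal proves $\D^2\n\in BMO$ but not $\n\in C^{1,1}$.

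The paper closes this gap not by elliptic regularity but by a pointwise differentiation of the exact first-order structure of the Gauss map. From $\n=2ie^{-2\lambda}\star(\e_{\z}\wedge\e_z)$ one has the closed formula
\begin{align*}
\p{z}\n=i\star\left(\H\wedge\p{z}\phi+\p{\z}\phi\wedge\H_0\right),
\end{align*}
and differentiating once more in $z$,
\begin{align*}
\p{z}^2\n=i\star\left(\p{z}\H\wedge\p{z}\phi+\H\wedge\p{z}^2\phi+\p{z\z}^2\phi\wedge\H_0+\p{\z}\phi\wedge\p{z}\H_0\right).
\end{align*}
Each of the four terms is then bounded in $L^{\infty}$ by hand: the first by the same order count as above; the second since $\H=O(|z|^{2-\theta_0})$ and $\p{z}^2\phi=O(|z|^{\theta_0-2})$ (here $\alpha\leq\theta_0-2$ is used); the last two via the refined expansion $\h_0=\vec{A}_1z^{\theta_0-1}dz^2+O(|z|^{\theta_0})$, obtained by integrating the Codazzi relation with proposition \ref{integrating} and killing the potential $z^{\theta_0-2}$ coefficient by the a priori bound $\h_0=O(|z|^{\theta_0-1})$; this gives $\H_0=\vec{A}_1\z^{1-\theta_0}+O(|z|^{2-\theta_0})$, whence $\p{z}\H_0=O(|z|^{1-\theta_0})$ (the singular $\z^{1-\theta_0}$ part is annihilated by $\p{z}$) and $\p{z\z}^2\phi\wedge\H_0=O(|z|)$ since $\Delta\phi=2e^{2\lambda}\H=O(|z|^{\theta_0})$. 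Together with $\p{z\z}^2\n\in L^{\infty}$ from $\Delta\n\in L^{\infty}$, and using that $\n$ is real, all second derivatives of $\n$ are bounded, so $\n\in W^{2,\infty}(D^2)=C^{1,1}(D^2)$. This explicit second-derivative formula, and the intermediate integration of Codazzi producing the sharp development of $\h_0$, are the missing ingredients in your proposal.
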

	\begin{proof}	
	As the regularity can only increase as $\alpha$ decreases, we suppose that $\alpha=\theta_0-2$. Therefore, there exists $\vec{C}_1\in \C^n$ such that
	\begin{align}\label{2ndres}
	\H=\Re\left(\frac{\vec{C}_1}{z^{\theta_0-2}}\right)+O(|z|^{3-\theta_0-\epsilon}).
	\end{align}
    By the almost-harmonic equation satisfied by the unit normal $\n$ in Lemma \ref{pseudoDelta} of the appendix, we obtain
	\begin{align}\label{pseudoDelta0}
	\Delta\n+|\D\n|^2\n&=8\,\Im\left(\star\left(\bar{\partial}\H\wedge \partial\phi\right)\right)+2i\star(e^{\lambda}\bar{\H}_0\wedge e^{\lambda}\vec{H}_0).
	\end{align}
	Now, by Codazzi's identity, we have 
	\begin{align}\label{cod1}
	\bar{\partial}^N\h_0=g\otimes \partial^N\H=g\otimes \partial\H+|\H|^2\,g\otimes \partial\phi+\s{\H}{\h_0}\otimes\bar{\partial}\phi.
	\end{align}
	Furthermore, we easily compute that
	\begin{align}
	\bar{\partial}^\top \h_0&=-|\h_0|^2_{WP}\,g\otimes\partial\phi-\s{\H}{\h_0}\otimes\bar{\partial}\phi
	=-\left(|\H|^2-K_g\right)\,g\otimes\partial\phi-\s{\H}{\h_0}\otimes\bar{\partial}\phi
	\end{align}
	so
	\begin{align}\label{cod2}
	\bar{\partial}^N\h_0=\bar{\partial}\h_0-\bar{\partial}^\top\h_0=\bar{\partial}\h_0+\left(|\H|^2-K_g\right)\,g\otimes\partial\phi+\s{\H}{\h_0}\otimes\bar{\partial}\phi.
	\end{align}
	Putting together \eqref{cod1} and \eqref{cod2}, we get
	\begin{align}\label{cod3}
	\bar{\partial}\h_0=g\otimes \partial\H+(K_g)\,g\otimes \partial\phi.
	\end{align}
	Recalling that for $e^{2u}=|z|^{2-2\theta_0}e^{2\lambda}$, we have 
	\begin{align*}
	-\Delta u=e^{2\lambda}K_g\in L^{\infty}(D^2)
	\end{align*}
	we deduce that
	\begin{align*}
	(K_g)\,g\otimes \partial\phi=O(|z|^{\theta_0-1})
	\end{align*}
	while by 
	\begin{align*}
	\partial\H=-\frac{(\theta_0-2)}{2}\frac{dz}{z^{\theta_0-1}}+O(|z|^{2-\theta_0}).
	\end{align*}
	Therefore, we deduce as $e^{2\lambda}=|z|^{2\theta_0-2}(1+O(|z|))$ that
	\begin{align}
		\bar{\partial}\h_0=O(|z|^{\theta_0-1})
	\end{align}
	so by Proposition \ref{integrating}, there exists $\vec{D}_1,\vec{A}_1\in\C^n$ such that
	\begin{align}\label{almostharmh0}
		\h_0=\vec{D}_2z^{\theta_0-2}dz^2+\vec{A}_1z^{\theta_0-1}dz^2+O(|z|^{\theta_0}).
	\end{align}
	However, as we saw in the beginning of the proof of Theorem \ref{devh0} that $\h_0=O(|z|^{\theta_0-1})$, so
	\begin{align}
		\vec{D}_0=0.
	\end{align}
	Indeed, by the definition  of branch points we have for some $\vec{A}_0\in \mathbb{C}^n\setminus\ens{0}$ the expansions
	\begin{align*}
	\left\{\begin{alignedat}{1}
	\phi(z)&=\Re\left(\vec{A}_0z^{\theta_0}\right)+O(|z|^{\theta_0+1})\\
	2(\p{z}\lambda)&=\frac{(\theta_0-1)}{z}+O(1)
	\end{alignedat}\right.
	\end{align*}
	so
	\begin{align*}
		\h_0&=2\left(\p{z}^2\phi-2(\p{z}\lambda)\p{z}\phi\right)dz^2\\
		&=2\left(\frac{\theta_0(\theta_0-1)}{2}z^{\theta_0-2}-\left(\frac{(\theta_0-1)}{z}+O(1)\right)\left(\frac{\theta_0}{2}z^{\theta_0-2}+O(|z|^{\theta_0-1})\right)\right)dz^2+O(|z|^{\theta_0-1})\\
		&=O(|z|^{\theta_0-1}).
	\end{align*}
	and by \eqref{almostharmh0}, we obtain the expansion
	\begin{align}\label{normaldevh0}
		\h_0=\vec{A}_1z^{\theta_0-1}+O(|z|^{\theta_0})
	\end{align}
	As $\h_0=O(|z|^{\theta_0-1})$, we have $e^{\lambda}\H_0\in L^{\infty}(D^2)$, and 
	\begin{align*}
	\partial\H=O(|z|^{1-\theta_0}),
	\end{align*}
	so
	\begin{align}\label{c21estimate0}
	\bar{\partial}\H\wedge \partial\phi\in L^{\infty}(D^2),
	\end{align}
	while $\D\n\in L^p(D^2)$ for all $p<\infty$ (as $\phi\in W^{2,p}(D^2)$ for all $p<\infty$), we have
	\begin{align*}
	\Delta\n\in \bigcap_{p<\infty}L^p(D^2)
	\end{align*}
	and by standard Calder\'{o}n-Zygmund estimates, one has
	\begin{align*}
	\n\in \bigcap_{p<\infty}W^{2,p}(D^2).
	\end{align*}
	In particular, $\D\n\in L^{\infty}(D^2)$ (this was already proved in \cite{beriviere}), so reinserting this information in \eqref{pseudoDelta0}, we obtain
	\begin{align}\label{deltainfty}
	\Delta\n\in L^{\infty}(D^2),
	\end{align}
	and 
	\begin{align}\label{bmo}
	\D^2\n\in BMO(D^2).
	\end{align}
	Finally, we deduce immediately from \eqref{bmo} that
	\begin{align}\label{regup}
	\phi\in \bigcap_{p<\infty}W^{3,p}(D^2)\hookrightarrow \bigcap_{\alpha<1}C^{2,\alpha}(D^2).
	\end{align}
	We will now prove the extra regularity 
	\begin{align*}
	\n\in C^{1,1}(D^2).
	\end{align*}
	Indeed, if $\n:D^2\rightarrow \wedge^{n-2}\R^n$ is the Gauss map of $\phi$, then by the Lemma \ref{pseudoDelta}, we deduce that
	\begin{align*}
	\p{z}\n=i\star\left(\H\wedge \p{z}\phi+\p{\z}\phi\wedge \H_0\right)
	\end{align*}
	so
	\begin{align}\label{2pseudoharmonic}
	\p{z}^2\n=i\star\left(\p{z}\H\wedge\p{z}\phi+\H\wedge\p{z}^2\phi+\p{z\z}^2\phi\wedge \H_0+\p{\z}\phi\wedge\p{z}\H_0\right).
	\end{align}
	Firstly, by \eqref{c21estimate0}, we have
	\begin{align*}
	\partial_z\H\wedge \partial_z\phi\in L^{\infty}(D^2),
	\end{align*}
	and quite trivially $\p{z}^2\phi=O(|z|^{\theta_0-2})$, but $\alpha\leq \theta_0-2$ shows that $\H=O(|z|^{2-\theta_0})$, we have
	\begin{align}\label{higherreg1}
	\H\wedge \p{z}^2\phi\in L^{\infty}(D^2).
	\end{align}
	Now, using $e^{2\lambda}=|z|^{2\theta_0-2}\left(1+O(|z|^2)\right)$ and \eqref{normaldevh0}, we deduce that \Big(recall that $\h_0=\left(e^{2\lambda}\H_0\right)\,dz^2$\Big)
	\begin{align}\label{devH0}
	\H_0=\frac{\vec{A}_1}{\z^{\theta_0-1}}+O(|z|^{2-\theta_0}),
	\end{align}
	and this implies that
	$
	\partial_{z}\H_0=O(|z|^{1-\theta_0}),
	$
	so
	\begin{align}\label{higherreg2}
	\p{\z}\phi\wedge \partial_z\H_0\in L^{\infty}(D^2).
	\end{align}
	The trivial estimate
	$
	\p{z}^2\phi=O(|z|^{\theta_0-2}),
	$
	implies
	\begin{align}\label{higherreg3}
	\H\wedge\p{z}^2\phi\in L^{\infty}(D^2)
	\end{align}
	while as
	\begin{align*}
	\Delta\phi=2e^{2\lambda}\H=O(|z|^{\theta_0}),
	\end{align*}
	we obtain by \eqref{devH0}
    $
	\p{z\z}^2\phi\wedge \H_0=O(|z|),
	$
	and
	\begin{align}\label{higherreg4}
		\p{z\z}^2\phi\wedge\H_0\in L^{\infty}(D^2)
	\end{align}
	and by \eqref{c21estimate0}. Therefore, putting together \eqref{higherreg1}, \eqref{higherreg2}, \eqref{higherreg3}, and \eqref{higherreg4}, and looking at \eqref{2pseudoharmonic}, we finally have
	\begin{align}
		\p{z}^2\n\in L^{\infty}(D^2)
	\end{align}
	and by \eqref{deltainfty}, $\p{z\z}^2\n\in L^{\infty}(D^2)$, so
	\begin{align*}
		\p{z}\n\in W^{1,\infty}(D^2)
	\end{align*}
	and as $\n$ is \emph{real}, we have
	\begin{align}
		\n\in W^{2,\infty}(D^2)=C^{1,1}(D^2)
	\end{align}
	which concludes the proof of the proposition.
\end{proof}

We now come to the proposition allowing one to integrate solutions of the $\bar{\partial}$ equation to obtain a Taylor expansion at singular points (see the appendix of \cite{beriviere}). 
We first recall the boundedness of the maximal operator and an easy lemma.
\begin{theorem}\label{endlemma1}
	Let $1<p<\infty$. There exists a constant $C=C(p)$ independent of $n$ such that for all $f\in L^p(\R^n)$,
	\begin{align*}
	\np{{M}f}{p}{\R^n}\leq C\np{f}{p}{\R^n}
	\end{align*}
	where ${M}$ is the centred maximal function for Euclidean balls.
\end{theorem}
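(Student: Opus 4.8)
The statement is the dimension-free Hardy--Littlewood maximal inequality of Stein and Str\"omberg. The plan is as follows. By splitting $f$ into positive and negative parts and using $|Mf|\le M|f|$ it suffices to treat $f\ge 0$, and the endpoint $p=\infty$ is trivial with constant $1$, since $Mf\le \|f\|_{\lp{\infty}{\R^n}}$ pointwise. Writing $A_rf=f\ast\phi_r$ with $\phi=|B_1|^{-1}\mathbf{1}_{B_1}$ and $\phi_r(y)=r^{-n}\phi(y/r)$, one has $Mf=\sup_{r>0}A_rf$. The strategy is to first establish the bound on $\lp{2}{\R^n}$ with a constant independent of $n$ by a Littlewood--Paley (square function) argument, and then to propagate it to every $1<p<\infty$ by comparison with the heat semigroup, whose maximal operator is controlled dimension-free by Stein's abstract maximal theorem.

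For the $\lp{2}{\R^n}$ estimate I would pass to the Fourier side. The radial multiplier of $A_1$ is $m(\xi)=\widehat{\phi}(\xi)$, a Bessel function $m(\xi)=c_n|\xi|^{-n/2}J_{n/2}(|\xi|)$ with $m(0)=1$. I would dominate the oscillation of $r\mapsto A_rf$ by the $g$-function
\[
Gf=\Big(\int_0^\infty |r\,\partial_r A_rf|^2\,\frac{dr}{r}\Big)^{1/2},
\]
so that, after fixing one reference scale $r_0$, one has $Mf\lesssim A_{r_0}f+Gf$ in the relevant averaged sense. By Plancherel, $\|Gf\|_{\lp{2}{\R^n}}^2=\kappa\,\|f\|_{\lp{2}{\R^n}}^2$, where $\kappa=\int_0^\infty |s\,m'(s)|^2\,\frac{ds}{s}$ is a pure number extracted by scaling out $|\xi|$. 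The crucial point is that $\kappa$ is \emph{bounded independently of $n$}: this follows from the sharp, uniform-in-order estimates for Bessel functions (the expansion $m(s)=1+O(s^2)$ near the origin together with the oscillatory decay $|m(s)|\lesssim s^{-(n+1)/2}$ for large $s$), which are exactly the quantitative inputs isolated by Stein and Str\"omberg.

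To reach a general exponent $p$ I would introduce the heat semigroup $\{e^{t\Delta}\}_{t>0}$ on $\R^n$. It is a symmetric diffusion semigroup (self-adjoint, positivity preserving, $L^p$-contractive, and $e^{t\Delta}1=1$), so Stein's maximal theorem for such semigroups gives $\big\|\sup_{t>0}e^{t\Delta}|f|\big\|_{\lp{p}{\R^n}}\le A_p\|f\|_{\lp{p}{\R^n}}$ for $1<p\le\infty$ with $A_p$ depending only on $p$ --- in particular independent of $n$, since the abstract theorem never sees the dimension. I would then compare the ball average $A_r$ with the heat average at scale $t\simeq r^2$: the smooth part of $A_r$ is dominated by the heat maximal operator, while the rough remainder is precisely the piece handled by the $g$-function $Gf$ above. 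Interpolating the dimension-free $\lp{2}{\R^n}$ bound against the dimension-free semigroup bound on $\lp{p}{\R^n}$ then yields $\np{Mf}{p}{\R^n}\le C(p)\np{f}{p}{\R^n}$ with $C(p)$ independent of $n$, for all $1<p<\infty$.

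The main obstacle is precisely keeping the constants free of $n$ during the passage from $\lp{2}{\R^n}$ to $\lp{p}{\R^n}$. The natural endpoint for a Marcinkiewicz interpolation would be the weak-type $(1,1)$ bound, but its best constant is known to grow (at least linearly) with the dimension and is conjecturally not dimension-free, so interpolating against $\lp{1}{\R^n}$ is forbidden. The resolution is to use the semigroup maximal theorem as a genuinely dimension-free $\lp{p}{\R^n}$ anchor in place of the $(1,1)$ endpoint, and to combine it with the dimension-free Bessel estimates underlying the $\lp{2}{\R^n}$ bound; assembling these two ingredients so that every constant remains controlled uniformly in $n$ is the delicate part of the argument.
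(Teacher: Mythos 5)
First, a point of reference: the paper does not prove this statement at all. It is quoted as a classical result (it is the Stein--Str\"omberg theorem for Euclidean balls), and in the paper it is only ever applied with $n=2$ (in Lemma \ref{endlemma2} and Proposition \ref{integrating}), where the ordinary dimension-dependent Hardy--Littlewood theorem via a Vitali covering argument would already suffice. So your proposal is to be judged on its own merits, and its $L^2$ half is indeed the standard Stein square-function argument: the constant $\int_0^\infty |s\,m'(s)|^2\,\frac{ds}{s}$ is dimension-free by the uniform Bessel estimates. Two technical corrections there: the pointwise bound $Mf\lesssim A_{r_0}f+Gf$ is not correct as written, since the fundamental theorem of calculus produces $\int_0^\infty|\partial_r A_r f|\,dr$, an $L^1(dr)$ quantity; the standard fix is the Cauchy--Schwarz trick $\sup_r|u(r)|^2\le 2\big(\int_0^\infty|u|^2\,\tfrac{ds}{s}\big)^{1/2}\big(\int_0^\infty|s\,u'|^2\,\tfrac{ds}{s}\big)^{1/2}$ applied to $u(r)=A_rf(x)$ or to the difference with the heat average. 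Moreover, if you compare with the heat semigroup you must calibrate the time as $t\simeq r^2/n$, not $t\simeq r^2$: since $\widehat{\phi}(\xi)=1-\tfrac{|\xi|^2}{2(n+2)}+\cdots$, the heat kernel at time $r^2$ lives at the wrong spatial scale, and even the pointwise domination of the normalized ball indicator by a heat kernel costs a factor $\sqrt{n}$ at the matched time.

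The genuine gap is the last step, for $1<p<2$, and it is exactly the obstacle you flag without actually resolving. Stein's maximal theorem for symmetric diffusion semigroups bounds $\sup_{t>0}e^{t\Delta}|f|$, which is a \emph{different} operator from $M$; to transfer the bound you must control the error $Df=\sup_{r>0}|A_rf-e^{t(r)\Delta}f|$ on $L^p$, and your square-function argument controls $D$ only on $L^2$ (plus trivially on $L^\infty$). Marcinkiewicz interpolation between those two endpoints yields the range $2\le p\le\infty$ only; ``interpolating the $L^2$ bound against the semigroup bound on $L^p$'' is not a valid operation, because the two estimates concern different operators, and there is no dimension-free endpoint below $L^2$ available (as you yourself note, weak $(1,1)$ grows with $n$). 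The actual Stein--Str\"omberg resolution for $1<p<2$ is the method of descent: fix $k=k(p)$ with $p>\tfrac{k}{k-1}$, write the $n$-dimensional ball average as a superposition, over rotations, of spherical means on $k$-dimensional spheres, and dominate $M$ by the $k$-dimensional spherical maximal operator acting along $k$-planes, whose $L^p(\R^n)$ norm equals by Fubini its $L^p(\R^k)$ norm $C(k,p)=C(p)$, independent of $n$. Without this (or some substitute for the sub-$L^2$ range), your proof establishes the theorem only for $p\ge 2$.
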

\begin{lemme}\label{endlemma2}
	Let $0<\alpha<n$ and $r>0$. Then for any $f\in L^1_{\mathrm{loc}}(\R^n)$, for all $x\in \R^n$, we ave
	\begin{align*}
	\int_{B_r(x)}\frac{f(y)}{|x-y|^{n-\alpha}}dy\leq \frac{2^n\alpha(n)}{2^\alpha-1}r^{\alpha}Mf(x)
	\end{align*}
\end{lemme}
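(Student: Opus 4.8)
The plan is to prove this by the standard dyadic decomposition of the ball $B_r(x)$ into annuli, on each of which the singular kernel $|x-y|^{-(n-\alpha)}$ is comparable to a constant, after which the average of $f$ over each annulus is dominated by the maximal function. Since the asserted inequality only makes sense with the correct sign, I first reduce to the case $f\geq 0$ by replacing $f$ with $|f|$: the left-hand side only increases and the right-hand side is unchanged under the usual convention $M(|f|)=Mf$. Throughout, $\alpha(n)$ denotes the Lebesgue measure of the unit ball in $\R^n$ and $Mf$ is the centred Hardy--Littlewood maximal function of Theorem \ref{endlemma1}.

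First I would set, for each integer $k\geq 0$, the annulus $A_k=\ens{y\in\R^n:\; 2^{-k-1}r<|x-y|\leq 2^{-k}r}$, so that $B_r(x)$ is the disjoint union $\bigcup_{k\geq 0}A_k$ up to the single point $y=x$, which is of measure zero. Since $n-\alpha>0$, on $A_k$ one has the pointwise lower bound $|x-y|^{n-\alpha}\geq (2^{-k-1}r)^{n-\alpha}$, and enlarging the domain of integration from $A_k$ to the ball $B_{2^{-k}r}(x)\supset A_k$ gives
\begin{align*}
\int_{A_k}\frac{f(y)}{|x-y|^{n-\alpha}}\,dy\leq (2^{-k-1}r)^{-(n-\alpha)}\int_{B_{2^{-k}r}(x)}f(y)\,dy.
\end{align*}
The defining property of the centred maximal function then yields $\int_{B_{2^{-k}r}(x)}f\leq \alpha(n)\,(2^{-k}r)^n\,Mf(x)$.

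It remains to insert this bound, simplify the powers of $2$ and of $r$, and sum over $k$. A direct computation of the exponent of $2$ gives $\int_{A_k}\frac{f}{|x-y|^{n-\alpha}}\,dy\leq 2^{n}\,2^{-\alpha(k+1)}\,\alpha(n)\,r^{\alpha}\,Mf(x)$, and summing the geometric series $\sum_{k\geq 0}2^{-\alpha(k+1)}=(2^\alpha-1)^{-1}$ produces exactly the constant $\frac{2^n\alpha(n)}{2^\alpha-1}$. There is essentially no serious obstacle here; the only points requiring a moment's care are the sign reduction to $f\geq 0$ (which legitimises both enlarging the domains and recombining the annular pieces), and verifying that the dyadic annuli exhaust $B_r(x)$ up to a null set, so that the sum of the annular integrals indeed equals the full integral $\int_{B_r(x)}\frac{f}{|x-y|^{n-\alpha}}\,dy$.
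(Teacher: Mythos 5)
Your proof is correct and follows essentially the same route as the paper: the dyadic annuli $A_k$ are exactly the sets $B_{2^{-k}r}(x)\setminus B_{2^{-k-1}r}(x)$ used there, with the same kernel bound, enlargement to balls, comparison with $Mf(x)$, and geometric series yielding the constant $\frac{2^n\alpha(n)}{2^\alpha-1}$. Your explicit reduction to $f\geq 0$ is a minor tidying of a point the paper leaves implicit.
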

\begin{proof}
	For $k\in\N$, let $B_k=B_{2^{-k}r}(x)$. We have
	\begin{align*}
	\int_{B_r(x)}\frac{f(y)}{|x-y|^{n-\alpha}}dy&=\sum_{k\in\N}^{}\int_{B_k\setminus B_{k+1}}\frac{f(y)}{|x-y|^{n-\alpha}}\leq\sum_{k\in\N}^{}\left(\frac{r}{2^{k+1}}\right)^\alpha\frac{1}{(2^{-(k+1)}r)^n}\int_{B_{2^{-k}r}(x)}f(y)dy\\
	&=\sum_{k\in\N}^{}2^n\alpha(n)\left(\frac{r}{2^{k+1}}\right)^\alpha \dashint{B_{2^{-k}r}(x)}f(y)dy\leq \frac{2^n\alpha(n)r^\alpha}{2^\alpha-1}Mf(x).
	\end{align*}
	This computation concludes the proof of the lemma.
\end{proof}
\begin{prop}\label{integrating}
    Let $u\in C^1(\bar{D^2}\setminus\ens{0})\cap L^2(D^2)$ be such that
    \begin{align*}
    	{\partial}_{\z}\,u(z)=\mu(z)f(z),\quad z\in D^2\setminus\ens{0} 
    \end{align*}
    where $f\in L^p(D^2)$ for some $2<p\leq \infty$, and $|\mu(z)|\simeq |z|^a\log^b|z|$ at $0$ for some $a\in \N$, and $b\geq 0$. Then
    \begin{align*}
    	u(z)=P(z)+|\mu(z)|T(z)
    \end{align*}
    for some \emph{polynomial $P$} of degree less than $a$, and a function $T$ such that
    \begin{align*}
    	T(z)=O(|z|^{1-\frac{2}{p}}\log^{\frac{2}{p'}}|z|).
    \end{align*}
     In particular, if $f\in L^{\infty}(D^2)$, we have
    \begin{align*}
    	u(z)=P(z)+O(|z|^{a+1}\log^{b+2}|z|).
    \end{align*}
\end{prop}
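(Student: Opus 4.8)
The plan is to solve the inhomogeneous Cauchy--Riemann equation explicitly and then read off the asymptotics from the solution formula. Set $g=\mu f$; since $|\mu(z)|\simeq|z|^a\log^b|z|\to 0$ at the origin, $g\in \mathrm{L}^p(D^2)$, and I would take as a particular solution the Cauchy transform
\[
v(z)=\frac{1}{\pi}\int_{D^2}\frac{g(w)}{z-w}\,dA(w),\qquad \partial_{\bar z}v=g .
\]
Because $p>2$, $v$ lies in $W^{1,p}(D^2)\hookrightarrow C^{0,1-2/p}$, so $v$ is in particular continuous at $0$. The function $u-v$ is then holomorphic on $D^2\setminus\ens{0}$; in every situation where the statement is applied $u$ is bounded near $0$, so $u-v$ extends holomorphically across the puncture and is given by a convergent power series. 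It therefore suffices to analyse $v$, splitting it into a holomorphic polynomial part and a controlled remainder.

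First I would extract the polynomial $P$. For $|w|>|z|$ one has the absolutely convergent expansion $\tfrac{1}{z-w}=-\sum_{k\ge 0}z^k w^{-k-1}$, and the coefficients $c_k=-\tfrac1\pi\int_{D^2}g(w)\,w^{-k-1}\,dA(w)$ converge precisely for $k<a$, because $|g(w)|\lesssim|w|^a\log^b|w|$ dominates the singularity $|w|^{-k-1}$ exactly in that range. Collecting these terms gives the holomorphic polynomial $P(z)=\sum_{k<a}c_kz^k$ of degree less than $a$ (the genuinely regular Taylor part of $v$), while the remaining pieces --- the tail $k\ge a$ over $\ens{|w|>2|z|}$, whose kernel sums to $z^a/\big(w^{a}(w-z)\big)$, together with the full near-diagonal contribution over $\ens{|w|\le 2|z|}$ --- constitute the remainder $v-P$.

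The analytic heart is the estimate $|v(z)-P(z)|\lesssim |\mu(z)|\,|z|^{1-2/p}\log^{2/p'}|z|$, which I would obtain scale by scale with $r=|z|$. On the inner region $\ens{|w|\le 2r}$ the monotonicity of $s\mapsto s^a|\log s|^b$ lets me replace $|\mu(w)|$ by $|\mu(z)|$, after which the singular kernel $1/(z-w)$ is handled by the fractional--integral bound of lemma \ref{endlemma2} with $n=2,\alpha=1$ (giving a factor $\lesssim r\,Mf(z)$, a gain of one power) combined with the maximal--function bound theorem \ref{endlemma1}, or equivalently by Hölder against $f\in \mathrm{L}^p$ using $\big(\int_{|w-z|<3r}|z-w|^{-p'}\big)^{1/p'}\simeq r^{1-2/p}$. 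The logarithmic factors $\log^{2/p'}$ appear from summing the weight $|w|^{-p'}$ over dyadic annuli, and the endpoint behaviour as $p\downarrow 2$ is exactly where the maximal operator is needed in place of a bare Hölder estimate. The main obstacle is the outer region $\ens{|w|>2r}$: the crude bound $|z^a/(w^a(w-z))|\le 2|z|^a|w|^{-a-1}$ only yields $O(|z|^a)$, which is too large by a factor $|z|^{1-2/p}$, so one must exploit the cancellation built into the truncated kernel $z^a/\big(w^a(w-z)\big)$ --- equivalently, the annihilation of the lowest angular Fourier modes that forces the borderline $z^a$--coefficient into the gain --- to upgrade to the required $O\big(|z|^{a+1-2/p}\big)$. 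Once this is in hand, dividing by $|\mu(z)|$ gives the stated $T$, and specialising to $f\in \mathrm{L}^\infty$ (so $p=\infty$, $p'=1$) produces the remainder $|\mu(z)|\,O(|z|\log^2|z|)=O(|z|^{a+1}\log^{b+2}|z|)$.
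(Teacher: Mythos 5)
Your overall architecture is the same as the paper's: both decompose $u$ via the Cauchy--Pompeiu formula into a holomorphic part plus the Cauchy transform of $\mu f$, extract a polynomial from the geometric-series expansion of the kernel, estimate the near region $\{|w|\le 2|z|\}$ by the maximal function (lemma \ref{endlemma2} combined with theorem \ref{endlemma1}), and treat the far region by H\"older against the weight $|w|^{-2/p'}$, which is where $\log^{2/p'}|z|$ enters. (Your remark that $u$ must be bounded near $0$ for the holomorphic part to extend across the puncture is an assumption the paper also uses implicitly when invoking the Cauchy formula on $\bar{D^2}$ with $u$ only $C^1$ away from $0$, and it holds in every application; that is not the problem.)

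The genuine gap is your handling of the borderline index $k=a$. First, your convergence criterion is miscalibrated: the coefficient $c_k=-\frac{1}{\pi}\int_{D^2}g(w)\,w^{-k-1}\,dA(w)$ converges not only for $k<a$ but also for $k=a$, since $|g(w)|\lesssim|w|^{a}\log^{b}|w|\,|f(w)|$ and H\"older gives $\int_{D^2}|w|^{-1}\log^{b}|w|\,|f(w)|\,dA\le \|f\|_{p}\big(\int_{D^2}|w|^{-p'}\log^{bp'}|w|\,dA\big)^{1/p'}<\infty$ because $p'<2$; this is exactly the paper's estimate \eqref{coefficientborne}, stated for all $n\le a$. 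Second, and fatally, the cancellation you invoke for the truncated kernel $z^a/\big(w^a(w-z)\big)$ does not exist: the term $c_a z^a$ is a genuine contribution of size $|z|^a$ which is generically nonzero --- take for instance $g(w)=w^{a+1}/|w|$, so that $|f|=1$ with $\mu(z)=|z|^a$, and $c_a=-2$ --- and it can never be absorbed into a remainder of size $O(|z|^{a+1-2/p}\log^{2/p'}|z|)$; no annihilation of angular Fourier modes rescues this. The statement's ``degree less than $a$'' must be read as ``degree at most $a$'' (as the paper's proof produces, and as its applications require, e.g.\ the constant $\bar{\vec{C}_0}$ obtained in the case $a=0$), and the correct resolution is precisely the paper's: put the coefficients $n=0,\dots,a$ into $P$, after which only the tail $n\ge a+1$ remains, and there your crude geometric-series bound, combined with the logarithmic-weight H\"older trick $|\zeta|^{-2/p'}=\big(|\zeta|^{-2}\log^{-2}(|\zeta|/2)\big)^{1/p'}\log^{2/p'}(|\zeta|/2)$, already yields $O\big(|z|^{a+1-2/p}\log^{b+2/p'}|z|\big)$ with no cancellation needed at all.
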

\begin{proof}
	By the general Cauchy formula (see \cite{hormandercomplex}), for all $z\in D^2\setminus\ens{0}$,
	\begin{align}\label{cauchy}
	u(z)&=\frac{1}{2\pi i}\left\{\int_{S^1}\frac{u(\zeta)}{\zeta-z}d\zeta+\int_{D^2}\frac{\p{\z}u(\zeta)}{\zeta-z}d\zeta\wedge d\bar{\zeta}\right\}=\frac{1}{2\pi i}\left\{\int_{S^1}\frac{u(\zeta)}{\zeta-z}d\zeta+\int_{D^2}\frac{\mu(\zeta)f(\zeta)}{\zeta-z}d\zeta\wedge d\bar{\zeta}\right\}\nonumber\\
	&=\frac{1}{2\pi i}\left(u_1(z)+u_2(z)\right).
	\end{align}
	In particular, $u$ is analytic on $D^2\setminus \ens{0}$. We now fix a constant $C>0$ such that
	\begin{align*}
	|\mu(|z|)|\leq C|z|^{a}(1+\log^b|z|)\quad \text{for all }\;\, z\in D^2.
	\end{align*}
	Now developing
	\begin{align*}
	\frac{1}{\zeta-z}=\sum_{n=0}^{\infty}z^n\zeta^{-(n+1)}
	\end{align*}
	we obtain for $|z|<1$
	\begin{align*}
	u_1(z)=\frac{1}{2\pi i}\int_{S^1}\frac{u(\zeta)}{\zeta-z}d\zeta=\sum_{n=0}^{\infty}\left(\frac{1}{2\pi i}\int_{S^1}u(\zeta)\zeta^{-(n+1)}d\zeta\right)z^n=\sum_{n\in\N}^{}c_n z^n
	\end{align*}
	As $u\in C^0(\bar{D^2}\setminus\ens{0})$, we deduce that $|c_n|\leq \Vert  u\Vert_{L^{\infty}(S^1)}$, and as $u\in C^1(\bar{D^2}\setminus\ens{0})$, we have $n|c_n|=O(1)$, so $\ens{c_n}_{n\in\N}\in l^2(\N)$, and the formula is valid in $L^2$ on the boundary $S^1$ too. In particular, $u_1$ is analytic in $D^2$, so we can write
	\begin{align}\label{analytic0}
	u_1(z)=\sum_{n=0}^{a}c_nz^n+\varphi_1(z)
	\end{align}
	where $\varphi_1(z)=O(|z|^{a+1})$ is analytic. Then we decompose
	\begin{align}\label{decomp2}
	u_2(z)=\int_{D(2|z|)}+\int_{D\setminus D(2|z|)}=u_2^1(z)+u_2^2(z)
	\end{align}
	Then by Lemma \ref{endlemma2} with $n=2$, $\alpha=1$, we have
	\begin{align}
	|u_2^1(z)|&\leq C2^a|z|^{a}(1+\log^b|z|)\int_{D(0,2|z|)}\frac{|f(\zeta)|}{|\zeta-z|}|d\zeta|^2\leq C 2^a|z|^a(1+\log^b|z|)\int_{D(z,3|z|)}\frac{|f(\zeta)|}{|\zeta-z|}|d\zeta|^2\nonumber\\
	&\leq C2^{a+3}\pi\,|z|^{a+1}(1+\log^b|z|){M}f(z)\leq C_1\np{f}{p}{D^2}|z|^{1-\frac{2}{p}}|\mu(z)|.
	\end{align}
	Then we have
	\begin{align*}
	u_2^2(z)=\int_{D\setminus D(0,2|z|)}\frac{\mu(\zeta)f(\zeta)}{\zeta-z}d\zeta\wedge d\bar{\zeta}&=\sum_{n\in\N}^{}\int_{D\setminus D(0,2|z|)}\left(\frac{\mu(\zeta)f(\zeta)}{\zeta^{n+1}}d\zeta\wedge d\bar{\zeta}\right)z^n\\
	&=\sum_{n\in\N}^{}d_n(z)\,z^n
	\end{align*}
	and for all $n\leq a$, one has by H\"{o}lder's inequality
	\begin{align}\label{coefficientborne}
		\left|\int_{D^2}^{}\frac{\mu(\zeta)f(\zeta)}{\zeta^{n+1}}d\zeta\wedge d\bar{\zeta}\right|\leq C\int_{D^2}\frac{|f(\zeta)|}{|\zeta|}|d\zeta|^2\leq 2\left(\frac{2\pi}{2-p'}\right)^{\frac{1}{p'}}C\np{f}{p}{D^2}.
	\end{align}
	We will also need this further decomposition
	\begin{align*}
		u_2(z)&=\sum_{n=0}^a\left( \int_{D^2}\frac{\mu(\zeta)f(\zeta)}{\zeta^{n+1}}d\zeta\wedge d\bar{\zeta}\right)z^n-\sum_{n=0}^{a}\left(\int_{D(0,2|z|)}\frac{\mu(\zeta)f(\zeta)}{\zeta^{n+1}}d\zeta\wedge d\bar{\zeta}\right)z^n\\
		&+\sum_{n=a+1}^{\infty}\left(\int_{D\setminus D(0,2|z|)}\frac{\mu(\zeta)f(\zeta)}{\zeta^{n+1}}d\zeta\wedge d\bar{\zeta}\right)z^n.
	\end{align*}
	By \eqref{coefficientborne}, the first term is a polynomial of degree at most $a$, and for $0\leq n\leq a$,
	\begin{align*}
		\left|\int_{D(0,2|z|)}\frac{\mu(\zeta)f(\zeta)}{\zeta^{n+1}}d\zeta\wedge d\bar{\zeta}\right|&\leq C|z|^{a-n}(1+\log^b|z|)\int_{D(0,2|z|)}\frac{|f(\zeta)|}{|\zeta|^{}}|d\zeta|^2\\
		&\leq 2^{\frac{2}{p'}}\left(\frac{2\pi}{2-p'}\right)^{\frac{1}{p'}}C\np{f}{p}{D^2}|z|^{a-n+1-\frac{2}{p}}(1+\log^b|z|).
	\end{align*}
	For $0\leq n\leq a$, one has
	\begin{align}\label{2eterme}
		\left|\int_{D(0,2|z|)}\frac{\mu(\zeta)f(\zeta)}{\zeta^{n+1}}d\zeta\wedge d\bar{\zeta}\right|\leq C'\np{f}{p}{D^2}|z|^{1-\frac{2}{p}}|\mu(z)|.
	\end{align}
	Finally,
\begin{align*}
		&\left|\int_{D\setminus D(0,2|z|)}\frac{\mu(\zeta)f(\zeta)}{\zeta^{n+1}}d\zeta\wedge d\bar{\zeta}\right|\leq C(1+\log^b(|z|)\int_{D\setminus D(0,2|z|)}|\zeta|^{a+1-n-\frac{2}{p}}|\zeta|^{-\frac{2}{p'}}|f(\zeta)||d\zeta|^2\\
			&\leq C2^{a+1-n-\frac{2}{p}}|z|^{a+1-n-\frac{2}{p}}(1+\log^b|z|)\log^{\frac{2}{p'}}|z|\left(\int_{D^2}\frac{|d\zeta|^2}{|\zeta|^2\log^2\left(\frac{|\zeta|}{2}\right)}\right)^{\frac{1}{p'}}\np{f}{p}{D^2}\\
			&\leq \frac{C'}{2^n}|z|^{a+1-n-\frac{2}{p}}(1+\log^{b+\frac{2}{p'}}|z|)\np{f}{p}{D^2}.
\end{align*}
Therefore,
\begin{align}\label{3eterme}
	\left|\sum_{n=a+1}^{\infty}\left(\int_{D\setminus D(0,2|z|)}\frac{\mu(\zeta)f(\zeta)}{\zeta^{n+1}}d\zeta\wedge d\bar{\zeta}\right)z^n\right|&\leq 2C'|z|^{a+1-\frac{2}{p}}(1+\log^{b+\frac{2}{p'}}|z|)\np{f}{p}{D^2}\\
	&\leq C''|\mu(z)||z|^{1-\frac{2}{p}}(1+\log^{\frac{2}{p'}}|z|)\np{f}{p}{D^2}
\end{align}
and putting together \eqref{cauchy}, \eqref{analytic0} \eqref{decomp2}, \eqref{2eterme}, \eqref{3eterme}, we can write
\begin{align*}
	u(z)=P(z)+|\mu(z)|T(z)
\end{align*}
where $T(z)=O(|z|^{1-\frac{2}{p}}\log^{\frac{2}{p'}}|z|)$, and this concludes the proof.
\end{proof}

\begin{rem}
    If $\phi:S^2\rightarrow\R^3$ is the inverted catenoid, we easily get
	\begin{align*}
	\h_0(z)&=\left(\z,-i\z,\frac{1}{2}\frac{\z}{z}\right)dz^2+O(|z|^2\log^2|z|)
	=\vec{\gamma}_0 \frac{\z}{z}dz^2+\vec{A}\z dz^2+O(|z|^2\log^2|z|)
	\end{align*}
	therefore the error term is essentially optimal, as it cannot be better than $O(|z|^2\log^2|z|)$ for a Willmore sphere at a multiplicity $1$ branch point. In particular, the estimate of Theorem \ref{devh0} is optimal.
\end{rem}

    \nocite{}
\bibliographystyle{plain}
\bibliography{biblio_classification}

\end{document}